\begin{abstract}
%\end{abstract}
\theoremstyle{plain}
\newtheorem{theorem}{Theorem}[section]
\newtheorem{proposition}[theorem]{Proposition}
\newtheorem{lemma}[theorem]{Lemma}
\newtheorem{corollary}[theorem]{Corollary}
\theoremstyle{remark}
\newtheorem{remark}[theorem]{Remark}
\theoremstyle{definition}
\newtheorem{definition}[theorem]{Definition}
\newtheorem{hypothesis}[theorem]{Hypothesis}
\DeclareMathOperator{\Ext}{Ext}
\DeclareMathOperator{\Gal}{Gal}
\DeclareMathOperator{\Hom}{Hom}
\DeclareMathOperator{\N}{N}
\DeclareMathOperator{\im}{im}
\DeclareMathOperator{\coker}{coker}
\DeclareMathOperator{\Fitt}{Fitt}
\newcommand{\bQ}{\mathbb{Q}}
\newcommand{\bZ}{\mathbb{Z}}
\newcommand{\cA}{\mathcal{A}}
\newcommand{\cD}{\mathcal{D}}
\newcommand{\cF}{\mathcal{F}}
\newcommand{\cG}{\mathcal{G}}
\newcommand{\cH}{\mathcal{H}}
\newcommand{\cI}{\mathcal{I}}
\newcommand{\cK}{\mathcal{K}}
\newcommand{\cN}{\mathcal{N}}
\newcommand{\cO}{\mathcal{O}}
\newcommand{\cP}{\mathcal{P}}
\newcommand{\cQ}{\mathcal{Q}}
\newcommand{\cR}{\mathcal{R}}
\newcommand{\cT}{\mathcal{T}}
\newcommand{\cX}{\mathcal{X}}
\newcommand{\fa}{\mathfrak{a}}
\newcommand{\fb}{\mathfrak{b}}
\newcommand{\fd}{\mathfrak{d}}
\newcommand{\fq}{\mathfrak{q}}
\newcommand{\fp}{\mathfrak{p}}
\newcommand{\fr}{\mathfrak{r}}
\newcommand{\fs}{\mathfrak{s}}
\newcommand{\fn}{\mathfrak{n}}
\newcommand{\fm}{\mathfrak{m}}
\newcommand{\CC}{\mathbb{C}}
\newcommand{\QQ}{\mathbb{Q}}
\newcommand{\RR}{\mathbb{R}}
\newcommand{\ZZ}{\mathbb{Z}}
\newcommand{\Ann}{\mathrm{Ann}}
\newcommand{\bz}{\mathbb{Z}}
\begin{document}

%%%%%%%%%%%%%%%%%%%%%%%%%%%%%%%
%%% TITLE, AUTHOR, ABSTRACT %%%
%%%%%%%%%%%%%%%%%%%%%%%%%%%%%%%
\title[]{On the theory of higher rank\\
Euler, Kolyvagin and Stark systems, II}

\author{David Burns, Ryotaro Sakamoto and Takamichi Sano}

\begin{abstract} We prove the existence of a canonical `higher Kolyvagin derivative' homomorphism between the modules of higher rank Euler systems and higher rank Kolyvagin systems, as has been conjectured to exist by Mazur and Rubin. This homomorphism exists in the setting of $p$-adic representations that are free with respect to the action of a Gorenstein order $\mathcal{R}$ and, in particular, implies that higher rank Euler systems control the $\mathcal{R}$-module structures of Selmer modules attached to the representation. We give a first application of this theory by considering the (conjectural) Euler system of Rubin-Stark elements.
\end{abstract}

\address{King's College London,
Department of Mathematics,
London WC2R 2LS,
U.K.}
\email{david.burns@kcl.ac.uk}

\address{Graduate School of Mathematical Sciences, The University of Tokyo,
3-8-1 Komaba, Meguro-Ku, Tokyo, 153-8914, Japan}
\email{sakamoto@ms.u-tokyo.ac.jp}

\address{Osaka City University,
Department of Mathematics,
3-3-138 Sugimoto\\Sumiyoshi-ku\\Osaka\\558-8585,
Japan}
\email{sano@sci.osaka-cu.ac.jp}

\maketitle

%\section{Introduction} \label{Intro}%This is an updated version of the submitted file dburns.tex
\tableofcontents

\section{Introduction}

%Let $K$ be a number field, $p$ an {\it odd} prime number, $O$ the ring of integers of a finite extension $Q$ of $\QQ_p$ and $M$ a non-zero element of $O$. Let $T$ be a free $O$-module of finite rank that is endowed with a continuous $O$-linear action of the absolute Galois group $G_K$ of $K$ that is ramified at only finitely many places and set $\mathcal{A} := T/MT$.
%An Euler system for $T$ is a collection of cohomology classes in the groups $H^1(F,T^\ast(1))$ for finite extensions $F$ of $K$ whose norm projections satisfy a specific family of relations (involving multiplication by Euler factors) as $F$ varies. This notion was introduced by
\subsection{Background to the problem}
Ever since its introduction by Kolyvagin \cite{kolyvagin}, the theory of Euler systems has played a vital role in the proof of many celebrated results concerning the structure of Selmer groups of $p$-adic representations over number fields.

In an attempt to axiomatise, and extend, the use of Euler systems, Mazur and Rubin \cite{MRkoly} developed an associated theory of `Kolyvagin systems' and showed both that Kolyvagin systems controlled the structure of Selmer groups and that Kolyvagin's `derivative operator' gave rise to a canonical homomorphism between the modules of Euler and Kolyvagin systems that are associated to a given representation.

In this way, it became clear that Kolyvagin systems play the key role in obtaining structural results about Selmer groups and that the link to Euler systems is pivotal for the supply of Kolyvagin systems that are related to the special values of $L$-series.

For many representations, however, families of cohomology classes (such as Euler or Kolyvagin systems) are not themselves sufficient to control Selmer groups and in such `higher rank' cases authors have considered various collections of elements in higher exterior powers of cohomology groups.

The theory of `higher rank Euler systems' has in principle been well-understood for some time by now, with the first general approach being described by Perrin-Riou in \cite{PR} after significant contributions were made by Rubin in an important special case (related to Stark's Conjecture) in \cite{rubinstark}.

In addition, a general method was introduced in \cite[\S6]{rubinstark} whereby higher rank Euler systems could be used to construct, in a non-canonical way, classical (rank one) Euler or Kolyvagin systems to which standard techniques could then be applied.

However, whilst this `rank reduction' method has since been used both often and to great effect, notably by Perrin-Riou in \cite{PR} and by B\"uy\"ukboduk in \cite{Buyuk} and \cite{Buyuk2}, it is intrinsically non-canonical and also requires several auxiliary hypotheses (such as, for example, the validity of Leopoldt's Conjecture in the settings considered in \cite{rubinstark} and \cite{Buyuk}) that can themselves be very difficult to verify.

In an attempt to address these deficiencies, Mazur and Rubin \cite{MRselmer} have developed a theory of `higher rank Kolyvagin systems', and an associated notion of `higher rank Stark systems' (these are collections of cohomology classes generalizing the units predicted by Stark-type conjectures), and  showed that, under suitable hypotheses, such systems can be used to control Selmer groups.

However, the technical difficulties encountered when computing with higher exterior powers meant that the theory developed in \cite{MRselmer} was insufficient in the following respects.
\begin{itemize}
\item[$\bullet$] Coefficient rings were restricted to be either principal artinian local rings or discrete valuation rings, whilst dealing with more general coefficient rings is essential if one is to deal effectively with questions arising, for example, in either deformation theory or Galois module theory.
\item[$\bullet$] More importantly, whilst Mazur and Rubin conjectured the existence of a canonical link between the theories of higher rank Euler and Kolyvagin (or Stark) systems, they were unable to shed any light on the precise nature of this relationship (which they described as `mysterious').
\end{itemize}

\subsection{Overview of the solution}

The first of the above problems was resolved independently by the first and third authors in \cite{sbA} and by the second author in \cite{sakamoto}, a key part of the solution being the introduction of `exterior power biduals' as a functorially stronger version of exterior powers.

Building on these earlier articles, {\it we shall now resolve the second problem, and hence prove the conjecture of Mazur and Rubin, by constructing a canonical `higher Kolyvagin derivative' map between the modules of Euler and Kolyvagin systems of any given rank}.

We shall construct this map in the setting of $p$-adic representations that are free with respect to the action of an arbitrary Gorenstein order $\cR$, as one would expect to suffice for applications to deformation theory. In addition, by combining the construction with results from \cite{sbA}, we are able to deduce that, under natural hypotheses, higher rank Euler systems determine all of the higher Fitting ideals over $\cR$ of the relevant Selmer modules.

%It is worth pointing out in this context that, by directly obtaining information on $\cR$-module structures (rather than restricting to simpler coefficients), this method constitutes a significant strengthening of results even in the setting of classical Euler systems.

%{\color{blue} Unnecessary? }\textcolor{red}{In fact, as a key part of our argument, we have to show that the classical Kolyvagin derivative homomorphism can be constructed for representations over Gorenstein orders and to do this we find it better to adapt the approach of Kato in \cite{kato} rather than that used by Rubin in \cite{R}.} %d sin the classical case. In comparison with the original proof given by Rubin in \cite{R}, the argument we present deals with more general coefficient rings and is, we feel, both more direct and also more conceptual in nature.

In this regard we also note that obtaining concrete structural information about natural arithmetic modules such as ideal class groups, Tate-Shafarevic groups and Selmer groups with respect to coefficient rings that are not regular is a notoriously difficult problem and, despite an extensive literature discussing special cases (for recent examples see for instance Kurihara \cite{kuri}, Greither and Popescu \cite{GreitherPopescu} and Greither and Ku\v cera \cite{GK} and the references contained therein), there has not hitherto been any general approach to this problem.

There are two further differences between our approach (using exterior power biduals) and that of earlier articles that seem worthy of comment.

Firstly, we are able to show that, under standard hypotheses, {\it the module of higher rank Kolyvagin systems is canonically isomorphic to the corresponding module of higher rank Stark systems and is therefore free of rank one over the coefficient ring $\cR$}. This key fact allows us avoid the problem highlighted by Mazur and Rubin in \cite[Rem. 11.9]{MRselmer} that not all higher rank Kolyvagin systems defined in terms of higher exterior powers are, in their terminology, `stub' systems.

Secondly, as a key step in our construction of the higher Kolyvagin derivative operator, we shall develop a variant of the (non-canonical) rank-reduction methods employed in \cite{Buyuk}, \cite{PR} and \cite{rubinstark} that is both canonical in nature and also avoids difficult auxiliary hypotheses that are used in these earlier articles.

In a further article it will be shown that all of the key aspects of the theory developed here extend naturally to the analogous Iwasawa-theoretic setting.

In view of the range of existing applications of the classical theory of Euler and Kolyvagin systems, it thus seems plausible that the very general theory developed here will have significant applications in the future.

For the moment, however, to give an early indication of the usefulness of this approach we shall just restrict to the setting that was originally considered by Rubin in \cite{rubinstark}.

%\textcolor{yellow}{We thus fix a number field $K$ and write $r$ for the number of its archimedean places. In this setting our methods will show, under certain mild hypotheses, that for a wide range of abelian extensions $L$ of $K$ the module of rank $r$ Euler systems for (a certain induced form of) the representation $\ZZ_p(1)$ over $K$ determines all of the higher Fitting ideals of suitable isotypic factors of the ideal class group of $L$.}

In this setting, we find that our methods allow us, in a straightforward fashion, to extend, refine and remove any hypotheses concerning the validity of Leopoldt's Conjecture from the results of Rubin in \cite{rubinstark} and of B\"uy\"ukboduk in \cite{Buyuk}. %to both an extension and a strong refinement of the main results of Rubin in \cite{rubincrelle} and \cite{rubinstark} and of B\"uy\"ukboduk in \cite{Buyuk}. % that were obtained under the additional assumptions that $K$ is totally real and, crucially, that Leopoldt's Conjecture is valid.

\subsection{A summary of results} For the reader's convenience, we now give a brief summary of the main results of this article. For simplicity, we shall only discuss a special case of the general setting considered in later sections. In addition, we shall  omit stating explicitly hypotheses that are standard in the theory of Euler, Kolyvagin and Stark systems (since, in each case, they are made precise by the indicated results in later sections).

We thus fix an odd prime $p$, a number field $K$ and a Galois representation $T$ over a Gorenstein $\ZZ_p$-order $\cR$ that is endowed with a continuous action of the absolute Galois group of $K$. We also fix a power $M$ of $p$ and set $A:=T/MT$ and $R:=\cR/(M)$. For each natural number $r$ we write ${\rm ES}_r(T)$, respectively ${\rm KS}_r(A)$ and ${\rm SS}_r(A)$, for the modules of Euler systems of rank $r$ for $T$, respectively of Kolyvagin and Stark systems of rank $r$ for $A$, that are defined in \S \ref{euler sys sec 1}, respectively in \S \ref{defkoly} and \S \ref{defstark}.

Our main result is then the following.

\begin{theorem}[{Theorem \ref{derivable1} and Corollaries \ref{higher der} and \ref{remark surjective}}]\label{thm1} Under standard hypotheses, there exists a canonical `higher Kolyvagin derivative' homomorphism %of $\cR$-modules
$$\cD_r: {\rm ES}_r(T) \to {\rm KS}_r(A).$$
Under certain mild additional hypotheses, this homomorphism is surjective.
\end{theorem}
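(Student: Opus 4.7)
The plan is to construct $\cD_r$ by combining three main ingredients: a canonical isomorphism between Kolyvagin and Stark systems, a canonical variant of rank-reduction, and Kolyvagin's classical derivative operator (in rank one). The whole construction is carried out inside the formalism of exterior power biduals from \cite{sbA} and \cite{sakamoto}, since that is what makes the requisite functorial statements true over a non-regular Gorenstein coefficient ring $\cR$.

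First, I would establish a canonical isomorphism ${\rm KS}_r(A) \cong {\rm SS}_r(A)$. The point is that, with biduals in place of ordinary exterior powers, the local conditions defining Stark and Kolyvagin systems can be matched term-by-term at every Kolyvagin prime, and the comparison reduces to a dual-basis calculation using the Gorenstein hypothesis on $\cR$. Combined with the structure theory from \cite{sbA}, this should also show that each of these modules is free of rank one over $R = \cR/(M)$, which simultaneously settles the ``stub'' issue raised in \cite[Rem. 11.9]{MRselmer} and reduces the problem to producing, from a given $c \in {\rm ES}_r(T)$, a single distinguished Stark system.

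Second, I would develop a canonical rank-reduction. Given $c \in {\rm ES}_r(T)$ and $\phi \in \Hom_\cR(T,\cR)^{\otimes (r-1)}$ (together with the necessary local data at the archimedean and $p$-adic places), contraction against $\phi$ produces a rank-one Euler system $c_\phi$ for $T$, and one may apply the classical Mazur--Rubin derivative operator to obtain, for each squarefree product $n$ of Kolyvagin primes, a class $\cD_1(c_\phi)_n \in \Ho^1(K,A)$. The key observation is that the assignment $\phi \mapsto \cD_1(c_\phi)_n$ is itself $\cR$-linear, so passing to the bidual of $\bigwedge^{r-1}\Hom_\cR(T,\cR)$ one obtains a canonical element
\[
\kappa_n \in {\bigcap}_\cR^{\,r} \Ho^1_{\cF(n)}(K,A)
\]
with no auxiliary choices. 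This is the point at which the methods of \cite{rubinstark}, \cite{PR} and \cite{Buyuk} become canonical rather than choice-dependent, and it is also what removes the Leopoldt-type hypotheses that were only ever used to reconcile different choices of $\phi$.

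Third, for each $n$ one must verify that the family $(\kappa_n)_n$ satisfies the higher rank Kolyvagin system axioms: the finite--singular comparison at each $\ell \mid n$ and the compatibility when $n$ grows by a new prime. Via the bidual formalism these reduce, $\phi$ by $\phi$, to the classical rank-one congruences satisfied by $\cD_1(c_\phi)$, which in turn follow from the Euler factors in the distribution relations for $c$. The main obstacle, and where the bulk of the technical work will sit, is precisely this last step: showing that the rank-reduction is not merely pointwise canonical but coherent as $n$ varies, so that the Kolyvagin axioms survive passage through the bidual. This coherence is where the Gorenstein hypothesis on $\cR$ and the reflexivity properties of exterior power biduals both play a decisive role.

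Once $\cD_r$ is constructed, surjectivity under mild additional hypotheses follows from the freeness of rank one of ${\rm KS}_r(A) \cong {\rm SS}_r(A)$ over $R$: it suffices to exhibit a single Euler system whose image generates the target. Such a generator can be produced by running the construction backwards from a Stark system generator using the global duality input of \cite{sbA}, so the surjectivity assertion in Corollary \ref{remark surjective} should reduce to a statement about the image of $\cD_1$ in the rank-one Mazur--Rubin theory, which is already known.
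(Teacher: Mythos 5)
Your high-level plan coincides with the paper's in outline: identify ${\rm KS}_r$ with ${\rm SS}_r$ and show both are free of rank one, reduce the construction of the derivative to the rank-one theory by contracting against functionals, recover the rank-$r$ class via the bidual, and obtain surjectivity from freeness plus a supply of basic systems. However, the central rank-reduction step, as you describe it, has a genuine gap. You propose contracting against $\phi \in \Hom_\cR(T,\cR)^{\otimes(r-1)}$ (supplemented by unspecified local data) and then passing to ``the bidual of $\bigwedge^{r-1}\Hom_\cR(T,\cR)$'' to land in $\bigcap^r_\cR H^1_{\cF(\fn)}(K,A)$. This does not typecheck: by definition $\bigcap^r_R X = \bigl(\bigwedge^r_R X^\ast\bigr)^\ast$ with $X^\ast = \Hom_R(X,R)$, so the membership criterion of Proposition \ref{reduction} is expressed in terms of functionals $\Phi \in \bigwedge^{r-1}_R X^\ast$ on the global cohomology $X = H^1(\cO_{K,S_\fn},\cA)$, not on the representation $T$ itself. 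There is no natural map from $\Hom_\cR(T,\cR)$ to $\Hom(H^1(\cO_{K,S_\fn},\cA),R)$, and supplying ``local data at the archimedean and $p$-adic places'' is precisely the kind of auxiliary, choice-dependent ingredient the construction is designed to eliminate. The object actually used in the paper is a compatible family $\Psi = (\Psi_F)_F \in \varprojlim_F \bigwedge^{r-1}_{\cR[\cG_F]} H^1(\cO_{F,S(F)},T)^\ast$, a system of functionals on the global cohomology over the whole tower of fields — exactly the form of rank reduction in \cite[\S6]{rubinstark} and \cite{PR}.

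The step you have missed, and the one the argument actually turns on, is Lemma \ref{key lemma}: given a single mod-$M$ functional $\Phi$ on $H^1(\cO_{K,S_\fn},\cA)$, one must lift it to a tower-level family $\Psi$ with the property that $\Phi(\kappa'(c_\fd)) = \kappa'(\Psi(c)_\fd)$ holds for every $\fd \mid \fn$ simultaneously. The lift exists because the transition maps in $\varprojlim_F \bigwedge^{r-1} H^1(\cO_F,T)^\ast$ are surjective, which is precisely what Hypothesis \ref{hyp free}(i) (reflexivity of $H^1$ over the Gorenstein order, equivalently $\Ext^1$-vanishing) buys. Your ``coherence as $n$ varies'' heuristic is therefore not the right diagnosis: once a single tower-level $\Psi$ exists, the determinantal modification $\cD_{\fn/\fd}$ and the verification of the finite--singular relations reduce, functional by functional, to the known rank-one statements. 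Finally, your claim that ${\rm KS}_r(A) \cong {\rm SS}_r(A)$ follows from a dual-basis calculation is also not accurate: the paper establishes this (Theorem \ref{main}(i)) via Theorem \ref{thm koly}, whose proof requires the connectedness of the graph $\cX^0$ of core vertices (Theorem \ref{connected}) and a chain of Chebotarev-density arguments — this is where the hypothesis $p>3$ enters and where the freeness of ${\rm KS}_r(A)$ is genuinely proved, not merely formally identified.
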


%{\it we are able to show that the map $\cD_r$ in Theorem \ref{thm1} is surjective}, by using previous results by the first and the third author in \cite{sbA}

By this result, one can associate a canonical Kolyvagin system
\[ \kappa(c):=\cD_r(c)\]
in ${\rm KS}_r(A)$ to every Euler system $c$ in ${\rm ES}_r(T)$.

A key aspect of the proof of Theorem \ref{thm1} is the development of a suitable `rank-reduction' method by which consideration is restricted to the case $r=1$ where the result can be established by the existing methods of Mazur and Rubin.

We shall also further develop the general theory of higher rank Kolyvagin systems in order to prove the next result.

In the sequel, for each commutative noetherian ring $\Lambda$, each non-negative integer $i$ and each finitely generated $\Lambda$-module $M$ we write ${\rm Fitt}_\Lambda^i(M)$ for the $i$-th Fitting ideal of $M$.

\begin{theorem}[{Theorem \ref{main}}]\label{thm2} Under standard hypotheses, the following claims are valid.
\begin{itemize}
\item[(i)] The module ${\rm KS}_r(A)$ of Kolyvagin systems of rank $r$ is free of rank one over $R$.
\item[(ii)] For each system $\kappa$ in ${\rm KS}_r(A)$ and each non-negative integer $i$ one has
$$I_i(\kappa)\subseteq {\rm Fitt}_R^i({\rm Sel}(A)),$$
where $I_i(\kappa)$ is a canonical ideal associated with $\kappa$ (see Definition \ref{koly ideal}) and ${\rm Sel}(A)$ is a natural (dual) Selmer module for $A$ (which is denoted by $H^1_{\cF^\ast}(K,A^\ast(1))^\ast$ in Theorem \ref{main}).
\item[(iii)] If $R$ is a principal ideal ring and $\kappa$ is a basis of ${\rm KS}_r(A)$, then the inclusion in claim (ii) is an equality.
\end{itemize}
\end{theorem}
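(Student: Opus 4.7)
The plan is to address the three claims in turn, leveraging the canonical isomorphism between $\mathrm{KS}_r(A)$ and the module $\mathrm{SS}_r(A)$ of Stark systems that is highlighted in the overview. The guiding idea is that Stark systems admit a more direct structural analysis, since at each finite stage they live in an exterior power bidual associated to a single Kolyvagin set, while the comparison to Kolyvagin systems then upgrades these structural results to the infinitely-generated object.

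For (i), I would construct the comparison map $\mathrm{KS}_r(A) \to \mathrm{SS}_r(A)$ via the finite-singular splittings at admissible Kolyvagin primes, and verify it is an isomorphism by exhibiting an explicit inverse built from the same local decompositions. Freeness of rank one over $R$ is then established at the level of Stark systems: one chooses a Kolyvagin set $n$ large enough that the associated modified Selmer condition trivialises $\mathrm{Sel}(A)$, at which point the exterior power bidual housing $\kappa_n$ is canonically free of rank one over $R$ by a bidual version of the standard core-rank calculation, and coherence as $n$ varies forces this description to propagate to all Kolyvagin sets.

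For (ii), the ideal $I_i(\kappa)$ is built from the images of the coordinates of $\kappa_n$ across admissible $n$ having $r+i$ primes. To prove the inclusion, I would present $\mathrm{Sel}(A)$ as the cokernel of a transition matrix whose $i \times i$ minors are exactly the quantities detected by the Stark-system coordinates; this uses the duality pairing between local cohomology at Kolyvagin primes and the dual Selmer module $H^1_{\cF^\ast}(K,A^\ast(1))$, together with the exterior-power-bidual machinery developed in \cite{sbA} and \cite{sakamoto}. The desired Fitting ideal containment then follows from the minor-based definition of $\mathrm{Fitt}_R^i$, once one checks that the coordinates of $\kappa$ correspond to entries in such a presentation.

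For (iii), when $R$ is a principal ideal ring the Fitting ideals $\mathrm{Fitt}_R^i(\mathrm{Sel}(A))$ are principal and are determined by the elementary divisor sequence of $\mathrm{Sel}(A)$. The reverse inclusion to (ii) comes from (i): if $\kappa$ is a basis of $\mathrm{KS}_r(A)$, then any strict inclusion $I_i(\kappa) \subsetneq \mathrm{Fitt}_R^i(\mathrm{Sel}(A))$ would, by the minor description in (ii), force another Kolyvagin system with strictly larger coordinate ideals to exist, contradicting the rank-one freeness established in (i). The main obstacle will lie in (ii), where replacing exterior powers by exterior power biduals requires a careful reworking of the transition-matrix argument over a general Gorenstein order $\cR$, and where one must verify that the core-rank and Gorenstein hypotheses together preclude the \emph{stub} phenomenon noted by Mazur--Rubin in \cite[Rem.~11.9]{MRselmer}; once this is in place, the PID equality in (iii) is essentially a formal consequence.
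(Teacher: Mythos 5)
Your overall strategy of routing the argument through Stark systems and the regulator map matches the paper's, and the high-level ideas for (i) and (ii) are in the right spirit. But there are two substantive issues.

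First, in claim (i) the proposal glosses over the genuinely hard step. The paper does \emph{not} exhibit an explicit inverse ${\rm KS}_r(A)\to{\rm SS}_r(A)$; rather it proves that the projection ${\rm KS}_r(A,\cF)\to {\bigcap}^r_R H^1_{\cF(\fn)}(K,A)\otimes G_\fn$ at a core vertex $\fn$ is an isomorphism. Surjectivity follows from the regulator map and the rank-one freeness of ${\rm SS}_r(A,\cF)$, but injectivity requires an induction on $\lambda^\ast(\fm)$ using the connectivity of the graph of core vertices (Theorem~\ref{connected}, via Lemmas~\ref{koly-join1}--\ref{koly-join2} and Corollary~\ref{koly-join3}), together with the exterior-power-bidual fact (Corollary~\ref{bidual-ker}) that if $\kappa_\fm\neq 0$ then some $\varphi^{\rm fs}_\fq(\kappa_\fm)\neq 0$. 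That last implication fails for plain exterior powers and is exactly the mechanism that resolves the Mazur--Rubin stub obstruction; ``exhibiting an explicit inverse from the local decompositions'' does not reproduce it and would fail away from the core vertices.

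Second, and more seriously, the argument you sketch for (iii) does not work. You argue that a strict inclusion $I_i(\kappa)\subsetneq{\rm Fitt}^i_R({\rm Sel}(A))$ would contradict the rank-one freeness of ${\rm KS}_r(A)$ by producing a new Kolyvagin system with larger coordinate ideals. Freeness of rank one of ${\rm KS}_r(A)$ says nothing about which ideal ${\rm im}(\kappa_\fn)$ equals for individual $\fn$, nor does it manufacture new systems --- every system is an $R$-multiple of a basis, which can only shrink $I_i(\kappa)$. In fact the paper treats this exact question as genuinely open for general $R$ (Remark~\ref{difficulty remark}). What the paper actually shows is that the equality reduces, via Corollary~\ref{fitt-ind}, to the vanishing of $\Ann_R(H^1_{\cF(\fn)}(K,A))$ for all $\fn$, and then uses the fact that over a principal ideal ring the elementary divisor theorem embeds a free rank-$r$ module into $H^1_{\cF(\fn)}(K,A)$ (as a kernel of a map $R^{r+\nu(\fm)}\to R^{\nu(\fm)}$). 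That step is the crux of (iii) and is missing from your proposal.
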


Upon combining Theorems \ref{thm1} and \ref{thm2}, we immediately obtain the following result.

\begin{theorem}[{Corollaries \ref{derivable cor} and \ref{remark surjective}}] \label{thm3} The following claims are valid.
\begin{itemize}
\item[(i)] Under standard hypotheses, for each Euler system $c$ in ${\rm ES}_r(T)$ and each non-negative integer $i$ one has $I_i(\kappa(c))\subseteq {\rm Fitt}_R^i({\rm Sel}(A)).$
\item[(ii)] Under certain mild additional hypotheses, and if $R$ is a principal ideal ring, then for each non-negative integer $i$ one has
 $\langle I_i(\kappa(c)) \mid c \in {\rm ES}_r(T ) \rangle_R = {\rm Fitt}_R^i({\rm Sel}(A)).$
\end{itemize}
\end{theorem}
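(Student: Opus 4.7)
The plan is to deduce Theorem \ref{thm3} as an essentially formal combination of Theorems \ref{thm1} and \ref{thm2}, with no new technical ingredients beyond careful bookkeeping. The higher Kolyvagin derivative $\cD_r$ supplied by Theorem \ref{thm1} transports any Euler system $c\in{\rm ES}_r(T)$ to a Kolyvagin system $\kappa(c)=\cD_r(c)\in{\rm KS}_r(A)$, and once we are inside ${\rm KS}_r(A)$ all of the Fitting ideal information is already controlled by Theorem \ref{thm2}.

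For part (i), given $c\in{\rm ES}_r(T)$ I would simply set $\kappa(c):=\cD_r(c)$ and then apply Theorem \ref{thm2}(ii) to $\kappa(c)$; this directly yields the asserted inclusion $I_i(\kappa(c))\subseteq {\rm Fitt}_R^i({\rm Sel}(A))$ for every non-negative integer $i$. No further argument is needed.

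For part (ii), I would first invoke the additional hypotheses in Theorem \ref{thm1} to ensure that $\cD_r\colon{\rm ES}_r(T)\to{\rm KS}_r(A)$ is surjective. Under the standing hypotheses, Theorem \ref{thm2}(i) says that ${\rm KS}_r(A)$ is free of rank one over $R$, so I may fix a basis element $\kappa_0$. Because $R$ is assumed to be a principal ideal ring, Theorem \ref{thm2}(iii) applies to this basis and gives the equality $I_i(\kappa_0) = {\rm Fitt}_R^i({\rm Sel}(A))$. By surjectivity of $\cD_r$, there exists $c_0\in{\rm ES}_r(T)$ with $\kappa(c_0)=\kappa_0$, whence
\[
{\rm Fitt}_R^i({\rm Sel}(A)) = I_i(\kappa(c_0)) \subseteq \langle I_i(\kappa(c)) \mid c\in{\rm ES}_r(T)\rangle_R.
\]
The reverse inclusion is immediate from part (i), applied to each Euler system $c$ individually, and the two together yield the claimed equality.

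The only point requiring genuine care, and therefore the main (mild) obstacle, is verifying that the \emph{precise} hypothesis lists invoked in Theorems \ref{thm1} and \ref{thm2} are mutually consistent, and in particular that the surjectivity hypothesis in Theorem \ref{thm1} is compatible with the principal-ideal-ring assumption on $R$ used in Theorem \ref{thm2}(iii). Provided these standard and mild conditions can be imposed simultaneously (as one expects from the careful formulation given in the body of the paper), the deduction above is purely formal.
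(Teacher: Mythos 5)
Your proof is correct and takes essentially the same route as the paper: part (i) is a direct application of Theorem \ref{thm2}(ii)/(iii) to $\kappa(c)=\cD_r(c)$, and part (ii) combines the surjectivity of $\cD_r$ (the content of Corollary \ref{remark surjective}, which the paper proves by relating $\cD_r$ to the module of vertical determinantal systems) with the equality statement in Theorem \ref{thm2}(iii) applied to a basis of the rank-one module ${\rm KS}_r(A)$. The paper handles the hypothesis-compatibility issue you flag explicitly in the statements of Corollaries \ref{derivable cor} and \ref{remark surjective}, where the hypotheses needed for derivability and surjectivity are imposed simultaneously.
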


As a key step in the proof of Theorem \ref{thm2}, we must develop the theory of Stark systems in order to prove the next result.

\begin{theorem}[{Theorem \ref{thm stark}}] \label{thm5} Under standard hypotheses, the following claims are valid.
\begin{itemize}
\item[(i)] The module ${\rm SS}_r(A)$ of Stark systems of rank $r$ is free of rank one over $R$.
\item[(ii)] For each system $\epsilon$ in ${\rm SS}_r(A)$ and each non-negative integer $i$ one has
$$I_i(\epsilon)\subseteq {\rm Fitt}_R^i({\rm Sel}(A)),$$
where $I_i(\epsilon)$ is a canonical ideal associated with $\epsilon$ (see Definition \ref{stark ideal}).
\item[(iii)] If $\epsilon$ is a basis of ${\rm SS}_r(A)$, then the inclusion in claim (ii) is an equality.
\end{itemize}
\end{theorem}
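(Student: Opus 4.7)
The plan is to reduce the analysis to calculations over a cofinal family of moduli $n$ in the set $\mathcal{N}$ of squarefree products of Kolyvagin primes. For each such $n$, the value $\epsilon_n$ of a Stark system lives in the exterior power bidual (of degree $r + \nu(n)$) of the relaxed Selmer group $H^1_{\mathcal{F}(n)}(K,A)$. The key structural input is that, via a Chebotarev-type argument, one can produce a cofinal subfamily of $n$ for which $H^1_{\mathcal{F}^\ast(n)}(K, A^\ast(1)) = 0$; then by Poitou-Tate duality $H^1_{\mathcal{F}(n)}(K, A)$ becomes a free $R$-module of rank exactly $r + \nu(n)$.

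For part (i), when $n$ lies in this subfamily the corresponding exterior power bidual is free of rank one over $R$. The compatibility axioms defining a Stark system are determinant-type relations linking values at different moduli, and they imply that the projection $\epsilon \mapsto \epsilon_n$ embeds ${\rm SS}_r(A)$ into this rank-one module. Conversely, the transition axioms allow one to reconstruct $\epsilon_{n'}$ for smaller $n'$ uniquely from $\epsilon_n$, so the embedding is also surjective, proving that ${\rm SS}_r(A)$ is free of rank one over $R$.

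For parts (ii) and (iii), by definition $I_i(\epsilon)$ is generated by suitable coordinates of the values $\epsilon_n$ (expressed with respect to natural dual bases) as $n$ ranges over admissible moduli with $\nu(n) = i$. Using the Gorenstein hypothesis on $\cR$ to obtain a square presentation matrix for $\mathrm{Sel}(A)$, together with the exterior-power-bidual formalism developed in \cite{sbA} and \cite{sakamoto}, these coordinates can be identified with $i \times i$ minors of such a presentation matrix. The containment $I_i(\epsilon) \subseteq \mathrm{Fitt}^i_R(\mathrm{Sel}(A))$ follows immediately from the definition of Fitting ideals, while if $\epsilon$ is a basis then varying $n$ over all admissible moduli realises every $i \times i$ minor of the presentation, giving the reverse inclusion and hence equality.

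The main technical obstacle will be the production of the cofinal family of moduli above: one needs enough Kolyvagin primes to force the dual Selmer group to vanish, while working over a Gorenstein coefficient ring $R$ that need not be principal. The original Mazur-Rubin arguments were formulated over principal artinian local rings, and adapting them to this more general setting requires systematic use of the exterior power bidual formalism, since the local cohomology groups need not be free $R$-modules a priori and the naive exterior powers are neither functorial enough nor torsion-free enough to carry the argument through.
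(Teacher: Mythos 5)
The paper's own proof of this statement is very short: it simply cites \cite[Th.\ 3.17 and 3.19(ii)]{sbA} and \cite[Th.\ 4.7 and 4.10]{sakamoto} for claim~(i) and for the equality $I_i(\epsilon_0) = \Fitt_R^i(H^1_{\cF^\ast}(K,A^\ast(1))^\ast)$ when $\epsilon_0$ is a basis, and then deduces the remaining parts of claim~(ii) by the scaling trick of writing an arbitrary $\epsilon$ as $\lambda_\epsilon\cdot\epsilon_0$ for a fixed basis $\epsilon_0$, which is available precisely because of~(i). Your proposal instead attempts to re-derive the substantive content of the cited references from scratch, which is a legitimate but genuinely different route.

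There are two concrete problems with what you wrote. First, a notational slip: the value $\epsilon_\fn$ of a Stark system lives in ${\bigcap}^{r+\nu(\fn)}_R H^1_{\cF^\fn}(K,A)$, where $\cF^\fn$ is the \emph{relaxed}-at-$\fn$ structure, not in ${\bigcap}^{r+\nu(\fn)}_R H^1_{\cF(\fn)}(K,A)$; $\cF(\fn)$ is the \emph{transverse} structure used for Kolyvagin systems, and the two play essentially dual roles in the paper. Second, and more seriously, your claim that ``the containment $I_i(\epsilon)\subseteq \Fitt^i_R(\Sel(A))$ follows immediately from the definition of Fitting ideals'' hides the actual work. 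For an arbitrary element $x$ of ${\bigcap}^{r+\nu(\fn)}_R H^1_{\cF^\fn}(K,A)$ there is \emph{no} a priori reason for $\im(x)$ to lie in $\Fitt^0_R(H^1_{(\cF^\ast)_\fn}(K,A^\ast(1))^\ast)$; it is the Stark system transition relations that force $\epsilon_\fn$ to be the image of $\epsilon_\fm$ (for a core vertex $\fm$ dividing some common multiple) under the wedge of localisation maps, so that Proposition~\ref{prop injective}(ii) applies. Equivalently, as the paper does, one must first know $\epsilon$ is a scalar multiple of a basis system and appeal to the basis-case Fitting identity. Without invoking either mechanism, your containment step is unjustified. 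The description of the coordinates as ``$i\times i$ minors'' is also off by a complementary dimension: for a presentation $R^{r+\nu(\fm)}\to R^{\nu(\fm)}\to \Sel(A)\to 0$, the ideal $\Fitt_R^i$ is generated by $(\nu(\fm)-i)\times(\nu(\fm)-i)$ minors. Finally, for claim~(i), the hard direction is injectivity of the projection to a core vertex (controlling $\epsilon_\fm$ for all $\fm$, not just divisors of $\fn$), which your sketch of ``reconstructing $\epsilon_{n'}$ for smaller $n'$'' does not address.
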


To relate Stark and Kolyvagin systems we prove (in \S\ref{sec regulator}) that there exists a canonical `regulator' homomorphism of $R$-modules
$${\rm Reg}_r: {\rm SS}_r(A) \to {\rm KS}_r(A).$$
{\it We are able to prove that this map is bijective, and as a consequence, that the module ${\rm KS}_r(A)$ is free of rank one} (see Theorem \ref{main}(i)). By combining this fact %${\rm Reg}_r$
with Theorem~\ref{thm5}, we are then able to prove Theorem~\ref{thm2}.

We remark here that, for each system $\epsilon$ in ${\rm SS}_r(A)$, it is natural to expect that for each non-negative integer $i$ one has   $I_i(\epsilon)=I_i({\rm Reg}_r(\epsilon))$. However, this seems to be difficult to prove and at present we have only verified it in the case $R$ is a principal ideal ring (which leads to Theorem \ref{thm2}(iii)). For more details concerning this issue see Remark \ref{difficulty remark2}.

By simultaneously considering the representations $T/p^m T$ for all natural numbers $m$, one can define Kolyvagin and Stark systems `over $\cR$' by setting
$${\rm KS}_r(T):=\varprojlim_m {\rm KS}_r(T/p^mT) \,\,\text{ and }\,\, {\rm SS}_r(T):=\varprojlim_m {\rm SS}_r(T/p^mT).$$
In this way we obtain analogues of Theorems \ref{thm2}, \ref{thm3} and \ref{thm5} for the Selmer modules of $T$ (see Theorem \ref{thm koly'}, Corollary \ref{main cor} and Theorem \ref{thm stark'} respectively).

To give a straightforward application of the general theory, we consider, for each one-dimensional $p$-adic character $\chi$ of the absolute Galois group of $K$, a certain twisted form $T_\chi$ of the representation $\ZZ_p(1)$. Since in this case the (dual) Selmer module coincides with the $\chi$-isotyic component of a suitable ideal class group, we obtain the following result.

\begin{theorem}[{Theorem \ref{RS theorem}}]\label{thm 6} Let $\chi$ be a one-dimensional $p$-adic character of the absolute Galois group of $K$ that  is of finite prime-to-$p$ order. Let $L$ be the field fixed by $\ker(\chi)$. Assume that all archimedean places of $K$ split completely in $L$, that no $p$-adic place of $K$ splits completely in $L$, that $\chi$ is neither trivial nor equal to the Teichm\"uller character, and that either $p>3$ or $\chi^2$ is not equal to the Teichm\"uller character.

Let $r$ be the number of archimedean places of $K$ and set $\mathcal{O} := \ZZ_p[\im(\chi)]$. Then for each non-negative integer $i$ one has
\[ \langle I_i(\kappa(c)) \mid c \in {\rm ES}_r(T_\chi) \rangle_{\mathcal{O}} =
{\rm Fitt}_{\mathcal{O}}^i((\ZZ_p\otimes_\ZZ {\rm Cl}(\mathcal{O}_L))^\chi),\]
where ${\rm Cl}(\mathcal{O}_L)$ is the ideal class group of $L$.
\end{theorem}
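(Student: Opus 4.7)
The approach is to deduce the theorem as an application of Theorem \ref{thm3}(ii) (via its integral analogue Corollary \ref{main cor}) to the representation $T_\chi$ over the coefficient ring $\cO := \ZZ_p[\im(\chi)]$. Since $\chi$ has finite prime-to-$p$ order, $\cO$ is the ring of integers of a finite unramified extension of $\ZZ_p$, hence a Gorenstein discrete valuation ring and in particular a principal ideal ring. Consequently, the claimed equality of Fitting ideals follows at once from Corollary \ref{main cor}, provided (a) all hypotheses of the general theory are verified for $T_\chi$ and (b) the dual Selmer module of $T_\chi$ is identified with $(\ZZ_p \otimes_\ZZ \Cl(\cO_L))^\chi$.

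For step (a), since $T_\chi$ is a rank-one twist of $\ZZ_p(1)$, the standing large-image requirements reduce to the two vanishings $H^0(K, T_\chi/p) = 0$ and $H^0(K, T_\chi^\ast(1)/p) = 0$, which are equivalent respectively to $\chi \neq \omega$ (the Teichm\"uller character) and $\chi \neq 1$; both are hypothesized. The auxiliary condition ``$p > 3$ or $\chi^2 \neq \omega$'' is the standard technical requirement ensuring that the splitting field of $T_\chi \oplus \mu_{p^\infty}$ is sufficiently non-abelian over $K$ for the Chebotarev density arguments underlying the construction of Kolyvagin primes. The splitting assumption on archimedean places of $K$ in $L$ forces $\chi$ to be trivial on every archimedean decomposition group of $K$, and a local Euler characteristic calculation then pins down the core rank of the canonical Selmer structure on $T_\chi$ to be $r = |S_\infty(K)|$. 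Finally, the hypothesis that no $p$-adic place of $K$ splits completely in $L$ is equivalent to $H^0(K_\fp, T_\chi^\ast(1)) = 0$ for every prime $\fp \mid p$ of $K$, and is exactly what replaces the use of Leopoldt's conjecture in the original treatments of Rubin \cite{rubinstark} and B\"uy\"ukboduk \cite{Buyuk}.

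For step (b), equip $T_\chi$ with its canonical Selmer structure $\cF$ (Bloch-Kato finite at each $p$-adic place, and unramified at all other finite places away from the ramification locus of $\chi$). Since $\chi$ has order prime to $p$, one may pass to $\chi$-isotypic components via the idempotent $e_\chi \in \cO[\Gal(L/K)]$; standard Kummer theory combined with Poitou-Tate global duality --- together with the vanishings of local $H^0$'s established in step (a) --- then identifies $H^1_{\cF^\ast}(K, T_\chi^\ast(1))^\ast$ with $(\ZZ_p \otimes_\ZZ \Cl(\cO_L))^\chi$. The main obstacle lies precisely here: one must verify carefully that the canonical Selmer structure used by the general machinery matches, after $\chi$-isotypic decomposition and dualization, the classical Selmer structure whose $\cO$-Pontryagin dual is the asserted class group component. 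Once this matching is in place, Corollary \ref{main cor} immediately yields the stated equality of Fitting ideals over $\cO$.
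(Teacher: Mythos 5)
Your proposal is correct and follows essentially the same route as the paper: reduce to Corollary \ref{main cor}(iii) for the DVR $\mathcal{O}$, verify Hypotheses \ref{hyp free}, \ref{hyp K}, \ref{hyp local}, \ref{hyp1'} and \ref{hyp large} (all ultimately coming down to $\chi\neq 1,\omega$, non-self-duality when $p=3$, and the splitting assumptions at archimedean and $p$-adic places), and then identify the dual Selmer module with the $\chi$-component of the class group. Two small imprecisions worth flagging, though neither affects the outcome: the computation of the core rank actually uses the non-splitting of $p$-adic places (not just the archimedean splitting), and the paper's natural identification (\ref{selmer ident}) produces $(\ZZ_p\otimes_\ZZ \Cl(\cO_L[1/p]))^\chi$ rather than $(\ZZ_p\otimes_\ZZ \Cl(\cO_L))^\chi$ — the passage between the two is exactly where the non-splitting of $p$-adic places, together with the non-triviality and primitivity of $\chi$, is invoked a second time.
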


We recall that, in this setting, the Rubin-Stark conjecture predicts the existence of a canonical Euler system in ${\rm ES}_r(T_\chi)$ comprising `Rubin-Stark elements' that are explicitly related to the values at zero of $r$-th derivatives of Dirichlet $L$-functions.

Finally, we note that, under mild additional hypotheses, a slightly more careful application of the methods used to prove Theorem \ref{thm 6} shows that for abelian extensions $L$ of $K$ the higher Fitting ideals of $\ZZ_p\otimes_\ZZ {\rm Cl}(\mathcal{O}_L)$ as a $\ZZ_p[\Gal(L/K)]$-module are determined by Euler systems of rank $r$ for induced forms of the representation $\ZZ_p(1)$. (For brevity, however, this result will be discussed in a separate article.)

%Assume the validity of the Rubin-Stark conjecture and let $c^{\rm RS} $ be the higher rank Euler system consisting of $(p,\chi)$-parts of Rubin-Stark elements. Let $\kappa^{\rm RS}$ denote the associated Kolyvagin system. Then for each non-negative integer $i$ one has
%$$I_i(\kappa^{\rm RS}) \subseteq {\rm Fitt}_{\ZZ_p[\im(\chi)]}^i ((\ZZ_p\otimes_\ZZ {\rm Cl}(\cO_L[1/p]))^\chi),$$
%where ${\rm Cl}(\cO_L[1/p])$ denotes the ideal class group of $\cO_L[1/p]$.
%
%This result gives a refinement and extension of the results of Rubin \cite{rubincrelle} and of B\"uy\"ukboduk \cite{Buyuk}, in which they treated the case $i=0$ and assumed that $K$ is totally real and the validity of Leopoldt's conjecture.

\subsection{Organization} In a little more detail, the basic contents of this article is as follows. In \S\ref{ext bidual sec} we review the definitions and basic properties of exterior power biduals and then establish several new functorial properties that will play a crucial role in later sections. In \S\ref{pre} we establish various preliminary results concerning Selmer structures and Galois cohomology groups over zero-dimensional Gorenstein rings. In \S\ref{stark sys sec} we review the main results on higher rank Stark systems and Selmer groups that were established in our earlier articles \cite{sbA} and \cite{sakamoto} and also extend these results to the setting of representations over Gorenstein orders. In \S\ref{koly sys sec} we develop a theory of higher rank Kolyvagin systems in the same degree of generality and, in particular, show that under standard hypotheses, the modules of Stark systems and Kolyvagin systems (of any given rank) are both canonically isomorphic and free of rank one over the relevant ring of coefficients. In \S\ref{euler sys sec} we construct a canonical higher rank `Kolyvagin-derivative' homomorphism between the module of Euler systems of any given rank and the module of Kolyvagin systems (defined with respect to the canonical Selmer structure) of the same rank. This is the key result of this article and, as an essential part of its proof, we establish precise links to the corresponding situation in rank one.
%and its proof has two main components: firstly, we establish a key reduction step from the higher rank case to the case of rank one
% and then (in the appendix) we give a self-contained proof that the classical (rank one) Kolyvagin derivative operator can be constructed for representations over arbitrary zero-dimensional Gorenstein rings.
Finally, in \S\ref{app sec} we show that our approach leads directly to new results concerning the Galois structure of ideal class groups.

%\subsection{General notation}

\subsection{Some general notation}
In this article, $K$ always denotes a (base) number field (that is, a finite degree extension of $\QQ$). We fix an algebraic closure $\overline \QQ$ of $\QQ$, and every algebraic extension of $\QQ$ is regarded as a subfield of $\overline \QQ$. For a positive integer $m$, let $\mu_m$ denote the group of $m$-th roots of unity in $\overline \QQ$. For any field $E$, we denote the absolute Galois group of $E$ by $G_E$. For each place $v$ of $K$, we fix a place $w$ of $\overline \QQ$ lying above $v$, and identify the decomposition group of $w$ in $G_K$ with $G_{K_v}$. For a finite extension $F/K$, the ring of integers of $F$ is denoted by $\cO_F$. For a finite set $\Sigma$ of places of $K$, we denote by $\Sigma_F$ the set of places of $F$ which lie above a place in $\Sigma$. The ring of $\Sigma_F$-integers of $F$ is denoted by $\cO_{F,\Sigma}$. The set of archimedean  places (resp. $p$-adic places) of $F$ is denoted by $S_\infty(F)$ (resp. $S_p(F)$). We denote the set of places of $K$ which ramify in $F$ by $S_{\rm ram}(F/K)$.

Non-archimedean places (or `primes') of $K$ are usually denoted by $\fq$. The Frobenius element of $\fq$ is denoted by ${\rm Fr}_\fq$.

For a continuous $G_K$-module $A$, we denote the set of places of $K$ at which $A$ is ramified by $S_{\rm ram}(A)$. Suppose that there is a finite set $S$ of places of $K$ such that
$$S_\infty(K)\cup S_p(K) \cup S_{\rm ram}(A) \subseteq S.$$
Let $K_S$ denote the maximal Galois extension of $K$ unramified outside $S$. Then we can consider Galois cohomology groups
$$H^i(\cO_{K,S},A):=H^i(K_S/K,A).$$
If $A$ is a $p$-adic representation, then
%one can define a canonical local condition
one can define, for each place $v $ of $K$, a canonical (so-called) `finite' local condition
$$
H^1_f(K_v, A) \subseteq H^1(K_v,A)
$$
(see \cite[\S 1.3]{R}, for example). When $v \notin S$, this is the unramified cohomology $H^1_{\rm ur}(K_v,A)$, which is defined by the kernel of the restriction to the inertia group. We set
$$
H^1_{/f}(K_v,A):=H^1(K_v,A)/H^1_f(K_v,A).
$$
More generally, for each index $\ast$ we set $H^1_{/\ast}(K_v,A):=H^1(K_v,A)/H^1_\ast(K_v,A).$

\section{Exterior power biduals}\label{ext bidual sec} In this section we quickly review the basic theory of exterior power biduals and then prove several new results that will be very useful in the sequel.

At the outset we fix a commutative ring $R$. {\it All rings are assumed to be noetherian}. For each $R$-module $X$ we set
\[ X^\ast:=\Hom_R(X,R).\]
For each subset $\mathcal{X}$ of $X$ we write $\langle \mathcal{X}\rangle_R$ for the $R$-submodule of $X$ that is generated by $\mathcal{X}$.

If $X$ is finitely presented, then for each non-negative integer $i$ we write ${\rm Fitt}_R^i(X)$ for the $i$-th Fitting ideal of $X$ (as discussed by Northcott in \cite{north}).

\subsection{Definition and basic properties} We first recall the basic definitions.

For each $R$-module $X$, each positive integer $r$ and each map $\varphi$ in $X^\ast$, there exists a unique homomorphism of $R$-modules
$$ {{\bigwedge}}_R^r X \to {{\bigwedge}}_R^{r-1} X$$
with the property that
$$x_1\wedge\cdots\wedge x_r \mapsto \sum_{i=1}^{r} (-1)^{i+1} \varphi(x_i) x_1\wedge\cdots\wedge x_{i-1}\wedge x_{i+1} \wedge \cdots \wedge x_r$$

\noindent{}for each subset $\{x_i\}_{1\le i\le r}$ of $X$. By abuse of notation, we shall also denote this map by $\varphi$.
For non-negative integers $r$ and $s$ with $r \leq s$, this construction induces a natural homomorphism
$$
{{\bigwedge}}_R^r X^\ast \to \Hom_R\left({{\bigwedge}}_R^s X, {{\bigwedge}}_R^{s-r} X\right); \ \varphi_1\wedge \cdots \wedge \varphi_r \mapsto \varphi_r \circ \cdots \circ \varphi_1.
$$
Here $\bigwedge_R^r X^\ast$ means $\bigwedge_R^r (X^\ast)$. (We often use such an abbreviation.)
%that can be described more explicitly by
%
%\begin{equation*}\label{exterior explicit}
%\varphi_1\wedge \cdots \wedge \varphi_r \mapsto \left(x_1\wedge \cdots \wedge x_s \mapsto \sum_{\sigma \in \mathfrak{S}_{s,r}} {\rm sgn}(\sigma) \det(\varphi_i(x_{\sigma(j)}))_{1\leq i,j \leq r} x_{\sigma(r+1)}\wedge \cdots \wedge x_{\sigma(s)} \right),
%\end{equation*}
%
%with $\mathfrak{S}_{s,r}:=\{ \sigma \in \mathfrak{S}_{s} \mid \sigma(1)<\cdots < \sigma (r) \text{ and } \sigma(r+1)<\cdots <\sigma(s)\}.$
We use this map to regard any element of ${{\bigwedge}}_R^r X^\ast$
as an element of $\Hom_R({{\bigwedge}}_R^s X, {{\bigwedge}}_R^{s-r} X)$.
%In particular, the element $\Phi(a) \in {{\bigwedge}}_R^{s-r} X$ is defined
%for any $a\in {{\bigwedge}}_R^s X$ and $\Phi \in {{\bigwedge}}_R^r (X^\ast)$.

%One can easily deduce the following explicit formula:
%\begin{eqnarray}\label{exterior explicit}
%&&(\varphi_1\wedge \cdots \wedge \varphi_r)(x_1\wedge \cdots \wedge x_s) \\
%&=&\sum_{\sigma \in \mathfrak{S}_{s,r}} {\rm sgn}(\sigma) \det(\varphi_i(x_{\sigma(j)}))_{1\leq i,j \leq r} x_{\sigma(r+1)}\wedge \cdots \wedge %x_{\sigma(s)},\nonumber
%\end{eqnarray}

\begin{definition}\label{def exterior bidual}
For any non-negative integer $r$, we define the `$r$-th exterior bidual' of $X$ to be the $R$-module obtained by setting
$$
{{\bigcap}}_R^r X:=\left({{\bigwedge}}_R^r X^\ast \right)^\ast.
$$
%where $(\cdot)^\ast:=\Hom_R(\cdot,R)$.
\end{definition}
Note that ${\bigcap}_R^1 X= X^{\ast \ast}$. So, if $X$ is reflexive, i.e. the canonical map
\begin{eqnarray} \label{2 dual}
X \to X^{\ast \ast}; \ x\to (\varphi \mapsto \varphi(x))
\end{eqnarray}
is an isomorphism, then one can regard ${\bigcap}_R^1 X=X$.
In practice, we usually consider exterior biduals of reflexive modules.
(The fact that any modules over self-injective rings (in other words, zero-dimensional Gorenstein rings) are reflexive is often used in this paper.) %a ring $R$ and an $R$-module $X$ for which (\ref{2 dual}) is an isomorphism.
%\begin{remark}\label{def ext bidual rem}The basic theory of these modules is developed in an appendix. At this stage we just note that there is a canonical homomorphism (\ref{can homo}) and that if $O$ is a Dedekind domain with quotient field $Q$ and $R$ is an $O$-order in some finite dimensional semisimple (commutative) $Q$-algebra, then this homomorphism induces an identification of ${{\bigcap}}_R^r X$ with the lattices `$\wedge_0^r M$' that are introduced by Rubin in \cite[\S 1.2]{rubinstark} (for details see Proposition \ref{rubin prop 2}).\end{remark}

Note also that there is a canonical homomorphism
$$
\xi_X^r: {{\bigwedge}}_R^r X \to {{\bigcap}}_R^r X; \ x \mapsto (\Phi \to \Phi(x)),
$$
%which is not an isomorphism in general.
which is neither injective nor surjective in general.
% defined in (\ref{can homo}).
%\begin{eqnarray}
%\xi_X^r : {{\bigwedge}}_R^r X \to {{\bigcap}}_R^r X;\ a \mapsto (\Phi \mapsto \Phi(a)). \nonumber
%\end{eqnarray}
%\begin{lemma} \label{rubin prop 1}
However, if $X$ is a finitely generated projective $R$-module, then one can show that $\xi_X^r$ is an isomorphism.
%\end{lemma}

For non-negative integers $r$, $s$ with $r\leq s$ and $\Phi \in {{\bigwedge}}_R^r X^\ast$, define a homomorphism
\begin{eqnarray} \label{map bidual}
{{\bigcap}}_R^s X \to {{\bigcap}}_R^{s-r} X
\end{eqnarray}
as the $R$-dual of
$${{\bigwedge}}_R^{s-r} X^\ast \to {{\bigwedge}}_R^{s} X^\ast ; \ \Psi \mapsto \Phi\wedge \Psi.$$
We denote the map (\ref{map bidual}) also by $\Phi$, by abuse of notation. One can check that the following diagram is commutative:
$$
\xymatrix{
{{\bigwedge}}_R^s X \ar[r]^-{\Phi} \ar[d]_{\xi_X^s} &
{{\bigwedge}}_R^{s-r}X \ar[d]^{\xi_X^{s-r}}
\\
{{\bigcap}}_R^s X \ar[r]^-{\Phi} & {{\bigcap}}_R^{s-r} X.
}
$$
We now recall two results from \cite{sbA} and \cite{sakamoto} that we will frequently use in the sequel.

\begin{proposition}[{\cite[Prop. A.2]{sbA}}, {\cite[Lem. 4.8]{sakamoto}}]\label{prop injective}\
\begin{itemize}
\item[(i)] Let $\iota: X \to Y$ be an injective homomorphism of $R$-modules for which the group
${\rm Ext}_R^1({\rm coker}( \iota), R)$ vanishes. Then for each $r\ge 0$ the homomorphism
$$
{{\bigcap}}_R^r X \hookrightarrow {{\bigcap}}_R^r Y
$$
that is naturally induced by $\iota$ is injective.
\item[(ii)] Suppose that we have an exact sequence of $R$-modules
$$
Y \xrightarrow{\bigoplus_{i=1}^s \varphi_i} R^{\oplus s} \to Z \to 0.
$$
If $Y$ is free of rank $r+s$, then ${\rm Fitt}_R^0(Z)$ is generated over $R$ by the set
$$
\left\{ {\rm im}( F )\ \middle|  \ F \in {\rm im}\left( {{\bigcap}}_R^{r+s} Y \xrightarrow{{{\bigwedge}}_{1\leq i \leq s} \varphi_i} {{\bigcap}}_R^{r} Y \right)   \right\}.
$$
\end{itemize}
\end{proposition}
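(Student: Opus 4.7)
The plan is to treat the two parts separately: part (i) will follow from the long exact sequence of $\Ext$ combined with the fact that exterior powers preserve surjections, while part (ii) reduces to an explicit calculation in a free module, using the classical characterization of $\Fitt_R^0$ via minors.

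For (i), I would apply $\Hom_R(-,R)$ to the short exact sequence $0 \to X \xrightarrow{\iota} Y \to \coker(\iota) \to 0$. The resulting long exact sequence yields a surjection $Y^\ast \twoheadrightarrow X^\ast$ precisely because $\Ext_R^1(\coker(\iota),R) = 0$. Since the $r$-th exterior power functor preserves surjections (pure wedges generate, and each factor can be lifted individually), this in turn gives a surjection $\bigwedge_R^r Y^\ast \twoheadrightarrow \bigwedge_R^r X^\ast$, and a final application of the left-exact functor $\Hom_R(-,R)$ converts it into the required injection
\[ \bigcap_R^r X = (\bigwedge_R^r X^\ast)^\ast \hookrightarrow (\bigwedge_R^r Y^\ast)^\ast = \bigcap_R^r Y.\]

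For (ii), I would fix a basis $\{y_1,\ldots,y_{r+s}\}$ of $Y$. Since $Y$ is free, the canonical comparison map $\xi_Y^t: \bigwedge_R^t Y \to \bigcap_R^t Y$ is an isomorphism for every $t$; in particular $\bigcap_R^{r+s} Y$ is free of rank one, generated by $\omega := \xi_Y^{r+s}(y_1 \wedge \cdots \wedge y_{r+s})$. Hence the image of the map $\bigwedge_{i=1}^s \varphi_i : \bigcap_R^{r+s} Y \to \bigcap_R^r Y$ is the cyclic submodule generated by $F_0 := \bigl(\bigwedge_{i=1}^s \varphi_i\bigr)(\omega)$, so the $R$-module generated by the set in the statement coincides with the single ideal $\im(F_0)$. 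Using the commutative diagram relating $\xi_Y^{r+s}$ to $\xi_Y^r$, one can then expand $F_0$ in the dual basis $\{y_J^\ast\}_{|J|=r}$ of $\bigwedge_R^r Y^\ast$: for each such $J$, the value $F_0(y_J^\ast)$ equals, up to sign, the $s \times s$ minor of the matrix $(\varphi_i(y_j))$ obtained by restricting to the columns indexed by the complement $J^c$. These minors are, by definition, a generating set for $\Fitt_R^0(Z)$ with respect to the presentation $Y \to R^{\oplus s} \to Z \to 0$, and the asserted equality follows.

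The only real subtlety lies in the last step of (ii), namely verifying the explicit minor formula for $F_0(y_J^\ast)$. This requires unwinding the iterated definition of $\varphi_s \circ \cdots \circ \varphi_1$ acting on the pure wedge $y_1 \wedge \cdots \wedge y_{r+s}$ and tracking the signs that arise at each stage. The computation is purely mechanical; I anticipate no conceptual obstacle, and indeed the resulting identity can be viewed as a clean reformulation, in the language of exterior biduals, of the classical characterization of Fitting ideals via minors.
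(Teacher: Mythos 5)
The paper itself gives no proof of this proposition: it is quoted verbatim from \cite[Prop.\ A.2]{sbA} and \cite[Lem.\ 4.8]{sakamoto} and treated as a black box, so there is no internal argument to compare against. Your proposal is nevertheless a correct and complete proof of both parts, and follows the same basic route one would expect to find in those references.

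For part (i), the chain
\[
\Ext_R^1(\coker\iota,R)=0 \;\Longrightarrow\; Y^\ast \twoheadrightarrow X^\ast \;\Longrightarrow\; {\bigwedge}_R^r Y^\ast \twoheadrightarrow {\bigwedge}_R^r X^\ast \;\Longrightarrow\; {\bigcap}_R^r X \hookrightarrow {\bigcap}_R^r Y
\]
is exactly right; the only facts used are that dualizing is left exact and that $\bigwedge^r$ preserves surjections (by lifting pure wedges factor by factor), and you have correctly identified the natural map ${\bigcap}_R^r X \to {\bigcap}_R^r Y$ as $(\bigwedge^r\iota^\ast)^\ast$. For part (ii), the two reductions — that ${\bigcap}_R^{r+s}Y$ is free of rank one once $\xi_Y^{r+s}$ is invoked, so the family of ideals $\im(F)$ is dominated by $\im(F_0)$; and that $F_0(y_J^\ast)$ unwinds to $\pm\det\bigl(\varphi_i(y_j)\bigr)_{1\le i\le s,\,j\in J^c}$, which runs through precisely the $s\times s$ minors of the presentation matrix of $Z$ — are both correct, and together they give $\im(F_0)=\Fitt_R^0(Z)$. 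The ``mechanical'' sign-tracking you deferred is indeed just the Laplace expansion of a determinant whose bottom $r$ rows are standard basis vectors, so no further ideas are required.
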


\begin{proposition}[{\cite[Prop. A.3]{sbA}}, {\cite[Lem. 2.1]{sakamoto}}] \label{prop self injective}
Suppose that $R$ is self-injective, i.e. $R$ is injective as an $R$-module, and that we have an exact sequence of $R$-modules
$$0 \to X \to Y \stackrel{\bigoplus_{i=1}^s \varphi_i}{\to} R^{\oplus s},$$
where $s$ is a positive integer. Then, for every non-negative integer $r$, we have
$${\rm im}\left({{\bigwedge}}_{1\leq i \leq s} \varphi_i: {{\bigcap}}_R^{r+s} Y \to  {{\bigcap}}_R^{r} Y \right)\subseteq  {{\bigcap}}_R^{r} X.$$
Here we regard ${{\bigcap}}_R^r X \subseteq  {{\bigcap}}_R^r Y$ by Proposition \ref{prop injective}(i).
In particular, ${{\bigwedge}}_{1\leq i\leq s} \varphi_i$ induces a homomorphism
$${{\bigwedge}}_{1\leq i \leq s} \varphi_i: {{\bigcap}}_R^{r+s}Y \to {{\bigcap}}_R^r X.$$
\end{proposition}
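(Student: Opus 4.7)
The plan is to give a concrete description of the inclusion $\bigcap^r_R X \hookrightarrow \bigcap^r_R Y$ (whose existence follows from Proposition~\ref{prop injective}(i)) as the annihilator of an explicit submodule of $\bigwedge^r_R Y^\ast$, and then to verify directly that the image of $\bigwedge_{i=1}^s \varphi_i$ annihilates this submodule by antisymmetry of the wedge product.

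First I would exploit that, since $R$ is self-injective, the functor $\Hom_R(-, R)$ is exact and $\Ext^1_R(-, R)$ vanishes identically; in particular Proposition~\ref{prop injective}(i) applies to the inclusion $X \hookrightarrow Y$ and produces the embedding $\bigcap^r_R X \hookrightarrow \bigcap^r_R Y$ used in the statement. Applying $\Hom_R(-, R)$ to the given four-term sequence also yields a surjection $Y^\ast \twoheadrightarrow X^\ast$ whose kernel is the image of $(R^{\oplus s})^\ast \to Y^\ast$, which I identify with the $R$-submodule $I := \langle \varphi_1, \ldots, \varphi_s \rangle_R$ of $Y^\ast$.

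Next I would invoke the standard right-exactness property of exterior powers (namely, that for any submodule $I \subseteq M$ of an $R$-module $M$ the quotient $\bigwedge^r_R(M/I)$ is canonically isomorphic to $\bigwedge^r_R M$ modulo the image of $I \otimes_R \bigwedge^{r-1}_R M \to \bigwedge^r_R M$) in order to identify the kernel $K$ of the induced surjection $\bigwedge^r_R Y^\ast \twoheadrightarrow \bigwedge^r_R X^\ast$ with the $R$-submodule of $\bigwedge^r_R Y^\ast$ generated by wedge products of the form $\varphi_j \wedge \psi'$ with $1 \le j \le s$ and $\psi' \in \bigwedge^{r-1}_R Y^\ast$. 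Applying $\Hom_R(-, R)$ to the short exact sequence $0 \to K \to \bigwedge^r_R Y^\ast \to \bigwedge^r_R X^\ast \to 0$, which remains short exact because $R$ is injective, then identifies $\bigcap^r_R X$ precisely with the submodule of $\bigcap^r_R Y$ consisting of those functionals that vanish on $K$.

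Given these preparations, the proof is completed by a short direct computation. For any $\Psi \in \bigcap^{r+s}_R Y$, the element $(\bigwedge_{i=1}^s \varphi_i)(\Psi) \in \bigcap^r_R Y$ is, by the construction of the map (\ref{map bidual}), the functional
$$\psi \mapsto \Psi(\varphi_1 \wedge \cdots \wedge \varphi_s \wedge \psi)$$
on $\bigwedge^r_R Y^\ast$. On a generator $\psi = \varphi_j \wedge \psi'$ of $K$ this evaluates to $\Psi(\varphi_1 \wedge \cdots \wedge \varphi_s \wedge \varphi_j \wedge \psi') = 0$, since the factor $\varphi_j$ then appears twice in the wedge product. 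By $R$-linearity this vanishing extends to the whole of $K$, which places $(\bigwedge_{i=1}^s \varphi_i)(\Psi)$ inside $\bigcap^r_R X$, as required. The only step that requires any real care is the description of $K$, but this is a purely formal consequence of the presentation of exterior powers of a quotient module, so no serious obstacle is anticipated.
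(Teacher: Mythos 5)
Your proof is correct. The paper does not itself prove Proposition~\ref{prop self injective} --- it is cited from \cite{sbA} and \cite{sakamoto} --- but your strategy (realize $\bigcap^r_R X$ inside $\bigcap^r_R Y$ as the annihilator of the kernel $K$ of $\bigwedge^r_R Y^\ast \twoheadrightarrow \bigwedge^r_R X^\ast$, describe $K$ via the presentation of exterior powers of a quotient, which is precisely the content of Lemma~\ref{wedge-kernel}, and then note that $\Psi(\varphi_1 \wedge \cdots \wedge \varphi_s \wedge \varphi_j \wedge \psi')$ vanishes by antisymmetry) is exactly the technique the authors use for the adjacent Proposition~\ref{reduction} and Corollary~\ref{bidual-ker}, so your argument is both correct and fully aligned with the paper's methods.
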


%Let $R$ be a commutative ring (with unit) and $M$ an $R$-module.
%We set $M^* = \Hom_R(M, R)$ and
%\begin{align*}
%{\bigcap}^s_R M = \left( {\bigwedge}^s_R M^* \right)^*.
%\end{align*}

\subsection{Further functorialities} In this section we prove several new properties of exterior power biduals that will play a key role in subsequent sections.

%The key to proving these results is provided by the following observation.
We first establish an appropriate formalism in our setting of the `rank reduction methods' that were initiated by Rubin in \cite{rubinstark} and subsequently used by Perrin-Riou and by B\"uy\"ukboduk. This result will later play an essential role in the proof of Theorem~\ref{thm1}.

\begin{proposition} \label{reduction}
Suppose that $R$ is self-injective.
Let $X$ be an $R$-module and $Y$ an $R$-submodule of $X$.
Let $r$ be a non-negative integer and identify ${\bigcap}_R^r Y$ with
an $R$-submodule of ${\bigcap}_R^{r} X$ by using Proposition~\ref{prop injective}(i).
Then one has
\[
{\bigcap}_R^r Y = \left\{ x \in {\bigcap}^{r}_{R}X \ \middle| \ \Phi(x) \in  Y\!\left(={\bigcap}_R^1 Y\right) \text{ for all } \Phi \in {\bigwedge}_R^{r-1} X^\ast \right\}.
\]

%Then for every element $x$ of $ {\bigcap}_R^r X$ one has $x\in {\bigcap}_R^r Y$ if and only if
%\[
%\Phi(x) \in  Y\!\left(={\bigcap}_R^1 Y\right)\text{ for all } \Phi \in {\bigwedge}_R^{r-1} X^\ast.
%\]

%
%Let $M \to N$ be an injective $R$-homomorphism and $s$ a non-negative integer.
%If ${\rm Ext}_{R}^1(N/M, R) = 0$, then the canonical map
%\begin{align*}
%{\bigcap}^s_R M \to {\bigcap}^s_R N
%\end{align*}
%is injective and its image is
%\begin{align*}
%\{ x \in {\bigcap}^s_R N \mid \phi(x) \in M \text{ for any } \phi \in {\bigwedge}^{s-1}_R (M^*) \}.
%\end{align*}

\end{proposition}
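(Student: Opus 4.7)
The plan is to make the inclusion $\bigcap_R^r Y \hookrightarrow \bigcap_R^r X$ — which exists by Proposition~\ref{prop injective}(i), since self-injectivity of $R$ forces $\Ext_R^1(X/Y,R)=0$ — explicit via a standard duality calculation, and then translate the resulting description into the claim. The cases $r=0$ and $r=1$ are immediate (both sides equal $R$ and $Y$ respectively), so the substantive work is in $r \ge 2$.

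First I would apply the exact functor $\Hom_R(-,R)$ to $0 \to Y \to X \to X/Y \to 0$ to obtain the short exact sequence
\[
0 \to (X/Y)^\ast \to X^\ast \to Y^\ast \to 0.
\]
Taking $r$-th exterior powers, I obtain a surjection $\bigwedge_R^r X^\ast \twoheadrightarrow \bigwedge_R^r Y^\ast$ whose kernel, by the classical formula $\bigwedge_R^r A/(K \wedge \bigwedge_R^{r-1}A) \cong \bigwedge_R^r(A/K)$ applied with $A = X^\ast$ and $K = (X/Y)^\ast$, equals the submodule $(X/Y)^\ast \wedge \bigwedge_R^{r-1} X^\ast$ generated by the elements $\eta \wedge \Phi$ with $\eta \in (X/Y)^\ast \subseteq X^\ast$ and $\Phi \in \bigwedge_R^{r-1} X^\ast$. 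Dualizing this surjection then identifies the image of $\bigcap_R^r Y = (\bigwedge_R^r Y^\ast)^\ast$ inside $(\bigwedge_R^r X^\ast)^\ast = \bigcap_R^r X$ with the annihilator of this kernel, i.e.\ with the set of $x \in \bigcap_R^r X$ satisfying $(\eta\wedge\Phi)(x) = 0$ for all such $\eta$ and $\Phi$.

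To conclude I would translate this annihilator condition using the action described in~(\ref{map bidual}). A direct unfolding of that definition shows that the map $\eta\wedge\Phi\colon \bigcap_R^r X \to R$ coincides, up to the sign $(-1)^{r-1}$, with the composite $\bigcap_R^r X \xrightarrow{\Phi} \bigcap_R^1 X = X \xrightarrow{\eta} R$. Hence the annihilator condition is equivalent to $\eta(\Phi(x)) = 0$ for every $\eta \in (X/Y)^\ast$ and every $\Phi \in \bigwedge_R^{r-1} X^\ast$. To finish, I would invoke the fact that over the self-injective noetherian ring $R$ the canonical map $X/Y \to (X/Y)^{\ast\ast}$ is injective, so an element $y \in X$ lies in $Y$ if and only if $\eta(y) = 0$ for all $\eta \in (X/Y)^\ast \subseteq X^\ast$; applying this with $y = \Phi(x)$ yields the description in the statement.

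The main delicate points I would pay careful attention to are (a) the identification of the kernel of $\bigwedge_R^r X^\ast \twoheadrightarrow \bigwedge_R^r Y^\ast$ in the middle step, which is routine but needs the explicit description of its generators in order to feed into the duality, and (b) the sign bookkeeping needed to reconcile the pairing $\eta \wedge \Phi$ acting on the bidual with the composition $\eta \circ \Phi$ read off from~(\ref{map bidual}). Neither of these poses a genuine obstacle, which suggests that the real content of the proposition is the standard duality principle powering the second step, now upgraded from exterior powers to their biduals via self-injectivity of $R$.
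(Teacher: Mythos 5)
Your argument is correct and follows essentially the same strategy as the paper: dualize $0\to Y\to X\to X/Y\to 0$, identify the kernel of $\bigwedge^r_R X^\ast \twoheadrightarrow \bigwedge^r_R Y^\ast$ as the submodule generated by $(X/Y)^\ast\wedge\bigwedge^{r-1}_R X^\ast$ (which the paper proves from scratch as Lemma~\ref{wedge-kernel} and you cite as a classical fact), and dualize once more to obtain the annihilator description of $\bigcap^r_R Y$ inside $\bigcap^r_R X$. The only cosmetic difference is that you derive both inclusions simultaneously from the injectivity of the canonical map $X/Y \to (X/Y)^{\ast\ast}$ over a self-injective ring, whereas the paper proves the containment $\supseteq$ explicitly and declares the reverse inclusion clear.
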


\begin{proof} Since $R$ is self-injective, upon taking the $R$-dual of the tautological exact sequence $0 \to Y \to X \to X/Y\to 0$ we obtain another exact sequence
$$
0 \to (X/Y)^\ast \to X^\ast \to Y^\ast \to 0.
$$
Then, by applying the result of Lemma \ref{wedge-kernel} below to this sequence, we deduce that
\begin{align*}
\ker \left( {\bigwedge}^r_R X^\ast \to {\bigwedge}^r_R Y^\ast \right)=\left\langle \varphi \wedge \Phi \ \middle| \ \varphi \in (X/Y)^\ast, \ \Phi \in {\bigwedge}^{r-1}_R X^\ast \right\rangle_R,
\end{align*}
and hence that the sequence
$$
\left\langle \varphi \wedge \Phi \ \middle| \ \varphi \in (X/Y)^\ast, \ \Phi \in {\bigwedge}^{r-1}_R X^\ast \right\rangle_R \to {\bigwedge}^r_R X^\ast \to {\bigwedge}^r_R Y^\ast \to 0
$$
is exact.
Taking $R$-duals of the latter exact sequence, we deduce that
\begin{align}\label{characterize}
{\bigcap}_R^r Y &= \left\{ x \in {\bigcap}_R^r X \  \middle| \ x (\varphi\wedge\Phi)=0 \text{ for every $\varphi \in (X/Y)^\ast$ and $\Phi \in {\bigwedge}^{r-1}_R X^\ast$} \right\}
%\\
%&= \left\{ x \in {\bigcap}_R^r X \  \middle| \ \varphi(\Phi(x)) = 0 \text{ for every $\varphi \in (X/Y)^\ast$ and $\Phi \in {\bigwedge}^{r-1}_R X^\ast$} \right\}
\end{align}

Now we suppose that $x \in {\bigcap}_R^r X$ satisfies $\Phi(x) \in  Y $ for every $\Phi \in \bigwedge_R^{r-1} X^\ast $. Then we have
$$
x(\varphi \wedge \Phi)=\varphi(\Phi(x))=0
$$
for every $\varphi \in (X/Y)^\ast\subseteq  X^\ast$ and $\Phi \in \bigwedge_R^{r-1} X^\ast $.
Hence, by (\ref{characterize}), we have $x \in {\bigcap}_R^r Y$.
This proves the proposition since the opposite inclusion is clear.
\end{proof}

%To prove Proposition \ref{reduction}, we need the following lemma.

\begin{lemma}\label{wedge-kernel} We suppose given a short exact sequence of $R$-modules
\[
0\to X \xrightarrow{\iota} Y \xrightarrow{\pi} Z\to 0.
\]
  % and $r$ a non-negative integer.
Then, for every non-negative integer $r$, the kernel of the natural homomorphism
\[ {\bigwedge}^r_R Y \stackrel{\pi}{\to} {\bigwedge}^r_R Z\]
is generated over $R$ by the elements $\iota(\varphi) \wedge \Phi$ as $\varphi$ ranges over $X$ and $\Phi$ over ${\bigwedge}^{r-1}_R Y$.
%is generated by the set
%\begin{align*}
%\{ \phi \wedge \psi \mid \phi \in \ker(Y \to N), \psi \in {\bigwedge}^{s-1}_R Y \}.
%\end{align*}
\end{lemma}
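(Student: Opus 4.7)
My plan is to prove the lemma by constructing an explicit left inverse to the induced map on the quotient. Write $K \subseteq {\bigwedge}^r_R Y$ for the $R$-submodule generated by all elements $\iota(\varphi) \wedge \Phi$ with $\varphi \in X$ and $\Phi \in {\bigwedge}^{r-1}_R Y$. The containment $K \subseteq \ker(\pi)$ is immediate, since $\pi \circ \iota = 0$ forces $\pi(\iota(\varphi) \wedge \Phi) = 0 \wedge \pi(\Phi) = 0$. The entire content of the lemma is therefore the reverse inclusion, which I plan to establish by exhibiting a left inverse of the natural surjection $\overline{\pi}: {\bigwedge}^r_R Y / K \twoheadrightarrow {\bigwedge}^r_R Z$.

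To construct this inverse, I will define a map $\rho : Z^r \to {\bigwedge}^r_R Y / K$ by choosing, for each tuple $(z_1,\ldots,z_r)$, lifts $y_i \in Y$ with $\pi(y_i) = z_i$ (which exist since $\pi$ is surjective) and setting $\rho(z_1,\ldots,z_r) := \overline{y_1 \wedge \cdots \wedge y_r}$. The key verification is that this is independent of the choice of lifts: if one replaces each $y_i$ by $y_i + \iota(\varphi_i)$ with $\varphi_i \in X$, then expanding the wedge $(y_1 + \iota(\varphi_1)) \wedge \cdots \wedge (y_r + \iota(\varphi_r))$ produces $y_1 \wedge \cdots \wedge y_r$ plus a sum of cross terms, each of which contains at least one factor of the form $\iota(\varphi_i)$ and thus lies in $K$. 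Once well-definedness is established, $\rho$ is automatically $R$-multilinear by linearity of the wedge in each argument, and is alternating because for $z_i = z_j$ one may arrange the lifts to satisfy $y_i = y_j$. The universal property of the exterior power then yields a homomorphism ${\bigwedge}^r_R Z \to {\bigwedge}^r_R Y / K$, which by construction inverts $\overline{\pi}$ on pure wedges and hence on all of ${\bigwedge}^r_R Z$.

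With this inverse in hand, $\overline{\pi}$ is an isomorphism, and in particular injective, so $\ker(\pi) \subseteq K$ as desired. The only real technical point I foresee is the well-definedness of $\rho$, but this follows cleanly from the multilinear expansion because every mixed term involves at least one factor in $\iota(X)$ and therefore belongs to $K$. No hypotheses beyond exactness of the given sequence are required, so the argument works over any commutative (noetherian) ring $R$ without appealing to self-injectivity or any other special property, which makes the lemma available for the application to Proposition~\ref{reduction} where $R$ has already been assumed self-injective.
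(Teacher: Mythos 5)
Your proof is correct, and it takes a genuinely different route from the paper's. You define the map $\rho\colon Z^r \to \bigwedge^r_R Y/K$ by lifting along $\pi$, verify well-definedness by expanding the wedge of shifted lifts (all cross terms land in $K$ because by antisymmetry any term containing a factor $\iota(\varphi_i)$ can be rewritten as $\pm\,\iota(\varphi_i)\wedge\Phi$), and then use the universal property of $\bigwedge^r_R$ to obtain a section $\sigma\colon \bigwedge^r_R Z \to \bigwedge^r_R Y/K$ of $\overline{\pi}$, whence $\overline{\pi}$ is injective and $\ker(\pi)\subseteq K$. This is a clean, self-contained argument. The paper instead works one level up: it proves by induction on $r$ that the sequence $\bigoplus_{i=1}^{r}\Omega_{r,i}\to Y^{\otimes r}\to Z^{\otimes r}\to 0$ is exact (where $\Omega_{r,i}$ is $Y^{\otimes(r-1)}$ with $X$ inserted in the $i$-th slot) and then descends to exterior powers via a commutative diagram and a snake-lemma argument, using that the tensor-to-exterior quotient maps $f_1,f_2$ are surjective and that $\ker(f_1)\twoheadrightarrow\ker(f_2)$. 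Both approaches are elementary and require no hypothesis on $R$ beyond commutativity; yours avoids the detour through tensor powers and the diagram chase, whereas the paper's gives as a byproduct the precise right-exactness statement for tensor powers (which it does not, in the end, reuse elsewhere). Neither approach is intrinsically stronger; yours is shorter and arguably more transparent.
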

\begin{proof}
We shall denote $\otimes_R$ simply by $\otimes$.
%Set $X = \ker(Y \to Z)$ and
For each integer $i$ with $1 \leq i \leq r$, we consider the $R$-module
%\begin{align*}
$$
\Omega_{r,i} := Y \otimes \cdots \otimes Y \otimes X \otimes Y \otimes \cdots \otimes Y,
$$
in which there are $r-1$ copies of $Y$ and the module $X$ occurs as the $i$-th module from the left hand end (so that $\Omega_{r,i}$ is isomorphic to $X \otimes Y^{\otimes(r-1)}$).

We set
$$\Omega_r:=\bigoplus_{i=1}^{r} \Omega_{r,i}$$
and claim that the sequence of $R$-modules
\begin{align*}
\Omega_r \to  Y^{\otimes r} \xrightarrow{\pi^{\otimes r}}  Z^{\otimes r} \to 0,
\end{align*}
in which the first map is the direct sum of the maps $\Omega_{r,i}\to Y^{\otimes r}$ that are induced by the given map $\iota: X \to Y$, is exact.

To prove this claim we use induction on $r$. When $r=1$, the claim is clear. When $r>1$, by the inductive hypothesis, we have
the following commutative diagram of $R$-modules, whose rows and columns are exact:
\begin{align*}
\xymatrix{
\Omega_{r-1} \otimes X \ar[r] \ar[d] &  Y^{\otimes (r-1)} \otimes X \ar[r] \ar[d] &
 Z^{\otimes(r-1)} \otimes X \ar[r] \ar[d] & 0
\\
\Omega_{r-1} \otimes Y \ar[r] \ar[d] &  Y^{\otimes r} \ar[r] \ar[d] &
Z^{\otimes(r-1)} \otimes Y \ar[r] \ar[d] & 0
\\
\Omega_{r-1} \otimes Z \ar[r] \ar[d] & Y^{\otimes (r-1)} \otimes Z \ar[r] \ar[d] &
 Z^{\otimes r} \ar[r] \ar[d] & 0
\\
0 & 0 & 0.
}
\end{align*}
Since $\Omega_r = (\Omega_{r-1} \otimes Y) \oplus  (Y^{\otimes (r-1)} \otimes X)$,
we see that
$$\im \left( \Omega_r \to Y^{\otimes r} \right) =
\ker \left( Y^{\otimes r} \to  Z^{\otimes r} \right)$$
 by diagram chasing. Hence we have proved the claim.

Now we put $\Upsilon_r := \ker\left( {\bigwedge}^r_R Y \to {\bigwedge}^r_R Z \right)$ and
consider the commutative diagram
\begin{align*}
\xymatrix{
& \Omega_r \ar[r] \ar[d] & Y^{\otimes r} \ar[r] \ar[d]^{f_1} & Z^{\otimes r} \ar[r] \ar[d]^{f_2} & 0
\\
0 \ar[r] & \Upsilon_r \ar[r]  &  {\bigwedge}_R^rY \ar[r] & {\bigwedge}_R^r Z \ar[r] & 0,
}
\end{align*}
whose rows are exact.
%Since the kernel of the map $\bigotimes^r_R Y \to {\bigwedge}^r_R Y$ is generated by the set
%\begin{align*}
%\{ m_1 \otimes \cdots \otimes m_s \in \bigotimes^s_R Y \mid m_i = m_j \text{ for some } i \neq j \},
%\end{align*}
%the map $\ker\left( \bigotimes^r_R Y \to {\bigwedge}^r_R Y \right) \to
%\ker \left( \bigotimes^r_R Z \to {\bigwedge}^r_R Z \right)$ is surjective, which implies that
Clearly, $f_1$ is surjective, and the map $\ker (f_1) \to \ker (f_2)$ is also surjective, so we see that the map $\Omega_r \to \Upsilon_r$ is surjective (by Snake lemma).
This shows that the inclusion
$$\Upsilon_r \subseteq  \left\langle \varphi \wedge \Psi \mid \varphi \in X, \ \Psi \in {\bigwedge}^{r-1}_R Y \right\rangle_R$$
holds.
Since the opposite inclusion clearly holds, we have proved the lemma.
\end{proof}

\begin{corollary}\label{bidual-ker}
Suppose that $R$ is self-injective. Let $X$ be an $R$-module and $f \colon X \to R$ an $R$-homomorphism.
Then for any non-negative integer $r$, we have
\begin{align*}
{\bigcap}^{r}_{R}\ker\left( f \right) = \ker\left(f \colon {\bigcap}^{r}_{R}X \to {\bigcap}^{r-1}_{R}X \right).
\end{align*}
%where we regard ${\bigcap}_{R}^{r} \ker\left(f \right) \subseteq  {\bigcap}_{R}^{r} X$ by Proposition \ref{prop injective} (i).
\end{corollary}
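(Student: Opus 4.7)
The plan is to combine Proposition \ref{reduction} with the self-injectivity of $R$ and the definition of the action of $f$ on biduals. The verification proceeds in essentially two steps, with the use of self-injectivity concentrated in the first.

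I would start by setting $Y := \ker(f)$ and noting that $X/Y$ injects into $R$ via $f$. Self-injectivity of $R$ ensures that $\mathrm{Ext}_R^1(X/Y,R) = 0$, so that $\bigcap_R^r Y \hookrightarrow \bigcap_R^r X$ by Proposition \ref{prop injective}(i) and the hypotheses of Proposition \ref{reduction} are in place. The first (and main) step is to identify the image of the natural injection $(X/Y)^\ast \hookrightarrow X^\ast$ with the cyclic submodule $R f$. Concretely: any $\psi \in (X/Y)^\ast = \Hom_R(X/Y,R)$ regarded as a map on the submodule $X/Y \hookrightarrow R$ extends, by injectivity of $R$, to an endomorphism of $R$, which is necessarily multiplication by some element $a \in R$. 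Pulling back along the projection $X \twoheadrightarrow X/Y$ identifies $\psi$ with $a f \in X^\ast$. Conversely, $f$ itself lies in the image, so the image is exactly $Rf$.

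The second step is to read off from the proof of Proposition \ref{reduction} (specifically, the characterization of $\bigcap_R^r Y$ via the vanishing of $x(\varphi \wedge \Phi)$ for $\varphi \in (X/Y)^\ast$) that
\[
\bigcap_R^r Y = \left\{ x \in \bigcap_R^r X \ \middle| \ x(\varphi \wedge \Phi) = 0 \text{ for all } \varphi \in (X/Y)^\ast,\ \Phi \in \bigwedge_R^{r-1} X^\ast \right\}.
\]
Substituting the identification $(X/Y)^\ast = Rf$ and absorbing the scalar $a$ by $R$-linearity, this reduces to
\[
\bigcap_R^r Y = \left\{ x \in \bigcap_R^r X \ \middle| \ x(f \wedge \Phi) = 0 \text{ for all } \Phi \in \bigwedge_R^{r-1} X^\ast \right\}.
\]

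Finally, by the very definition of the action of $f \in X^\ast$ on biduals as the $R$-dual of wedge multiplication $\Psi \mapsto f \wedge \Psi$, one has $(f(x))(\Phi) = x(f \wedge \Phi)$ for every $\Phi \in \bigwedge_R^{r-1} X^\ast$. Hence the right-hand side above is exactly $\ker\!\bigl(f \colon \bigcap_R^r X \to \bigcap_R^{r-1} X\bigr)$, which gives the desired equality. The only nontrivial point is the identification of $(X/Y)^\ast$ with $Rf$ inside $X^\ast$, which is precisely where self-injectivity of $R$ is used; everything else is a direct unpacking of the definitions.
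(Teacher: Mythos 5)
Your proof is correct. It is essentially the same idea as the paper's, in that both ultimately rest on Proposition~\ref{reduction}, but the paper takes a cleaner route: it invokes the \emph{statement} of Proposition~\ref{reduction} directly, using the identity $f(x)(\Phi)=x(f\wedge\Phi)=f(\Phi(x))$ to see that $f(x)=0$ forces $\Phi(x)\in\ker(f)$ for all $\Phi$, and hence $x\in\bigcap^r_R\ker(f)$; the easy inclusion is immediate since $f$ restricts to zero on $\ker(f)$. You instead rewind to the intermediate characterization $(\ref{characterize})$ from the proof of Proposition~\ref{reduction} and then need the extra (correct, and mildly interesting) observation that the injection $(X/Y)^\ast\hookrightarrow X^\ast$ has image $Rf$, which again uses self-injectivity of $R$. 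Both arguments are valid; the paper's avoids that extra step by calling the proposition as a black box, while yours makes explicit where the one-dimensionality of the quotient $X/Y\hookrightarrow R$ is being used.
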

\begin{proof}
By definition, we see that the composition
${\bigcap}^{r}_{R}\ker\left( f \right) \hookrightarrow {\bigcap}^{r}_{R}X \xrightarrow{f} {\bigcap}^{r-1}_{R}X$ is the zero map.
Hence we have ${\bigcap}^{r}_{R}\ker\left( f \right) \subseteq  \ker\left(f \colon {\bigcap}^{r}_{R}X \to {\bigcap}^{r-1}_{R}X \right)$.
To prove the opposite inclusion, take $x \in \ker\left(f \colon {\bigcap}^{r}_{R}X \to {\bigcap}^{r-1}_{R}X \right)$.
Then, by definition, we have
$$
0 = f(x)(\Phi) = x(f \wedge \Phi)=f(\Phi(x))
$$
for any $\Phi \in \bigwedge^{r-1}_{R}X^{*}$, and so $\Phi(x) \in \ker(f)$.
By Proposition~\ref{reduction}, we conclude that $x \in {\bigcap}^{r}_{R}\ker\left( f \right)$.
\end{proof}

The following corollary will be used in \S\S \ref{stark sys sec} and \ref{koly sys sec} to define Kolyvagin and Stark systems over a Gorenstein order.

\begin{corollary}\label{morph}
 We suppose to be given a surjective homomorphism $R \to S$ of self-injective rings. Let $X$ be an $R$-module, $F$ a free $R$-module of finite rank, and $Y$ an $S$-module.
%and $X \to Y$ an $R$-module homomorphism.
We then suppose to be given a commutative diagram of $R$-modules
%are maps from $X$ to a finitely generated free $R$-module $F$ and from $Y$ to $F \otimes_{R} S$ such that the map $Y \to F \otimes_{R} S$ is injective and the diagram
\begin{align*}
\xymatrix{
X \ar[r] \ar[d] & F \ar[d]^{\pi}
\\
Y \ar@{^{(}->}[r] & F \otimes_{R}S
}
\end{align*}
in which the lower horizontal arrow is injective and the map $\pi$ is induced by the given homomorphism $R \to S$.

Then for any positive integer $r$, there exists a natural homomorphism of $R$-modules
\[ {{\bigcap}}^{r}_{R}X \to {{\bigcap}}^{r}_{S}Y\]
that is independent of the given maps $X \to F$ and $Y \hookrightarrow F \otimes_{R} S$ and is such that all squares of the following diagram
\begin{align*}
\xymatrix{
{{\bigwedge}}^{r}_{R}X \ar[r]^-{\xi^{r}_{X}} \ar[d] & {{\bigcap}}^{r}_{R}X \ar[d] \ar[r] & {{\bigcap}}^{r}_{R}F \ar[d]
\\
{{\bigwedge}}^{r}_{S}Y \ar[r]^-{\xi^{r}_{Y}}  & {{\bigcap}}^{r}_{S}Y \ar@{^{(}->}[r] & {{\bigcap}}^{r}_{S}(F \otimes_{R}S)
}
\end{align*}
commute.
Here the
%first, resp.
second upper and lower horizontal arrows denote %the canonical maps, resp.
the maps induced by the given maps $X \to F$ and $Y \hookrightarrow F \otimes_{R} S$.
%Furthermore, the map ${{\bigcap}}^{r}_{R}X \to {{\bigcap}}^{r}_{S}Y$ is independent of the choice of such maps .
\end{corollary}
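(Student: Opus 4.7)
The plan is to construct the desired homomorphism as the unique factorisation of a canonical composite into $\bigcap^{r}_{S}(F \otimes_{R}S)$ through the subgroup $\bigcap^{r}_{S}Y$. Since $S$ is self-injective, Proposition~\ref{prop injective}(i) tells us that the inclusion $Y \hookrightarrow F \otimes_{R}S$ induces an injection $\bigcap^{r}_{S}Y \hookrightarrow \bigcap^{r}_{S}(F \otimes_{R}S)$, so uniqueness of the desired map and the commutativity of the right-hand square of the final diagram will both be automatic once a factorisation is exhibited. Freeness of $F$ and of $F \otimes_{R}S$ gives identifications $\bigcap^{r}_{R}F = \bigwedge^{r}_{R}F$ and $\bigcap^{r}_{S}(F \otimes_{R}S) = \bigwedge^{r}_{S}(F \otimes_{R}S)$, and the candidate composite
\[ \alpha\colon {\bigcap}^{r}_{R}X \xrightarrow{f_{\ast}} {\bigwedge}^{r}_{R}F \xrightarrow{\pi_{\ast}} {\bigwedge}^{r}_{S}(F \otimes_{R}S) \]
is the natural object to consider.

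The crux is to verify that $\alpha$ takes values in $\bigcap^{r}_{S}Y$. I would invoke Proposition~\ref{reduction}, reducing the problem to showing that $\Psi \circ \alpha \colon \bigcap^{r}_{R}X \to F \otimes_{R}S$ has image in $g(Y)$ for every $\Psi \in \bigwedge^{r-1}_{S}(F \otimes_{R}S)^{\ast}$, where $g\colon Y \hookrightarrow F \otimes_{R}S$ is the given embedding. Writing $\Psi = \psi_{1} \wedge \cdots \wedge \psi_{r-1}$ and lifting each $\psi_{i}$ along the surjection $\Hom_{R}(F, R) \twoheadrightarrow \Hom_{S}(F \otimes_{R}S, S)$ (available since $F$ is $R$-free and $R \to S$ is surjective), one obtains $\tilde{\Psi} = \tilde{\psi}_{1} \wedge \cdots \wedge \tilde{\psi}_{r-1} \in \bigwedge^{r-1}_{R}F^{\ast}$. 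The standard naturality diagrams for exterior duals and for base change under $\pi$ then yield
\[ \Psi \circ \alpha = \pi \circ \tilde{\Psi} \circ f_{\ast} = \pi \circ f^{\ast\ast} \circ (f^{\ast}\tilde{\Psi}), \]
where $f^{\ast}\tilde{\Psi} \in \bigwedge^{r-1}_{R}X^{\ast}$ acts on $\bigcap^{r}_{R}X$ with values in $X^{\ast\ast}$. It thus suffices to show that the $R$-linear map $\pi \circ f^{\ast\ast}\colon X^{\ast\ast} \to F \otimes_{R}S$ factors through $g(Y)$. Let $h\colon X \to Y$ denote the unique $R$-homomorphism satisfying $g \circ h = \pi \circ f$ (well-defined by injectivity of $g$); functoriality of $R$-double-dualization then gives
\[ \pi \circ f^{\ast\ast} = (\pi \circ f)^{\ast\ast} = (g \circ h)^{\ast\ast} = g^{\ast\ast} \circ h^{\ast\ast}. \]
Because $R$ is self-injective and both $Y$ and $F \otimes_{R}S$ are finitely generated as $R$-modules (via $R \to S$), they are reflexive over $R$, so $g^{\ast\ast} = g$ under the canonical identifications; hence $\pi \circ f^{\ast\ast} = g \circ h^{\ast\ast}$ factors through $g(Y)$, as required.

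For the independence of the resulting map from the choices of $f$ and $g$, given two admissible data sets $(F_{i}, f_{i}, g_{i})$ for $i = 1, 2$ realising the same underlying $h$, I would form the diagonal configuration $(F_{1} \oplus F_{2}, (f_{1}, f_{2}), (g_{1}, g_{2}))$ and use the coordinate projections $F_{1} \oplus F_{2} \to F_{i}$ together with the injectivity of $\bigcap^{r}_{S}Y \hookrightarrow \bigcap^{r}_{S}(F_{i} \otimes_{R}S)$ to force all three factorisations to coincide. The commutativity of the left-hand square of the final diagram reduces, again by the same injectivity, to checking equality on simple wedges $x_{1} \wedge \cdots \wedge x_{r}$, where both routes produce $g(h(x_{1})) \wedge \cdots \wedge g(h(x_{r}))$ in $\bigwedge^{r}_{S}(F \otimes_{R}S)$.

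The main obstacle is the factorisation step: because $X$ itself is not assumed reflexive one cannot argue pointwise on $X$ and must instead pass through $X^{\ast\ast}$, and the crucial input is then the reflexivity of $Y$ as an $R$-module, which relies on $R$ (and not merely $S$) being self-injective. Once this factorisation is in hand, the remainder of the argument is formal bookkeeping with the naturality properties already encoded in Propositions~\ref{prop injective} and~\ref{reduction}.
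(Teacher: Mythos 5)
Your proof is correct and achieves the same goal, but it takes a genuinely different route at the crucial factorisation step. Both arguments share the overall scaffolding: identify $\bigcap^{r}$ of the free modules $F$ and $F\otimes_{R}S$ with exterior powers, invoke Proposition~\ref{reduction} to reduce membership in $\bigcap^{r}_{S}Y$ to a condition on all contractions by $\Psi \in \bigwedge^{r-1}_{S}(F\otimes_{R}S)^{\ast}$, and use the surjectivity of $\Hom_{R}(F,R)\to\Hom_{S}(F\otimes_{R}S,S)$ to lift $\Psi$ and then exploit the compatibility of contraction with push-forward along $\pi$. The paper, however, first replaces $X$ by its image in $F$, so that $\bigcap^{r}_{R}X$ sits inside $\bigcap^{r}_{R}F$; this lets it argue element-wise that $\Phi(x)\in X$ (by a second application of Proposition~\ref{reduction}, now to the inclusion $X\subseteq F$) and hence $\pi(\Phi(x))\in Y$ directly from the commutative square. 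You instead avoid that reduction and pass through $X^{\ast\ast}$, rewriting $\tilde{\Psi}\circ f_{\ast}$ as $f^{\ast\ast}\circ(f^{\ast}\tilde{\Psi})$ and then using functoriality of $R$-double-dualisation together with the $R$-reflexivity of $Y$ (which holds because $Y$ is a finitely generated $R$-module — a submodule of $F\otimes_{R}S$ over the noetherian $R$, though worth saying explicitly) to conclude $\pi\circ f^{\ast\ast}=g\circ h^{\ast\ast}$ factors through $g(Y)$. Your route is more bookkeeping-heavy but more categorical and sidesteps the implicit compatibility check hidden in the paper's ``we may assume $X\to F$ is injective.'' You also give an actual argument (via the diagonal configuration $F_{1}\oplus F_{2}$) for the independence of the resulting map from the chosen factorisation; the paper's written proof asserts this independence in the statement but does not in fact address it.
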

\begin{proof}
Since the map $Y \hookrightarrow F \otimes_{R}S$ is injective, by replacing $X$ with $\im\left(X \to F\right)$, we may assume that the map $X \to F$ is injective.
Since $F$ is free of finite rank, the maps $\xi_{F}^{r}$ and $\xi_{F \otimes_{R}S}^{r}$ are isomorphisms.
%Let $\pi \colon F \to F \otimes_{R}S$ be a natural map.
Hence we get the following commutative diagram:
\begin{align*}
\xymatrix{
{{\bigwedge}}^{r}_{R}X \ar[r]^-{\xi^{r}_{X}} \ar[d] & {{\bigcap}}^{r}_{R}X \ar@{^{(}->}[r] & {{\bigcap}}^{r}_{R}F  \ar[r]^-{(\xi_{F}^{r})^{-1}} & {{\bigwedge}}^{r}_{R}F \ar[d]^-{{\wedge}^{r}\pi}
\\
{{\bigwedge}}^{r}_{S}Y \ar[r]^-{\xi_{Y}^{r}} & {{\bigcap}}^{r}_{S}Y \ar@{^{(}->}[r] & {{\bigcap}}^{r}_{S}(F \otimes_{R}S)  \ar[r]^-{(\xi_{F \otimes_{R}S}^{r})^{-1}} & {{\bigwedge}}^{r}_{S}(F \otimes_{R}S).
}
\end{align*}
Since the diagram commutes and the map ${{\bigcap}}^{r}_{S}Y \hookrightarrow {{\bigcap}}^{r}_{S}(F \otimes_{R}S)$ is injective by Proposition~\ref{prop injective}(i), we only need to show that
$$
\im\left({{\bigcap}}^{r}_{R}X \to {{\bigwedge}}^{r}_{S}(F \otimes_{R}S) \right) \subseteq  \im\left({{\bigcap}}^{r}_{S}Y \to {{\bigwedge}}^{r}_{S}(F \otimes_{R}S)\right).
$$
To see this, take elements $x \in {{\bigwedge}}^{r}_{R}F$ with $\xi_{F}^{r}(x) \in {{\bigcap}}_{R}^{r}X$ and $\Phi \in {{\bigwedge}}^{r-1}_{R}F^{*}$. We also denote by $\Phi$ the image of $\Phi$ under the map
$$
{{\bigwedge}}^{r}_{R}F^{*} \to {{\bigwedge}}^{r}_{S}(F^{*} \otimes_{R}S) \xrightarrow{\sim} {{\bigwedge}}^{r-1}_{S}(F \otimes_{R}S)^{*}.
$$
Then by Proposition~\ref{reduction}, we have $\Phi(x) \in X$.
Hence we conclude that $\Phi((\wedge^{r}\pi)(x)) = \pi(\Phi(x)) \in Y$. Since the natural map
\[ \Hom_{R}(F, R) \otimes_{R}S \to \Hom_{S}(F \otimes_{R}S, S)\]
is bijective we can again apply Proposition~\ref{reduction} to deduce that $\xi_{F \otimes_{R}S}^{r}((\wedge^{r}\pi)(x))$ belongs to $ {{\bigcap}}^{r}_{S}Y$, as required.
\end{proof}

\section{Preliminaries concerning Galois cohomology} \label{pre}

\subsection{Notation and hypotheses}

Let $K$ be a number field. Let $(R, \fp)$ be a self-injective (commutative) local ring (in other words, a zero-dimensional Gorenstein local ring)
with a finite residue field $\Bbbk=R/\fp$ of characteristic $p>0$.
Note that $R$ is an artinian ring, which is finite of order a power of $p$.
Let $A$ be a free $R$-module of finite rank with an $R$-linear continuous action of $G_K$. ($A$ is endowed with discrete topology.) Since $A$ is finite, the action of $G_K$ factors through a finite quotient, so $A$ is unramified outside a finite set of places of $K$.

\subsubsection{Selmer structures} \label{section sel}

We fix a Selmer structure $\mathcal{F}$ on $A$.
Recall that a Selmer structure $\mathcal{F}$ on $A$ is a collection of the following data (see \cite[Def. 2.1.1]{MRkoly}):
\begin{itemize}
\item a finite set $S(\mathcal{F})$ of places of $K$ such that
$S_\infty(K)\cup S_p(K) \cup S_{\rm ram}(A) \subseteq  S(\mathcal{F})$;
\item for every $v \in S(\mathcal{F})$, a choice of an $R$-submodule $H_\mathcal{F}^1(K_v, A) \subseteq  H^1(K_v, A)$.
\end{itemize}
%We fix a finite set $\Sigma$ of places of $K$ which is disjoint from $S(\mathcal{F})$.
The Selmer module attached to $\mathcal{F}$ is defined by
$$
H_\mathcal{F}^1(K, A) := \ker \left( H^1(\cO_{K,S(\mathcal{F})}, A) \to \bigoplus_{v\in S(\mathcal{F})} H^1_{/\cF}(K_v, A)\right).
%/H^1_{\mathcal{F}}(K_v,A)\right).
$$
(Recall that for any index $\ast$ we set $H^1_{/\ast}(K_v,A):=H^1(K_v,A)/H^1_\ast(K_v,A).) $
Note that
$$
H^1(\cO_{K,S(\cF)}, A)=\ker \left( H^1(K, A) \to \bigoplus_{v \notin S(\cF)} H^1_{/f}(K_v, A)
%/H^1_{f}(K_v,A)
\right),
$$
so we have
$$
H^1_{\cF}(K,A) = \ker \left( H^1(K, A) \to \bigoplus_{v} H^1_{/\cF}(K_v, A)
%/H^1_{\cF}(K_v,A)
\right),
$$
where we set $H^1_\cF(K_v,A):=H^1_f(K_v,A)$ ($=H^1_{\rm ur}(K_v,A)$) for $v \notin S(\cF)$.
In the following, we set $S:=S(\cF)$ for simplicity. (Note that Selmer structures and modules can be defined for continuous representations of $G_K$ of various other types: in \S \ref{one-dim case}, for example, we consider representations over one-dimensional Gorenstein rings and Selmer modules associated to them.)
%We slightly modify the Selmer module by choosing a finite set $\Sigma$ of places of $K$, which is disjoint from $S(\mathcal{F})$. In the following, we set $S:=S(\cF)$ for simplicity. Recall that, for any continuous $\Gal(K_S/K)$-module $N$, one can consider a `$\Sigma$-modified cohomology complex' $R\Gamma_\Sigma(\cO_{K,S},N)$, which lies in the exact triangle
%$$R\Gamma_\Sigma(\cO_{K,S},N) \to R\Gamma(\cO_{K,S},N) \to \bigoplus_{v \in \Sigma}R\Gamma_f(K_v, N)\to.$$
%(See \cite[\S 2.3.1]{sbA}). We set $H^i_\Sigma(\cO_{K,S},N):=H^i(R\Gamma_\Sigma(\cO_{K,S},N))$. We define the `$\Sigma$-modified Selmer module' by
%$$H_{\Sigma,\cF}^1(K, A):=
%\ker \left( H^1(\cO_{K,S}, A) \to
%\bigoplus_{v\in S} H^1(K_v, A)/H^1_{\mathcal{F}}(K_v,A)\right). $$
%{\it In the following, we fix $\Sigma$, and abbreviate $H_{\Sigma,\cF}^1(K, A)$ to $H_{\mathcal{F}}^1(K, A)$.}

We next review the definition of the dual Selmer structure $\cF^\ast$ on $A^\ast(1)$. By local Tate duality, we have a canonical isomorphism
$$H^1(K_v, A) \simeq H^1(K_v, A^\ast(1))^\ast.$$
Using this, we define $\cF^\ast$ to be the following data
\begin{itemize}
\item $S(\cF^\ast):=S(\cF)(=S)$;
\item for $v\in S$,
$$H^1_{\cF^\ast}(K_v, A^\ast(1)):=\ker(H^1(K_v, A^\ast(1)) \simeq H^1(K_v, A)^\ast \to H^1_{\cF}(K_v,A)^\ast). $$
\end{itemize}

If $B$ is an $R[G_{K}]$-submodule (resp. quotient) of $A$, then the Selmer structure $\cF$ on $A$ induces a Selmer structure on $B$ (which we also denote by $\cF$) as follows: we define the local condition by
$$
%H^1_\cF(K_v, B):=\ker(H^1(K_v,B) \to  H^1(K_v,A)/H^1_\cF(K_v,A))
H^1_\cF(K_v, B):=\ker(H^1(K_v,B) \to  H^1_{/\cF}(K_v,A))
$$
$$
(\text{resp. }H^1_{\cF}(K_v,B):=\im (H^1_{\cF}(K_v,A) \to H^1(K_v,B))).
$$
%This construction is given in \cite[Example 2.1.7]{MRkoly}.

The following result is a consequence of the (Poitou-Tate) global duality and is proved by Mazur and Rubin in \cite[Th. 2.3.4]{MRkoly}.

\begin{theorem}[Global duality]\label{gd} Let $\cF_1$ and $\cF_2$ be Selmer structures on $A$ such that at every place $v$ one has $H^1_{\cF_1}(K_v,A)\subseteq   H^1_{\cF_2}(K_v,A)$. Then there exists a canonical exact sequence
\begin{multline*}
0\to H^1_{\cF_1}(K,A) \to H^1_{\cF_2}(K,A) \to \bigoplus_{v } H^1_{\cF_2}(K_v,A)/H^1_{\cF_1}(K_v,A) \\
\to H^1_{\cF_1^\ast}(K,A^\ast(1))^\ast \to H^1_{\cF_2^\ast}(K,A^\ast(1))^\ast \to 0.
\end{multline*}
\end{theorem}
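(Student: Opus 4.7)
The plan is to deduce the five-term exact sequence from a comparison of two ``generalised Poitou--Tate exact sequences'', one attached to each Selmer structure $\cF_i$. A first reduction is to enlarge $S(\cF_1)$ and $S(\cF_2)$ to a common finite set $S$, by extending each Selmer structure at the additional places by the canonical finite (equivalently, unramified) local conditions; this leaves the given data unchanged and preserves the hypothesis that $H^1_{\cF_1}(K_v,A)\subseteq H^1_{\cF_2}(K_v,A)$ at every $v$.

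The key intermediate assertion is that, for each individual Selmer structure $\cF$ on $A$ with defining set $S$, there is a canonical four-term exact sequence
\[
0 \to H^1_\cF(K,A) \to H^1(\cO_{K,S},A) \xrightarrow{\lambda_\cF} \bigoplus_{v\in S} H^1_{/\cF}(K_v, A) \xrightarrow{\psi_\cF} H^1_{\cF^\ast}(K,A^\ast(1))^\ast,
\]
in which $\lambda_\cF$ is the natural localisation map and $\psi_\cF$ sends a family $(c_v)$ (lifted to $\bigoplus_v H^1(K_v,A)$) to the functional $x \mapsto \sum_v \langle c_v, \res_v(x)\rangle_v^{\loc}$. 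The map $\psi_\cF$ is well-defined on the quotient $\bigoplus_v H^1_{/\cF}(K_v,A)$ precisely because the subgroups $H^1_\cF(K_v, A)$ and $H^1_{\cF^\ast}(K_v, A^\ast(1))$ are exact annihilators under local Tate duality, by the very definition of $\cF^\ast$. Exactness at the first two non-trivial terms is immediate from the definition of $H^1_\cF(K,A)$ as a kernel; exactness at $\bigoplus_v H^1_{/\cF}(K_v,A)$ follows by combining the central portion of the Poitou--Tate nine-term exact sequence
\[
H^1(\cO_{K,S},A) \xrightarrow{\lambda} \bigoplus_{v\in S} H^1(K_v,A) \xrightarrow{\mu} H^1(\cO_{K,S}, A^\ast(1))^\ast
\]
with the $R$-dual of the short exact sequence $0 \to H^1_{\cF^\ast}(K, A^\ast(1)) \to H^1(\cO_{K,S}, A^\ast(1)) \to H^1(\cO_{K,S}, A^\ast(1))/H^1_{\cF^\ast}(K, A^\ast(1)) \to 0$, the latter $R$-dualisation being exact because $R$ is self-injective.

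Granted this fundamental sequence, I would then stack its instances for $\cF_1$ and $\cF_2$ as rows of a commutative diagram. The hypothesis on the local conditions yields natural surjective vertical maps from the $\cF_1$-row to the $\cF_2$-row whose kernels are immediately identifiable: the third vertical map has kernel $\bigoplus_v H^1_{\cF_2}(K_v, A)/H^1_{\cF_1}(K_v, A)$, and the fourth, obtained by $R$-dualising the inclusion $H^1_{\cF_2^\ast}(K, A^\ast(1)) \hookrightarrow H^1_{\cF_1^\ast}(K, A^\ast(1))$, has kernel $(H^1_{\cF_1^\ast}(K,A^\ast(1))/H^1_{\cF_2^\ast}(K,A^\ast(1)))^\ast$. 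A direct diagram chase on this configuration (or, equivalently, repeated application of the snake lemma to pairs of adjacent columns) then assembles the required five-term exact sequence, with the connecting homomorphism given on representatives precisely by the local Tate pairing construction that defines $\psi_{\cF_1}$, and with surjectivity of the final map coming from the $R$-duality invoked above.

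The main technical obstacle is establishing the identity $\ker\psi_\cF = \im\lambda_\cF$ in the fundamental four-term sequence. The non-trivial containment asserts that any $(c_v) \in \bigoplus_v H^1(K_v,A)$ with $\mu((c_v))$ vanishing on $H^1_{\cF^\ast}(K,A^\ast(1))$ may be adjusted by a family in $\bigoplus_v H^1_\cF(K_v, A)$ so as to lie in $\im \lambda$. Equivalently, one must verify that $\mu$ restricts to a surjection from $\bigoplus_v H^1_\cF(K_v, A)$ onto the annihilator of $H^1_{\cF^\ast}(K, A^\ast(1))$ inside $H^1(\cO_{K,S}, A^\ast(1))^\ast$. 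Proving this requires a symmetric application of Poitou--Tate duality to the representation $A^\ast(1)$, combined with place-by-place local Tate duality to identify the relevant annihilators; this is the computational heart of the argument, with everything else reducing to straightforward homological bookkeeping.
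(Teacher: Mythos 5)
The paper does not prove this result; it is quoted from Mazur and Rubin \cite[Th.~2.3.4]{MRkoly}, so there is no internal proof to compare against. Evaluated on its own merits, your Poitou--Tate strategy is the standard one, the four-term sequence you isolate is the right intermediate object, and your reformulation of the crux (that $\mu$ maps $\bigoplus_v H^1_{\cF}(K_v,A)$ onto the annihilator of $H^1_{\cF^\ast}(K,A^\ast(1))$ inside $H^1(\cO_{K,S},A^\ast(1))^\ast$) is correct. Note, though, that this reformulated statement already follows from self-injectivity of $R$, local Tate duality and the fact that $H^1_{\cF^\ast}(K,A^\ast(1))$ is by definition the kernel of a localization map; no separate Poitou--Tate argument for $A^\ast(1)$ is needed at that point.

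There is, however, a genuine gap in the assembly step. Stacking the two four-term sequences (each exact only at its first three terms) and running a ``direct diagram chase'' does not by itself give exactness at $H^1_{\cF_1^\ast}(K,A^\ast(1))^\ast$: nothing in the formal data forces an element of the kernel of the fourth vertical map, namely $(H^1_{\cF_1^\ast}(K,A^\ast(1))/H^1_{\cF_2^\ast}(K,A^\ast(1)))^\ast$, to lie in the image of $\psi_{\cF_1}$. (For a trivial illustration, take both rows with the first three terms all zero, the fourth term of the first row equal to $R$ and that of the second row zero; the asserted five-term sequence would then force $R=0$.) To close this you need the additional, non-formal fact that $\psi_{\cF_1}$ is the $R$-dual of the localization map $H^1_{\cF_1^\ast}(K,A^\ast(1)) \to \bigoplus_{v} H^1_{\cF_1^\ast}(K_v,A^\ast(1))$, so that (by self-injectivity) the image of $\psi_{\cF_1}$ is the full annihilator of the kernel of that localization; since any class in $H^1_{\cF_1^\ast}(K,A^\ast(1))$ that localizes to zero at every $v\in S$ automatically satisfies every local condition and so lies in $H^1_{\cF_2^\ast}(K,A^\ast(1))$, the required containment follows and only then does the chase close. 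Also, of your vertical maps only the third and fourth are surjective; the first is merely injective (which is what you in fact use).
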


\subsubsection{Hypotheses} \label{section hyp}

Let $K(A)$ be the minimal Galois extension of $K$ such that $G_{K(A)}$ acts trivially on $A$.
Let $M := \min \{ p^n \mid  p^n R = 0 \}$.
We denote by $K(1)$ the maximal $p$-extension of $K$ inside the Hilbert class field of $K$. %We fix a finite abelian $p$-extension $E$ of $K$ such that $E/K$ is unramified outside $S$.  %and that $K(1) \subseteq  E$. (For example, one can take $E=K(1)$.)
We set
\[
K_M := K (\mu_{M}, (\cO^\times_K)^{1/M}) K(1)\,\,\text{ and }\,\,K(A)_M := K(A)K_M.
\]
Here $(\cO_K^\times)^{1/M}$ denotes the set $\{u \in \overline \QQ \mid u^M \in \cO_K^\times\}$.

In the following, we assume the following hypotheses.

\begin{hypothesis}\label{hyp1}\
\begin{itemize}
\item[(i)] $A \otimes_R \Bbbk$ is an irreducible $\Bbbk[G_K]$-module;
\item[(ii)] there exists $\tau \in G_{K_M}$ such that $A/(\tau - 1)A \simeq R$ as $R$-modules;
\item[(iii)] $H^1(K(A)_M/K, A) = H^1(K(A)_M/K, A^*(1)) = 0$.
\end{itemize}
\end{hypothesis}

\begin{hypothesis}\label{hyp2}
$(A \otimes_R \Bbbk)^{G_K} = ((A \otimes_R \Bbbk)^*(1))^{G_K} = 0$.
\end{hypothesis}

\begin{remark} It is clear that if $A$ satisfies Hypothesis~\ref{hyp1}, then so also does $A^*(1)$. Note also that the vanishing of $ ((A \otimes_R \Bbbk)^*(1))^{G_K}$ is equivalent to the vanishing of $ (A^\ast(1)\otimes_R \Bbbk)^{G_K}$ and so Hypothesis~\ref{hyp2} for $A$ is equivalent to the same hypothesis for $A^\ast(1)$.%, since $K(A)_M = K(A^*(1))_M$ and
%$R$ is self-injective.
\end{remark}

Let $\mathcal{P}$ be the set of primes $\fq \not\in S $ of $K$ such that ${\rm Fr}_\fq$ is conjugate to $\tau$ in ${\rm Gal}(K(A)_M/K)$. In particular, by Hypothesis~\ref{hyp1}(ii), we see that $\fq$ splits completely in $K_M$ and that $A/({\rm Fr}_\fq-1)A \simeq R$ for every $\fq \in \cP$.
%satisfying the following:
%\begin{itemize}
%\item $\fq$ is unramified in $K(A)_M/K$;
%\item ${\rm Fr}_\fq$ is conjugate to $\tau$ in ${\rm Gal}(K(A)_M/K)$.
%\end{itemize}
 For a set $\mathcal{Q}$ of primes of $K$, we denote by $\mathcal{N}(\mathcal{Q})$ the set of square-free products of primes in $\mathcal{Q}$. (We let $1 \in \mathcal{N}(\mathcal{Q})$ for convention.) For $\fn \in \cN$, the number of primes which divide $\fn$ is denoted by $\nu(\fn)$. (When $\fn=1$, we set $\nu(1):=0$.) We often abbreviate $\mathcal{N}(\mathcal{P})$ to $\mathcal{N}$.
%Let $\cN=\cN(\cP)$ be the set of square-free products of primes in $\mathcal{P}$.
%For $\fn \in \cN$, the number of primes which divide $\fn$ is denoted by $\nu(\fn)$.

For $\fq \in \cP$, we denote by $K(\fq)$ the maximal $p$-extension of $K$
inside the ray class field modulo $\fq$. Put $G_\fq:=\Gal(K(\fq)/K(1))=\Gal(K(\fq)_\fq/K_\fq)$, where $K(\fq)_\fq$ is the completion of $K(\fq)$ at the (fixed) place lying above $\fq$.
For $\fn \in \cN$, we denote by $K(\fn)$ the composite of
$K(\fq)$'s with $\fq \mid \fn$.
We set
\begin{align*}
G_\fn := \bigotimes_{\fq \mid \fn} G_\fq.
\end{align*}
Note that, since $\fq \in \cP$ splits completely in $K_M$,
the order of $G_\fq$ is divisible by $M$.

\subsubsection{Modified Selmer structures} \label{section mod sel}

For $\fa,\fb,\fn \in \cN$ which are pairwise relatively prime, we define a Selmer structure $\mathcal{F}_\fa^\fb(\fn)$ by
\begin{itemize}
\item $S(\mathcal{F}_\fa^\fb(\fn)):=S \cup \{ \fq \mid \fa\fb\fn \}$;
\item for $v \in S(\mathcal{F}_\fa^\fb(\fn))$,
\begin{align*}
H^1_\mathcal{F_\fa^\fb(\fn)}(K_v,A) :=
\begin{cases}
H^1_\cF(K_v, A) &\text{ if $v \in S$}, \\
0 &\text{ if $v \mid \fa$,}\\
H^1(K_v , A) &\text{ if $v\mid \fb$},\\
H^1_{\rm tr}(K_v,A) &\text{ if $v \mid \fn$},
\end{cases}
\end{align*}
\end{itemize}
where $H^1_{\rm tr}(K_\fq,A)$ is the `transverse' submodule of $H^1(K_\fq, A)$ which fits in a canonical decomposition
$$H^1(K_\fq, A)=H^1_f(K_\fq, A)\oplus H^1_{\rm tr}(K_\fq, A). $$
Explicitly, one defines
$$H^1_{\rm tr}(K_\fq, A):=H^1(K(\fq)_\fq/K_\fq, A^{G_{K(\fq)_\fq}})=\Hom(G_\fq, A^{{\rm Fr}_\fq=1}),$$
which is regarded as a submodule of $H^1(K_\fq, A)$ by the inflation map. Since $H^1_{\rm tr}(K_\fq, A)$ is canonically isomorphic to $H^1_{/f}(K_\fq,A)$, we sometimes identify them.
Note that $(\cF_\fa^\fb(\fn))^\ast=\cF_\fb^\fa(\fn)$. This follows from the fact that
$$H^1_f(K_\fq,A)\simeq H^1_{/f}(K_\fq,A^\ast(1))^\ast$$
for every $\fq\in \cP$.
If $\fa=1$, we abbreviate $\cF_\fa^\fb(\fn)$ to $\cF^\fb(\fn)$. Similarly, if $\fb=1$ or $\fn=1$, they are omitted.
The Selmer structures $\cF^\fn$ and $\cF(\fn)$ will often appear. We remark that the associated Selmer modules are
$$
H^1_{\cF^\fn}(K,A) := \ker \left( H^1(\cO_{K,S_\fn}, A) \to \bigoplus_{v\in S}H^1_{/\cF}(K_v,A)
%/H^1_\cF(K_v,A)
\right)
$$
and
$$
H^1_{\cF(\fn)}(K,A) := \ker \left( H^1(\cO_{K,S_\fn}, A) \to \bigoplus_{v\in S}H^1_{/\cF}(K_v,A)
%/H^1_\cF(K_v,A)
\oplus \bigoplus_{\fq \mid \fn}  H^1_{/ {\rm tr}}(K_v,A)\right)$$
respectively, where $S_\fn:=S\cup \{\fq \mid \fn\}$.
%{\color{blue}and $H^1_{/ {\rm tr}}(K_v,A):=H^1(K_v,A)/H^1_{\rm tr}(K_v,A) (\simeq H^1_f(K_v,A))$ }.

\subsubsection{Finite-singular comparison maps} \label{section fs}
%In particular, we have $R \otimes_\bZ G_\fn  \simeq R$.
%In what follows, we often abbreviate $\otimes_\bZ$ to $\otimes$.
For $\fq \in \cP$, we recall the definition of the `finite-singular comparison map'
\begin{align*}
\varphi^{\rm fs}_\fq \colon H^1_f(K_\fq, A) \simeq H^1_{\rm tr}(K_\fq, A) \otimes G_\fq,
\end{align*}
which is important in the theory of Kolyvagin systems.
We use the following canonical isomorphisms:
\begin{align*}
H^1_{f}(K_\fq, A) \simeq A/({\rm Fr}_\fq - 1)A; \ a \mapsto a({\rm Fr}_\fq) \ \text{(evaluation of ${\rm Fr}_\fq$ to the 1-cocycle $a$)},
\end{align*}
\begin{align*}
H^1_{\rm tr}(K_\fq, A) \otimes G_\fq= \Hom(G_\fq, A^{{\rm Fr}_\fq = 1})  \otimes G_\fq \simeq A^{{\rm Fr}_\fq = 1}; \
f \otimes \sigma \mapsto f(\sigma).
\end{align*}
Since $A/({\rm Fr}_\fq-1)A\simeq R$, one has $\det(1 - {\rm Fr}_\fq  \mid A) = 0$, so
there exists a unique polynomial $Q_\fq(x) \in R[x]$ such that
$(x - 1)Q_\fq(x) = \det(1 - {\rm Fr}_\fq x \mid A)$ in $R[x]$.
By the Cayley-Hamilton theorem, we know that $({\rm Fr}_\fq^{-1} - 1)Q_\fq({\rm Fr}_\fq^{-1})$ annihilates $A$,
so we have a well-defined map
$$
Q_\fq({\rm Fr}_\fq^{-1}): A/({\rm Fr}_\fq-1)A \to  A^{{\rm Fr}_\fq = 1}; \ a \mapsto Q_\fq({\rm Fr}_\fq^{-1})a.
$$
(This is actually an isomorphism, see \cite[Cor. A.2.7]{R}.)

Now we define $\varphi^{\rm fs}_\fq$ to be the composite homomorphism

\begin{align*}
\varphi^{\rm fs}_\fq \colon H^1_{f}(K_\fq, A) \simeq A/({\rm Fr}_\fq - 1)A \xrightarrow{Q_\fq({\rm Fr}_\fq^{-1})}
A^{{\rm Fr}_\fq = 1} \simeq H^1_{\rm tr}(K_\fq, A) \otimes G_\fq.
\end{align*}

\subsection{Application of the Chebotarev density theorem}

In this subsection, we prove several basic results that will be used later.

For an $R$-module $X$ and an ideal $I$ of $R$ we consider the $R$-submodule
\[ X[I] := \{x \in X \mid ax=0\, \text{ for all } \, a \in I\}\]
of $X$ comprising elements that are annihilated by elements of $I$.

\begin{proposition}\label{prop h1}
Assume $(A \otimes_R  \Bbbk )^{G_K}$ vanishes. Then, for any ideal $I$ of $R$, the homomorphism $H^1(K, A[I]) \to H^1(K, A)[I]$
induced by the inclusion $A[I] \hookrightarrow A$ is bijective.
\end{proposition}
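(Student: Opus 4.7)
The approach is to reduce to the case of a principal ideal by induction on the number of generators of $I$, and to establish the principal case by a standard long-exact-sequence argument after first proving that $H^{0}(K,M)=0$ for all the modules $M$ that will occur. The latter vanishing will be extracted from the hypothesis $(A \otimes_{R}\Bbbk)^{G_{K}}=0$ by a Nakayama-style filtration argument.

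As a preliminary step, I would prove the auxiliary claim that $(N \otimes_{R}A)^{G_{K}}=0$ for every $R$-module $N$. Since $R$ is artinian and $\fp$ is nilpotent, one has $\fp^{n}N=0$ for some $n$, so the $\fp$-adic filtration $N \supseteq \fp N \supseteq \fp^{2}N \supseteq \cdots$ is finite; its graded pieces are $\Bbbk$-vector spaces with trivial $G_{K}$-action. Tensoring with $A$ is exact (as $A$ is $R$-free), so $N \otimes_{R}A$ inherits a finite filtration whose successive quotients are isomorphic to $(\fp^{k}N/\fp^{k+1}N) \otimes_{\Bbbk} (A \otimes_{R}\Bbbk)$. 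Each such quotient has vanishing $G_{K}$-invariants by the hypothesis, and a d\'evissage then gives the claim.

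Specializing, one obtains $A^{G_{K}}=(iA)^{G_{K}}=(A/iA)^{G_{K}}=0$ for every $i \in R$, together with the analogous vanishing for the modules $A[I'] \cong \mathrm{Ann}_{R}(I') \otimes_{R}A$ (the isomorphism following from $A$ being $R$-free) and their quotients by multiplication by ring elements. With these vanishings the case $I=(i)$ follows by splitting $0 \to A[i] \to A \xrightarrow{i} A \to A/iA \to 0$ into the two short exact sequences $0 \to A[i] \to A \to iA \to 0$ and $0 \to iA \to A \to A/iA \to 0$ and taking their long exact sequences. The vanishing of the relevant $H^{0}$'s yields injections $H^{1}(K,A[i]) \hookrightarrow H^{1}(K,A)$ and $H^{1}(K,iA) \hookrightarrow H^{1}(K,A)$; since the composition $H^{1}(K,A) \xrightarrow{i} H^{1}(K,iA) \hookrightarrow H^{1}(K,A)$ is multiplication by $i$ and the second arrow is injective, the image of $H^{1}(K,A[i])$ in $H^{1}(K,A)$ coincides with $H^{1}(K,A)[i]$.

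For a general ideal $I=(i_{1}, \dots, i_{k})$, I induct on $k$. Setting $I' = (i_{1}, \dots, i_{k-1})$, the inductive hypothesis supplies $H^{1}(K,A[I']) \cong H^{1}(K,A)[I']$. Applying the principal case to $A[I']$ in place of $A$ --- which is legitimate because $A[I']$ is of the form $N \otimes_{R} A$ and so inherits all the required vanishing properties --- gives $H^{1}(K,A[I'][i_{k}]) \cong H^{1}(K,A[I'])[i_{k}]$. Combining this with the identifications $A[I'][i_{k}]=A[I]$ and $H^{1}(K,A[I'])[i_{k}] \cong H^{1}(K,A)[I'] \cap H^{1}(K,A)[i_{k}]=H^{1}(K,A)[I]$ completes the proof. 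The main obstacle is the preliminary Nakayama vanishing: it must be formulated flexibly enough to apply uniformly not only to $A$ itself but to all the auxiliary modules $A[I']$, $iA[I']$ and $A[I']/iA[I']$ that arise when the principal case is invoked at each stage of the induction.
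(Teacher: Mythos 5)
Your proof is correct, but it follows a genuinely different route from the paper's, and the contrast is worth noting. The authors explicitly flag that the Mazur--Rubin proof of this statement relied on a lemma now known to contain an error, so that ``their induction argument apparently fails''; their own proof therefore deliberately avoids induction on the number of generators, handling all of $r_1,\dots,r_d$ at once via the short exact sequence $0 \to A[I] \to A \to A/A[I] \to 0$ together with the embedding $A/A[I] \hookrightarrow A^d$, $a\mapsto(r_1a,\dots,r_da)$, and the two vanishing statements $(A/A[I])^{G_K}=0$ (Lemma \ref{lemma S}) and $(\coker(A\to A^d))^{G_K}=0$ (Lemma \ref{fixed-cor}). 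You instead resurrect the inductive reduction to the principal case, but you make it work by first proving the uniform vanishing claim $(N\otimes_R A)^{G_K}=0$ for \emph{every} $R$-module $N$, via the finite $\fp$-adic filtration of $N$ and d\'evissage. This is exactly what is needed to run the principal-case argument for $A[I']$ in place of $A$, since $A[I']$ is no longer $R$-free but, together with $i_kA[I']$ and $A[I']/i_kA[I']$, is of the form $N\otimes_R A$ (using that $A$ is free, hence flat, over $R$; e.g.\ $A[I']\cong R[I']\otimes_R A$, $i_kA[I']\cong i_kR[I']\otimes_R A$, $A[I']/i_kA[I']\cong(R[I']/i_kR[I'])\otimes_R A$). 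What you gain is a cleaner and more general vanishing lemma which in one stroke subsumes both the (finite-filtration case of) the paper's Lemma \ref{lemma S} and their Lemma \ref{fixed-cor}(ii); what the paper's route buys is a shorter argument that never needs to be re-run on a module other than $A$ itself, and which therefore sidesteps the subtlety you correctly flag at the end — that the auxiliary vanishing must be formulated flexibly enough to apply to the non-free modules $A[I']$ and their subquotients. Both are valid; yours is a repair of the broken induction, theirs is an end-run around it.
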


Although a proof of this result was given by Mazur and Rubin in \cite[Lem. 3.5.3]{MRkoly}, it relies on the validity of a lemma, which contains an error (see \cite[Lem. 2.1.4]{MRkoly} and \cite[`Erratum' on p.182]{MRselmer}), and so their induction argument apparently fails. For the reader's convenience, we shall give a full proof of Proposition~\ref{prop h1}.

To prove this proposition, we need the following algebraic lemmas.

\begin{lemma}\label{lemma S}
Let $(S, \fp_{S})$ be a local ring, $G$ a group and $B$ an $S[G]$-module.
%Suppose that $B$ is a free $S$-module of finite rank and that $(B/\fp_{S}B)^{G} = 0$.
Suppose that $B$ is a flat $S$-module and that $(B/\fp_{S}B)^{G} = 0$.
Then we have
$$
B^G = {\bigcap}_{i = 0}^{\infty}(\fp^{i}_{S}B)^{G}.
$$
In particular, if $\mathfrak{p}_S^n B=0$ for a sufficiently large $n$, then we have $B^G=0$.
\end{lemma}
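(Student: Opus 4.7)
The plan is to prove both inclusions of the claimed equality $B^{G} = \bigcap_{i \geq 0}(\mathfrak{p}_{S}^{i}B)^{G}$. The inclusion $\bigcap_{i \geq 0}(\mathfrak{p}_{S}^{i}B)^{G} \subseteq B^{G}$ is trivial since $\mathfrak{p}_{S}^{i}B \subseteq B$ for every $i$. For the reverse inclusion it suffices, given $b \in B^{G}$, to show by induction on $i$ that $b$ lies in $\mathfrak{p}_{S}^{i}B$. The base case $i=0$ is immediate, and the heart of the argument is the inductive step.

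For the inductive step I would suppose $b \in B^{G} \cap \mathfrak{p}_{S}^{i}B$ and study the image of $b$ in the quotient $\mathfrak{p}_{S}^{i}B/\mathfrak{p}_{S}^{i+1}B$. Applying flatness of $B$ over $S$ to the short exact sequence $0 \to \mathfrak{p}_{S}^{i+1} \to \mathfrak{p}_{S}^{i} \to \mathfrak{p}_{S}^{i}/\mathfrak{p}_{S}^{i+1} \to 0$ yields a natural $G$-equivariant isomorphism
\begin{equation*}
\mathfrak{p}_{S}^{i}B/\mathfrak{p}_{S}^{i+1}B \;\cong\; (\mathfrak{p}_{S}^{i}/\mathfrak{p}_{S}^{i+1}) \otimes_{S} B \;\cong\; (\mathfrak{p}_{S}^{i}/\mathfrak{p}_{S}^{i+1}) \otimes_{k} (B/\mathfrak{p}_{S}B),
\end{equation*}
where $k := S/\mathfrak{p}_{S}$ and the last equality uses that $\mathfrak{p}_{S}^{i}/\mathfrak{p}_{S}^{i+1}$ is annihilated by $\mathfrak{p}_{S}$. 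Since $G$ acts trivially on the $k$-vector space $V := \mathfrak{p}_{S}^{i}/\mathfrak{p}_{S}^{i+1}$ (choose a $k$-basis to write $V \otimes_{k} (B/\mathfrak{p}_{S}B)$ as a direct sum of copies of $B/\mathfrak{p}_{S}B$), taking $G$-invariants commutes with this tensor factor and we obtain
\begin{equation*}
\left(\mathfrak{p}_{S}^{i}B/\mathfrak{p}_{S}^{i+1}B\right)^{G} \;=\; V \otimes_{k} (B/\mathfrak{p}_{S}B)^{G} \;=\; 0
\end{equation*}
by the given hypothesis. Hence the image of $b$ in $\mathfrak{p}_{S}^{i}B/\mathfrak{p}_{S}^{i+1}B$ is zero, that is, $b \in \mathfrak{p}_{S}^{i+1}B$, as required.

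The final assertion is immediate: if $\mathfrak{p}_{S}^{n}B = 0$ for some $n$ then $B^{G} \subseteq \mathfrak{p}_{S}^{n}B = 0$. The only delicate point in the argument is the identification $\mathfrak{p}_{S}^{i}B/\mathfrak{p}_{S}^{i+1}B \cong V \otimes_{k}(B/\mathfrak{p}_{S}B)$, for which both flatness of $B$ (to pass from $\mathfrak{p}_{S}^{i}\otimes_{S}B$ to $\mathfrak{p}_{S}^{i}B$ and analogously for $\mathfrak{p}_{S}^{i+1}$) and the annihilation of $V$ by $\mathfrak{p}_{S}$ are essential; I expect this to be the main technical point, but it is standard. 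No appeal to $(S, \mathfrak{p}_{S})$ being Noetherian or to Krull's intersection theorem is needed, because the induction extracts the necessary information directly from the residue layer.
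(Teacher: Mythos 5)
Your proof is correct and takes essentially the same route as the paper: use flatness to identify $\fp_S^i B/\fp_S^{i+1}B$ with a direct sum of copies of $B/\fp_S B$, deduce its $G$-invariants vanish, and conclude. The only cosmetic difference is that you phrase the final step as an induction showing $b \in B^G$ lies in each $\fp_S^i B$, while the paper states the equivalent equality $(\fp_S^i B)^G = (\fp_S^{i+1}B)^G$ for all $i$.
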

\begin{proof}
Since $B$ is a flat $S$-module, we have $S[G]$-isomorphisms
$$
\fp^{i}_{S}B/\fp^{i+1}_{S}B \simeq \fp^i_{S}/\fp^{i+1}_{S} \otimes_{S} B \simeq (B/\fp_{S}B)^{\dim_{S/\fp_{S}}(\fp^{i}_{S}/\fp^{i+1}_{S})}.
$$
By the assumption $(B/\fp_{S}B)^{G} = 0$, we see that $(\fp^i_{S} B)^{G} = (\fp^{i+1}_{S}B)^{G}$.
This equality holds for arbitrary $i$, so we have $B^{G} = {\bigcap}_{i=0}^{\infty}(\fp^{i}_{S}B)^{G}$.
\end{proof}

\begin{lemma}\label{fixed-cor} If $(A \otimes_R  \Bbbk )^{G_K}$ vanishes, then the following claims are valid.  %Let $d$ be a positive integer.
\begin{itemize}
\item[(i)] For any ideals $I_{1}, \ldots, I_{d}$ of $R$, we have
$$
\left({\rm coker}\left( A \to A/I_{1}A \times \cdots \times A/I_{d}A \right) \right)^{G_{K}} = 0,
$$
where $A \to A/I_{1}A \times \cdots \times A/I_{d}A$ is the diagonal map.
\item[(ii)] For any elements $r_{1}, \ldots, r_{d} \in R$, we have
$$
\left({\rm coker}\left( A \xrightarrow{ (r_{1}, \ldots, r_{d}) \times } A^{d} \right) \right)^{G_{K}} = 0.
$$
\end{itemize}
\end{lemma}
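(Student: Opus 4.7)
The plan is to reduce both parts of the lemma to a single general statement: for any finitely generated $R$-module $M$ equipped with the trivial $G_{K}$-action, the $R[G_{K}]$-module $A\otimes_{R}M$ satisfies $(A\otimes_{R}M)^{G_{K}}=0$.

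To prove this general statement, I would argue as follows. Since $R$ is artinian local with residue field $\Bbbk$, the module $M$ admits a finite filtration $0=M_{0}\subset M_{1}\subset\cdots\subset M_{n}=M$ whose successive quotients $M_{i}/M_{i-1}$ are all isomorphic to $\Bbbk$ (the unique simple $R$-module). Because $A$ is free, hence flat, over $R$, applying $A\otimes_{R}(-)$ preserves exactness and yields, for each $i$, a short exact sequence of $R[G_{K}]$-modules
\[
0\to A\otimes_{R}M_{i-1}\to A\otimes_{R}M_{i}\to A\otimes_{R}\Bbbk \to 0.
\]
A straightforward induction on $i$, using the left-exactness of $(-)^{G_{K}}$ and the hypothesis $(A\otimes_{R}\Bbbk)^{G_{K}}=0$, then gives $(A\otimes_{R}M)^{G_{K}}=0$.

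For claim (i), I would observe that the diagonal map $A\to A/I_{1}A\times\cdots\times A/I_{d}A$ is obtained by applying the exact functor $A\otimes_{R}(-)$ to the diagonal map of $R$-modules $R\to\prod_{i=1}^{d}R/I_{i}$; hence its cokernel is canonically isomorphic to $A\otimes_{R}M$, where $M:=\mathrm{coker}(R\to\prod_{i=1}^{d}R/I_{i})$ is a finite $R$-module with trivial $G_{K}$-action. Similarly, for claim (ii), the map $A\xrightarrow{(r_{1},\ldots,r_{d})\times}A^{d}$ is obtained by applying $A\otimes_{R}(-)$ to the $R$-linear map $R\xrightarrow{(r_{1},\ldots,r_{d})}R^{d}$, so its cokernel is isomorphic to $A\otimes_{R}Q$ with $Q:=\mathrm{coker}(R\to R^{d})$ a finite $R$-module with trivial $G_{K}$-action. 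In both cases the general statement above yields the desired vanishing.

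I do not anticipate any serious obstacle here: the essential input is the flatness of $A$ over $R$, which lets one identify the two given cokernels as $A\otimes_{R}N$ for a finite $R$-module $N$ with trivial $G_{K}$-action, and then perform the d\'evissage back to the residue-field hypothesis $(A\otimes_{R}\Bbbk)^{G_{K}}=0$. (As a conceptual alternative, one could instead invoke Lemma~\ref{lemma S} directly on each of these cokernels after verifying flatness over a suitable quotient ring of $R$, but the filtration approach is cleaner because it avoids any flatness issue for the cokernels themselves.)
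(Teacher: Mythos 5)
Your proof is correct and takes a genuinely cleaner route than the paper's. The paper argues by induction on $d$ using the kernel--cokernel lemma to split the cokernels into pieces of the form $A/(J+I_d)A$, and then applies Lemma~\ref{lemma S} to each such piece after noting it is flat over the quotient ring $R/(J+I_d)$. Your approach instead isolates the right general statement once and for all: for every finitely generated $R$-module $M$ carrying the trivial $G_K$-action, the module $A\otimes_R M$ has trivial $G_K$-invariants. Flatness of $A$ identifies both cokernels in the lemma as modules of this form (via right-exactness of $A\otimes_R(-)$ this identification in fact needs no flatness, while the d\'evissage of $M$ by $\Bbbk$-quotients together with flatness of $A$ and left-exactness of $(-)^{G_K}$ handles the general statement). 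This avoids the kernel--cokernel manipulations entirely and works uniformly for any finite $R$-module, whereas Lemma~\ref{lemma S} only applies directly to modules that are themselves flat over a quotient of $R$. Your closing aside about invoking Lemma~\ref{lemma S} directly on the cokernels is the one slightly imprecise point---those cokernels are typically not flat over any quotient ring, which is exactly why the paper has to decompose them first; but you acknowledge this and your filtration argument sidesteps the issue correctly.
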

\begin{proof}
We prove claim (i) by induction on $d$. When $d=1$, the claim is clear.
When $d>1$, we set
$$
M_{d} := {\rm coker}\left( A \to A/I_{1}A \times \cdots \times A/I_{d}A \right)
$$
and
$$
M_{d-1} := {\rm coker}\left( A \to A/I_{1}A \times \cdots \times A/I_{d-1}A \right).
$$
%where the maps are the diagonal map. Note that $M_{0} = 0$.
Applying the `kernel-cokernel lemma' (see \cite[Exer. 2 in Chap. I, \S 3]{NSW}) to the sequence
$$A \stackrel{f}{\to } A/I_{1}A \times \cdots \times A/I_{d}A \stackrel{g}{\to} A/I_{1}A \times \cdots \times A/I_{d-1}A,$$
where $g$ is the natural projection, we obtain the exact sequence
$$\ker(g\circ f)\to \ker (g) \to \coker (f)\to  \coker(g\circ f) \to \coker(g).$$
Noting that
\begin{align*}
\ker (g\circ f) &= I_1\cap \cdots \cap I_{d-1}=:J,
\\
\ker(g) &= A/I_dA,
\\
\coker(f) &= M_d,
\\
\coker(g\circ f) &= M_{d-1},
\\
\coker(g) &= 0,
\end{align*}
we have an exact sequence
$$
0 \to A/(J+I_d)A \to M_d \to M_{d-1} \to 0.
$$
By the inductive hypothesis, we have $M_{d-1}^{G_K}=0$. Also, by applying Lemma \ref{lemma S} to $S:=R/(J+I_d)$, $B:=A/(J+I_d)A$ and $G:=G_K$, we see that $(A/(J+I_d)A)^{G_K}=0$. Hence we have $M_d^{G_K}=0$. This proves (i).

Next, we prove claim (ii). We set $I_{i} := R[r_{i}]$ for each $1 \leq i \leq d$ and consider the sequence
$$ A \stackrel{f}{\to} A/I_1 A \times \cdots \times A/I_d A \stackrel{g}{\to} A^d,$$
where $g$ is induced by the `multiplication by $(r_1,\ldots,r_d)$':
$$g: A/I_1 A \times \cdots \times A/I_d A \to A^d; \  (a_1,\ldots,a_d) \mapsto (r_1a_1,\ldots,r_da_d).$$
(Note that this is injective.) Using the kernel-cokernel lemma, we obtain the exact sequence
$$0 \to \coker(f)\to \coker(g\circ f) \to A/r_1A \times \cdots \times A/r_dA \to 0. $$
Since we know that $\coker(f)^{G_K}=0$ by (i) and that $(A/r_iA)^{G_K}=0$ by Lemma \ref{lemma S}, we have $\coker(g\circ f)^{G_K}=0$, i.e.
$$
\left({\rm coker}\left( A \xrightarrow{ (r_{1}, \ldots, r_{d}) \times } A^{d} \right) \right)^{G_{K}} = 0.
$$
\end{proof}
%
%Now we prove Proposition \ref{prop h1}.
\begin{proof}[Proof of Proposition \ref{prop h1}]
Let $\{r_1, \ldots, r_d\}$ be a set of generators of an ideal $I$ of $R$.
We have a short exact sequence
\begin{align} \label{ex1}
0 \to A[I] \to A \to A/A[I] \to 0
\end{align}
and an injection
\begin{align} \label{inj1}
A/A[I] \hookrightarrow A^{d};\ a \mapsto (r_1 a ,\ldots, r_d a).
\end{align}
Since $(A \otimes_R \Bbbk)^{G_K} = 0$, we see by Lemma~\ref{lemma S} that the sequence (\ref{ex1}) induces an exact sequence
\begin{align*}
0 \to H^1(K, A[I]) \to H^1(K, A) \to H^1(K, A/A[I]).
\end{align*}
(We apply Lemma~\ref{lemma S} with $S:=R/R[I]$ and $B:=A/A[I]$.) One also sees by Lemma~\ref{fixed-cor}(ii) that (\ref{inj1}) induces an injection
\begin{align*}
H^1(K, A/A[I]) \hookrightarrow H^1(K, A^{d}) = H^1(K, A)^d.
\end{align*}
Since the composition map
\begin{align*}
H^1(K, A) \to H^1(K, A/A[I]) \to H^1(K, A)^d
\end{align*}
is multiplication by $(r_1, \ldots, r_d)$, we have
\begin{align*}
H^1(K, A[I]) &= \ker\left( H^1(K, A) \to H^1(K, A/A[I]) \right)
\\
&= \ker\left( H^1(K, A) \xrightarrow{ (r_1, \ldots, r_d)\times } H^1(K, A)^d \right)
\\
&= H^1(K, A)[I].
\end{align*}
\end{proof}

%We fix an injection $\Bbbk \hookrightarrow R$.
%Then we get $A \otimes_R \Bbbk \hookrightarrow A$.
%Note that the image of $\Bbbk \hookrightarrow R$ is $R[\fp]$.

\begin{corollary}[{\cite[Lem. 3.5.3]{MRkoly}}]\label{dualselisom}
Suppose that $(A \otimes_R \Bbbk)^*(1)^{G_K}$ vanishes.
%or $A = R(1)$
Then, for any ideal $I$ of $R$, the inclusion map $A^\ast(1)[I] \hookrightarrow A^*(1)$ induces an isomorphism
\begin{align*}
H^1_{\cF^*}(K, A^\ast(1)[I]) \simeq H^1_{\cF^*}(K, A^*(1))[I].
\end{align*}
(Here $\cF$ denotes the fixed Selmer structure on $A$, and $\cF^\ast$ the dual Selmer structure on $A^\ast(1)$ of $\cF$. Note that the Selmer structure on $A^\ast(1)[I]$ induced by $\cF^\ast$ coincides with the dual of the Selmer structure on $A/IA$ induced by $\cF$, so the notation $H^1_{\cF^*}(K, A^\ast(1)[I])$ makes no confusion.)
\end{corollary}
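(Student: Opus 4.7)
The plan is to deduce the isomorphism by applying Proposition~\ref{prop h1} to the module $A^*(1)$ and then comparing local conditions. First I would note that the hypothesis that $((A \otimes_R \Bbbk)^*(1))^{G_K}$ vanishes is, as observed in the remark following Hypothesis~\ref{hyp2}, equivalent to $(A^*(1) \otimes_R \Bbbk)^{G_K} = 0$. Since $A^*(1)$ is a free $R$-module of finite rank with continuous $G_K$-action, it satisfies the hypothesis of Proposition~\ref{prop h1}. An inspection of the proof of that proposition shows that every step is insensitive to replacing $G_K$ throughout by the quotient $G_{K,S} := \Gal(K_S/K)$: Lemmas~\ref{lemma S} and \ref{fixed-cor} are purely algebraic and apply to any group, while the remaining arguments invoke only the long exact sequence in Galois cohomology attached to short exact sequences of $G_{K,S}$-modules. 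In this way I obtain a canonical isomorphism
$$H^1(\cO_{K,S}, A^*(1)[I]) \simeq H^1(\cO_{K,S}, A^*(1))[I]$$
induced by the inclusion $A^*(1)[I] \hookrightarrow A^*(1)$.

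Second, I would use the definition of the Selmer structure induced on the submodule $A^*(1)[I]$ of $A^*(1)$ (recalled in \S\ref{section sel}): at each place $v \in S$ the local condition is $H^1_{\cF^*}(K_v, A^*(1)[I]) = \ker(H^1(K_v, A^*(1)[I]) \to H^1_{/\cF^*}(K_v, A^*(1)))$. Hence $H^1_{\cF^*}(K, A^*(1)[I])$ coincides with the kernel of the composite $H^1(\cO_{K,S}, A^*(1)[I]) \to \bigoplus_{v \in S} H^1_{/\cF^*}(K_v, A^*(1))$, while $H^1_{\cF^*}(K, A^*(1))[I]$ is tautologically the kernel of the analogous composite starting from $H^1(\cO_{K,S}, A^*(1))[I]$. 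Naturality of the restriction maps with respect to the inclusion $A^*(1)[I]\hookrightarrow A^*(1)$ ensures that these two composites fit into a commutative square whose upper horizontal arrow is the isomorphism of the previous paragraph and whose lower horizontal arrow is the identity on $\bigoplus_{v \in S} H^1_{/\cF^*}(K_v, A^*(1))$. Taking kernels then yields the required isomorphism. The only point in this plan that requires genuine checking is that the proof of Proposition~\ref{prop h1} extends to $\cO_{K,S}$-cohomology, but this is routine and I do not expect the argument to pose any serious obstacle.
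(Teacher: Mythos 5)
Your proposal follows essentially the same route as the paper: compare the defining exact sequences for the two Selmer modules via a commutative diagram, invoke Proposition~\ref{prop h1} for the middle vertical isomorphism, and use compatibility of the local conditions to conclude (the paper phrases this last point by noting that $\bigoplus_{v\in S} H^1_{/\cF^*}(K_v,A^*(1)[I]) \to \bigoplus_{v\in S} H^1_{/\cF^*}(K_v,A^*(1))$ is injective, which you build in directly by targeting $\bigoplus_{v\in S} H^1_{/\cF^*}(K_v,A^*(1))$ at both levels). You have also correctly flagged, and correctly resolved, the small point the paper glides over — namely that Proposition~\ref{prop h1} is stated for $H^1(K,-)$ but is needed here for $H^1(\cO_{K,S},-)$, and indeed the argument of that proposition transfers verbatim to the profinite group $\Gal(K_S/K)$.
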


\begin{proof}
%Noting that $H^{1}(K_{v}, (-)^{*}(1))/H^{1}_{\cF^{*}}(K_{v}, (-)^{*}(1)) \simeq H^{1}_{\cF}(K_{v}, -)^{*}$,
We have the following commutative diagram:
\begin{align*}
\xymatrix{
0 \ar[r] & H^{1}_{\cF^{*}}(K, A^{*}(1)[I])  \ar[r] \ar[d] & H^{1}(\cO_{K, S}, A^{*}(1)[I])  \ar[r] \ar[d] &
\ar[d] %\bigoplus_{v \in S}\frac{H^1(K_v,A^\ast(1)[I])}{H^1_{\cF^\ast}(K_v,A^\ast(1)[I])}
\bigoplus_{v \in S}H^1_{/\cF^{*}}(K_v,A^\ast(1)[I])
\\
0 \ar[r] & H^{1}_{\cF^{*}}(K, A^{*}(1))[I]  \ar[r] & H^{1}(\cO_{K, S}, A^{*}(1))[I]  \ar[r] &
%\bigoplus_{v \in S}\frac{H^1(K_v,A^\ast(1))}{H^1_{\cF^\ast}(K_v,A^\ast(1))}.
\bigoplus_{v \in S}H^1_{/\cF^{*}}(K_v,A^\ast(1)).
%H^{1}_{\cF}(K_{v}, A)^{*}.
}
\end{align*}
Here each row is exact.
The middle vertical arrow is an isomorphism by Proposition~\ref{prop h1}.
The right vertical arrow is injective since the dual $H^{1}_{\cF}(K_{v}, A) \to H^{1}_{\cF}(K_{v}, A/IA)$ is surjective (by the definition of the induced Selmer structure).
Hence we conclude that the left vertical arrow is an isomorphism.
\end{proof}

Let $\tau \in G_{K_{M}}$ be the element in Hypothesis~\ref{hyp1}(ii).
Recall that $\cP$ is the set of primes $\fq \not \in S $
of $K$ such that ${\rm Fr}_{\fq}$ is conjugate to $\tau$ in $\Gal(K(A)_{M}/K)$.

%\begin{lemma}[{\cite[Proposition 3.6.1]{MRkoly}}]\label{chebotarev}
%Let $c \in H^1(K, A)$ and $c^* \in H^1(K, A^*(1))$ be non-zero elements.
%Then there is a subset $\cS \subseteq  \cP$ of positive density such that
%${\rm loc}_\fq(c) \neq 0$ and ${\rm loc}_\fq(c^*) \neq 0$ for every $\fq \in \cS$, where ${\rm loc}_\fq$ denotes the localization map $H^1(K,-)\to H^1(K_\fq,-)$.
%\end{lemma}

\begin{lemma}[{\cite[Prop. 3.6.1]{MRkoly}}]\label{chebotarev}
Assume Hypothesis \ref{hyp1}. Let $c_{1}, \ldots, c_{s} \in H^1(K, A)$ and $c_{1}^{*}, \ldots, c_{t}^{*} \in H^1(K, A^*(1))$ be non-zero elements.
If $s+t < p$, then there is a subset $\cQ \subseteq  \cP$ of positive density such that
${\rm loc}_{\fq}(c_{i}) $ and ${\rm loc}_{\fq}(c^{*}_{i}) $ are all non-zero for every $\fq \in \cQ$,
where ${\rm loc}_{\fq}$ denotes the localization map $H^{1}(K, -) \to H^{1}(K_{\fq}, -)$.
\end{lemma}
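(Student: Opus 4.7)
The plan is to adapt the Chebotarev-style argument of Mazur--Rubin to the self-injective setting. The main idea is to realise each non-zero cohomology class as a $G_{K}$-equivariant homomorphism via inflation-restriction, assemble these into a single common target Galois extension $\tilde L/K$, and then bound the density of Frobenius classes killing any given class by exploiting the irreducibility of $A \otimes_{R} \Bbbk$ from Hypothesis~\ref{hyp1}(i). First, by Hypothesis~\ref{hyp1}(iii), the inflation-restriction sequence along $K \subseteq K(A)_{M}$ yields injections
\[
H^{1}(K, A) \hookrightarrow \Hom_{G_{K}}(G_{K(A)_{M}}, A),\qquad
H^{1}(K, A^{\ast}(1)) \hookrightarrow \Hom_{G_{K}}(G_{K(A)_{M}}, A^{\ast}(1)),
\]
using that $G_{K(A)_{M}}$ acts trivially on both $A$ and $A^{\ast}(1)$. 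Each non-zero $c_{i}$ therefore corresponds to a non-zero $G_{K}$-equivariant homomorphism $\phi_{i}$, and each non-zero $c_{j}^{\ast}$ to some $\psi_{j}$. I would let $\tilde L \subseteq \overline{\bQ}$ be the fixed field of $\bigcap_{i} \ker(\phi_{i}) \cap \bigcap_{j} \ker(\psi_{j})$, so that $\tilde L/K$ is Galois and $H := \Gal(\tilde L/K(A)_{M})$ embeds into $A^{\oplus s} \oplus A^{\ast}(1)^{\oplus t}$ via the combined map $(\phi_{1}, \ldots, \phi_{s}, \psi_{1}, \ldots, \psi_{t})$.

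The second step is a cocycle computation. For $\fq \in \cP$ unramified in $\tilde L$, after choosing a decomposition group I would write ${\rm Fr}_{\fq} = \sigma h$ with $\sigma \in G_{K}$ a fixed lift of $\tau$ and $h \in G_{K(A)_{M}}$; the cocycle identity then yields $c_{i}({\rm Fr}_{\fq}) = c_{i}(\sigma) + \sigma \cdot \phi_{i}(h)$. The finite part of ${\rm loc}_{\fq}(c_{i})$ lives in $H^{1}_{f}(K_{\fq}, A) \cong A/({\rm Fr}_{\fq}-1)A$, which by Hypothesis~\ref{hyp1}(ii) is identified with $A/(\tau-1)A \cong R$, and depends on $h$ only through the image of $\phi_{i}(h)$ in this quotient (shifted by $c_{i}(\sigma)$). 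Using the irreducibility of $\bar A := A \otimes_{R} \Bbbk$ together with Nakayama's lemma, I would argue that $R \cdot \im(\phi_{i}) = A$, so that the image of $\im(\phi_{i})$ in $R = A/(\tau-1)A$ generates $R$ as an $R$-module and, since $R$ is local, contains a unit. This image is therefore an additive subgroup of $R$ of size at least $p$, so the fraction of $h \in H$ for which the finite part of ${\rm loc}_{\fq}(c_{i})$ vanishes is at most $1/p$; the same bound holds for each $\psi_{j}$.

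Finally, the Chebotarev density theorem applied to $\tilde L/K$ converts these cardinality bounds into density bounds on primes of $K$: the density of $\fq \in \cP$ for which some ${\rm loc}_{\fq}(c_{i})$ or ${\rm loc}_{\fq}(c_{j}^{\ast})$ vanishes is at most $(s+t)/p$, which is strictly less than $1$ by the hypothesis $s+t < p$. Thus the complementary subset $\cQ \subseteq \cP$ has strictly positive density and every $\fq \in \cQ$ satisfies the required non-vanishing condition. The main obstacle is the density estimate in the second paragraph: since $\phi_{i}$ is only an equivariant homomorphism of abelian groups and is not a priori $R$-linear, the image $\im(\phi_{i})$ need not be an $R$-submodule of $A$, and one must pass to the $R$-submodule it generates before applying the irreducibility of $\bar A$ and Nakayama's lemma. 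Carrying this out carefully (and verifying that the ``bad'' condition really does cut out a subset of $H$ of the claimed relative size, not just asymptotically) is where the argument requires genuine care.
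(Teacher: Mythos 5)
Your overall strategy coincides with the paper's: restrict non-zero classes to $G_{K(A)_M}$, combine the resulting equivariant homomorphisms into a single finite extension $\tilde L/K$, evaluate a cocycle at a Frobenius of the form $\tau\gamma$, bound the density of bad $\gamma$, and finish with Chebotarev. The cocycle identity and the final Chebotarev step are both in order.

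The gap is in the density estimate, and it is exactly where you flag trouble at the end. You assert that $R\cdot\im(\phi_i) = A$, deduced from irreducibility of $\bar A$ and Nakayama, and from this you conclude that the image of $\im(\phi_i)$ in $R = A/(\tau-1)A$ generates $R$ and so contains a unit. But this first assertion does not follow and is in general false. The image of $R\cdot\im(\phi_i)$ in $\bar A = A/\fp A$ is a $\Bbbk[G_K]$-submodule, and irreducibility only tells you it is $0$ or $\bar A$; the zero case is not excluded. Concretely, $\phi_i$ may take all its values in $\fp A$ (a non-zero class $c_i$ need not have image meeting $A\setminus \fp A$), in which case $R\cdot\im(\phi_i)\subseteq \fp A \subsetneq A$ and the Nakayama step has nothing to bootstrap from. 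Passing to the $R$-submodule generated by $\im(\phi_i)$, which you correctly note is necessary since $\phi_i$ is not $R$-linear, does not repair this.

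What the argument actually needs is much weaker: that $\im(\phi_i)\not\subseteq (\tau-1)A$, equivalently that the composite $f\circ\mathrm{Res}(c_i)\colon G_{K(A)_M}\to A/(\tau-1)A$ is non-zero. This is what the paper proves, by showing that $(\tau-1)A$ contains no non-trivial $G_K$-stable $R$-submodule of $A$. The point is that any such non-zero $R$-submodule $N$ has $N[\fp]\neq 0$; since $A[\fp]\cong A\otimes_R\Bbbk$ as $\Bbbk[G_K]$-modules and the latter is irreducible, $N[\fp]=A[\fp]$; but $A[\fp]$ has $\Bbbk$-dimension $\mathrm{rank}_R(A)$ while $((\tau-1)A)[\fp]$ has dimension $\mathrm{rank}_R(A)-1$ by Hypothesis~\ref{hyp1}(ii), so $A[\fp]\not\subseteq(\tau-1)A$. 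Applied to $N=R\cdot\im(\phi_i)$, this gives $f\circ\mathrm{Res}(c_i)\neq 0$. Once you know the homomorphism is non-zero you do not need its image to contain a unit: $A/(\tau-1)A\cong R$ is a finite $p$-group, so any non-trivial subgroup has order at least $p$, and the preimage of the single element $a_i$ is a coset of the kernel (or empty), giving the $1/p$ bound directly. Replacing your claim with this weaker and correct statement makes the rest of your argument go through.
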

\begin{proof}
Let $B \in \{A, A^{*}(1) \}$.
By Hypothesis \ref{hyp1}(iii), we know that the restriction map
$$
{\rm Res}: H^{1}(K, B) \to H^{1}(K(A)_{M}, B)^{G_{K}} = \Hom(G_{K(A)_{M}}, B)^{G_{K}}
$$
is injective.
Since $B$ is an irreducible $R[G_{K}]$-module by Hypothesis \ref{hyp1}(i) and since $(\tau - 1)B$ is a free $R$-module of rank ${\rm rank}_{R}(A) - 1$ by Hypothesis \ref{hyp1}(ii), there is no non-trivial $G_{K}$-stable $R$-submodule of $(\tau - 1)B$.
Hence the natural map
$$
f: H^{1}(K(A)_{M}, B)^{G_{K}} = \Hom(G_{K(A)_{M}}, B)^{G_{K}} \to \Hom(G_{K(A)_{M}}, B/(\tau-1)B)
$$
is also injective.

Now suppose $B=A$. For $1 \leq i \leq s$, let $F_{i}$ be the field corresponding to the subgroup $\ker({\rm Res}(c_{i}))$ of $G_{K(A)_M}$. % = G_{F_{i}}$, and $\widetilde F_i$ the Galois closure of $F_i$ over $K$.
%Then
Note that $F_{i}$ is a finite Galois extension over $K$ (this follows from the fact that ${\rm Rec}(c_i)$ is a $G_K$-homomorphism).
Let $\widetilde{c}_{i} \colon G_{K} \to A$ be a $1$-cocycle, which represents $c_{i}$, and set
$a_{i} := - \widetilde{c}_{i}(\tau) \in A/(\tau-1)A$.
By the definition of $1$-coboundary, note that $a_{i} \in A/(\tau-1)A$ is independent of the choice of the representative $\widetilde{c}_{i}$ of $c_{i}$.
Define $H_{i} \subseteq  G_{K(A)_{M}}$ by
$$
H_{i} = f({\rm Res}(c_{i}))^{-1}(a_{i}).
$$
Since $c_i$ is non-zero and both ${\rm Res}$ and $f$ are injective, we have
$$
[G_{K(A)_M} \colon \ker\left( f({\rm Res}(c_{i}))\right)] \geq p.
$$

Next, we suppose $B=A^\ast(1)$. For $1 \leq i \leq t$, define $F_{i}^{*}$, $a_{i}^{*}$, and $H_{i}^{*} \subseteq  G_{K(A)_M}$ similarly by using $c_{i}^{*}$ instead of $c_{i}$.

Since $s+t <p$ and $[G_{K(A)_M} \colon \ker\left( f({\rm Res}(c))\right)] \geq p$ for each $c \in \{ c_{1}, \ldots, c_{s}, c_{1}^{*}, \ldots, c_{t}^{*} \}$, we have
$$
G_{K(A)_M} \neq H_{1} \cup \cdots \cup H_{s} \cup H_{1}^{*} \cup \cdots \cup H^{*}_{t}.
$$
Now set $F := F_{1} \cdots F_{s}F_{1}^{*} \cdots F_{t}^{*}$ (this is a finite Galois extension of $K$) and define $\cQ$ to be the set of primes $\fq \not \in S $ of $K$ such that
$\fq$ is unramified in $F/K$ and
${\rm Fr}_{\fq}$ is conjugate to $\tau\gamma$ in $\Gal(F/K)$
for some $\gamma \in G_{K(A)_M} \setminus (H_{1} \cup \cdots \cup H_{s} \cup H_{1}^{*} \cup \cdots \cup H^{*}_{t})$.
Then we see that $\cQ \subseteq  \cP$ by construction, and that $\mathcal{Q}$ is of positive density by the Chebotarev density theorem.
If $\fq \in \cQ$, then for any $1 \leq i \leq s$ we have
$$
{\rm loc}_{\fq}(c_{i}) = \widetilde{c}_{i}(\tau\gamma) = \tau \widetilde{c}_{i}(\gamma) + \widetilde{c}_{i}(\tau)
= f({\rm Res}(c_{i}))(\gamma) - a_{i} \neq 0 \text{ in } A/(\tau - 1)A
$$
where we identify $H^{1}_{f}(K_{\fq}, A)=A/({\rm Fr}_{\fq} - 1)A = A/(\tau - 1)A$.
Similarly, we have ${\rm loc}_{\fq}(c_{i}^{*}) \neq 0$ for any $1 \leq i \leq t$ and $\fq \in \cQ$.
This completes the proof.
\end{proof}

\begin{lemma}\label{injective}
Assume Hypothesis \ref{hyp1}. Let $S$ be a self-injective ring and $R \to S$ a surjective ring homomorphism. Then for any free, finitely generated, $R$-submodule $X$ of $H^{1}(K, A)$ the natural homomorphism $X \otimes_{R} S \to H^{1}(K, A \otimes_{R}S)$ is injective.
\end{lemma}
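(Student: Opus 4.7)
Let $I := \ker(R \to S)$, so that $X \otimes_R S \cong X/IX$ and $A \otimes_R S \cong A/IA$. I propose to show directly that any $x \in X$ whose image in $H^1(K, A/IA)$ vanishes already lies in $IX$; this immediately yields injectivity of the map $X \otimes_R S \to H^1(K, A \otimes_R S)$.

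The key algebraic input will be the self-injectivity of $R$. Setting $J := \Ann_R(I)$, the standard double-annihilator identity for noetherian self-injective (equivalently, zero-dimensional Gorenstein) local rings gives $\Ann_R(J) = I$. From this I would draw two consequences, each reducing via a choice of $R$-basis to the rank-one identity $R[J] = I$: since $A$ is free of finite rank over $R$, we have $A[J] = IA$; and since $X$ is free over $R$, we have $X[J] = IX$.

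Next I would rewrite the tautological short exact sequence of $R[G_K]$-modules $0 \to IA \to A \to A/IA \to 0$ as $0 \to A[J] \to A \to A \otimes_R S \to 0$, and pass to the associated long exact sequence in Galois cohomology. Since $A[J]$ is annihilated by $J$, so too is the image of $H^1(K, A[J]) \to H^1(K, A)$, and therefore
$$\ker\bigl( H^1(K, A) \to H^1(K, A \otimes_R S) \bigr) \subseteq H^1(K, A)[J].$$

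To conclude, let $x \in X$ map to zero in $H^1(K, A \otimes_R S)$. By the previous inclusion, $Jx = 0$ in $H^1(K, A)$, and hence (since $X \subseteq H^1(K, A)$) also in $X$, so that $x \in X[J] = IX$, as required. I do not anticipate a substantial obstacle: once the duality $\Ann_R(\Ann_R(I)) = I$ is in hand, the remainder of the argument is purely formal, and in particular the deeper conditions in Hypothesis~\ref{hyp1} do not appear to play a role beyond guaranteeing the ambient setup.
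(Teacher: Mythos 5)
Your proof is correct, and it takes a genuinely different route from the paper's. The paper invokes Lemma~\ref{chebotarev} (a Chebotarev density argument resting on Hypothesis~\ref{hyp1}) to produce, iteratively, an ideal $\fm \in \cN$ such that the sum of localization maps $X \to \bigoplus_{\fq \mid \fm} H^1_f(K_\fq,A)$ is an isomorphism, after which injectivity of $X \otimes_R S \to H^1(K,A\otimes_R S)$ follows from the compatibility $H^1_f(K_\fq,A)\otimes_R S \cong H^1_f(K_\fq,A\otimes_R S)$. You replace all of that arithmetic input with a purely algebraic argument: writing $I := \ker(R\to S)$ and $J := \Ann_R(I)$, self-injectivity of $R$ forces $\Ann_R(J) = I$, hence $A[J] = IA$ and $X[J] = IX$ because $A$ and $X$ are $R$-free; the kernel of $H^1(K,A)\to H^1(K,A/IA)$ is the image of the $J$-torsion group $H^1(K,A[J])$, so it is killed by $J$, and any $x\in X$ dying in $H^1(K,A/IA)$ therefore lies in $X[J]=IX$. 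Both arguments are valid. The paper's version gives the slightly stronger (and here unused) fact that $X$ splits off as a direct sum of local cohomology groups, whereas your version is shorter, holds for an arbitrary quotient ring $S$ of $R$ (self-injectivity of $S$ is never needed), and, as you rightly observe, makes no use of Hypothesis~\ref{hyp1} at all.
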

\begin{proof}
We may assume $X \neq 0$. Let $\cQ$ be the set of all primes $\fq$ in $\cP$ such that ${\rm loc}_{\fq}(X) \not\subseteq   H^{1}_{f}(K_{\fq}, A)$. Note that $\cQ$ is a finite set.
Let $e_{1} \in X$ with $\Ann_{R}(e_{1}) = 0$ and $x$ a generator of $R[\fp]$. (The assumption that $R$ is a zero-dimensional Gorenstein local ring ensures that $R[\fp]$ is a principal ideal.) Note that $a \in R$ is a unit if and only if $xa \neq 0$.
Since $xe_{1} \neq 0$, there is a prime $\fq_{1} \in \cP \setminus \cQ$ with ${\rm loc}_{\fq_{1}}(xe_{1}) \neq 0$ by Lemma \ref{chebotarev}.
Since $H^{1}_{f}(K_{\fq_{1}}, A)$ is a free $R$-module of rank $1$ and ${\rm loc}_{\fq_{1}}(xe_{1}) \neq 0$,
the composition $Re_{1} \to X \to H^{1}_{f}(K_{\fq_{1}}, A)$ is an isomorphism and we have
$$
X \simeq H^1_f(K_{\fq_1},A) \oplus \ker \left( {\rm loc}_{\fq_{1}}|_{X} \right) \simeq Re_{1} \oplus \ker\left( {\rm loc}_{\fq_{1}}|_{X} \right).
$$
In particular, the $R$-module $\ker\left( {\rm loc}_{\fq_{1}}|_{X} \right)$ is free.
Hence we can take an element $e_{2} \in \ker\left( {\rm loc}_{\fq_{1}}|_{X} \right)$ with $\Ann_{R}(e_{2}) = 0$.
Similarly, by Lemma \ref{chebotarev}, we get a prime $\fq_{2} \in \cP \setminus (\cQ \cup \{\fq_{1}\} )$ such that
the composition $Re_{2} \to X \to H^{1}_{f}(K_{\fq_{2}}, A)$ is an isomorphism.
Since $e_{2} \in \ker\left( {\rm loc}_{\fq_{1}}|_{X} \right)$, the composition
$$
Re_{1} \oplus Re_{2} \to X \to H^{1}_{f}(K_{\fq_{1}}, A) \oplus H^{1}_{f}(K_{\fq_{2}}, A)
$$
is an isomorphism and
$$
X = Re_{1} \oplus  Re_{2} \oplus
\left(\ker\left( {\rm loc}_{\fq_{1}}|_{X} \right) \cap \ker\left( {\rm loc}_{\fq_{2}}|_{X} \right) \right).
$$
By repeating this argument, we can find an ideal $\fm \in \cN(\cP \setminus \cQ)$ such that
the sum of localization maps
$$
X \to \bigoplus_{\fq \mid \fm}H^{1}_{f}(K_{\fq}, A)
$$
is an isomorphism. Since $H^{1}_{f}(K_{\fq}, A) \otimes_{R} S \simeq H^{1}_{f}(K_{\fq}, A \otimes_{R} S)$ for any prime $\fq \in \cP$, the map $X \otimes_{R} S \to  \bigoplus_{\fq \mid \fm}H^{1}_{f}(K_{\fq}, A\otimes_R S ) $ is an isomorphism. This implies that $X\otimes_R S \to H^{1}(K, A \otimes_{R}S)$ is injective.
\end{proof}

\section{Stark systems}\label{stark sys sec}
In this section, we review some basic results on Stark systems. We continue to use the notation introduced in the previous section.

\subsection{Definition} \label{defstark}
Let $r$ be a non-negative integer. We recall the definition of Stark systems of rank $r$. The module of Stark systems is defined by the inverse limit
$${\rm SS}_r(A,\cF):=\varprojlim_{\fn \in \cN} {{\bigcap}}_{R}^{r + \nu(\fn)} H_{\cF^\fn}^1(K, A), $$
where the transition maps
\begin{align*}
v_{\fm,\fn}: {{\bigcap}}_{R}^{r + \nu(\fm)} H^1_{\cF^\fm} (K, A) \to {{\bigcap}}_{R}^{r + \nu(\fn)} H_{\cF^\fn}^1 (K, A)
\end{align*}
($\fm, \fn\in \cN$, $\fn\mid \fm$) are defined as follows. For each $\fq \in \cP$, we fix an isomorphism $H^1_{/f}(K_\fq, A)\simeq R$ and let $v_\fq$ be the composition map
\begin{align*}
v_\fq \colon H_{\cF^\fm}^1(K,A) \stackrel{{\rm loc}_\fq}{\to} H^1(K_\fq, A) \to H_{/f}^1(K_\fq, A) \simeq R.
\end{align*}
Since we have the exact sequence
\begin{align*}
0 \to H_{\cF^\fn}^1(K, A) \to H_{\cF^\fm}^1(K,A)
\stackrel{\bigoplus_{\fq \mid \fm / \fn} v_\fq }{\to} R^{\nu(\fm / \fn)},
\end{align*}
we see that ${{\bigwedge}}_{\fq \mid \fm/\fn}v_\fq$ induces
\begin{align*}
{{\bigwedge}}_{\fq \mid \fm/\fn}v_\fq \colon {{\bigcap}}_R^{r+\nu(\fm)} H_{\cF^\fm}^1(K, A) \to
{{\bigcap}}_R^{r+\nu(\fn)} H_{\cF^\fn}^1(K, A),
\end{align*}
by Proposition~\ref{prop self injective}. We denote this map by $v_{\fm,\fn}$.
%Here the order of ${{\bigwedge}}_{\fq \mid \fm/\fn}$ is determined by fixing an injection
%$\cP \hookrightarrow \bZ$ and regarding $\cP$ as a totally ordered set $(\cP, \prec)$. We write
%\begin{align*}
%\fm/\fn \text{ (resp. $\fn$, resp. $\fm$)}
%= \prod_{i=1}^{\nu(\fm/\fn)} \fq_i \left(\text{resp. $\prod_{i=1}^{\nu(\fn)} \fq'_i$,
%resp. $\prod_{i=1}^{\nu(\fm)}\fq_i''$}\right)
%\end{align*}
%so that $\fq_i\prec \fq_j$ (resp. $\fq_i' \prec \fq_j'$, resp. $\fq_i''\prec \fq_j''$) if $i < j$.
%We define
%\begin{align*}
%v_{\fm,\fn} := {\rm sgn}(\fm, \fn) \cdot {{\bigwedge}}_{\fq \mid \fm/\fn}v_\fq
%= {\rm sgn}(\fm, \fn) \cdot v_{\fq_1} \wedge \cdots \wedge v_{\fq_{\nu(\fm/\fn)}},
%\end{align*}
%where ${\rm sgn}(\fm, \fn)$ is defined to be the sign of the permutation
%\begin{align*}
%(\fq_1, \ldots, \fq_{\nu(\fm/\fn)}, \fq'_1,\ldots , \fq'_{\nu(\fn)}) \mapsto (\fq''_1,\ldots, \fq''_{\nu(\fm)}).
%\end{align*}
Note that $v_{\fm,\fn}=\bigwedge_{\fq \mid \fm/\fn}v_\fq$ is well-defined up to sign, but one can explicitly choose a sign for each pair $(\fm,\fn)$ so that one has $v_{\fm', \fn}=v_{\fm, \fn} \circ v_{\fm',\fm}$ when $\fn \mid \fm \mid \fm'$. (For an explicit choice of sign, see \cite[\S 3.1]{sbA}.) Thus the collection $\{v_{\fm,\fn}\}_{\fm,\fn}$ forms an inverse system.

By definition, a Stark system of rank $r$ (for ($A,\cF$)) is an element
\begin{align*}
(\epsilon_\fn)_\fn \in \prod_{\fn\in\cN} {{\bigcap}}_{R}^{r + \nu(\fn)} H_{\cF^\fn}^1(K, A)
\end{align*}
that satisfies $v_{\fm,\fn}(\epsilon_\fm)=\epsilon_{\fn}$ for all ideals $\fm$ and $\fn$ in $\cN$ with $\fn \mid \fm$.

\subsection{Stark systems and Selmer modules}

We can now define the key invariants that are associated to a Stark system.

In this definition we use the fact that each element of ${\bigcap}_R^{r+\nu(\fn)} H^1_{\cF^\fn}(K,A)$ is, by its very definition, a homomorphism of $R$-modules from ${\bigwedge}_R^{r+\nu(\fn)}H^1_{\cF^\fn}(K, A)^*$ to $R$.

\begin{definition}\label{stark ideal}
Let $\epsilon = ( \epsilon_\fn )_\fn \in {\rm SS}_r(A, \cF)$. Then for each non-negative integer $i$ we define an ideal $I_i(\epsilon)$ of $R$ by setting
\begin{align*}
I_i(\epsilon) := \sum_{\fn \in \cN,\ \nu(\fn) = i} {\rm im}(\epsilon_\fn).
\end{align*}
%where we regard $\epsilon_\fn \in {\bigcap}_R^{r+\nu(\fn)} H^1_{\cF^\fn}(K,A)$ as a map ${\bigwedge}_R^{r+\nu(\fn)}H^1_{\cF^\fn}(K, A)^*\to R $. (Recall that ${\bigcap}_R^{r+\nu(\fn)} H^1_{\cF^\fn}(K,A)$ is defined to be $\Hom_R\left({\bigwedge}_R^{r+\nu(\fn)}H^1_{\cF^\fn}(K, A)^*, R\right)$.)
\end{definition}

We consider the following hypothesis.

\begin{hypothesis} \label{hyp large}
There exists an ideal $\fn$ in $\cN$ such that $H^1_{(\cF^\ast)_\fn}(K,A^\ast(1))$ vanishes and $H^1_{\cF^\fn}(K, A)$ is a free $R$-module of rank $r +\nu(\fn)$.
\end{hypothesis}

\begin{remark}\label{free rem}
The following observation will frequently be used: if one assumes Hypothesis~\ref{hyp large}, then, for any ideal $\fm$ in $\cN$ for which the group $H^1_{(\cF^\ast)_\fm}(K,A^\ast(1))$ vanishes, the $R$-module $H^1_{\cF^\fm}(K, A)$ is free of rank $r +\nu(\fm)$.
In fact, let $\fn \in \cN$ be an ideal as in Hypothesis~\ref{hyp large} and $\fm \in \cN$ with $H^1_{(\cF^\ast)_\fm}(K,A^\ast(1))=0$.
Take an ideal $\fd \in \cN$ such that $\fm \mid \fd$ and $\fn \mid \fd$. Then, by the global duality, we have exact sequences
$$
0 \to H^{1}_{\cF^{\fn}}(K, A) \to H^{1}_{\cF^{\fd}}(K, A) \to \bigoplus_{\fq \mid \frac{\fd}{\fn}}H^{1}_{/f}(K_{\fq}, A) \to 0
$$
and
$$
0 \to H^{1}_{\cF^{\fm}}(K, A) \to H^{1}_{\cF^{\fd}}(K, A) \to \bigoplus_{\fq \mid \frac{\fd}{\fm}}H^{1}_{/f}(K_{\fq}, A) \to 0.
$$
By Hypothesis~\ref{hyp large} and the definition of $\cP$, we have $H^{1}_{\cF^{\fn}}(K, A) \simeq R^{r+\nu(\fn)}$ and $H^{1}_{/f}(K_{\fq}, A) \simeq R$ for any $\fq \in \cP$, and so $H^{1}_{\cF^{\fd}}(K, A) \simeq R^{r + \nu(\fd)}$. Since $R$ is a local ring, a projective $R$-module is free. Hence
we conclude that $H^{1}_{\cF^{\fm}}(K, A) \simeq R^{r+\nu(\fm)}$ by using the second (split) exact sequence.
\end{remark}

%\begin{remark}
%\begin{itemize}
%\item[(1)]
%Hypothesis \ref{hyp large} was studied in both {\color{blue}\cite[Hyp. 3.15]{sbA} and \cite[Lem. 4.6]{sakamoto}}. %In a forthcoming work of the second author, relations between the `cartesian condition' (in the sense of Mazur and Rubin) and Hypothesis \ref{hyp large} will be studied.
%We say that a Selmer structure $\cF$ on $A$ is cartesian if
%an injective map $\Bbbk \to R$ induces an isomorphism
%$H^1_{\cF^\fn}(K, A \otimes_R \Bbbk) \simeq H^1_{\cF^\fn}(K, A)[\fp]$ for some large ideal $\fn \in \cN$.
%Then under Hypothesis \ref{hyp1}, one can prove that there is a large ideal $\fn \in \cN$ with $H^1_{\cF^\fn}(K, A) \simeq R^{r +\nu(\fn)}$
%if and only if the Selmer structure $\cF$ is cartesian (See []).
%\end{itemize}
%\end{remark}

\begin{lemma}\label{fitt-lemma}
Assume Hypothesis \ref{hyp large}.
Let $i$ be a positive integer and $\fn \in \cN$ with $\nu(\fn) \geq i$. Suppose that $H^{1}_{(\cF^{*})_{\fn}}(K, A^{*}(1))$ vanishes.
Then we have
\begin{align*}
\Fitt_{R}^{i}\left(H^{1}_{\cF^{*}}(K, A^{*}(1))^{*}\right)
&= \sum_{ \fq \in \cP, \ \fq \mid \fn} \Fitt_{R}^{i-1}\left(H^{1}_{(\cF^{*})_{\fq}}(K, A^{*}(1))^{*}\right)
\\
&= \sum_{\fq \in \cP} \Fitt_{R}^{i-1}\left(H^{1}_{(\cF^{*})_{\fq}}(K, A^{*}(1))^{*}\right).
\end{align*}
%Here ${\rm Fitt}_R^j$ denotes the $j$-th Fitting ideal of $R$-modules
\end{lemma}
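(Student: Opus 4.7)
The plan is to apply global duality (Theorem \ref{gd}) to convert the problem into a computation of minor ideals of an explicit free presentation of $M := H^{1}_{\cF^{*}}(K, A^{*}(1))^{*}$, and then deduce the second equality formally from the first by enlarging $\fn$.

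First I would apply Theorem \ref{gd} to the pair $\cF \subseteq \cF^{\fn}$ on $A$. The hypothesis that $H^{1}_{(\cF^{*})_{\fn}}(K, A^{*}(1))$ vanishes (and hence so does its $R$-dual) collapses the resulting nine-term global duality sequence to the exact sequence
\[
H^{1}_{\cF^{\fn}}(K, A) \xrightarrow{\oplus_{\fq \mid \fn} v_{\fq}} \bigoplus_{\fq \mid \fn} H^{1}_{/f}(K_{\fq}, A) \to M \to 0.
\]
By Hypothesis \ref{hyp large} together with Remark \ref{free rem}, the source is free of rank $r+\nu(\fn)$ over $R$, and each $H^{1}_{/f}(K_{\fq}, A)$ is isomorphic to $R$ (since $\fq \in \cP$). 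Writing $s := \nu(\fn)$, this yields a free presentation $\Phi_{\fn}\colon R^{r+s} \to R^{s}$ of $M$, and so $\Fitt_{R}^{i}(M)$ coincides with the ideal $I_{s-i}(\Phi_{\fn})$ of $(s-i)\times(s-i)$ minors of $\Phi_{\fn}$.

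Next, for each prime $\fq \mid \fn$, I would apply Theorem \ref{gd} to $\cF \subseteq \cF^{\fq}$; this yields an exact sequence $R \to M \to H^{1}_{(\cF^{*})_{\fq}}(K, A^{*}(1))^{*} \to 0$ in which the first arrow is the restriction of the composite presentation map along the natural inclusion of the $\fq$-th summand $R \hookrightarrow R^{s}$. This identifies $H^{1}_{(\cF^{*})_{\fq}}(K, A^{*}(1))^{*}$ with $M/\langle m_{\fq}\rangle$, where $m_{\fq}$ is the image in $M$ of the $\fq$-th standard basis vector $e_{\fq} \in R^{s}$, so that the augmented matrix $(\Phi_{\fn} \mid e_{\fq})$ presents $M/\langle m_{\fq}\rangle$. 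The key minor-expansion identity is
\[
\Fitt_{R}^{i-1}(M/\langle m_{\fq}\rangle) \,=\, I_{s-i+1}(\Phi_{\fn}) + I_{s-i}(\Phi_{\fn}^{(\fq)}) \,=\, \Fitt_{R}^{i-1}(M) + I_{s-i}(\Phi_{\fn}^{(\fq)}),
\]
obtained by splitting each $(s-i+1)$-minor of $(\Phi_{\fn} \mid e_{\fq})$ according to whether or not the column $e_{\fq}$ is used; here $\Phi_{\fn}^{(\fq)}$ denotes the submatrix of $\Phi_{\fn}$ obtained by deleting the row indexed by $\fq$. Since $i \geq 1$, every $(s-i)\times(s-i)$ submatrix of $\Phi_{\fn}$ omits at least one row, and so summing over $\fq \mid \fn$ yields
\[
\sum_{\fq \mid \fn,\ \fq \in \cP} \Fitt_{R}^{i-1}\bigl(H^{1}_{(\cF^{*})_{\fq}}(K, A^{*}(1))^{*}\bigr) \,=\, I_{s-i}(\Phi_{\fn}) \,=\, \Fitt_{R}^{i}(M),
\]
which is the first claimed equality.

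The second equality is then a formal consequence of the first: the inclusion $\subseteq$ is immediate, and for the reverse inclusion I would, given any $\fq' \in \cP$ not dividing $\fn$, replace $\fn$ by $\fn\fq'$. Since $(\cF^{*})_{\fn\fq'}$ is more restrictive than $(\cF^{*})_{\fn}$, the vanishing hypothesis persists, and the first equality applied to $\fn\fq'$ places $\Fitt_{R}^{i-1}(H^{1}_{(\cF^{*})_{\fq'}}(K, A^{*}(1))^{*})$ inside $\Fitt_{R}^{i}(M)$, which by the first equality for $\fn$ itself coincides with the sum on the left-hand side of the desired second equality. The main technical point throughout this plan is the minor-expansion identity of the second paragraph; everything else is direct bookkeeping with global duality and Hypothesis \ref{hyp large}.
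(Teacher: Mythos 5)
Your proof is correct and follows essentially the same route as the paper: both extract explicit free presentations from the global duality sequences for $\cF \subseteq \cF^\fn$ and $\cF \subseteq \cF^\fq$ and then compute Fitting ideals as ideals of minors. The one small bookkeeping difference is that the paper presents $H^{1}_{(\cF^{*})_{\fq}}(K, A^{*}(1))^{*}$ directly by the $(s-1)\times(r+s)$ matrix $\Phi_\fn^{(\fq)}$ (deleting the row indexed by $\fq$), so that $\Fitt_R^{i-1}$ is immediately $I_{s-i}(\Phi_\fn^{(\fq)})$, whereas you present it as $M/\langle m_\fq\rangle$ via the augmented matrix $(\Phi_\fn \mid e_\fq)$ and recover the same ideal after the Laplace-expansion step; both are valid and lead to the same conclusion, and your handling of the second equality by enlarging $\fn$ to $\fn\fq'$ is a clean (and slightly more explicit) version of what the paper does.
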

\begin{proof}
By Hypothesis \ref{hyp large}, we see that, for any prime $\fq \in \cP$, there is an ideal $\fn \in \cN$ with $\fq \mid \fn$ such that $\nu(\fn)\geq i$ and
$H^{1}_{(\cF^{*})_{\fn}}(K, A^{*}(1)) = 0$. Hence the first equality implies the second equality.
To show the first equality, take an ideal $\fn \in \cN$ with $\nu(\fn) \geq i$ such that $H^{1}_{(\cF^{*})_{\fn}}(K, A^{*}(1)) = 0$.
Note that $H^{1}_{\cF^{{\fn}}}(K, A) \simeq R^{r+\nu(\fn)}$ by Remark~\ref{free rem}.
Then, by the global duality, we have exact sequences
$$
H^{1}_{\cF^{\fn}}(K, A) \to \bigoplus_{\fq \mid \fn}H^{1}_{/f}(K_{\fq}, A) \to H^{1}_{\cF^{*}}(K, A^{*}(1))^{*} \to 0
$$
and
$$
H^{1}_{\cF^{\fn}}(K, A) \to \bigoplus_{\fq \mid \frac{\fn}{\fr}}H^{1}_{/f}(K_{\fq}, A) \to H^{1}_{(\cF^{*})_{\fr}}(K, A^{*}(1))^{*} \to 0
$$
for any prime $\fr \mid \fn$.
Since $H^{1}_{\cF^{\fn}}(K, A)  \simeq R^{r + \nu(\fn)}$ and $H^{1}_{/f}(K_{\fq}, A) \simeq R$ for any prime $\fq \in \cP$,
these sequences give finite presentations of $H^{1}_{\cF^{*}}(K, A^{*}(1))^{*}$ and $H^{1}_{(\cF^{*})_{\fr}}(K, A^{*}(1))^{*}$ respectively.
Hence we see that
$$
\Fitt_{R}^{i}\left(H^{1}_{\cF^{*}}(K, A^{*}(1))^{*}\right)
= \sum_{ \fq \mid \fn} \Fitt_{R}^{i-1}\left(H^{1}_{(\cF^{*})_{\fq}}(K, A^{*}(1))^{*}\right).
$$
\end{proof}

%For any $\fn \in \cN$ and $\fq \in \cP$ with $\fq \nmid \fn$ we have $H^{1}_{((\cF^{*})_{\fn})_{\fq}}(K, A^{*}(1)) = H^{1}_{(\cF^{*})_{\fn\fq}}(K, A^{*}(1))$.
Using Lemma \ref{fitt-lemma} repeatedly, we deduce the following %corollary by using Lemma \ref{fitt-lemma} repeatedly. }

\begin{corollary}
Assume Hypothesis~\ref{hyp large}.
Then, for any non-negative integer $i$, we have
\begin{align*}
\Fitt_{R}^{i}\left(H^{1}_{\cF^{*}}(K, A^{*}(1))^{*}\right)
= \sum_{\fm \in \cN, \ \nu(\fm) = i} \Fitt_{R}^{0}\left(H^{1}_{(\cF^{*})_{\fm}}(K, A^{*}(1))^{*}\right).
\end{align*}
\end{corollary}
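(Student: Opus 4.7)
The plan is to prove the equality by induction on $i \geq 0$, driven by Lemma~\ref{fitt-lemma}. The base case $i=0$ is immediate, since the sum on the right-hand side reduces to the single term indexed by $\fm=1$, giving $\Fitt_{R}^{0}(H^{1}_{\cF^{*}}(K, A^{*}(1))^{*})$ on both sides.

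For the inductive step, a direct application of Lemma~\ref{fitt-lemma} (using any auxiliary ideal $\fn \in \cN$ with $\nu(\fn) \geq i$ and $H^{1}_{(\cF^{*})_{\fn}}(K, A^{*}(1)) = 0$, whose existence is granted by Hypothesis~\ref{hyp large}) yields
\[
\Fitt_{R}^{i}(H^{1}_{\cF^{*}}(K, A^{*}(1))^{*}) = \sum_{\fq \in \cP} \Fitt_{R}^{i-1}(H^{1}_{(\cF^{*})_{\fq}}(K, A^{*}(1))^{*}).
\]
Using the identity $(\cF_{\fa}^{\fb}(\fn))^{*} = \cF_{\fb}^{\fa}(\fn)$ recorded in \S\ref{section mod sel}, each local term $(\cF^{*})_{\fq}$ coincides with $(\cF^{\fq})^{*}$, so we may regard each summand as arising from the modified Selmer structure $\cF^{\fq}$ on $A$.

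Before invoking the inductive hypothesis on $\cF^{\fq}$, one must verify that Hypothesis~\ref{hyp large} is inherited by $\cF^{\fq}$ (with $r$ shifted to $r+1$). Given any $\fn$ witnessing Hypothesis~\ref{hyp large} for $\cF$, set $\fn'' := \fn/\fq$ if $\fq \mid \fn$, and $\fn'' := \fn$ otherwise. In either case $\fq \nmid \fn''$, and $H^{1}_{(\cF^{*})_{\fq\fn''}}(K, A^{*}(1))$ vanishes: this holds by construction when $\fq \mid \fn$, and when $\fq \nmid \fn$ it follows from the fact that $(\cF^{*})_{\fq\fn''}$ imposes a strictly stricter local condition at $\fq$ than $(\cF^{*})_{\fn''} = (\cF^{*})_{\fn}$, so the former Selmer module embeds in the latter. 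Then Remark~\ref{free rem} (applied to $\cF$) forces $H^{1}_{(\cF^{\fq})^{\fn''}}(K, A) = H^{1}_{\cF^{\fq\fn''}}(K, A)$ to be free of rank $r + \nu(\fq\fn'') = (r+1) + \nu(\fn'')$, which is precisely Hypothesis~\ref{hyp large} for $\cF^{\fq}$.

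With this in hand, applying the inductive hypothesis to $\cF^{\fq}$ (whose admissible primes form $\cP \setminus \{\fq\}$) gives
\[
\Fitt_{R}^{i-1}(H^{1}_{(\cF^{\fq})^{*}}(K, A^{*}(1))^{*}) = \sum_{\substack{\fn' \in \cN,\ \nu(\fn')=i-1 \\ \fq \nmid \fn'}} \Fitt_{R}^{0}(H^{1}_{(\cF^{*})_{\fq\fn'}}(K, A^{*}(1))^{*}).
\]
Substituting this back and reindexing via $\fm := \fq\fn'$, we note that every $\fm \in \cN$ with $\nu(\fm)=i$ arises in this form (in fact from each of its prime factors, but the duplication is harmless since we are summing ideals). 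The resulting expression is precisely the claimed right-hand side, completing the induction. The only non-routine step is the inheritance of Hypothesis~\ref{hyp large} along $\cF \rightsquigarrow \cF^{\fq}$, dispensed with above; everything else is bookkeeping with indices of admissible ideals.
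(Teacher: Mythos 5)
Your proof is correct and follows exactly the iterative approach the paper intends by its remark ``Using Lemma~\ref{fitt-lemma} repeatedly.'' The one point the paper leaves implicit, and which you rightly flag as ``the only non-routine step,'' is the inheritance of Hypothesis~\ref{hyp large} along $\cF \rightsquigarrow \cF^{\fq}$ (with core rank shifted from $r$ to $r+1$); your verification of this via Remark~\ref{free rem} is correct and makes the induction airtight.
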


We can now state one of the main results in the theory of Stark systems.

\begin{theorem}\label{thm stark} Under Hypothesis \ref{hyp large} all of the following claims are valid.
%Let $\epsilon \in {\rm SS}_{\chi(\cF)}(A, \cF)$.
\begin{itemize}
%\item[(i)] We have
%\begin{align*}
%I_i(\epsilon) = \sum_{\fn \mid \fd, \ \nu(\fn) = i}{\rm im}(\epsilon_\fn)
%\end{align*}
%for any large vertex $\fd \in \cN$ and non-negative integer $i$.
\item[(i)]
Let $\fn \in \cN$ with $H^{1}_{(\cF^{*})_{\fn}}(K, A^{*}(1)) = 0$. Then the natural projection homomorphism
$${\rm SS}_{r}(A, \cF) \to {\bigcap}^{r+\nu(\fn)}_{R}H^{1}_{\cF^{\fn}}(K, A); \ \epsilon \mapsto \epsilon_\fn$$ is bijective. In particular, the $R$-module ${\rm SS}_r(A,\cF)$ is free of rank one.
\item[(ii)] The following claims are valid for all $\epsilon$ in ${\rm SS}_{r}(A, \cF)$ and all $i \ge 0$.
\begin{itemize}
\item[(a)] $I_i(\epsilon)\subseteq   I_{i+1}(\epsilon)$, with equality for all sufficiently large $i$.
\item[(b)] $I_\infty(\epsilon):= \bigcup_{i \ge 0}I_i(\epsilon)$ is equal to $R$ if and only if $\epsilon$ is a basis of ${\rm SS}_{r}(A, \cF)$.
\item[(c)] $I_i(\epsilon) = I_\infty(\epsilon)\cdot {\rm Fitt}_{R}^i \left( H^1_{\cF^\ast}(K, A^*(1))^* \right).$
\end{itemize}
\end{itemize}
\end{theorem}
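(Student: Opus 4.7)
The plan is to begin with part (i), since the freeness of ${\rm SS}_r(A,\cF)$ over $R$ will reduce each assertion in (ii) to a computation performed on a single basis element. The key observation is that whenever $\fn\in\cN$ satisfies $H^1_{(\cF^*)_\fn}(K,A^*(1))=0$, the same vanishing holds for any multiple $\fd\in\cN$ of $\fn$: indeed, Theorem \ref{gd} exhibits $H^1_{(\cF^*)_\fd}(K,A^*(1))$ as a quotient of $H^1_{(\cF^*)_\fn}(K,A^*(1))$. Combined with Remark \ref{free rem}, the global duality sequence for such a pair $(\fn,\fd)$ is then a split short exact sequence
\[
0\to H^1_{\cF^\fn}(K,A)\to H^1_{\cF^\fd}(K,A)\to\bigoplus_{\fq\mid\fd/\fn}H^1_{/f}(K_\fq,A)\to 0
\]
of free $R$-modules of the expected ranks. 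Because exterior biduals of free modules coincide with exterior powers, passing to top exterior powers shows that the transition map $v_{\fd,\fn}=\bigwedge_{\fq\mid\fd/\fn}v_\fq$ is an isomorphism. Since the set $\{\fd\in\cN:\fn\mid\fd\}$ is cofinal in $\cN$, the inverse limit defining ${\rm SS}_r(A,\cF)$ projects isomorphically to $\bigcap_R^{r+\nu(\fn)}H^1_{\cF^\fn}(K,A)$, which is free of rank one.

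The core of part (ii) will be the pointwise identity
\[
\im(\epsilon_\fm)=I_\infty(\epsilon)\cdot\Fitt_R^0\bigl(H^1_{(\cF^*)_\fm}(K,A^*(1))^*\bigr)
\]
for every $\fm\in\cN$; summing over $\fm$ with $\nu(\fm)=i$ and applying the corollary to Lemma \ref{fitt-lemma} will then yield assertion (c). To prove the identity, write $\epsilon=u\cdot\epsilon^b$ for a basis $\epsilon^b$ of ${\rm SS}_r(A,\cF)$ supplied by (i), and fix an ideal $\fd\in\cN$ with $\fm\mid\fd$ and $H^1_{(\cF^*)_\fd}(K,A^*(1))=0$. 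Global duality then provides an exact sequence
\[
H^1_{\cF^\fd}(K,A)\xrightarrow{\bigoplus_{\fq\mid\fd/\fm}v_\fq}\bigoplus_{\fq\mid\fd/\fm}R\longrightarrow H^1_{(\cF^*)_\fm}(K,A^*(1))^*\to 0,
\]
whose source is free of rank $(r+\nu(\fm))+\nu(\fd/\fm)$. Proposition \ref{prop injective}(ii) then identifies $\Fitt_R^0(H^1_{(\cF^*)_\fm}(K,A^*(1))^*)$ with the ideal generated by $\{\im(F)\}$ as $F$ ranges over the image of $v_{\fd,\fm}$. Since $\epsilon^b_\fd$ generates the free rank-one module $\bigcap_R^{r+\nu(\fd)}H^1_{\cF^\fd}(K,A)$ by (i), every such $F$ is an $R$-multiple of $\epsilon^b_\fm=v_{\fd,\fm}(\epsilon^b_\fd)$, and conversely $\epsilon^b_\fm$ itself appears in the image, so the generated ideal equals $\im(\epsilon^b_\fm)$. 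Scaling by $u$ and observing that $I_\infty(\epsilon)=(u)$ (which follows from $\im(\epsilon^b_\fm)=R$ whenever $H^1_{(\cF^*)_\fm}(K,A^*(1))=0$) completes the identity.

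Parts (a) and (b) of (ii) will then follow quickly. For (a), the inclusion $I_i(\epsilon)\subseteq I_{i+1}(\epsilon)$ is immediate from the Stark system compatibility $v_\fq(\epsilon_{\fm\fq})=\epsilon_\fm$ for $\fq\in\cP\setminus\{\fr:\fr\mid\fm\}$, together with the elementary fact that $\im(\varphi(x))\subseteq\im(x)$ for any $x$ in an exterior bidual and any $\varphi$; stabilization of $I_i(\epsilon)$ for $i\gg 0$ follows from (c) since $\Fitt_R^i(H^1_{\cF^*}(K,A^*(1))^*)$ equals $R$ once $i$ exceeds the minimal number of generators of $H^1_{\cF^*}(K,A^*(1))^*$. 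For (b), (c) yields $I_\infty(\epsilon)=(u)$, which equals $R$ if and only if $u\in R^\times$, i.e.\ precisely when $\epsilon$ is a basis. The main obstacle I anticipate lies in the identification step at the heart of (c): verifying that the ideal produced by Proposition \ref{prop injective}(ii) is literally $\im(\epsilon^b_\fm)$ (rather than something only equal up to a unit) requires a careful identification of the image of the transition $v_{\fd,\fm}$ on biduals with the cyclic $R$-submodule generated by $\epsilon^b_\fm$, and this in turn relies essentially on the rank-one conclusion of (i) and on the compatibility between the bidual inclusion $\bigcap_R^{r+\nu(\fm)}H^1_{\cF^\fm}\hookrightarrow\bigcap_R^{r+\nu(\fm)}H^1_{\cF^\fd}$ provided by Proposition \ref{prop injective}(i) and the evaluation pairing used to define $\im$.
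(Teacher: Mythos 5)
Your proof is correct, but it takes a different expository route from the paper's. The paper's proof of Theorem \ref{thm stark} is deliberately very short: it \emph{cites} claim (i) and the basis case of (ii)(c) (the equality $I_i(\epsilon_0) = \Fitt_R^i(H^1_{\cF^\ast}(K,A^\ast(1))^\ast)$ for $\epsilon_0$ a basis) from the prior articles \cite{sbA} and \cite{sakamoto}, and then deduces everything else by the one-line scaling observation $I_i(\lambda\cdot\epsilon_0) = \lambda\cdot I_i(\epsilon_0)$, which immediately gives (a), (b) and the general case of (c). You, by contrast, re-derive the cited inputs: you prove (i) directly from cofinality of core vertices and the fact that the transition maps $v_{\fd,\fn}$ are isomorphisms between free rank-one biduals (split global duality sequence of free modules, top exterior power), and you prove the basis case of (ii)(c) via Proposition \ref{prop injective}(ii) applied to the global-duality presentation of $H^1_{(\cF^\ast)_\fm}(K,A^\ast(1))^\ast$ together with the corollary to Lemma \ref{fitt-lemma}. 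Your version of this last step is essentially the content of \cite[Th.\ 3.19(ii)]{sbA} that the paper takes as a black box, so the mathematics overlaps, but your write-up makes the mechanism explicit inside the proof rather than pointing to a reference. You also offer a small alternative for the inclusion $I_i(\epsilon)\subseteq I_{i+1}(\epsilon)$ in (a), using the transition relation $v_\fq(\epsilon_{\fm\fq})=\epsilon_\fm$ together with $\im(\varphi(x))\subseteq\im(x)$, whereas the paper gets it instantly from $\Fitt^i_R\subseteq\Fitt^{i+1}_R$ after the scaling identity; both are valid, and your direct argument is logically redundant once (c) is established but is a nice independent check. One trivial imprecision: you describe $H^1_{(\cF^\ast)_\fd}(K,A^\ast(1))$ as a \emph{quotient} of $H^1_{(\cF^\ast)_\fn}(K,A^\ast(1))$ for $\fn\mid\fd$; the global duality sequence (with $(\cF^\ast)_\fd \le (\cF^\ast)_\fn$ at every place) actually gives an injection $H^1_{(\cF^\ast)_\fd}(K,A^\ast(1)) \hookrightarrow H^1_{(\cF^\ast)_\fn}(K,A^\ast(1))$; either containment gives the vanishing you need, so the conclusion is unaffected.
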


\begin{proof} Both claim (i) and, in the case that $\epsilon$ is a basis of ${\rm SS}_{r}(A, \cF)$, the equality $I_i(\epsilon) = {\rm Fitt}_{R}^i \left( H^1_{\cF^\ast}(K, A^*(1))^* \right)$ in claim (ii)(c) were independently proved by the first and third authors in \cite[Th. 3.17 and 3.19(ii)]{sbA} and by the second author in \cite[Th. 4.7 and 4.10]{sakamoto}.

To deduce the remainder of claim (ii) we fix a basis $\epsilon_0$ of ${\rm SS}_{r}(A, \cF)$ and then for each $\epsilon$ in ${\rm SS}_{r}(A, \cF)$ define $\lambda_\epsilon \in R$ by the equality $\epsilon = \lambda_\epsilon\cdot \epsilon_0$. Then for each $i \ge 0$ one has
\begin{equation}\label{gen case} I_i(\epsilon) = \lambda_\epsilon\cdot I_i(\epsilon_0) =  \lambda_\epsilon\cdot {\rm Fitt}_{R}^i \left( H^1_{\cF^\ast}(K, A^*(1))^* \right).\end{equation}

Claim (ii)(a) is thus true since the definition of higher Fitting ideal implies both that
\[ {\rm Fitt}_{R}^i \left( H^1_{\cF^\ast}(K, A^*(1))^* \right) \subseteq   {\rm Fitt}_{R}^{i+1} \left( H^1_{\cF^\ast}(K, A^*(1))^* \right)\]
and ${\rm Fitt}_{R}^i \left( H^1_{\cF^\ast}(K, A^*(1))^* \right) = R$ for all sufficiently large $i$.

The latter fact also combines with (\ref{gen case}) to imply $I_i(\epsilon) = (\lambda_\epsilon)$ for all sufficiently large $i$, and hence that $I_\infty(\epsilon) = (\lambda_\epsilon)$. This equality implies claim (ii)(b) directly and also shows that (\ref{gen case}) implies claim (ii)(c).
\end{proof}

%\begin{lemma}\label{inj free}
%If $\fn \in \cN$ is large, then there exists an injective map
%\begin{align*}
%{\bigcap}_R^{r + \nu(\fn)}H^1_{\cF^\fn}(K, A) \to {\bigwedge}^{r + \nu(\fn)}R^{r + \chi(\cF)} \simeq R
%\end{align*}
%such that $\left( {\bigcap}_R^{r + \nu(\fn)}H^1_{\cF^\fn}(K, A) \right)[\fp] \to R[\fp]$ is an isomorphism.
%\end{lemma}

%\begin{corollary}
%If $\chi(\cF) < r$, then ${\bf SS}_r(A, \cF) = 0$.
%\end{corollary}

%\begin{definition}
%We say that $\fn \in \cN$ is large (for $\cF$) if $H^1_{\cF^*_\fn}(K, A^*(1))  = 0$.
%\end{definition}

%Let $\fn \in \cN$ be a large ideal and take $\fd \in \cN$ such that $\fn \mid \fd$.
%Then by Poitou-Tate global duality, % theorem and the choice of the set $\cP$,
%we have an exact sequence
%\begin{align*}
%0 \to H^1_{\cF^\fn}(K, T) \to H^1_{\cF^\fd}(K, T) \to \bigoplus_{\fq \mid \fd/\fn}R \to 0.
%\end{align*}
%Thus the integer $\dim_\Bbbk H^1_{\cF^\fn}(K, A)[\fp] - \nu(\fn)$ is independent of the choice of
%the large ideal $\fn \in \cN$.
%We denote it by $\chi(\cF)$ and this quantity is called by the core rank of $\cF$.

%\begin{lemma}
%Suppose that $\chi(\cF) \geq 0$.
%If $\fn \in \cN$ is large, then the projection map
%\begin{align*}
%{\rm SS}_{\chi(\cF)}(A, \cF) \to {\bigcap}_R^{r+\nu(\fn)} H^1_{\cF^\fn}(K, A)
%\end{align*}
%is an isomorphism.
%In particular, if $\cF$ is cartesian, then ${\bf SS}_{\chi(\cF)}(A, \cF)$ is a free $R$-module of rank one.
%\end{lemma}

\subsection{Stark Systems over Gorenstein orders}\label{one-dim case}
Let $Q$ be a finite extension of $\bQ_{p}$ and $\cO$ the ring of integers of $Q$. Let $\cQ$ be a finite-dimensional semisimple commutative $Q$-algebra. Let $(\cR, \fp)$ be a local Gorenstein $\cO$-order
in $\cQ$ (for basic properties of Gorenstein orders, see \cite[\S A.3]{sbA}).
Note that $\cR/(p^{m})$ is a zero-dimensional Gorenstein local ring since $p$ is a regular element of $\cR$.

Let $T$ be a free $\cR$-module of finite rank with an $\cR$-linear continuous action of $G_{K}$ which is unramified outside a finite set of places of $K$. Let $K(T)$ denote the minimal Galois extension of $K$ such that $G_{K(T)}$ acts trivially on $T$.
Recall that $K_{p^{m}} := K (\mu_{p^{m}}, (\cO^\times_K)^{1/p^{m}}) K(1)$ and $K(T)_{p^{m}} := K(T)K_{p^{m}}$ for any positive integer $m$.
Set $K(T)_{p^{\infty}} := \bigcup_{m >0}K(T)_{p^{m}}$ and $\overline{T} := T/\fp T$.

For an $\cR$-module $X$ we endow the Pontryagin dual $X^\vee := \Hom_\cO(X,Q/\cO)$ with the natural action of $\cR$.
For any positive integer $m$ and any $\cR/(p^m)$-module $X$, the module $X^\vee $ is naturally isomorphic to $X^\ast:=\Hom_{\cR/(p^m)}(X, \cR/(p^m))$ and in such cases we often identify the functors $(-)^\vee$ and $(-)^\ast$.

In this subsection, we assume the following hypothesis:
\begin{hypothesis}\label{hyp1'}\
\begin{itemize}
\item[(i)] $\overline{T}$ is an irreducible $(\cR/\fp)[G_{K}]$-module;
\item[(ii)] there exists $\tau \in G_{K(T)_{p^{\infty}}}$ such that $T/(\tau-1)T \simeq \cR$ as $\cR$-modules;
\item[(iii)] $H^{1}(K(T)_{p^{{\infty}}}/K, \overline{T}) = H^{1}(K(T)_{p^{{\infty}}}/K, \overline{T}^{\vee}(1)) = 0$;
%\item[(iv)] $\overline{T}^{G_{K}} = \overline{T}^{\vee}(1)^{G_{K}} = 0$.
\end{itemize}
\end{hypothesis}

\begin{remark}\label{hyp(iv)} Hypothesis~\ref{hyp1'} implies that $\overline{T}^{G_{K}}$ and $\overline{T}^{\vee}(1)^{G_{K}}$ both vanish. To see this note that if $\overline{T}^{G_{K}}$ does not vanish, then Hypothesis~\ref{hyp1'}(i) implies $\dim_{\Bbbk}(\overline{T}) = 1$ and hence that $\overline{T}$ is the trivial $G_{K}$-representation. But then, in this case, the module
$$
H^{1}(K(T)_{p^{{\infty}}}/K, \overline{T}) = \Hom\left(\Gal(K(T)_{p^{{\infty}}}/K), \overline{T}\right)
$$
does not vanish since there is a non-trivial $p$-subextension of $K(T)_{p^{{\infty}}}/K$ and this contradicts Hypothesis~\ref{hyp1'}(iii).
The vanishing of $\overline{T}^{\vee}(1)^{G_{K}}$ is proved by a similar argument.\end{remark}

\begin{remark}
If we assume Hypothesis \ref{hyp1'}, then $T/p^{m}T$ satisfies Hypotheses \ref{hyp1} and \ref{hyp2} for any positive integer $m$. In fact, it clearly satisfies Hypotheses \ref{hyp1}(i) and (ii). By Remark~\ref{hyp(iv)}, Hypothesis~\ref{hyp2} also holds true. We shall show Hypothesis \ref{hyp1}(iii), i.e. that, setting $A := T/p^{m}T$, one has
$$
H^{1}(K(A)_{p^{m}}/K, A) = H^{1}(K(A)_{p^{m}}/K, A^{*}(1)) = 0.
$$

Since the inflation map $H^{1}(K(A)_{p^{m}}/K, \overline{T}) \to H^{1}(K(T)_{p^{\infty}}/K, \overline{T})$ is injective, we have $H^{1}(K(A)_{p^{m}}/K, \overline{T}) = 0$ by Hypothesis \ref{hyp1'}(iii). Let $i$ be a non-negative integer. By the exact sequence $0 \to \fp^{i+1}A \to \fp^{i}A \to \fp^{i}A/\fp^{i+1}A \to 0$ and Remark~\ref{hyp(iv)}
we have an exact sequence
$$
0 \to H^{1}(K(A)_{p^{m}}/K, \fp^{i+1}A) \to H^{1}(K(A)_{p^{m}}/K, \fp^{i}A) \to  H^{1}(K(A)_{p^{m}}/K, \fp^{i}A/\fp^{i+1}A).
$$
Since the $(\cR/\fp)[G_{K}]$-module $\fp^{i}A/\fp^{i+1}A$ is isomorphic to a direct sum of $\overline{T}$, the group $H^{1}(K(A)_{p^{m}}/K, \fp^{i}A/\fp^{i+1}A)$ vanishes and so the natural map
\[ H^{1}(K(A)_{p^{m}}/K, \fp^{i+1}A) \to H^{1}(K(A)_{p^{m}}/K, \fp^{i}A)\]
is bijective. Since $i$ is arbitrary and $\fp^{m}A = 0$, we conclude that $H^{1}(K(A)_{p^{m}}/K, A)$ vanishes. The vanishing of $H^{1}(K(A)_{p^{m}}/K, A^{*}(1))$ is proved by a similar argument.
\end{remark}

We fix a Selmer structure $\cF$ on $T$.
For a positive integer $m$, let $\mathcal{P}_{m}$ denote the set of primes $\fq \not\in S(\cF)$ of $K$ such that
${\rm Fr}_\fq$ is conjugate to $\tau$ in $\Gal(K(T/p^{m}T)_{p^{m}}/K)$.
Put $\cN_{m}=\cN(\cP_{m})$. Note that $\cN_{m+1} \subseteq   \cN_{m}$ for any positive integer $m$.

We suppose that $(T/p^{m}T, \cF, \cP_{m})$ satisfies Hypothesis~\ref{hyp large} for any positive integer $m$.
Let $m$ be a positive integer and let $\fm$ and $\fn$ be ideals of $\cN_{m+1}$ such that $\fm \mid \fn$ and $H^{1}_{(\cF^{*})_{\fn}}(K, (T/p^{m+1}T)^{\vee}(1))$ vanishes.
Then $H^{1}_{(\cF^{*})_{\fn}}(K, (T/p^{m}T)^{\vee}(1))$ vanishes by Corollary~\ref{dualselisom} and so
$H^{1}_{\cF^{\fn}}(K, T/p^{m+1}T) \otimes_{\cR/(p^{m+1})} \cR/(p^{m})$ and $H^{1}_{\cF^{\fn}}(K, T/p^{m}T)$ are free $\cR/(p^{m})$-modules of the same rank by Remark~\ref{free rem}.
Hence by Lemma~\ref{injective}, the natural homomorphism
$$
H^{1}_{\cF^{\fn}}(K, T/p^{m+1}T) \otimes_{\cR/(p^{m+1})} \cR/(p^{m}) \to H^{1}_{\cF^{\fn}}(K, T/p^{m}T)
$$
is an isomorphism. Hence there is a canonical injection
$$
H^{1}_{\cF^{\fm}}(K, T/p^{m}T) \to
H^{1}_{\cF^{\fn}}(K, T/p^{m+1}T) \otimes_{\cR/(p^{m+1})} \cR/(p^{m}).
$$
Applying Corollary~\ref{morph} with $X = H^{1}_{\cF^{\fm}}(K, T/p^{m+1}T)$, $Y = H^{1}_{\cF^{\fm}}(K, T/p^{m}T)$, and $F = H^{1}_{\cF^{\fn}}(K, T/p^{m+1}T)$, we get a natural homomorphism
$$
{\bigcap}^{r+\nu(\fm)}_{\cR/(p^{m+1})}H^{1}_{\cF^{\fm}}(K, T/p^{m+1}T) \to {\bigcap}^{r+\nu(\fm)}_{\cR/(p^{m})}H^{1}_{\cF^{\fm}}(K, T/p^{m}T).
$$
Since $\fm$ is any element of $\cN_{m+1}$, we get a homomorphism
$$
{\rm SS}_{r}(T/p^{m+1}T, \cF) \to {\rm SS}_{r}(T/p^{m}T, \cF).
$$

\begin{lemma}\label{compatible}
Let $m$ be a positive integer.
\begin{itemize}
\item[(i)] The map ${\rm SS}_{r}(T/p^{m+1}T, \cF) \to {\rm SS}_{r}(T/p^{m}T, \cF)$ is surjective.
\item[(ii)] Fix $\epsilon^{(m+1)}$ in ${\rm SS}_{r}(T/p^{m+1}T, \cF)$ and write $\epsilon^{(m)}$ for its image in ${\rm SS}_{r}(T/p^{m}T, \cF)$. Then for each non-negative integer $i$ one has $I_{i}(\epsilon^{(m+1)})\cR/(p^m) = I_{i}(\epsilon^{(m)})$.
\end{itemize}
\end{lemma}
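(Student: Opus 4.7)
The plan is to reduce both assertions to Theorem~\ref{thm stark} by choosing an ideal $\fn \in \cN_{m+1}$ that witnesses Hypothesis~\ref{hyp large} at both levels $m$ and $m+1$ simultaneously. By Hypothesis~\ref{hyp large} applied to $T/p^{m+1}T$, such an $\fn$ exists with $H^1_{(\cF^*)_\fn}(K, (T/p^{m+1}T)^\vee(1)) = 0$, and by Corollary~\ref{dualselisom} the analogous vanishing holds automatically for $T/p^m T$. Remark~\ref{free rem} then ensures that both $H^1_{\cF^\fn}(K, T/p^{m+1}T)$ and $H^1_{\cF^\fn}(K, T/p^m T)$ are free of rank $r + \nu(\fn)$ over the respective coefficient rings $\cR/(p^{m+1})$ and $\cR/(p^m)$.

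For (i), I would apply Theorem~\ref{thm stark}(i) to identify ${\rm SS}_r(T/p^{m+1}T, \cF)$ and ${\rm SS}_r(T/p^m T, \cF)$ with the exterior biduals ${\bigcap}^{r+\nu(\fn)}_{\cR/(p^{m+1})} H^1_{\cF^\fn}(K, T/p^{m+1}T)$ and ${\bigcap}^{r+\nu(\fn)}_{\cR/(p^m)} H^1_{\cF^\fn}(K, T/p^m T)$ respectively. Since the underlying cohomology groups are free of the maximal rank, these biduals are canonically $\cR/(p^{m+1})$ and $\cR/(p^m)$. By construction via Corollary~\ref{morph}, the transition map is compatible with the natural surjection $H^1_{\cF^\fn}(K, T/p^{m+1}T) \twoheadrightarrow H^1_{\cF^\fn}(K, T/p^m T)$, so under these identifications it coincides with the ring surjection $\cR/(p^{m+1}) \twoheadrightarrow \cR/(p^m)$; in particular, it is surjective and sends a basis to a basis.

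For (ii), I would decompose $I_i(\epsilon) = I_\infty(\epsilon) \cdot \Fitt^i_R(H^1_{\cF^*}(K, A^*(1))^*)$ via Theorem~\ref{thm stark}(ii)(c) and verify compatibility of each factor under reduction modulo $p^m$. For the $I_\infty$ factor, write $\epsilon^{(m+1)} = \lambda \cdot \epsilon_0^{(m+1)}$ with $\epsilon_0^{(m+1)}$ a basis; by (i) its image $\epsilon_0^{(m)}$ is again a basis and the $\cR/(p^{m+1})$-linearity of the transition map yields $\epsilon^{(m)} = (\lambda \bmod p^m) \cdot \epsilon_0^{(m)}$, whence $I_\infty(\epsilon^{(m+1)}) \cdot \cR/(p^m) = I_\infty(\epsilon^{(m)})$. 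For the Fitting factor, the standard base change property of Fitting ideals reduces the claim to the module isomorphism
\[
H^1_{\cF^*}(K, (T/p^{m+1}T)^\vee(1))^* \otimes_{\cR/(p^{m+1})} \cR/(p^m) \cong H^1_{\cF^*}(K, (T/p^m T)^\vee(1))^*,
\]
which combines Corollary~\ref{dualselisom} (identifying the $p^m$-torsion of the level-$(m+1)$ dual Selmer group with its level-$m$ counterpart) with the duality identity $N^*/p^m N^* \cong (N[p^m])^*$, valid for any finitely generated $N$ over the self-injective ring $\cR/(p^{m+1})$.

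The main technical step is verifying this duality identity, which I would obtain by dualizing the short exact sequence $0 \to N[p^m] \to N \to p^m N \to 0$ and using the injectivity of $\cR/(p^{m+1})$ to identify the image of $(p^m N)^* \hookrightarrow N^*$ with $p^m N^*$; everything else is then a routine consequence of the rank-one structure of Stark system modules and base change for Fitting ideals.
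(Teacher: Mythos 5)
Your argument is correct and matches the paper's own proof: for (i) you both pick a core vertex $\fn$ witnessing Hypothesis~\ref{hyp large} at both levels, invoke Theorem~\ref{thm stark}(i) to identify the two Stark-system modules with the biduals at $\fn$ (which are free of rank one since the cohomology is free of rank $r+\nu(\fn)$), and use Corollary~\ref{morph} to see the transition map is the evident surjection; for (ii) your factorisation via Theorem~\ref{thm stark}(ii)(c) together with your duality identity $N^{\ast}/p^{m}N^{\ast}\cong (N[p^{m}])^{\ast}$ and Corollary~\ref{dualselisom} is precisely the paper's reduction to a basis followed by base change of Fitting ideals. The only stylistic difference is that the paper first reduces to the case where $\epsilon^{(m+1)}$ is a basis rather than explicitly isolating the $I_\infty$ factor, but the computation is the same.
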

\begin{proof}
Take an ideal $\fn \in \cN_{m+1}$ such that $H^{1}_{(\cF^{*})_{\fn}}(K, (T/p^{m+1}T)^{\vee}(1))$ vanishes. Then we have the following commutative diagram:
$$
\xymatrix{
{\rm SS}_{r}(T/p^{m+1}T, \cF) \ar[d] \ar[r] & {{\bigcap}}^{r+\nu(\fn)}_{\cR/(p^{m+1})}H^{1}_{\cF^{\fn}}(K, T/p^{m+1}T) \ar[d]
\\
{\rm SS}_{r}(T/p^{m}T, \cF) \ar[r] & {{\bigcap}}^{r+\nu(\fn)}_{\cR/(p^{m})}H^{1}_{\cF^{\fn}}(K, T/p^{m}T).
}
$$
The horizontal maps are isomorphisms by Theorem~\ref{thm stark}(i) and the right vertical map is surjective by the commutativity of the diagram in Corollary~\ref{morph} and that
$$
H^{1}_{\cF^{\fn}}(K, T/p^{m+1}T) \otimes_{\cR/(p^{m+1})} \cR/(p^{m}) \simeq H^{1}_{\cF^{\fn}}(K, T/p^{m}T) \simeq (\cR/(p^{m}))^{r+\nu(\fn)}.
$$
Hence the map ${\rm SS}_{r}(T/p^{m+1}T, \cF) \to {\rm SS}_{r}(T/p^{m}T, \cF)$ is surjective.

We will show claim (ii). We may assume that $\epsilon^{(m+1)}$ is a basis of ${\rm SS}_{r}(T/p^{m+1}T, \cF)$.
By claim (i), $\epsilon^{(m)}$ is also a basis of ${\rm SS}_{r}(T/p^{m}T, \cF)$.
Then we have
\begin{align*}
I_{i}(\epsilon^{(m+1)})\cR/(p^m) &= {\rm Fitt}_{\cR/(p^{m+1})}^{i}\left(H^{1}_{\cF^{*}}(K, (T/p^{m+1}T)^{\vee}(1))^{\vee}\right)\cR/(p^m)
\\
&= {\rm Fitt}_{\cR/(p^{m})}^{i}\left( \left( H^{1}_{\cF^{*}}(K, (T/p^{m+1}T)^{\vee}(1))[p^{m}]\right)^{\vee} \right)
\\
&= {\rm Fitt}_{\cR/(p^{m})}^{i}\left( H^{1}_{\cF^{*}}(K, (T/p^{m}T)^{\vee}(1))^{\vee}\right)
\\
&= I_{i}(\epsilon^{(m)})
\end{align*}
where the first and forth equality follows from Theorem~\ref{thm stark}(ii) and the third equality follows from Corollary~\ref{dualselisom}.
\end{proof}

\begin{definition}\label{stark ideal gorenstein}
We define the module ${\rm SS}_{r}(T, \cF)$ of Stark systems of rank $r$ for $(T, \cF)$ to be the inverse limit
$$
{\rm SS}_{r}(T, \cF) := \varprojlim_{m \in \bZ_{>0}}{\rm SS}_{r}(T/p^{m}T, \cF).
$$
Let $i$ be non-negative integer and $\epsilon = (\epsilon^{(m)})_{m} \in {\rm SS}_{r}(T, \cF)$. Then, by Lemma~\ref{compatible}(ii), we see that the family $(I_{i}(\epsilon^{(m)}))_{m}$ is an inverse system.
We define an ideal $I_{i}(\epsilon)$ of $\cR$ to be the inverse limit
$$
I_{i}(\epsilon) := \varprojlim_{m} I_{i}(\epsilon^{(m)}).
$$
\end{definition}

%\begin{remark}
%Let $\varpi \in \cR$ be a regular element and $M$ a positive integer. Then we can define a map ${\rm SS}_{r}(T/\varpi^{m+1}T, \cF) \to {\rm SS}_{r}(T/\varpi^{m}T, \cF)$.
%Since the set $\{(p^{m})\}_{n \in \bZ_{>0}}$ is a neighbourhood basis for $0$, we have $p^{m} \in \varpi^{M}\cR$ for any sufficient large positive integer $m$. Hence we get a map
%$$
%\varprojlim_{m \in \bZ_{>0}}{\rm SS}_{r}(T/p^{m}T, \cF) \to \varprojlim_{m \in \bZ_{>0}}{\rm SS}_{r}(T/\varpi^{m}T, \cF).
%$$
%In the same way, we can construct the inverse of this map. Hence the $\cR$-module ${\rm SS}_{r}(T, \cF)$ is independent of the choice of a regular element.
%\end{remark}

\begin{theorem}\label{thm stark'}\
\begin{itemize}
\item[(i)] The $\cR$-module ${\rm SS}_{r}(T, \cF)$ is free of rank one.
\item[(ii)] The following claims are valid for all $\epsilon$ in ${\rm SS}_{r}(T,\cF)$ and all $i \ge 0$.
\begin{itemize}
\item[(a)] $I_i(\epsilon)\subseteq   I_{i+1}(\epsilon)$, with equality for all sufficiently large $i$.
\item[(b)] $I_\infty(\epsilon):= \bigcup_{i \ge 0}I_i(\epsilon)$ is equal to $\cR$ if and only if $\epsilon$ is a basis of ${\rm SS}_{r}(A, \cF)$.
\item[(c)] $I_i(\epsilon) = I_\infty(\epsilon)\cdot {\rm Fitt}_{\cR}^i \left( H^1_{\cF^\ast}(K, T^{\vee}(1))^{\vee}\right).$
\end{itemize}
\end{itemize}
\end{theorem}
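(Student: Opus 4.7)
The plan is to deduce everything from Theorem~\ref{thm stark} and Lemma~\ref{compatible} together with the fact that $\cR = \varprojlim_{m} \cR/(p^{m})$ (since $\cR$ is a finitely generated $\cO$-module and $\cO$ is $p$-adically complete).

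For claim (i), I would first build a basis of ${\rm SS}_{r}(T,\cF)$ by induction. Start with any basis $\epsilon^{(1)}$ of the rank-one free $\cR/(p)$-module ${\rm SS}_{r}(T/pT,\cF)$ (which exists by Theorem~\ref{thm stark}(i)). Assuming $\epsilon^{(m)}$ has been chosen, use the surjectivity in Lemma~\ref{compatible}(i) to pick a preimage $\epsilon^{(m+1)} \in {\rm SS}_{r}(T/p^{m+1}T,\cF)$. Since $\cR/(p^{m+1})$ is local with residue field $\cR/\fp$ and the image of $\epsilon^{(m+1)}$ under the further projection to ${\rm SS}_{r}(T/pT,\cF)$ is the basis $\epsilon^{(1)}$, Nakayama's lemma implies that $\epsilon^{(m+1)}$ is itself a basis of the rank-one free $\cR/(p^{m+1})$-module ${\rm SS}_{r}(T/p^{m+1}T,\cF)$. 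The resulting compatible family $\epsilon = (\epsilon^{(m)})_{m}$ lies in ${\rm SS}_{r}(T,\cF)$. For any $\epsilon' = (\epsilon'^{(m)})_{m}$ in ${\rm SS}_{r}(T,\cF)$, write $\epsilon'^{(m)} = \lambda^{(m)} \epsilon^{(m)}$ with $\lambda^{(m)} \in \cR/(p^{m})$; the compatibility of $(\epsilon'^{(m)})_m$ forces $(\lambda^{(m)})_{m}$ to be a compatible family and hence to define a unique element $\lambda \in \varprojlim_{m} \cR/(p^{m}) = \cR$ with $\epsilon' = \lambda \epsilon$. This proves that $\epsilon$ is an $\cR$-basis of ${\rm SS}_{r}(T,\cF)$.

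For claim (ii), fix any $\epsilon \in {\rm SS}_{r}(T,\cF)$ and write $\epsilon = \lambda \cdot \epsilon_{0}$ with $\epsilon_{0}$ a basis of ${\rm SS}_{r}(T,\cF)$ and $\lambda \in \cR$ as above. Set $\mathrm{Sel} := H^{1}_{\cF^{*}}(K, T^{\vee}(1))^{\vee}$. Corollary~\ref{dualselisom} gives a canonical isomorphism $H^{1}_{\cF^{*}}(K, T^{\vee}(1))[p^{m}] \simeq H^{1}_{\cF^{*}}(K,(T/p^{m}T)^{\vee}(1))$, and dualising yields $H^{1}_{\cF^{*}}(K,(T/p^{m}T)^{\vee}(1))^{\vee} \simeq \mathrm{Sel}/p^{m}\mathrm{Sel}$. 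Base change for Fitting ideals therefore gives
\[
{\rm Fitt}_{\cR/(p^{m})}^{i}\!\left(H^{1}_{\cF^{*}}(K,(T/p^{m}T)^{\vee}(1))^{\vee}\right) = {\rm Fitt}_{\cR}^{i}(\mathrm{Sel}) \cdot \cR/(p^{m}).
\]
Combining this with Theorem~\ref{thm stark}(ii)(c) applied to the basis $\epsilon_{0}^{(m)}$ at each finite level, one obtains
\[
I_{i}(\epsilon^{(m)}) = \lambda^{(m)} \cdot {\rm Fitt}_{\cR}^{i}(\mathrm{Sel}) \cdot \cR/(p^{m}),
\]
where $\lambda^{(m)}$ is the image of $\lambda$ in $\cR/(p^{m})$.

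To pass to the inverse limit, I would invoke the standard fact that every finitely generated $\cR$-submodule of $\cR$ (in particular the ideal $\lambda \cdot {\rm Fitt}_{\cR}^{i}(\mathrm{Sel})$) is closed in the $p$-adic topology (Krull's intersection theorem for the noetherian local ring $\cR$). Hence
\[
I_{i}(\epsilon) = \varprojlim_{m} I_{i}(\epsilon^{(m)}) = \lambda \cdot {\rm Fitt}_{\cR}^{i}(\mathrm{Sel}).
\]
Since $\mathrm{Sel}$ is a finitely generated $\cR$-module, one has ${\rm Fitt}_{\cR}^{i}(\mathrm{Sel}) \subseteq {\rm Fitt}_{\cR}^{i+1}(\mathrm{Sel})$ with equality to $\cR$ for all sufficiently large $i$. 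Hence $I_{i}(\epsilon) \subseteq I_{i+1}(\epsilon)$ with $I_{i}(\epsilon) = (\lambda)$ for all sufficiently large $i$, and so $I_{\infty}(\epsilon) = (\lambda)$. This proves (a), and then (b) is immediate since $(\lambda) = \cR$ if and only if $\lambda$ is a unit, if and only if $\epsilon$ is an $\cR$-basis. Finally (c) follows by rewriting the displayed formula as $I_{i}(\epsilon) = I_{\infty}(\epsilon) \cdot {\rm Fitt}_{\cR}^{i}(\mathrm{Sel})$.

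The only mild obstacle I anticipate is the passage to the inverse limit in the formula for $I_{i}(\epsilon)$: one must confirm that $\cR$-ideals are $p$-adically closed (which holds since $\cR$ is a noetherian complete local ring) and that the isomorphism $\mathrm{Sel}/p^{m}\mathrm{Sel} \simeq H^{1}_{\cF^{*}}(K,(T/p^{m}T)^{\vee}(1))^{\vee}$ supplied by Corollary~\ref{dualselisom} is compatible with variation of $m$; both are routine. All other steps are straightforward consequences of Theorem~\ref{thm stark} and Lemma~\ref{compatible}.
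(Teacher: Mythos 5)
Your proof is correct and follows essentially the same route as the paper's: claim (i) via the surjectivity of the transition maps in Lemma~\ref{compatible}(i) together with Theorem~\ref{thm stark}(i), and claim (ii) by reducing to a basis, applying Theorem~\ref{thm stark}(ii) at each finite level, using Corollary~\ref{dualselisom} to identify the finite-level Selmer modules with quotients of $H^1_{\cF^*}(K,T^\vee(1))^\vee$, and passing to the inverse limit via the closedness of ideals in the complete noetherian local ring $\cR$. You have simply filled in more of the routine details (the Nakayama step in (i), the explicit $\lambda$-scaling in (ii)) than the paper spells out.
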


\begin{proof} Claim (i) follows directly from Lemma~\ref{compatible}(i) and Theorem~\ref{thm stark}(i).

To prove claim (ii) it is enough, just as with the proof of Theorem~\ref{thm stark}(ii), to show that if $\epsilon = (\epsilon^{(m)})_{m}$ is a basis of ${\rm SS}_{r}(T, \cF)$, then for each non-negative integer $i$ one has $
I_{i}(\epsilon) = \Fitt_{\cR}^{i}(H^{1}_{\cF^{*}}(K, T^{\vee}(1))^{\vee}).$

In this case, each element $\epsilon^{(m)}$ is a basis of ${\rm SS}_{r}(T/(p^m), \cF)$ over $\cR/(p^m)$ and so Theorem~\ref{thm stark}(ii) implies that
\begin{align*}
I_{i}(\epsilon) &= \varprojlim_{m} \Fitt_{\cR/(p^{m})}^{i}\left(H^{1}_{\cF^{*}}(K, (T/p^{m}T)^{\vee}(1))^{\vee}\right)
\\
&= \varprojlim_{m} \Fitt_{\cR}^{i}\left(H^{1}_{\cF^{*}}(K, T^{\vee}(1))^{\vee}\right)\cR/(p^{m})\\
&= \Fitt_{\cR}^{i}\left(H^{1}_{\cF^{*}}(K, T^{\vee}(1))^{\vee}\right),
\end{align*}
where the second equality follows from Corollary~\ref{dualselisom} and the last from the fact that $\cR$ is a complete noetherian ring and so every ideal is closed. %and so we have $\varprojlim_{m} \Fitt_{\cR}^{i}\left(H^{1}_{\cF^{*}}(K, T^{\vee}(1))^{\vee}\right)\cR/(p^{m}) = $.
\end{proof}

\section{Kolyvagin systems}\label{koly sys sec}

In this section, we develop the theory of higher rank Kolyvagin systems. We continue to use the notation introduced in \S \ref{pre}.

\subsection{Definition}\label{defkoly}

In the following, we suppose $r>0$. (We do not define Kolyvagin systems of rank zero).

We recall the definition of Kolyvagin systems. As in \S\ref{defstark}, we fix an isomorphism $H_{/f}^1(K_\fq,A)\simeq R$ for each $\fq \in \cP$ and identify them. We will again use the map $v_\fq$, which is defined by
$$v_\fq : H^1_{\cF(\fn)}(K,A) \stackrel{{\rm loc}_\fq}{\to} H^1(K_\fq, A) \to H^1_{/f}(K_\fq,A)=R.$$
If $\fq \mid \fn$, this map induces
$$
v_\fq: {\bigcap}_R^r H^1_{\cF(\fn)}(K,A)\otimes G_\fn \to {\bigcap}_R^{r-1}H^1_{\cF_\fq(\fn/\fq)}(K,A)\otimes G_\fn.
$$
(For the definition of $G_\fn$, see \S \ref{section hyp}.)

The finite-singular comparison map
$$
\varphi_\fq^{\rm fs}: H^1_{\cF(\fn/\fq)}(K,A) \stackrel{{\rm loc}_\fq}{\to} H_f^1(K_\fq, A) \stackrel{\varphi_\fq^{\rm fs}}{\to} H^1_{\rm tr}(K_\fq,A)\otimes G_\fq =H^1_{/f}(K_\fq,A)\otimes G_\fq=R\otimes G_\fq,
$$
which is defined in \S \ref{section fs}, induces
$$
\varphi_\fq^{\rm fs} :{\bigcap}_R^r H^1_{\cF(\fn/\fq)}(K,A)\otimes G_{\fn/\fq} \to {\bigcap}_R^{r-1}H^1_{\cF_\fq(\fn/\fq)}(K,A)\otimes G_\fn.
$$
A Kolyvagin system of rank $r$ (for $(A,\cF)$) is an element
$$(\kappa_\fn)_\fn \in \prod_{\fn \in \cN}{\bigcap}_R^r H^1_{\cF(\fn)}(K,A)\otimes G_\fn$$
which satisfies the `finite-singular relation'
$$
v_\fq(\kappa_\fn)=\varphi_\fq^{\rm fs}(\kappa_{\fn/\fq}) \ \text{ in } \ {\bigcap}_R^{r-1}H^1_{\cF_\fq(\fn/\fq)}(K,A)\otimes G_\fn.
$$
The set of all Kolyvagin systems of rank $r$ is denoted by ${\rm KS}_r(A,\cF)$. This is an $R$-submodule of $\prod_{\fn \in \cN}{\bigcap}_R^r H^1_{\cF(\fn)}(K,A)\otimes G_\fn$.

\subsection{Regulator maps} \label{sec regulator}

We quickly review the relation between Kolyvagin and Stark systems (see \cite[\S 4.2]{sbA}).

There exists a canonical homomorphism of $R$-modules
$${\rm Reg}_r: {\rm SS}_r(A,\cF) \to {\rm KS}_r(A,\cF),$$
which is referred to in loc. cit. as a `regulator map'. The definition of this map is as follows. Let $\epsilon=(\epsilon_\fn)_\fn \in {\rm SS}_r(A,\cF)$. For each $\fn \in \cN$, we define
$$\kappa(\epsilon_\fn):=\left({\bigwedge}_{\fq \mid \fn}\varphi_\fq^{\rm fs} \right)(\epsilon_\fn)\in {\bigcap}_R^r H^1_{\cF(\fn)}(K,A)\otimes G_\fn,$$
where we regard ${\bigwedge}_{\fq \mid \fn}\varphi_\fq^{\rm fs} $ as a map
$${\bigwedge}_{\fq \mid \fn}\varphi_\fq^{\rm fs} : {\bigcap}_R^{r+\nu(\fn)} H^1_{\cF^\fn}(K,A) \to {\bigcap}_R^r H^1_{\cF(\fn)}(K,A)\otimes G_\fn.$$
Then one sees that $(\kappa(\epsilon_\fn))_\fn$ is a Kolyvagin system. In fact, if $\fq \mid \fn$, then we have
\begin{eqnarray*}
v_\fq(\kappa(\epsilon_\fn))&=&v_\fq\left(\left({\bigwedge}_{\fq' \mid \fn}\varphi_{\fq'}^{\rm fs} \right)(\epsilon_\fn)\right) \\
&=&\varphi_\fq^{\rm fs}\left( \left({\bigwedge}_{\fq' \mid \fn/\fq}\varphi_{\fq'}^{\rm fs} \right)(v_\fq(\epsilon_\fn))\right) \\
&=& \varphi_\fq^{\rm fs}\left( \left({\bigwedge}_{\fq' \mid \fn/\fq}\varphi_{\fq'}^{\rm fs} \right)(\epsilon_{\fn/\fq})\right) \\
&=& \varphi_\fq^{\rm fs}(\kappa(\epsilon_{\fn/\fq})).
\end{eqnarray*}
The regulator map is defined by setting ${\rm Reg}_r(\epsilon):=(\kappa(\epsilon_\fn))_\fn.$

\subsection{Kolyvagin systems and Selmer modules}

For each $\kappa \in {\rm KS}_r(A,\cF)$, we can associate it with invariants $I_i(\kappa)$, similarly to the case of Stark systems (see Definition \ref{stark ideal}).

\begin{definition}\label{koly ideal}
Let $\kappa \in {\rm KS}_{r}(A, \cF)$. We fix a generator of $G_\fq$ for each $\fq \in \cP$ and regard
$$
{\bigcap}_R^r H^1_{\cF(\fn)}(K,A)\otimes G_\fn={\bigcap}_R^r H^1_{\cF(\fn)}(K,A)
$$
for every $\fn \in \cN$.
For a non-negative integer $i$, define an ideal $I_i(\kappa)$ of $R$ by
\begin{align*}
I_i(\kappa) := \sum_{\fn \in \cN, \ \nu(\fn) = i}{\rm im}(\kappa_\fn),
\end{align*}
where we regard $\kappa_\fn \in {\bigcap}_R^r H^1_{\cF(\fn)}(K,A) =\Hom_R \left( {\bigwedge}^r_R H^1_{\cF(\fn)}(K, A)^*, R \right)$.
These ideals, of course, do not depend on the choice of a generator of $G_\fq$ for each $\fq$.
\end{definition}

The aim of this subsection is to prove the following theorem, which is one of the main results in this paper.

\begin{theorem} \label{main}
Assume Hypotheses \ref{hyp1}, \ref{hyp2} and \ref{hyp large}, and also suppose $p > 3$.
\begin{itemize}
\item[(i)] The regulator map
\begin{align*}
{\rm Reg}_r \colon {\rm SS}_r(A, \cF) \to {\rm KS}_r(A, \cF)
\end{align*}
is an isomorphism. In particular, the $R$-module ${\rm KS}_r(A,\cF)$ is free of rank one.
\item[(ii)] Fix $\kappa$ in ${\rm KS}_r(A,\cF)$. Then for each ideal $\fn$ in $\cN$ one has
\begin{align*}
{\rm im}(\kappa_\fn) \subseteq   {\rm Fitt}_R^0 ( H^1_{\cF(\fn)^*}(K, A^*(1))^*),
\end{align*}
with equality if $\kappa$ is a basis of ${\rm KS}_r(A, \cF)$. In particular, if $\kappa$ is a basis, then
\begin{align*}
{\rm im}(\kappa_1) = {\rm Fitt}_R^0( H^1_{\cF^*}(K, A^*(1))^* ).
\end{align*}
\item[(iii)] Fix $\kappa$ in ${\rm KS}_r(A,\cF)$. Then for each non-negative integer $i$ one has
$$I_i(\kappa)\subseteq   {\rm Fitt}_R^i(H^1_{\cF^\ast}(K,A^\ast(1))^\ast),$$
with equality if $R$ is a principal ideal ring and $\kappa$ is a basis of ${\rm KS}_r(A,\cF)$.
\end{itemize}
\end{theorem}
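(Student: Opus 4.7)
My approach is to prove the three parts sequentially, leveraging Theorem~\ref{thm stark} as the main input together with the structural properties of exterior biduals developed in \S\ref{ext bidual sec}.

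For part (i), I would establish that ${\rm Reg}_r$ is both injective and surjective. For injectivity, I fix a core ideal $\fn\in\cN$ with $H^1_{(\cF^*)_\fn}(K,A^*(1))=0$ (which exists by Hypothesis~\ref{hyp large}) and recall from Remark~\ref{free rem} that $H^1_{\cF^\fn}(K,A)$ is then free of rank $r+\nu(\fn)$; by Theorem~\ref{thm stark}(i), a Stark system $\epsilon$ is determined by $\epsilon_\fn$, so it suffices to show that $\bigwedge_{\fq\mid\fn}\varphi^{\rm fs}_\fq(\epsilon_\fn)=0$ forces $\epsilon_\fn=0$. This in turn will follow from the fact that each local map $\varphi^{\rm fs}_\fq$ is an isomorphism $H^1_f(K_\fq,A)\xrightarrow{\sim} H^1_{\rm tr}(K_\fq,A)\otimes G_\fq$ (via the Cayley--Hamilton argument of \S\ref{section fs}), together with a careful application of Proposition~\ref{reduction} to the local decomposition $H^1(K_\fq,A)=H^1_f\oplus H^1_{\rm tr}$. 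For surjectivity, I would adapt the core-vertex arguments of Mazur--Rubin \cite[\S 5]{MRkoly} to the higher-rank setting over Gorenstein coefficients to show that ${\rm KS}_r(A,\cF)$ is itself free of rank one over $R$; the restrictions imposed by Hypothesis~\ref{hyp2} and $p>3$ enter precisely here, as the requisite simultaneous Chebotarev argument (Lemma~\ref{chebotarev}) must be applied to several auxiliary cohomology classes at once. Since both source and target of ${\rm Reg}_r$ are free of rank one, an injective homomorphism between them is automatically an isomorphism.

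For part (ii), the inclusion will follow by applying Proposition~\ref{prop injective}(ii) to a finite presentation of $H^1_{\cF(\fn)^*}(K,A^*(1))^*$ produced from the global duality sequence of Theorem~\ref{gd} (by enlarging $\cF(\fn)$ at auxiliary primes until the dual Selmer module vanishes); the finite-singular relation defining a Kolyvagin system then places $\kappa_\fn$ in the image of the relevant $\bigwedge$-map. For the equality when $\kappa$ is a basis, I realize $\kappa={\rm Reg}_r(\epsilon)$ for a basis $\epsilon$ of ${\rm SS}_r(A,\cF)$ via part (i), and match ${\rm im}(\kappa_\fn)$ with ${\rm Fitt}_R^0(H^1_{\cF(\fn)^*}(K,A^*(1))^*)$ by reinterpreting $\epsilon_\fn$ as the image-generating element for the auxiliary Selmer structure $\cF(\fn)$ and invoking Theorem~\ref{thm stark}(ii)(c) for that structure at its base level. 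For part (iii), the containment is immediate from (ii) combined with the corollary of Lemma~\ref{fitt-lemma}, which expresses ${\rm Fitt}_R^i(H^1_{\cF^*}(K,A^*(1))^*)$ as a sum of ${\rm Fitt}_R^0$-pieces over $\fn$ with $\nu(\fn)=i$. For the equality under the PID hypothesis, I would use the elementary divisor decomposition $H^1_{\cF^*}(K,A^*(1))^*\simeq\bigoplus_k R/(d_k)$ with $d_1\mid d_2\mid\cdots$ and produce, via Lemma~\ref{chebotarev}, auxiliary primes $\fq_1,\ldots,\fq_i\in\cP$ whose localization maps separate the top $i$ invariant factors; this forces ${\rm im}(\kappa_{\fq_1\cdots\fq_i})$ to coincide with the required product of elementary divisors.

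The principal obstacle, already flagged in Remark~\ref{difficulty remark2}, is that one does not in general know the equality $I_i(\epsilon)=I_i({\rm Reg}_r(\epsilon))$ for a Stark system $\epsilon$, so the assertion in part (iii) cannot simply be transferred through the isomorphism in part (i). The PID hypothesis is essential to this approach because it permits the explicit elementary divisor bookkeeping that is unavailable over general zero-dimensional Gorenstein local rings; removing it would require substantially finer control over how $\bigwedge_{\fq\mid\fn}\varphi^{\rm fs}_\fq$ behaves at the level of exterior biduals.
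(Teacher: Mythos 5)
Your outline for part (i) contains a genuine gap at the most delicate point. You propose to establish that ${\rm KS}_r(A,\cF)$ is free of rank one by ``adapting the core-vertex arguments of Mazur--Rubin,'' but this is exactly the step that cannot be obtained by adaptation alone: Mazur and Rubin themselves flagged in \cite[Rem.~11.9]{MRselmer} that in higher rank their exterior-power Kolyvagin systems need not be `stub' systems, precisely because the naive analogue of the injectivity argument breaks down. The paper's proof of Theorem~\ref{thm koly} is where the new idea lives. The graph connectedness (Theorem~\ref{connected}) does indeed adapt closely from Mazur--Rubin, but the injectivity of the projection $\kappa\mapsto\kappa_\fn$ at a core vertex requires, for $\kappa_\fm\neq 0$, the existence of a prime $\fq$ with $\varphi^{\rm fs}_\fq(\kappa_\fm)\neq 0$. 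This is deduced from Corollary~\ref{bidual-ker} --- i.e.\ from the identity ${\bigcap}^r_R\ker(f)=\ker\bigl(f\colon{\bigcap}^r_R X\to{\bigcap}^{r-1}_R X\bigr)$ over a self-injective ring --- applied to a sum of localization maps chosen via Lemma~\ref{chebotarev}. This is the place where exterior biduals (rather than exterior powers) are indispensable, and your sketch does not identify it. Relatedly, in your reduction of the injectivity of ${\rm Reg}_r$ to the injectivity of ${\bigwedge}_{\fq\mid\fn}\varphi^{\rm fs}_\fq$ on $\epsilon_\fn$, the supporting ingredient is not Proposition~\ref{reduction} applied to the local decomposition: the paper shows that (\ref{phiisom}) is an isomorphism because, at a core vertex, global duality gives a split short exact sequence of \emph{free} $R$-modules $0\to H^1_{\cF(\fn)}(K,A)\to H^1_{\cF^\fn}(K,A)\to\bigoplus_{\fq\mid\fn}H^1_{/\rm tr}(K_\fq,A)\to 0$, after which biduals coincide with exterior powers and the map is computed termwise.

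Parts (ii) and (iii) of your plan are much closer to the paper. For (ii) the paper reduces to the basis case first (the general inclusion following by $R$-linearity), then uses Lemma~\ref{chebotarev}, the global duality exact sequence (\ref{fitt-exact}), and Proposition~\ref{prop injective}(ii); you propose these ingredients in a slightly different order, which is fine. For the inclusion in (iii), note that the relevant input is really Corollary~\ref{fitt-ind} (phrased in terms of the structures $\cF^*(\fm)$ rather than $(\cF^*)_\fm$ appearing in the unnamed corollary of Lemma~\ref{fitt-lemma}); either works here via the surjection $H^1_{\cF^*(\fm)}(K,A^*(1))^*\twoheadrightarrow H^1_{(\cF^*)_\fm}(K,A^*(1))^*$, but the former is the cleaner fit with part~(ii). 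For the equality in (iii), your approach (choose auxiliary primes to separate invariant factors directly) is genuinely different from the paper's and plausibly workable, but the paper's is simpler: Corollary~\ref{fitt-ind} already reduces the matter to showing that $\Ann_R(H^1_{\cF(\fn)}(K,A))$ vanishes for all $\fn$, which over a PID is immediate from the rank count ${\rm rank}_R H^1_{\cF^\fm}(K,A) = r+\nu(\fm) > \nu(\fm) = {\rm rank}_R X$ in (\ref{fitt-exact}) together with the elementary divisor theorem, without any need to track how localization maps act on individual invariant factors.
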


\begin{remark}\label{difficulty remark} For each $\kappa$ in ${\rm KS}_r(A,\cF)$ and each non-negative integer $i$, Theorem~\ref{main}(i) allows us to define a canonical ideal of $R$ by setting
\[ I'_i(\kappa) := I_i({\rm Reg}_r^{-1}(\kappa)),\]
where the right hand side is as defined in Definition~\ref{stark ideal}. If $R$ is a principal ideal ring and $\kappa$ is a basis of ${\rm KS}_r(A,\cF)$, then Theorems~\ref{thm stark}(ii) and \ref{main}(iii) combine to imply that $I'_i(\kappa) = I_i(\kappa)$ for all $i$. It would be interesting to know if such an equality is true more generally but this question seems to be difficult. \end{remark}

The next subsection is devoted to the proof of Theorem \ref{main}.

\subsection{The proof of Theorem \ref{main}}

%In this subsection, we assume the following hypotheses.

%\begin{hypothesis}\label{kolyvagin-hyp}
%\begin{itemize}
%\item[(1)] The quantity $r = \chi(\cF)$ is positive.
%\item[(2)] There is a large ideal $\fn \in \cN$ such that $H^1_{\cF^\fn}(k, A) \simeq R^{r + \nu(\fn)}$.
%\end{itemize}
%\end{hypothesis}

In this subsection, we always assume Hypotheses \ref{hyp1}, \ref{hyp2}, and \ref{hyp large}, and fix an injection $\Bbbk \hookrightarrow R$.
%Set
%$$
%H^{1}_{/ {\rm tr}}(K_{\fq}, -) := H^{1}(K_{\fq}, -)/H^{1}_{\rm tr}(K_{\fq}, -)
%$$
%for any prime $\fq \in \cP$.
Note that
$$
H^{1}_{?}(K_{\fq}, A) \otimes_{R} \Bbbk \xrightarrow{\sim} H^{1}_{?}(K_{\fq}, A \otimes_{R} \Bbbk)
$$
and
$$
H^{1}_{?}(K_{\fq}, A \otimes_{R} \Bbbk) \xrightarrow{\sim} H^{1}_{?}(K_{\fq}, A)[\fp]
$$
where $\fq \in \cP$, $? \in \{\emptyset, f, /f, {\rm tr}, /{\rm tr} \}$,
the first map is the natural map, and the second is induced by the fixed map $\Bbbk \hookrightarrow R$.

\begin{lemma}\label{s-iso}
If $\fn \in \cN$ is an ideal such that $H^{1}_{(\cF^{*})_{\fn}}(K, A^{*}(1)) = 0$, then the natural map
$$
H^{1}_{\cF^{\fn}}(K, A) \otimes_{R} \Bbbk \to H^{1}_{\cF^{\fn}}(K, A \otimes_{R} \Bbbk)
$$
and the map
$$
H^{1}_{\cF^{\fn}}(K, A \otimes_{R} \Bbbk) \to H^{1}_{\cF^{\fn}}(K, A)[\fp]
$$
induced by $\Bbbk \hookrightarrow R$ are isomorphisms.
\end{lemma}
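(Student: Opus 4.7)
The plan is to deduce both assertions from the Gorenstein structure of $R$ together with a mild extension of Proposition~\ref{prop h1} to the cohomology of $\Gal(K_{S_{\fn}}/K)$. To set up, I would fix a generator $x$ of the socle ideal $R[\fp]$ (which is principal because $R$ is zero-dimensional Gorenstein) and arrange the fixed injection $\Bbbk \hookrightarrow R$ so that $1 \mapsto x$. Since $A$ is a free $R$-module, multiplication by $x$ then yields a canonical $R[G_{K}]$-isomorphism
\begin{align*}
A \otimes_{R} \Bbbk \xrightarrow{\sim} A[\fp], \qquad a \otimes 1 \mapsto xa,
\end{align*}
and under this identification the composition of the two maps in the lemma is precisely the natural homomorphism $H^{1}_{\cF^{\fn}}(K, A) \otimes_{R} \Bbbk \to H^{1}_{\cF^{\fn}}(K, A)[\fp]$, $c \otimes 1 \mapsto xc$.

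To handle the second map, I would compare the defining Selmer exact sequences via the commutative diagram
\begin{align*}
\xymatrix{
0 \ar[r] & H^{1}_{\cF^{\fn}}(K, A[\fp]) \ar[r] \ar[d] & H^{1}(\cO_{K, S_{\fn}}, A[\fp]) \ar[r] \ar[d] & \bigoplus_{v \in S} H^{1}_{/\cF}(K_{v}, A) \ar[d]^{=} \\
0 \ar[r] & H^{1}_{\cF^{\fn}}(K, A)[\fp] \ar[r] & H^{1}(\cO_{K, S_{\fn}}, A)[\fp] \ar[r] & \bigoplus_{v \in S} H^{1}_{/\cF}(K_{v}, A),
}
\end{align*}
in which the rightmost columns agree because the local condition defining $\cF^{\fn}$ on $A$ at a prime $\fq \mid \fn$ is all of $H^{1}(K_{\fq}, A)$, and so imposes no constraint on $A[\fp]$. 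The middle vertical arrow is bijective by a direct adaptation of Proposition~\ref{prop h1} to the quotient $\Gal(K_{S_{\fn}}/K)$ of $G_{K}$, which is legitimate because the algebraic inputs of that proposition (Lemmas~\ref{lemma S} and \ref{fixed-cor}) depend only on the Galois action on $A$, which factors through $\Gal(K_{S_{\fn}}/K)$ since $A$ is unramified outside $S \subseteq S_{\fn}$, while Hypothesis~\ref{hyp2} provides the required vanishing $(A \otimes_{R} \Bbbk)^{G_{K}} = 0$. A short diagram chase then produces the bijectivity of the left vertical arrow.

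For the first map I would then invoke Hypothesis~\ref{hyp large} together with Remark~\ref{free rem}, by which the vanishing of $H^{1}_{(\cF^{*})_{\fn}}(K, A^{*}(1))$ forces $F := H^{1}_{\cF^{\fn}}(K, A)$ to be free of rank $r + \nu(\fn)$ over $R$. For any such free $F$ the map $F \otimes_{R} \Bbbk \to F[\fp]$, $v \otimes 1 \mapsto xv$, is the canonical isomorphism induced by $\Bbbk \xrightarrow{\sim} R[\fp]$, so by the explicit description above the composition of the two maps in the lemma is an isomorphism; combined with the bijectivity of the second map this forces the first to be bijective as well. The only step demanding real care is the adaptation of Proposition~\ref{prop h1} from $G_{K}$ to $\Gal(K_{S_{\fn}}/K)$, but since the proof of that proposition only used the abstract Galois action on $A$ and its quotients, this poses no genuine obstacle.
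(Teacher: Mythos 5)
Your proof is correct, but it takes a genuinely different route from the one in the paper. The paper proceeds by establishing that both maps are injective and then trapping the middle term with a $\Bbbk$-dimension count: the injectivity of $H^{1}_{\cF^{\fn}}(K, A) \otimes_{R} \Bbbk \to H^{1}_{\cF^{\fn}}(K, A \otimes_{R} \Bbbk)$ is obtained from Lemma~\ref{injective} (which rests on the Chebotarev-type result in Lemma~\ref{chebotarev}), the injectivity of the second map from Lemma~\ref{lemma S}, and then, since both ends of the chain have $\Bbbk$-dimension $r + \nu(\fn)$ by the freeness supplied by Hypothesis~\ref{hyp large} and Remark~\ref{free rem}, both injections must be bijections. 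You instead use the Gorenstein structure to observe that the composition of the two maps is the canonical comparison $F/\fp F \to F[\fp]$ for the free module $F = H^{1}_{\cF^{\fn}}(K, A)$ and is hence an isomorphism, and then prove the second map is an isomorphism directly by adapting Proposition~\ref{prop h1} to the cohomology of $\Gal(K_{S_{\fn}}/K)$ together with a diagram chase, from which the bijectivity of the first map follows by composition. Your route avoids the Chebotarev input of Lemma~\ref{injective} altogether and dispenses with the dimension count; the cost is the adaptation of Proposition~\ref{prop h1}, which you correctly note is unproblematic because the relevant algebraic inputs (Lemmas~\ref{lemma S} and~\ref{fixed-cor}) are insensitive to whether one works with $G_{K}$ or a quotient through which the action on $A$ factors. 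Both arguments use Hypothesis~\ref{hyp large} in an essential way (for the freeness of $H^{1}_{\cF^{\fn}}(K, A)$), and both implicitly establish along the way that the image-induced Selmer structure on $A \otimes_{R} \Bbbk$ (as a quotient of $A$) and the kernel-induced structure on $A[\fp]$ (as a submodule of $A$) give the same Selmer module; in your formulation this identification is what allows the composition to be computed as $c \otimes 1 \mapsto xc$. The one thing your approach does not directly reproduce is the explicit statement $\dim_{\Bbbk}H^{1}_{\cF^{\fn}}(K, A \otimes_{R}\Bbbk) = r + \nu(\fn)$, which the paper's count yields for free and which is reused in the surrounding discussion of $\lambda(\fn)$; but that is of course an immediate corollary once the first map is known to be an isomorphism.
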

\begin{proof}
Note that $H^{1}_{\cF^{\fn}}(K, A) \simeq R^{r+\nu(\fn)}$ by Hypothesis \ref{hyp large}.
Applying Lemma~\ref{injective} with $S = \Bbbk$ and $X = H^{1}_{\cF^{\fn}}(K, A)$, we see that the natural map
%By Lemma \ref{injective}, the map
$$
H^{1}_{\cF^{\fn}}(K, A) \otimes_{R} \Bbbk \to H^{1}_{\cF^{\fn}}(K, A \otimes_{R} \Bbbk)
$$
is injective.
%Since $(A \otimes_{R} \Bbbk)^{G_{K}} = 0$ by Hypothesis \ref{hyp2},
By Hypothesis~\ref{hyp2} and Lemma~\ref{lemma S}, we have $\left(\coker\left(A \otimes_{R} \Bbbk \to A\right)\right)^{G_{K}} = 0$, and so
the map
$$
H^{1}_{\cF^{\fn}}(K, A \otimes_{R} \Bbbk) \to H^{1}_{\cF^{\fn}}(K, A)[\fp]
$$
induced by $\Bbbk \hookrightarrow R$ is also injective.
%{\color{red} by Lemma \ref{lemma S}}.
Hence we have
\begin{align*}
r + \nu(\fn) &= \dim_{\Bbbk}H^1_{\cF^{\fn}}(K, A) \otimes_{R} \Bbbk
\\
&\leq \dim_{\Bbbk}H^1_{\cF^{\fn}}(K, A \otimes_R \Bbbk)
\\
&\leq \dim_{\Bbbk}H^1_{\cF^{\fn}}(K, A)[\fp]
\\
&= r + \nu(\fn)
%\dim_{\Bbbk}H^1_{\cF^{\fn}}(K, A) \otimes_{R} \Bbbk
\end{align*}
where the last equality follows from the fact that $\dim_{\Bbbk}R[\fp] = 1$ and $H^{1}_{\cF^{\fn}}(K, A) \simeq R^{r+\nu(\fn)}$.
%and Hypothesis \ref{hyp large}.
Hence both the injections are isomorphisms.
\end{proof}

\begin{corollary}\label{selisom}
For any ideal $\fn \in \cN$, the map $H^{1}_{\cF(\fn)}(K, A \otimes_{R} \Bbbk) \to H^{1}_{\cF(\fn)}(K, A)[\fp]$ induced by
the map $\Bbbk \hookrightarrow R$ is an isomorphism.
\end{corollary}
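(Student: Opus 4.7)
The plan is to bootstrap from Lemma~\ref{s-iso} by a diagram chase, after first finding a suitable ideal $\fm \in \cN$ divisible by $\fn$ to which that lemma applies. To produce $\fm$, I would note that $H^1_{(\cF^*)_\fn}(K, A^*(1))$ is finite (since $R$ is finite), and argue by successive reduction: for any non-zero class in this group, Lemma~\ref{chebotarev} (applied with $s=0$, $t=1$) together with its positive-density conclusion provides primes $\fq \in \cP$ coprime to $\fn$ at which that class localizes non-trivially. Adjoining such a prime and imposing the strict local condition there strictly shrinks the dual Selmer module. Iterating finitely many times yields $\fm \in \cN$ with $\fn \mid \fm$ and $H^1_{(\cF^*)_\fm}(K, A^*(1)) = 0$.

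With this $\fm$, the definitions of $\cF(\fn)$ and $\cF^\fm$ yield an exact sequence
\[
0 \to H^1_{\cF(\fn)}(K, A) \to H^1_{\cF^\fm}(K, A) \to \bigoplus_{\fq \mid \fn} H^1_{/{\rm tr}}(K_\fq, A) \oplus \bigoplus_{\fq \mid \fm/\fn} H^1_{/f}(K_\fq, A),
\]
and the analogous sequence holds with $A$ replaced by $A \otimes_R \Bbbk$ throughout. Applying the left-exact functor $(-)[\fp]$ to the sequence for $A$, and using the map on cohomology induced by the fixed injection $\Bbbk \hookrightarrow R$, produces a commutative diagram with exact rows between the $A \otimes_R \Bbbk$ sequence and the $[\fp]$-subsequence of the $A$ sequence. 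By the identifications recalled immediately before Lemma~\ref{s-iso} each vertical map between the rightmost local terms is an isomorphism, and Lemma~\ref{s-iso} applied to $\fm$ gives the same for the middle vertical map. A four-lemma chase then forces the leftmost vertical map, which is precisely the map in the statement, to be an isomorphism.

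The main obstacle lies in the first step: the auxiliary primes produced by Chebotarev must be chosen inside $\cP$ rather than among all primes, and this relies on the precise form of Lemma~\ref{chebotarev} combined with Hypothesis~\ref{hyp1}. Once $\fm$ is fixed the remainder is a routine diagram chase, since the local cohomology terms intervening between $\cF(\fn)$ and $\cF^\fm$ are exactly those ($H^1_{/{\rm tr}}$ and $H^1_{/f}$ at primes in $\cP$) for which the comparison between $A \otimes_R \Bbbk$ and $A[\fp]$ is already known.
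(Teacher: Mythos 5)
Your argument is correct and follows essentially the same route as the paper's proof: find $\fm \in \cN$ with $\fn \mid \fm$ and $H^1_{(\cF^*)_\fm}(K, A^*(1)) = 0$ via Lemma~\ref{chebotarev}, then compare the exact sequences defining $H^1_{\cF(\fn)}$ inside $H^1_{\cF^\fm}$ for $A\otimes_R\Bbbk$ and for $A$ (after applying $(-)[\fp]$), and deduce the conclusion from Lemma~\ref{s-iso} and the local isomorphisms. Your extra detail on producing $\fm$ by iterated shrinking of the dual Selmer module is a correct elaboration of the paper's terse appeal to Lemma~\ref{chebotarev}.
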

\begin{proof}
Let $\fn \in \cN$. By using Lemma \ref{chebotarev}, we can take an ideal $\fm \in \cN$ with $\fn \mid \fm$ and
$H^{1}_{(\cF^{*})_{\fm}}(K, A^{*}(1)) = 0$.
Then by the definition of the Selmer structure $\cF(\fn)$, we have the following diagram, whose rows are exact:
\begin{align*}
{\small
\xymatrix{
H^{1}_{\cF(\fn)}(K, A_\Bbbk) \ar@{^{(}->}[r] \ar[d] & H^{1}_{\cF^{\fm}}(K, A_\Bbbk) \ar[r] \ar[d] & \bigoplus_{\fq \mid \fn}H^{1}_{/ {\rm tr}}(K_{\fq}, A_\Bbbk) \oplus \bigoplus_{\fq \mid \frac{\fm}{\fn}}H^{1}_{/f}(K_{\fq}, A_\Bbbk) \ar[d]^{\simeq}
\\
H^{1}_{\cF(\fn)}(K, A)[\fp] \ar@{^{(}->}[r] & H^{1}_{\cF^{\fm}}(K, A) [\fp] \ar[r] & \bigoplus_{\fq \mid \fn}H^{1}_{/ {\rm tr}}(K_{\fq}, A)[\fp] \oplus \bigoplus_{\fq \mid \frac{\fm}{\fn}}H^{1}_{/f}(K_{\fq}, A )[\fp] .
}}
\end{align*}
Here we abbreviate $A \otimes_{R} \Bbbk$ to $A_{\Bbbk}$, the vertical maps are induced by $\Bbbk \hookrightarrow R$ and the rightmost vertical map is an isomorphism.
By Lemma~\ref{s-iso}, the middle vertical map is also an isomorphism, and so is the left.
\end{proof}

For an ideal $\fn \in \cN$, put
$$
\lambda(\fn) := \dim_\Bbbk H^1_{\cF(\fn)}(K, A \otimes_R \Bbbk)
$$
and
$$
\lambda^*(\fn) := \dim_\Bbbk H^1_{\cF^\ast(\fn)}(K, (A \otimes_R \Bbbk)^*(1)).
$$

\begin{corollary}\label{indep-diff}
%If there is a large ideal $\fn \in \cN$ such that $H^1_{\cF^\fn}(k, A) \simeq R^{r + \nu(\fn)}$,
%Assume Hypotheses \ref{hyp1}, \ref{hyp2}, and \ref{hyp large}.
For any ideal $\fn$ in $\cN$ one has $r = \lambda(\fn) - \lambda^{*}(\fn)$ and hence also $\lambda^{*}(\fn) < \lambda(\fn)$.
\end{corollary}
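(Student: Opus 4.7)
The plan is to deduce both assertions by first showing that $\lambda(\fn) - \lambda^*(\fn)$ is independent of $\fn \in \cN$ and then computing the common value at $\fn=1$ using Hypothesis~\ref{hyp large}. Throughout, set $\bar A := A \otimes_R \Bbbk$.

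For the independence, I would argue by induction on $\nu(\fn)$: fix a prime $\fq \in \cP$ not dividing $\fn$ and apply global duality (Theorem~\ref{gd}) over the field $\Bbbk$ to the two inclusions of Selmer structures $\cF(\fn) \subseteq \cF^\fq(\fn)$ and $\cF(\fq\fn) \subseteq \cF^\fq(\fn)$, which differ only at $\fq$ (finite versus unrestricted in the first case, transverse versus unrestricted in the second). Since we are over a field, the resulting five-term exact sequences give, upon taking alternating sums of $\Bbbk$-dimensions and using $(\cF^\fq(\fn))^* = (\cF^*)_\fq(\fn)$,
\begin{align*}
\dim_\Bbbk H^1_{\cF^\fq(\fn)}(K,\bar A) - \dim_\Bbbk H^1_{(\cF^*)_\fq(\fn)}(K,\bar A^*(1)) &= \lambda(\fn) - \lambda^*(\fn) + \dim_\Bbbk H^1_{/f}(K_\fq,\bar A),\\
\dim_\Bbbk H^1_{\cF^\fq(\fn)}(K,\bar A) - \dim_\Bbbk H^1_{(\cF^*)_\fq(\fn)}(K,\bar A^*(1)) &= \lambda(\fq\fn) - \lambda^*(\fq\fn) + \dim_\Bbbk H^1_{/{\rm tr}}(K_\fq,\bar A).
\end{align*}
The canonical decomposition $H^1(K_\fq,\bar A) = H^1_f(K_\fq,\bar A) \oplus H^1_{\rm tr}(K_\fq,\bar A)$ valid at $\fq \in \cP$, combined with Hypothesis~\ref{hyp1}(ii) (which forces $\dim_\Bbbk H^1_f(K_\fq,\bar A) = \dim_\Bbbk \bar A/(\tau-1)\bar A = 1$ and symmetrically $\dim_\Bbbk H^1_{\rm tr}(K_\fq,\bar A) = 1$), shows that the two correction terms agree. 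Subtracting the displays yields $\lambda(\fq\fn) - \lambda^*(\fq\fn) = \lambda(\fn) - \lambda^*(\fn)$, and the induction closes.

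For the evaluation at $\fn = 1$, choose $\fn_0 \in \cN$ as in Hypothesis~\ref{hyp large}. Lemma~\ref{s-iso} then gives $\dim_\Bbbk H^1_{\cF^{\fn_0}}(K,\bar A) = r + \nu(\fn_0)$, while Corollary~\ref{dualselisom} (applied with $I = \fp$ to the Selmer structure $\cF^{\fn_0}$) together with the vanishing of $H^1_{(\cF^*)_{\fn_0}}(K,A^*(1))$ gives $\dim_\Bbbk H^1_{(\cF^*)_{\fn_0}}(K,\bar A^*(1)) = 0$. Iterating the first alternating-sum identity above $\nu(\fn_0)$ times—successively adjoining the primes dividing $\fn_0$ and using $\dim_\Bbbk H^1_{/f}(K_\fq,\bar A) = 1$ at each step—one obtains
\[
\dim_\Bbbk H^1_{\cF^{\fn_0}}(K,\bar A) - \dim_\Bbbk H^1_{(\cF^*)_{\fn_0}}(K,\bar A^*(1)) = \lambda(1) - \lambda^*(1) + \nu(\fn_0).
\]
Substituting the two computed values forces $\lambda(1) - \lambda^*(1) = r$, and the independence established in the previous paragraph propagates this equality to every $\fn \in \cN$. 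The strict inequality $\lambda^*(\fn) < \lambda(\fn)$ is then immediate from the standing convention $r > 0$ in effect throughout \S\ref{koly sys sec}.

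The main technical obstacle is the bookkeeping required to arrange the global duality sequences so that the local correction terms cancel; the key input making that cancellation possible is the equality $\dim_\Bbbk H^1_f(K_\fq,\bar A) = \dim_\Bbbk H^1_{\rm tr}(K_\fq,\bar A)$ at primes of $\cP$, which is precisely what the definition of $\cP$ and Hypothesis~\ref{hyp1}(ii) are engineered to deliver.
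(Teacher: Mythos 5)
Your proof is correct, and it uses the same underlying machinery as the paper — global duality, Lemma~\ref{s-iso}, Corollary~\ref{dualselisom}, Hypothesis~\ref{hyp large} and the fact that $\dim_\Bbbk H^1_{/f}(K_\fq,\bar A)=\dim_\Bbbk H^1_{/{\rm tr}}(K_\fq,\bar A)=1$ for $\fq \in \cP$ — but it is organized differently. The paper invokes Lemma~\ref{chebotarev} to choose, for each given $\fn$, a core vertex $\fm$ with $\fn\mid\fm$, and then does a \emph{single} global duality comparison of $\cF(\fn)$ with $\cF^{\fm}$, whose alternating sum gives $\lambda(\fn)-\lambda^*(\fn)=\dim_\Bbbk H^1_{\cF^\fm}(K,\bar A)-\nu(\fm)=r$ in one stroke. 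You instead split the argument into two steps: first an induction showing $\lambda(\fn\fq)-\lambda^*(\fn\fq)=\lambda(\fn)-\lambda^*(\fn)$ by pairing the two five-term sequences that both land on $\cF^{\fq}(\fn)$, and then a separate evaluation at $\fn=1$ via the core vertex $\fn_0$ furnished directly by Hypothesis~\ref{hyp large}. This buys you independence from Lemma~\ref{chebotarev} and makes the ``core rank is constant'' phenomenon explicit; the paper's version is more economical. One small presentational caveat: the ``iteration'' you perform in the base-case step compares $\cF^{\fd}$ with $\cF^{\fd\fq}$, which is not literally the first display (that one compares $\cF(\fn)$ with $\cF^{\fq}(\fn)$); the computation is of course identical in flavor, but it would read better to state the step you are actually iterating.
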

\begin{proof}
Let $\fn \in \cN$.
We take an ideal $\fm \in \cN$ with $\fn \mid \fm$ and $H^{1}_{(\cF^{*})_{\fm}}(K, A^{*}(1)) = 0$ by using Lemma \ref{chebotarev}.
Then $H^{1}_{(\cF^{*})_{\fm}}(K, (A \otimes_{R} \Bbbk)^{*}(1)) = 0$ by Corollary \ref{dualselisom}.
Hence we have
\begin{align*}
\lambda(\fn) - \lambda^{*}(\fn) = \dim_\Bbbk H^1_{\cF^{\fm}}(K, A \otimes_R \Bbbk) - \nu(\fm) = r
\end{align*}
where the first equality follows from the global duality and
$$
\dim_{\Bbbk}H^{1}_{/f}(K_{\fq}, A \otimes_{R} \Bbbk) = \dim_{\Bbbk}H^{1}_{/{\rm tr}}(K_{\fq}, A \otimes_{R} \Bbbk) = 1
$$
for any prime $\fq \in \cP$, and the second equality follows from Lemma~\ref{s-iso} and
Hypothesis~\ref{hyp large}.
\end{proof}

\begin{proposition}%[{\cite[Lemma 4.1.7(iv)]{MRkoly}}]
\label{rankind}
Let $\fn \in \cN$ and $\fq \in \cP$ with $\fq \nmid \fn$.
Assume that the localization maps
$$
H^1_{\cF(\fn)}(K, A \otimes_R \Bbbk) \to H^1_{f}(K_\fq, A \otimes_R \Bbbk)
$$
and
$$
H^1_{\cF^\ast(\fn)}(K, (A \otimes_R \Bbbk)^*(1)) \to H^1_{f}(K_\fq, (A \otimes_R \Bbbk)^*(1))
$$
are non-zero.
%If the maps $H^1_{\cF(\fn)}(K, A \otimes_R \Bbbk) \to H^1_{f}(K_\fq, A \otimes_R \Bbbk)$ and $H^1_{\cF^\ast(\fn)}(K, (A \otimes_R \Bbbk)^*(1)) \to H^1_{f}(K_\fq, (A \otimes_R \Bbbk)^*(1))$ are non-zero,
Then we have $\lambda(\fn\fq) = \lambda(\fn) - 1$ and $\lambda^*(\fn\fq) = \lambda^*(\fn) -1$.
\end{proposition}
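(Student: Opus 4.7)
The plan is to apply the five-term global duality sequence of Theorem~\ref{gd} twice, each time to a pair of Selmer structures which differ only at the prime $\fq$, and to exploit the canonical decomposition $H^1(K_\fq, A \otimes_R \Bbbk) = H^1_f(K_\fq, A \otimes_R \Bbbk) \oplus H^1_{\rm tr}(K_\fq, A \otimes_R \Bbbk)$ (and its analogue for $A^\ast(1) \otimes_R \Bbbk$), in which each summand is one-dimensional over $\Bbbk$ by the definition of $\cP$ and Corollary~\ref{selisom}. This will mean that the non-vanishing hypothesis on each of the two localization maps is equivalent to surjectivity onto its one-dimensional target. Moreover, Corollary~\ref{indep-diff} applied to $\fn\fq$ reduces matters to proving just the first equality $\lambda(\fn\fq) = \lambda(\fn) - 1$, since then $\lambda^\ast(\fn\fq) = \lambda(\fn\fq) - r = \lambda^\ast(\fn) - 1$.

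First I would apply Theorem~\ref{gd} to the pair $\cF_\fq(\fn) \subseteq \cF(\fn)$ on $A \otimes_R \Bbbk$ (which differ only at $\fq$, with zero and finite conditions there, respectively). The resulting five-term sequence has middle arrow equal to ${\rm loc}_\fq \colon H^1_{\cF(\fn)}(K, A \otimes_R \Bbbk) \to H^1_f(K_\fq, A \otimes_R \Bbbk)$, which by the first hypothesis is surjective. Dimension counting immediately gives $\dim_\Bbbk H^1_{\cF_\fq(\fn)}(K, A \otimes_R \Bbbk) = \lambda(\fn) - 1$. A symmetric argument applied to the pair $\cF^\ast_\fq(\fn) \subseteq \cF^\ast(\fn)$ on $A^\ast(1) \otimes_R \Bbbk$, this time exploiting the second hypothesis together with the identifications $(\cF^\ast_\fq(\fn))^\ast = \cF^\fq(\fn)$ and $(\cF^\ast(\fn))^\ast = \cF(\fn)$ on $A \otimes_R \Bbbk$, will yield $\dim_\Bbbk H^1_{\cF^\fq(\fn)}(K, A \otimes_R \Bbbk) = \lambda(\fn)$ (the last two terms of the five-term sequence being the $\Bbbk$-duals of these two Selmer modules).

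To conclude, I would observe that the tautological inclusion $H^1_{\cF(\fn)}(K, A \otimes_R \Bbbk) \subseteq H^1_{\cF^\fq(\fn)}(K, A \otimes_R \Bbbk)$ is forced to be an equality by the $\Bbbk$-dimension count of the previous step, which means that every class in $H^1_{\cF^\fq(\fn)}(K, A \otimes_R \Bbbk)$ has localization at $\fq$ lying in $H^1_f(K_\fq, A \otimes_R \Bbbk)$. Since $H^1_f(K_\fq, A \otimes_R \Bbbk) \cap H^1_{\rm tr}(K_\fq, A \otimes_R \Bbbk) = 0$, the subgroup $H^1_{\cF(\fn\fq)}(K, A \otimes_R \Bbbk)$ of $H^1_{\cF^\fq(\fn)}(K, A \otimes_R \Bbbk)$ cut out by the transverse condition at $\fq$ then coincides with the kernel $H^1_{\cF_\fq(\fn)}(K, A \otimes_R \Bbbk)$ of ${\rm loc}_\fq$, yielding $\lambda(\fn\fq) = \lambda(\fn) - 1$ and so, via Corollary~\ref{indep-diff}, both claimed equalities. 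The only genuinely delicate point is the bookkeeping of dual Selmer structures: at each stage one must verify that the pair of Selmer structures to which Theorem~\ref{gd} is applied really does satisfy the required containment at every place, and that the local quotients appearing in the middle term of the five-term sequence have been correctly identified.
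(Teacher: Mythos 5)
Your proof is correct and follows essentially the same route as the paper: you use the first hypothesis (via global duality for the pair $\cF_\fq(\fn)\subseteq\cF(\fn)$) to get $\dim_\Bbbk H^1_{\cF_\fq(\fn)}(K,A\otimes_R\Bbbk)=\lambda(\fn)-1$, use the second hypothesis (via duality for the pair $(\cF^*)_\fq(\fn)\subseteq\cF^*(\fn)$) to force $H^1_{\cF(\fn)}(K,A\otimes_R\Bbbk)=H^1_{\cF^\fq(\fn)}(K,A\otimes_R\Bbbk)$, deduce $H^1_{\cF(\fn\fq)}(K,A\otimes_R\Bbbk)=H^1_{\cF_\fq(\fn)}(K,A\otimes_R\Bbbk)$ from $H^1_f\cap H^1_{\rm tr}=0$, and finish with Corollary~\ref{indep-diff}. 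The only cosmetic difference is that the paper records the two surjectivity consequences as explicit short exact sequences rather than invoking the full five-term sequence, and also one could quibble that the arrow $H^1_{\cF(\fn)}(K,A\otimes_R\Bbbk)\to H^1_f(K_\fq,A\otimes_R\Bbbk)$ you describe is the second map in the five-term sequence rather than literally the middle one, but the argument is the same.
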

\begin{proof}
%Assume that the maps $H^1_{\cF(\fn)}(K, A \otimes_R \Bbbk) \to H^1_{f}(K_\fq, A \otimes_R \Bbbk)$ and $H^1_{\cF^\ast(\fn)}(K, (A \otimes_R \Bbbk)^*(1)) \to H^1_{f}(K_\fq, (A \otimes_R \Bbbk)^*(1))$ are non-zero.
%Since
%$$
%\dim_\Bbbk H^1_{f}(K_\fq, A \otimes_R \Bbbk) = \dim_\Bbbk H^1_{f}(K_\fq, (A \otimes_R \Bbbk)^*(1)) = 1,
%$$
%they are surjective.
By the assumption and the fact that
$$
\dim_\Bbbk H^1_{f}(K_\fq, A \otimes_R \Bbbk) = \dim_\Bbbk H^1_{f}(K_\fq, (A \otimes_R \Bbbk)^*(1)) = 1,
$$
the sequences
\begin{align*}
0 \to H^{1}_{\cF_{\fq}(\fn)}(K, A \otimes_{R} \Bbbk) \to H^1_{\cF(\fn)}(K, A \otimes_R \Bbbk) \to H^1_{f}(K_\fq, A \otimes_R \Bbbk) \to 0
\end{align*}
and
\begin{align*}
0 \to H^{1}_{/f}(K_{\fq}, A) \to H^{1}_{\cF^{*}(\fn)}(K, (A \otimes_{R} \Bbbk)^{*}(1))^{*} \to H^1_{(\cF^{*})_{\fq}(\fn)}(K, (A \otimes_R \Bbbk)^{*}(1))^{*} \to 0
\end{align*}
are exact. Hence by the global duality and the second exact sequence, we have
$$
H^{1}_{\cF^{\fq}(\fn)}(K, A \otimes_{R} \Bbbk) = H^{1}_{\cF(\fn)}(K, A \otimes_{R} \Bbbk),
$$
and so
$$
H^{1}_{\cF(\fn\fq)}(K, A \otimes_{R} \Bbbk) = H^{1}_{\cF(\fn)}(K, A \otimes_{R} \Bbbk) \cap H^{1}_{\cF(\fn\fq)}(K, A \otimes_{R} \Bbbk) = H^{1}_{\cF_{\fq}(\fn)}(K, A \otimes_{R} \Bbbk).
$$
Thus $\lambda(\fn\fq) = \lambda(\fn) - 1$ by the first exact sequence, and so $\lambda^*(\fn\fq) = \lambda^*(\fn) -1$ by Corollary~\ref{indep-diff}.
\end{proof}

According to Mazur and Rubin \cite{MRkoly}, it is convenient to regard elements in $\cN$ as `vertices'. In Definition \ref{graph} below, we consider a graph, whose vertices consist of elements in $\cN$. We first recall the notion of `core vertices' introduced by Mazur and Rubin.

\begin{definition}
We say that $\fn \in \cN$ is a core vertex if $H^1_{\cF^\ast(\fn)}(K, A^*(1)) = 0$.
\end{definition}

\begin{remark}\label{core rem}\

\begin{itemize}
\item[(i)] By Corollary \ref{dualselisom}, $\fn \in \cN$ is a core vertex if and only if $\lambda^{*}(\fn) = 0$.
In this case, we have $\lambda(\fn) = r$ by Corollary \ref{indep-diff}.
\item[(ii)] Let $\fn \in \cN$ be a core vertex. By the global duality, we have
$$
0 \to H^{1}_{\cF(\fn)}(K, A) \to H^{1}_{\cF^{\fn}}(K, A) \to \bigoplus_{\fq \mid \fn}H^{1}_{/ {\rm tr}}(K_{\fq}, A) \to 0.
$$
Since $H^{1}_{\cF^{\fn}}(K, A) \simeq R^{r + \nu(\fn)}$ by Hypothesis \ref{hyp large} and
$H^{1}_{/{\rm tr}}(K_{\fq}, A) \simeq R$ for any prime $\fq \in \cP$, the $R$-module
$H^{1}_{\cF(\fn)}(K, A)$ is free of rank $r$.
\end{itemize}
\end{remark}

\begin{lemma}\label{diff-lem}
Let $\fn \in \cN$ and $\fq \in \cP$ with $\fq \nmid \fn$.
Then we have $|\lambda(\fn\fq) - \lambda(\fn)| \leq 1$ and $|\lambda^{*}(\fn\fq) - \lambda^{*}(\fn)| \leq 1$.
\end{lemma}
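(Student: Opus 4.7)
The plan is to prove the two inequalities simultaneously by first reducing them to a single statement via Corollary~\ref{indep-diff} and then using global duality (Theorem~\ref{gd}) to compare the Selmer modules for $\cF(\fn)$ and $\cF(\fn\fq)$ against a common auxiliary Selmer structure.

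First I would observe that Corollary~\ref{indep-diff} gives $\lambda(\fn) - \lambda^{*}(\fn) = r = \lambda(\fn\fq) - \lambda^{*}(\fn\fq)$, so that
\[
\lambda(\fn\fq) - \lambda(\fn) = \lambda^{*}(\fn\fq) - \lambda^{*}(\fn).
\]
It therefore suffices to bound $|\lambda(\fn\fq) - \lambda(\fn)|$.

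Next I would introduce the auxiliary Selmer structure $\cF_{\fq}(\fn)$ (which agrees with $\cF(\fn)$ at every place other than $\fq$ and has trivial local condition at $\fq$), and note that both $\cF(\fn)$ and $\cF(\fn\fq)$ contain $\cF_{\fq}(\fn)$ in the sense of Theorem~\ref{gd}. Writing $A_{\Bbbk} := A \otimes_{R}\Bbbk$, the local conditions at $\fq$ for $\cF(\fn)$ and $\cF(\fn\fq)$ are $H^{1}_{f}(K_{\fq},A_{\Bbbk})$ and $H^{1}_{\rm tr}(K_{\fq},A_{\Bbbk})$ respectively, and each of these is one-dimensional over $\Bbbk$ because $\fq \in \cP$ implies $A_{\Bbbk}/({\rm Fr}_{\fq}-1)A_{\Bbbk} \simeq \Bbbk$ and $A_{\Bbbk}^{{\rm Fr}_{\fq}=1} \simeq \Bbbk$. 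Consequently the comparison exact sequences of Theorem~\ref{gd} yield embeddings
\[
0 \to H^{1}_{\cF_{\fq}(\fn)}(K, A_{\Bbbk}) \to H^{1}_{\cF(\fn)}(K, A_{\Bbbk}) \to H^{1}_{f}(K_{\fq}, A_{\Bbbk})
\]
and
\[
0 \to H^{1}_{\cF_{\fq}(\fn)}(K, A_{\Bbbk}) \to H^{1}_{\cF(\fn\fq)}(K, A_{\Bbbk}) \to H^{1}_{\rm tr}(K_{\fq}, A_{\Bbbk}),
\]
in which the rightmost terms are at most one-dimensional.

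Setting $\lambda_{\fq}(\fn) := \dim_{\Bbbk} H^{1}_{\cF_{\fq}(\fn)}(K, A_{\Bbbk})$, these sequences immediately give $\lambda(\fn), \lambda(\fn\fq) \in \{\lambda_{\fq}(\fn), \lambda_{\fq}(\fn)+1\}$, so $|\lambda(\fn\fq) - \lambda(\fn)| \leq 1$, and the bound on $|\lambda^{*}(\fn\fq) - \lambda^{*}(\fn)|$ then follows from the reduction in the first paragraph. There is no real obstacle here, since the whole argument rests on the one-dimensionality of $H^{1}_{f}$ and $H^{1}_{\rm tr}$ (already used repeatedly in this section) together with the standard Poitou--Tate sequence of Theorem~\ref{gd}; the only small subtlety is remembering to extract both bounds from a single comparison via Corollary~\ref{indep-diff}, rather than carrying out two separate global-duality arguments on $A_{\Bbbk}$ and $A_{\Bbbk}^{*}(1)$.
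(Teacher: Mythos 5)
Your proof is correct. The only difference from the paper's argument is that you compare both $H^{1}_{\cF(\fn)}(K,A_{\Bbbk})$ and $H^{1}_{\cF(\fn\fq)}(K,A_{\Bbbk})$ against the common \emph{smaller} Selmer module $H^{1}_{\cF_{\fq}(\fn)}(K,A_{\Bbbk})$ (trivial local condition at $\fq$), whereas the paper compares both against the common \emph{larger} module $H^{1}_{\cF^{\fq}(\fn)}(K,A_{\Bbbk})$ (unrestricted local condition at $\fq$) and uses the one-dimensionality of $H^{1}_{/f}$ and $H^{1}_{/\mathrm{tr}}$ rather than of $H^{1}_{f}$ and $H^{1}_{\mathrm{tr}}$. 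The two variants are dual to each other, rely on exactly the same ingredients (the global duality sequence of Theorem~\ref{gd} and the rank-one decomposition $H^{1}(K_{\fq},A)=H^{1}_{f}\oplus H^{1}_{\mathrm{tr}}$), and yield the same pair of inequalities; your reduction of the $\lambda^{*}$ bound via Corollary~\ref{indep-diff} at the outset rather than the end is also merely cosmetic.
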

\begin{proof}
Since $\dim_{\Bbbk}H^{1}_{/f}(K_{\fq}, A \otimes_{R} \Bbbk) =
\dim_{\Bbbk}H^{1}_{/{\rm tr}}(K_{\fq}, A \otimes_{R} \Bbbk) = 1$,
we have
$$
0 \leq \dim_{\Bbbk}H^{1}_{\cF^{\fq}(\fn)}(K, A \otimes_{R} \Bbbk) - \lambda(\fn) \leq 1
$$
and
$$
0 \leq \dim_{\Bbbk}H^{1}_{\cF^{\fq}(\fn)}(K, A \otimes_{R} \Bbbk) - \lambda(\fn\fq) \leq 1.
$$
Hence $|\lambda(\fn\fq) - \lambda(\fn)| \leq 1$.
The inequality $|\lambda^{*}(\fn\fq) - \lambda^{*}(\fn)| \leq 1$ follows from Corollary~\ref{indep-diff} and
$|\lambda(\fn\fq) - \lambda(\fn)| \leq 1$.
\end{proof}

\begin{corollary}\label{diff-cor}
There is a core vertex $\fn \in \cN$ with $\nu(\fn) = \lambda^{*}(1)$.
Furthermore, every core vertex $\fn \in \cN$ satisfies an inequality $\nu(\fn) \geq \lambda^*(1)$.
\end{corollary}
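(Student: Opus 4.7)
My plan is to prove the two assertions essentially independently, using Proposition~\ref{rankind} to construct a core vertex of the minimal length, and Lemma~\ref{diff-lem} to give the lower bound $\nu(\fn)\ge \lambda^*(1)$ for every core vertex.

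For the existence assertion, I would argue by induction on $\lambda^*(\fn)$ to produce, starting from $\fn_0 = 1$, a sequence of vertices $\fn_0 \mid \fn_1 \mid \cdots \mid \fn_{\lambda^*(1)}$ with $\fn_{i+1}/\fn_i$ a single prime in $\cP$ and $\lambda^*(\fn_{i+1}) = \lambda^*(\fn_i)-1$. If $\lambda^*(\fn_i) > 0$ then both $H^1_{\cF(\fn_i)}(K,A\otimes_R\Bbbk)$ and $H^1_{\cF^*(\fn_i)}(K,(A\otimes_R\Bbbk)^*(1))$ are non-zero (using Corollary~\ref{indep-diff} for the former, which gives dimension $r + \lambda^*(\fn_i) > 0$). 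I then pick a non-zero element in each and, via Corollary~\ref{selisom} and its dual analogue, lift them to non-zero classes $c \in H^1(K,A)$ and $c^* \in H^1(K,A^*(1))$ (using the $\fp$-torsion embedding). Applying Lemma~\ref{chebotarev} with $s=t=1$ (which requires $p>2$, automatic since $p$ is odd) yields a prime $\fq \in \cP$ with $\fq \nmid \fn_i$ such that $\loc_\fq(c)$ and $\loc_\fq(c^*)$ are both non-zero; since these classes are unramified at $\fq$, their localizations land in $H^1_f$, so the hypotheses of Proposition~\ref{rankind} are satisfied and we conclude $\lambda^*(\fn_i\fq) = \lambda^*(\fn_i) - 1$. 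Setting $\fn_{i+1} := \fn_i\fq$ and iterating exactly $\lambda^*(1)$ times produces a core vertex of length $\lambda^*(1)$.

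For the lower bound assertion, suppose $\fn$ is a core vertex and factor it as $\fn = \fq_1\cdots \fq_k$ with $k = \nu(\fn)$. Consider the chain $\fn = \fm_0, \fm_1 = \fn/\fq_k, \fm_2 = \fn/(\fq_k\fq_{k-1}), \ldots, \fm_k = 1$. By Lemma~\ref{diff-lem}, one has $|\lambda^*(\fm_{j+1}) - \lambda^*(\fm_j)| \le 1$ for each $j$, so by the triangle inequality
\begin{equation*}
\lambda^*(1) = |\lambda^*(1) - \lambda^*(\fn)| \le \sum_{j=0}^{k-1}|\lambda^*(\fm_{j+1}) - \lambda^*(\fm_j)| \le k = \nu(\fn),
\end{equation*}
using that $\lambda^*(\fn) = 0$ since $\fn$ is a core vertex.

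The main subtlety I expect is the first part: ensuring that the Chebotarev argument can actually be applied to elements of the Selmer groups \emph{for the reduction} $A\otimes_R\Bbbk$, since Lemma~\ref{chebotarev} is phrased for classes in $H^1(K,A)$ and $H^1(K,A^*(1))$. This is handled cleanly by Corollary~\ref{selisom} (and its analogue for $A^*(1)$ coming from Corollary~\ref{dualselisom}), which identifies $H^1_{\cF(\fn)}(K,A\otimes_R\Bbbk)$ with the $\fp$-torsion $H^1_{\cF(\fn)}(K,A)[\fp]$; after this identification, the non-vanishing of $\loc_\fq(c)$ in $H^1(K_\fq,A)$ translates back (via the same isomorphism at the local level) to non-vanishing in $H^1_f(K_\fq,A\otimes_R\Bbbk)$, which is exactly what Proposition~\ref{rankind} requires.
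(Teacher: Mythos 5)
Your proposal is correct and follows the same route the paper takes: the paper's proof is a one-line citation of Lemma~\ref{chebotarev}, Corollary~\ref{indep-diff} and Proposition~\ref{rankind} for existence and Lemma~\ref{diff-lem} for the lower bound, and your argument is precisely the natural fleshing-out of that citation, including the (implicit in the paper, but worth making explicit) use of Corollary~\ref{selisom} and Corollary~\ref{dualselisom} to transport non-zero classes for $A\otimes_R\Bbbk$ into $H^1(K,A)$ and $H^1(K,A^\ast(1))$ before invoking the Chebotarev lemma.
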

\begin{proof}
The existence of a core vertex $\fn \in \cN$ with $\nu(\fn) = \lambda^{*}(1)$ follows from Lemma~\ref{chebotarev}, Corollary~\ref{indep-diff},
and  Proposition~\ref{rankind}.
The second claim follows from Lemma~\ref{diff-lem}.
\end{proof}

\begin{definition} \label{graph}
We define a graph $\cX^0$ as follows.
\begin{itemize}
\item The vertices of $\cX^0$ are the core vertices.
\item Let $\fn$ and $\fn\fq$ be core vertices.
We join $\fn$ and $\fn\fq$ by an edge in $\cX^0$
if and only if the localization map $H^1_{\cF(\fn)}(K, A \otimes_{R} \Bbbk) \to H^1_f(K_\fq, A \otimes_{R} \Bbbk)$ is non-zero.
\end{itemize}
\end{definition}

\begin{lemma}%[{\cite[Lemma 4.3.8]{MRkoly}}]
\label{joincore}
Let $\fn \in \cN$ and $\fq \in \cP$ with $\fq \nmid \fn$.
\begin{itemize}
\item[(i)] If $\fn$ is a core vertex and
the map $H^1_{\cF(\fn)}(K, A \otimes_R \Bbbk) \to H^1_f(K_\fq, A \otimes_R \Bbbk)$ is non-zero,
then $\fn\fq$ is also a core vertex and $\fn$ and $\fn\fq$ are joined by an edge in $\cX^0$.
\item[(ii)] If $\fn\fq$ is a core vertex and
the map $H^1_{\cF(\fn\fq)}(K, A \otimes_R \Bbbk) \to H^1_{\rm tr}(K_\fq, A \otimes_R \Bbbk)$ is non-zero,
then $\fn$ is also a core vertex and $\fn$ and $\fn\fq$ are joined by an edge in $\cX^0$.
\end{itemize}
\end{lemma}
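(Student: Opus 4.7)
The plan is to deduce both parts from global duality (Theorem~\ref{gd}) applied to the pairs of Selmer structures $\cF(\fn) \subseteq \cF^{\fq}(\fn)$ and $\cF(\fn\fq) \subseteq \cF^{\fq}(\fn)$ on $A_\Bbbk := A \otimes_R \Bbbk$. Throughout I will translate between the defining vanishing condition $H^1_{\cF^\ast(\fn)}(K, A^\ast(1)) = 0$ for a core vertex and the corresponding equality $H^1_{\cF^\ast(\fn)}(K, A_\Bbbk^\ast(1)) = 0$ by means of Corollary~\ref{dualselisom} (as in Remark~\ref{core rem}(i)). Applying Theorem~\ref{gd} to the two inclusions above, noting that the only place at which the Selmer conditions differ is $\fq$ with respective quotients $H^1_{/f}(K_\fq, A_\Bbbk)$ and $H^1_{/{\rm tr}}(K_\fq, A_\Bbbk)$ and using the identifications $(\cF^\fq(\fn))^\ast = \cF^\ast_\fq(\fn)$ and $(\cF(\fn\fq))^\ast = \cF^\ast(\fn\fq)$, yields the two five-term exact sequences
\begin{align*}
0 \to H^1_{\cF(\fn)}(K, A_\Bbbk) &\to H^1_{\cF^\fq(\fn)}(K, A_\Bbbk) \to H^1_{/f}(K_\fq, A_\Bbbk) \\
 &\to H^1_{\cF^\ast(\fn)}(K, A_\Bbbk^\ast(1))^\ast \to H^1_{\cF^\ast_\fq(\fn)}(K, A_\Bbbk^\ast(1))^\ast \to 0
\end{align*}
and
\begin{align*}
0 \to H^1_{\cF(\fn\fq)}(K, A_\Bbbk) &\to H^1_{\cF^\fq(\fn)}(K, A_\Bbbk) \to H^1_{/{\rm tr}}(K_\fq, A_\Bbbk) \\
 &\to H^1_{\cF^\ast(\fn\fq)}(K, A_\Bbbk^\ast(1))^\ast \to H^1_{\cF^\ast_\fq(\fn)}(K, A_\Bbbk^\ast(1))^\ast \to 0,
\end{align*}
which share the common right-hand term $H^1_{\cF^\ast_\fq(\fn)}(K, A_\Bbbk^\ast(1))^\ast$.

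For claim~(i), the assumption that $\fn$ is a core vertex kills $H^1_{\cF^\ast(\fn)}(K, A_\Bbbk^\ast(1))$ and hence, via the first sequence, the common end term, turning the first sequence into a short exact sequence. Since $H^1_f(K_\fq, A_\Bbbk)$ is one-dimensional over $\Bbbk$, the non-vanishing hypothesis upgrades the localization $H^1_{\cF(\fn)}(K, A_\Bbbk) \to H^1_f(K_\fq, A_\Bbbk)$ to a surjection. Combined with the decomposition $H^1(K_\fq, A_\Bbbk) = H^1_f(K_\fq, A_\Bbbk) \oplus H^1_{\rm tr}(K_\fq, A_\Bbbk)$, this shows that the image of $H^1_{\cF^\fq(\fn)}(K, A_\Bbbk)$ in $H^1(K_\fq, A_\Bbbk)$ contains $H^1_f(K_\fq, A_\Bbbk)$ and simultaneously surjects onto the complementary quotient $H^1_{/f}(K_\fq, A_\Bbbk)$, whence that image is everything. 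Feeding this surjectivity into the second sequence forces $H^1_{\cF^\ast(\fn\fq)}(K, A_\Bbbk^\ast(1)) = 0$, and so $\fn\fq$ is a core vertex by Corollary~\ref{dualselisom}; the edge between $\fn$ and $\fn\fq$ is present tautologically by the hypothesis.

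For claim~(ii) I would run the symmetric argument: $\fn\fq$ being core collapses the second sequence and kills the common end term, and the non-vanishing of $H^1_{\cF(\fn\fq)}(K, A_\Bbbk) \to H^1_{\rm tr}(K_\fq, A_\Bbbk)$ (again a one-dimensional target) together with the surjectivity of $H^1_{\cF^\fq(\fn)}(K, A_\Bbbk) \to H^1_{/{\rm tr}}(K_\fq, A_\Bbbk)$ upgrades $H^1_{\cF^\fq(\fn)}(K, A_\Bbbk) \to H^1(K_\fq, A_\Bbbk)$ to a surjection. The first sequence then yields at once both that $H^1_{\cF^\ast(\fn)}(K, A_\Bbbk^\ast(1)) = 0$—so $\fn$ is core—and that $H^1_{\cF(\fn)}(K, A_\Bbbk) \to H^1_f(K_\fq, A_\Bbbk)$ is surjective, since $H^1_{\cF(\fn)}$ is by definition the preimage of $H^1_f(K_\fq, A_\Bbbk)$ inside $H^1_{\cF^\fq(\fn)}$ and any element of $H^1_f$ can be lifted through the surjection to $H^1(K_\fq, A_\Bbbk)$. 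I expect the only delicate point to be the bookkeeping involved in unwinding the five-term exact sequences and the duality conventions $(\cF_\fa^\fb(\fn))^\ast = \cF_\fb^\fa(\fn)$; the underlying content is otherwise a short linear-algebra observation about which 1-dimensional subspaces of $H^1(K_\fq, A_\Bbbk)$ can arise as the image of the global Selmer group.
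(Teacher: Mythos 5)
Your proof is correct. It uses the same essential tool as the paper---the global duality sequence (Theorem~\ref{gd}) together with the one-dimensionality of the local conditions at $\fq$---but organizes the comparison differently. You apply duality to the two inclusions $\cF(\fn)\subseteq\cF^\fq(\fn)$ and $\cF(\fn\fq)\subseteq\cF^\fq(\fn)$, both sitting inside the common relaxed structure $\cF^\fq(\fn)$, so that the resulting five-term sequences share the end term $H^1_{(\cF^*)_\fq(\fn)}(K,(A\otimes_R\Bbbk)^*(1))^*$ and vanishing information is passed between them. This forces you to establish, as an intermediate step, surjectivity of the full localization $H^1_{\cF^\fq(\fn)}(K,A\otimes_R\Bbbk)\to H^1(K_\fq,A\otimes_R\Bbbk)$. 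The paper instead applies duality to the inclusion $\cF_\fq(\fn)\subseteq\cF(\fn)$ (and, for claim~(ii), additionally to $\cF_\fq(\fn)\subseteq\cF(\fn\fq)$): with $\fn$ (respectively $\fn\fq$) assumed core, each of these collapses to a four-term sequence from which the vanishing of $H^1_{(\cF^*)^\fq(\fn)}(K,(A\otimes_R\Bbbk)^*(1))$ is read off in a single step, after which the core property of $\fn\fq$ (respectively $\fn$) follows from the containment $H^1_{\cF^*(\fn\fq)}\subseteq H^1_{(\cF^*)^\fq(\fn)}$ (respectively $H^1_{\cF^*(\fn)}\subseteq H^1_{(\cF^*)^\fq(\fn)}$). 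Both routes are sound; your symmetric set-up makes the interchange between claims~(i) and~(ii) transparent, while the paper's dualized choice of inclusions shortcuts the intermediate surjectivity.
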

\begin{proof}
We first prove claim (i). If $\fn$ is a core vertex,
we have an exact sequence by the global duality
\begin{multline*}
0 \to H^1_{\cF_\fq(\fn)}(K, A \otimes_R \Bbbk) \to H^1_{\cF(\fn)}(K, A \otimes_R \Bbbk )
\to H^1_f(K_\fq, A \otimes_R \Bbbk)\\ \to H^1_{(\cF^{*})^{\fq}(\fn)}(K, (A \otimes_R \Bbbk)^*(1))^* \to 0.
\end{multline*}
Since $\dim_\Bbbk H^1_f(K_\fq, A \otimes_R \Bbbk) = 1$ and the map $H^1_{\cF(\fn)}(K, A \otimes_R \Bbbk) \to H^1_f(K_\fq, A \otimes_R \Bbbk)$ is non-zero, we have $H^1_{(\cF^{*})^{\fq}(\fn)}(K, (A \otimes_R \Bbbk)^*(1)) = 0$, and so $H^1_{\cF^*(\fn\fq)}(K, (A \otimes_R \Bbbk)^*(1))=0$.

To prove claim (ii), we can show in a similar way that $\fn$ is a core vertex, by showing that $H^1_{(\cF^{*})^{\fq}(\fn)}(K, (A \otimes_R \Bbbk)^*(1)) = 0$.
By the global duality, the cokernel of the localization map $H^1_{\cF(\fn)}(K, A \otimes_{R} \Bbbk) \to H^1_f(K_\fq, A \otimes_{R} \Bbbk)$ injects into
$H^1_{(\cF^{*})^{\fq}(\fn)}(K, (A \otimes_R \Bbbk)^*(1))^{*} = 0$.
Hence $\fn$ and $\fn\fq$ are joined by an edge in $\cX^0$.
\end{proof}

\begin{lemma}[{\cite[Lem. 4.3.9]{MRkoly}}]\label{koly-join1}
If $\fn$ and $\fn\fq$ are core vertices, then there is a path in $\cX^0$ from $\fn$ to $\fn\fq$.
\end{lemma}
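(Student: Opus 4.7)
The plan is to exhibit a path of length at most three in $\cX^{0}$ from $\fn$ to $\fn\fq$. If the localization map $H^{1}_{\cF(\fn)}(K,A\otimes_{R}\Bbbk)\to H^{1}_{f}(K_{\fq},A\otimes_{R}\Bbbk)$ is nonzero, then Lemma~\ref{joincore}(i) already supplies a direct edge between $\fn$ and $\fn\fq$, so assume henceforth that this localization is zero. Applying Theorem~\ref{gd} to the pair of Selmer structures $\cF_{\fq}(\fn)\subseteq\cF(\fn)$, and using that $\fn$ is a core vertex (together with Corollary~\ref{dualselisom} and Lemma~\ref{s-iso} to pass freely between $A$, $A^{*}(1)$ and their mod-$\fp$ analogues), one obtains an exact sequence whose right-hand end forces $H^{1}_{(\cF^{\ast})^{\fq}(\fn)}(K,(A\otimes_{R}\Bbbk)^{\ast}(1))$ to be one-dimensional over $\Bbbk$; fix a generator $c^{\ast}$.

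Now pick nonzero classes $c_{1}\in H^{1}_{\cF(\fn)}(K,A\otimes_{R}\Bbbk)$ and $c_{2}\in H^{1}_{\cF(\fn\fq)}(K,A\otimes_{R}\Bbbk)$ with $\mathrm{loc}_{\fq}(c_{2})$ generating $H^{1}_{\mathrm{tr}}(K_{\fq},A\otimes_{R}\Bbbk)$, and apply Lemma~\ref{chebotarev} (valid because $s+t=3<p$ by the hypothesis $p>3$) to find a prime $\fq'\in\cP$, coprime to $\fn\fq$, at which the localizations of $c_{1}$, $c_{2}$ and $c^{\ast}$ are all nonzero. I claim that the required path is
\[
\fn - \fn\fq' - \fn\fq\fq' - \fn\fq.
\]
The edges $\fn-\fn\fq'$ and $\fn\fq-\fn\fq\fq'$, together with the assertion that $\fn\fq'$ and $\fn\fq\fq'$ are core vertices, follow immediately from Lemma~\ref{joincore}(i) applied to the nonvanishing of $\mathrm{loc}_{\fq'}(c_{1})$ and $\mathrm{loc}_{\fq'}(c_{2})$ respectively.

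The main obstacle, and where the dual class $c^{\ast}$ is used, is verifying the middle edge between $\fn\fq'$ and $\fn\fq\fq'$: by Lemma~\ref{joincore}(i) (with $\fn$ replaced by $\fn\fq'$) this amounts to showing that the map $H^{1}_{\cF(\fn\fq')}(K,A\otimes_{R}\Bbbk)\to H^{1}_{f}(K_{\fq},A\otimes_{R}\Bbbk)$ is nonzero, and repeating the global-duality analysis of the opening paragraph with $\fn$ replaced by $\fn\fq'$ further reduces this to proving that $H^{1}_{(\cF^{\ast})^{\fq}(\fn\fq')}(K,(A\otimes_{R}\Bbbk)^{\ast}(1))=0$. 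To establish this, apply Theorem~\ref{gd} to the pair $(\cF^{\ast})^{\fq}(\fn)\subseteq(\cF^{\ast})^{\fq,\fq'}(\fn)$; the resulting five-term exact sequence, combined with the dimension identities $\dim_{\Bbbk}H^{1}_{\cF_{\fq}(\fn)}(K,A\otimes_{R}\Bbbk)=r$ (which holds because the vanishing of $\mathrm{loc}_{\fq}$ on $H^{1}_{\cF(\fn)}$ forces the equality $H^{1}_{\cF_{\fq}(\fn)}=H^{1}_{\cF(\fn)}$) and $\dim_{\Bbbk}H^{1}_{\cF_{\fq,\fq'}(\fn)}(K,A\otimes_{R}\Bbbk)=r-1$ (forced by the nonvanishing of $\mathrm{loc}_{\fq'}(c_{1})$), yields that $H^{1}_{(\cF^{\ast})^{\fq,\fq'}(\fn)}(K,(A\otimes_{R}\Bbbk)^{\ast}(1))$ is itself one-dimensional, generated by the image of $c^{\ast}$. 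Since $H^{1}_{(\cF^{\ast})^{\fq}(\fn\fq')}$ is visibly contained in this module, any nonzero element $c^{\ast\prime}$ of it would be a nonzero scalar multiple of $c^{\ast}$; but then $\mathrm{loc}_{\fq'}(c^{\ast\prime})$ would simultaneously lie in $H^{1}_{f}(K_{\fq'},(A\otimes_{R}\Bbbk)^{\ast}(1))$ (since $c^{\ast}$ is unramified at $\fq'$ with nonzero localization there) and in $H^{1}_{\mathrm{tr}}(K_{\fq'},(A\otimes_{R}\Bbbk)^{\ast}(1))$ (by the transverse defining condition of $(\cF^{\ast})^{\fq}(\fn\fq')$ at $\fq'$), contradicting the direct-sum decomposition $H^{1}(K_{\fq'})=H^{1}_{f}\oplus H^{1}_{\mathrm{tr}}$ and completing the proof.
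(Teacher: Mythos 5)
Your strategy matches the paper's: assume $\mathrm{loc}_{\fq}$ vanishes on $H^{1}_{\cF(\fn)}(K, A\otimes_{R}\Bbbk)$ (the other case is immediate), produce an auxiliary prime via Lemma~\ref{chebotarev}, and build the path $\fn - \fn\fq' - \fn\fq\fq' - \fn\fq$. Your argument for the critical middle edge is, however, genuinely different from the paper's, and arguably cleaner: the paper shows $H^{1}_{\cF_{\fq}(\fn\fq')} \ne H^{1}_{\cF(\fn\fq')}$ by a chain of equalities of $A$-side Selmer groups followed by a dimension count, whereas you locate the would-be obstruction $H^{1}_{(\cF^{\ast})^{\fq}(\fn\fq')}$ inside the one-dimensional space generated by $c^{\ast}$ and get a contradiction because $\mathrm{loc}_{\fq'}(c^{\ast})$ would have to lie simultaneously in $H^{1}_{f}(K_{\fq'})$ and in $H^{1}_{\rm tr}(K_{\fq'})$. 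That closing argument is correct, and the dimension computations feeding it (showing $H^{1}_{(\cF^{\ast})^{\fq\fq'}(\fn)}$ is one-dimensional and generated by $c^{\ast}$) check out.

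There are, however, two concrete problems with the write-up. First, the condition you impose on $c_{2}$ — that $\mathrm{loc}_{\fq}(c_{2})$ generate $H^{1}_{\rm tr}(K_{\fq}, A\otimes_{R}\Bbbk)$ — cannot be satisfied under the standing assumption. Once $\mathrm{loc}_{\fq}$ vanishes on $H^{1}_{\cF(\fn)}(K, A\otimes_{R}\Bbbk)$ one has $H^{1}_{\cF(\fn)} = H^{1}_{\cF_{\fq}(\fn)} = H^{1}_{\cF(\fn\fq)}$ (the last equality because both sides have $\Bbbk$-dimension $r$), so $\mathrm{loc}_{\fq}$ also vanishes identically on $H^{1}_{\cF(\fn\fq)}$ and no such $c_{2}$ exists. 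This is a false statement in the proof, though a repairable one: the stipulation is never actually used downstream, and you could just as well take $c_{2}=c_{1}$, which lies in $H^{1}_{\cF(\fn\fq)}$ by the equality just quoted. Second, you appeal to "the hypothesis $p>3$" in order to apply Lemma~\ref{chebotarev} with $s+t=3$, but this lemma carries no such hypothesis; the paper's proof uses only two auxiliary classes (one on each side), so only $s+t=2<p$ is needed, which holds for every odd $p$. Both issues vanish at once if you observe the equality $H^{1}_{\cF(\fn)} = H^{1}_{\cF(\fn\fq)}$ (which the paper records explicitly) and thereby reduce to two classes rather than three.
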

\begin{proof}
We may assume that the map $H^1_{\cF(\fn)}(K, A \otimes_R \Bbbk) \to H^1_{f}(K_\fq, A \otimes_R \Bbbk)$ is zero.
Then we have
\begin{align*}
H^1_{\cF(\fn)}(K, A \otimes_R \Bbbk) = H^1_{\cF_\fq(\fn)}(K, A \otimes_R \Bbbk)
= H^1_{\cF(\fn\fq)}(K, A \otimes_R \Bbbk)
\end{align*}
where the second equality follows from $\lambda(\fn) = r = \lambda(\fn\fq)$.
Furthermore, by the global duality, we have $\dim_\Bbbk H^1_{(\cF_\fq(\fn))^*}(K, (A \otimes_R \Bbbk)^*(1)) = 1$.
By Lemma \ref{chebotarev}, there is a prime $\fr \in \cP$ with $\fr \nmid \fn\fq$ such that
the localization maps
$$H^1_{\cF(\fn)}(K, A \otimes_R \Bbbk) \to H^1_f(K_\fr, A \otimes_R \Bbbk)$$
and
$$H^1_{(\cF_\fq(\fn))^*}(K, (A \otimes_R \Bbbk)^*(1)) \to H^1_{f}(K_\fr, (A \otimes_R \Bbbk)^*(1))$$
are non-zero.
By Lemma \ref{joincore}(i) and $H^1_{\cF(\fn)}(K, A \otimes_R \Bbbk) = H^1_{\cF(\fn\fq)}(K, A \otimes_R \Bbbk)$,
it follows from the fact that the map
$H^1_{\cF(\fn)}(K, A \otimes_R \Bbbk) \to H^1_f(K_\fr, A \otimes_R \Bbbk)$ is non-zero
that both $\fn\fr$ and $\fn\fq\fr$ are core vertices and that
there are paths in $\cX^0$ from $\fn$ to $\fn\fr$ and from $\fn\fq$ to $\fn\fq\fr$.
Hence again by Lemma~\ref{joincore}(i), we only need to show that
$$
H^1_{\cF_\fq(\fn\fr)}(K, A \otimes_R \Bbbk) \neq H^1_{\cF(\fn\fr)}(K, A \otimes_R \Bbbk).
$$
Since the map
$H^1_{(\cF_\fq(\fn))^*}(K, (A \otimes_R \Bbbk)^*(1)) \to H^1_{f}(K_\fr, (A \otimes_R \Bbbk)^*(1))$ is non-zero,
we have $H^1_{\cF_\fq(\fn)}(K, A \otimes_R \Bbbk) = H^1_{\cF_\fq^\fr(\fn)}(K, A \otimes_R \Bbbk)$
by the global duality. Hence we get an equality
\begin{align*}
H^1_{\cF_\fq(\fn\fr)}(K, A \otimes_R \Bbbk) = H^1_{\cF_{\fq\fr}(\fn)}(K, A \otimes_R \Bbbk)
= H^1_{\cF_\fr(\fn)}(K, A \otimes_R \Bbbk)
\end{align*}
where the second equality follows from
$H^1_{\cF(\fn)}(K, A \otimes_R \Bbbk) = H^1_{\cF_\fq(\fn)}(K, A \otimes_R \Bbbk)$.
Since the map $H^1_{\cF(\fn)}(K, A \otimes_R \Bbbk) \to H^1_f(K_\fr, A \otimes_R \Bbbk)$ is non-zero,
we have
$$
\dim_\Bbbk H^1_{\cF_\fq(\fn\fr)}(K, A \otimes_R \Bbbk) = \dim_{\Bbbk} H^1_{\cF_\fr(\fn)}(K, A \otimes_R \Bbbk) =
\lambda(\fn) - 1 = \lambda(\fn\fr) -1.
$$
This completes the proof.
\end{proof}

\begin{lemma}\label{change}
Let $s$ be a positive integer.
For $1 \leq i \leq s$, let $\fn_{i} \in \cN$ be a core vertex and $\fq_{i} \in \cP$ with $\fq_{i} \mid \fn_{i}$.
If $2s < p$ and $\fn_{i}/\fq_{i}$ is not a core vertex for any $1 \leq i \leq s$, then
there is a prime $\fr \in \cP$ with $\fr \nmid \fn_{1} \cdots \fn_{s}$ such that $\fn_{1}\fr/\fq_{1}, \ldots, \fn_{s}\fr/\fq_{s}$
are core vertices and that there is a path in $\cX^{0}$ from $\fn_{i}$ to $\fn_{i}\fr/\fq_{i}$ for every $1 \leq i \leq s$.
\end{lemma}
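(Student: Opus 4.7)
First, I would record the key dimensions. Setting $\fm_{i} := \fn_{i}/\fq_{i}$ for each $i$, the fact that $\fn_{i}$ is a core vertex while $\fm_{i}$ is not, combined with Corollary~\ref{indep-diff} and Lemma~\ref{diff-lem}, forces $\lambda^{*}(\fm_{i}) = 1$, $\lambda(\fm_{i}) = r+1$, $\lambda(\fn_{i}) = r$, and $\lambda^{*}(\fn_{i}) = 0$. An application of global duality to the pair $(\cF_{\fq_{i}}(\fm_{i}), \cF(\fm_{i}))$ then shows that the localization map $H^{1}_{\cF(\fm_{i})}(K, A \otimes_{R} \Bbbk) \to H^{1}_{f}(K_{\fq_{i}}, A \otimes_{R} \Bbbk)$ is surjective (otherwise $\fn_i$ and $\fm_i$ would have equal Selmer dimensions), whence $\dim_{\Bbbk} H^{1}_{\cF_{\fq_{i}}(\fm_{i})}(K, A \otimes_{R} \Bbbk) = r$. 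Combined with the trivial containment $H^{1}_{\cF_{\fq_{i}}(\fm_{i})} \subseteq H^{1}_{\cF(\fn_{i})}$ and with $\dim_{\Bbbk} H^{1}_{\cF(\fn_{i})}(K, A \otimes_{R} \Bbbk) = r$, this forces $H^{1}_{\cF(\fn_{i})}(K, A \otimes_{R} \Bbbk) = H^{1}_{\cF_{\fq_{i}}(\fm_{i})}(K, A \otimes_{R} \Bbbk) \subseteq H^{1}_{\cF(\fm_{i})}(K, A \otimes_{R} \Bbbk)$.

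Next, for each $i$, I would choose a non-zero class $c_{i} \in H^{1}_{\cF(\fn_{i})}(K, A \otimes_{R} \Bbbk)$ (possible as $r \geq 1$) and a non-zero $c^{*}_{i} \in H^{1}_{\cF^{*}(\fm_{i})}(K, (A \otimes_{R} \Bbbk)^{*}(1))$ (possible as $\lambda^{*}(\fm_{i}) = 1$). By Proposition~\ref{prop h1} and Corollary~\ref{dualselisom}, these give non-zero classes in $H^{1}(K, A)$ and $H^{1}(K, A^{*}(1))$ via the fixed injection $\Bbbk \hookrightarrow R$. I would then apply Lemma~\ref{chebotarev} to these $2s$ classes (permissible since $2s < p$), and use that the resulting set of primes has positive density to choose $\fr \in \cP$ not dividing $\fn_{1} \cdots \fn_{s}$ and such that each $\loc_{\fr}(c_{i})$ and each $\loc_{\fr}(c^{*}_{i})$ is non-zero.

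To verify the conclusions, for each $i$ note that $c_{i}$, viewed in $H^{1}_{\cF(\fm_{i})}(K, A \otimes_{R} \Bbbk)$ via the inclusion established above, has non-zero localization at $\fr$, and the same holds for $c^{*}_{i}$ in $H^{1}_{\cF^{*}(\fm_{i})}(K, (A \otimes_R \Bbbk)^{*}(1))$; so Proposition~\ref{rankind} yields $\lambda^{*}(\fm_{i}\fr) = 0$, and consequently $\fm_{i}\fr = \fn_{i}\fr/\fq_{i}$ is a core vertex. Moreover, since $c_{i} \in H^{1}_{\cF(\fn_{i})}(K, A \otimes_{R} \Bbbk)$ also has non-zero localization at $\fr$, Lemma~\ref{joincore}(i) shows that $\fn_{i}\fr$ is a core vertex joined to $\fn_{i}$ by an edge in $\cX^{0}$. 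Finally, applying Lemma~\ref{koly-join1} to the pair of core vertices $\fn_{i}\fr$ and $\fn_{i}\fr/\fq_{i}$ (which differ by the single prime $\fq_i$) produces a path in $\cX^{0}$ between them, and concatenating this with the edge from $\fn_{i}$ to $\fn_{i}\fr$ gives the required path from $\fn_{i}$ to $\fn_{i}\fr/\fq_{i}$.

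The main obstacle will be establishing the inclusion $H^{1}_{\cF(\fn_{i})} \subseteq H^{1}_{\cF(\fm_{i})}$ carried out in the first paragraph. Without this observation, the naive approach would require a class in $H^{1}_{\cF(\fm_{i})}$ (needed to force $\fm_{i}\fr$ to be a core vertex via Proposition~\ref{rankind}) separate from the one in $H^{1}_{\cF(\fn_{i})}$ (used for Lemma~\ref{joincore}(i)), bringing the Chebotarev count to $3s$ and requiring the stronger hypothesis $3s < p$ rather than the stated $2s < p$.
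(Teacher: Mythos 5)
Your proof is correct and follows essentially the same strategy as the paper: you establish the inclusion $H^1_{\cF(\fn_i)}(K, A\otimes_R \Bbbk) \subseteq H^1_{\cF(\fm_i)}(K, A\otimes_R \Bbbk)$ by a dimension count, invoke Lemma~\ref{chebotarev} for the $2s$ classes, and then combine Proposition~\ref{rankind}, Lemma~\ref{joincore}(i) and Lemma~\ref{koly-join1} exactly as the paper does. The only minor variation is in deriving that inclusion (the paper shows $H^1_{\cF^{\fq_i}(\fm_i)} = H^1_{\cF(\fm_i)}$ by relaxing the local condition at $\fq_i$, while you show $H^1_{\cF_{\fq_i}(\fm_i)} = H^1_{\cF(\fn_i)}$ by strengthening it), and your closing remark correctly identifies this inclusion as the device that keeps the Chebotarev count at $2s$ rather than $3s$.
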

\begin{proof}
Let $1 \leq i \leq s$ and put $\fm_{i} = \fn_{i}/\fq_{i}$.
Since $\lambda^{*}(\fn_{i}) = 0$ and $\fm_{i}$ is not a core vertex, we have
$\lambda(\fm_{i}) = r + 1$ and $\lambda^{*}(\fm_{i}) = 1$ by Corollary~\ref{indep-diff} and
Lemma~\ref{diff-lem}.
Note that
$$
H^{1}_{\cF(\fn_{i})}(K, A \otimes_{R} \Bbbk) \subseteq  H^{1}_{\cF^{\fq_{i}}(\fm_{i})}(K, A \otimes_{R} \Bbbk)
= H^{1}_{\cF(\fm_{i})}(K, A \otimes_{R} \Bbbk).
$$
In fact, we have $\lambda(\fm_{i}) \leq \dim_{\Bbbk}H^{1}_{\cF^{\fq_{i}}(\fm_{i})}(K, A \otimes_{R} \Bbbk) \leq \lambda(\fn_{i}) + 1 = \lambda(\fm_{i})$, where the second inequality follows from the exact sequence
$$
0 \to H^{1}_{\cF(\fn_{i})}(K, A \otimes_{R} \Bbbk) \to H^{1}_{\cF^{\fq_{i}}(\fm_{i})}(K, A \otimes_{R} \Bbbk) \to H^{1}_{/{\rm tr}}(K, A \otimes_{R} \Bbbk)
$$
and $\dim_{\Bbbk}H^{1}_{/{\rm tr}}(K, A \otimes_{R} \Bbbk) =1$.
By Lemma \ref{chebotarev} and $2s < p$, there is a prime $\fr \in \cP$ with $\fr \nmid \fn_{1} \cdots \fn_{s}$ such that
the maps
$$
H^{1}_{\cF(\fn_{i})}(K, A \otimes_{R} \Bbbk) \to H^{1}_{f}(K_{\fr}, A \otimes_{R} \Bbbk)
$$
and
$$
H^{1}_{\cF^{*}(\fm_{i})}(K, (A \otimes_{R} \Bbbk)^{*}(1)) \to H^{1}_{f}(K_{\fr}, (A \otimes_{R} \Bbbk)^{*}(1))
$$
are non-zero for any $1 \leq i \leq s$.
Then $\lambda^{*}(\fm_{i}\fr) = \lambda^{*}(\fm_{i}) - 1 = 0$ by Proposition~\ref{rankind}.
Hence $\fm_{i}\fr$ is a core vertex.
Furthermore, by Lemma~\ref{joincore}(i), $\fn_{i}\fr$ is also a core vertex.
Therefore by using Lemma~\ref{koly-join1}, there is a path in $\cX^{0}$ from $\fn_{i}$ to $\fm_{i}\fr$ for any $1 \leq i \leq s$.
\end{proof}

\begin{corollary}\label{koly-join3}
Suppose that $\fn_{1}, \fn_{2} \in \cN$ are core vertices and $\nu(\fn_{1}) = \nu(\fn_{2}) = \lambda^{*}(1)$.
If $p>3$, then there is a path in $\cX^0$ from $\fn_{1}$ to $\fn_{2}$.
\end{corollary}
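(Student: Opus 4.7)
My plan is to prove the corollary by induction on the quantity
\[ d(\fn_1,\fn_2) := \#\bigl\{\fq \in \cP : \fq \mid \fn_1,\ \fq \nmid \fn_2\bigr\}, \]
the number of prime factors of $\fn_1$ that do not divide $\fn_2$. Since $\nu(\fn_1) = \nu(\fn_2) = \lambda^*(1)$, this quantity is symmetric in $\fn_1$ and $\fn_2$, and the base case $d(\fn_1,\fn_2) = 0$ forces $\fn_1 = \fn_2$ (both being squarefree with the same prime factors), in which case there is nothing to prove.

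For the inductive step, assume $d(\fn_1,\fn_2) > 0$ and pick primes $\fq_1 \mid \fn_1$ with $\fq_1 \nmid \fn_2$ and $\fq_2 \mid \fn_2$ with $\fq_2 \nmid \fn_1$. The key observation is that $\nu(\fn_i/\fq_i) = \lambda^*(1) - 1$, so by the second assertion of Corollary~\ref{diff-cor} neither $\fn_1/\fq_1$ nor $\fn_2/\fq_2$ can be a core vertex. I can therefore invoke Lemma~\ref{change} with $s=2$, which requires only $4 < p$ (valid since $p > 3$ and $p$ is prime, forcing $p \geq 5$). This produces a prime $\fr \in \cP$ with $\fr \nmid \fn_1\fn_2$ such that the ideals
\[ \fn_1' := \fn_1\fr/\fq_1 \quad \text{and} \quad \fn_2' := \fn_2\fr/\fq_2 \]
are both core vertices, and such that there are paths in $\cX^0$ from $\fn_1$ to $\fn_1'$ and from $\fn_2$ to $\fn_2'$.

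The counting step then finishes the induction: the set of prime divisors of $\fn_1'$ differs from that of $\fn_1$ by removing $\fq_1 \in P_1 \setminus P_2$ and adjoining $\fr$, and similarly $\fn_2'$ trades $\fq_2$ for $\fr$. Hence the prime $\fr$ divides both $\fn_1'$ and $\fn_2'$, while $\fq_1$ has been eliminated from the symmetric difference, so $d(\fn_1',\fn_2') = d(\fn_1,\fn_2) - 1$. Moreover $\nu(\fn_1') = \nu(\fn_2') = \lambda^*(1)$. The inductive hypothesis supplies a path in $\cX^0$ from $\fn_1'$ to $\fn_2'$, and concatenating gives a path $\fn_1 \leadsto \fn_1' \leadsto \fn_2' \leadsto \fn_2$, completing the induction.

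The only delicate point I anticipate is checking the hypotheses of Lemma~\ref{change}; in particular, one must confirm that the primes $\fq_1$ and $\fq_2$ can indeed be chosen distinct (otherwise the $s=2$ formulation is vacuous), but this is automatic since $\fq_1 \nmid \fn_2$ while $\fq_2 \mid \fn_2$. The numerical constraint $2s < p$ is the reason the hypothesis $p > 3$ appears, and there seems to be no way to weaken it without strengthening Lemma~\ref{change}.
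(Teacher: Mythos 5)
Your proof is correct and follows essentially the same approach as the paper: the paper inducts on $\lambda^*(1) - \nu(\gcd(\fn_1,\fn_2))$, which (because $\nu(\fn_1) = \lambda^*(1)$) is exactly your quantity $d(\fn_1,\fn_2)$, and both arguments apply Lemma~\ref{change} with $s=2$ and conclude via the same gcd/symmetric-difference count. Your explicit insistence that $\fq_1 \nmid \fn_2$ and $\fq_2 \nmid \fn_1$ is a useful clarification — the paper only says ``distinct primes,'' but that stronger choice is what the counting step actually needs.
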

\begin{proof}
We will prove this lemma by induction on $\lambda^{*}(1) - \nu(\gcd(\fn_{1}, \fn_{2})) \geq 0$.
When it is equal to zero, we have $\fn_{1} = \fn_{2}$ and there is nothing to prove.
Suppose $\fn_{1} \neq \fn_{2}$, and fix distinct primes $\fq_{1} \mid \fn_{1}$ and $\fq_{2} \mid \fn_{2}$.
Then $\fn_{1}/\fq_{1}$ and $\fn_{2}/\fq_{2}$ are not core vertices by Corollary~\ref{diff-cor}.
By Lemma~\ref{change} and $p>3$, there is a prime $\fr \in \cP$ with $\fr \nmid \fn_{1}\fn_{2}$ such that
both $\fn_{1}\fr/\fq_{1}$ and $\fn_{2}\fr/\fq_{2}$ are core vertices and that
there are paths in $\cX^0$ connecting $\fn_{1}$ to $\fn_{1}\fr/\fq_{1}$ and $\fn_{2}$ to $\fn_{2}\fr/\fq_{2}$.
Since $\nu(\gcd(\fn_{1}\fr/\fq_{1}, \fn_{2}\fr/\fq_{2})) = \nu(\gcd(\fn_{1}, \fn_{2})) + 1$,
there is a path in $\cX^0$ connecting $\fn_{1}\fr/\fq_{1}$ to $\fn_{2}\fr/\fq_{2}$ by the induction hypothesis.
This completes the proof.
\end{proof}

\begin{lemma}\label{koly-join2}
Suppose that $\fn \in \cN$ is a core vertex with $\nu(\fn) > \lambda^*(1)$.
Then there is a core vertex $\fm \in \cN$ with $\nu(\fm) = \nu(\fn) - 1$
such that there is a path in $\cX^0$ from $\fn$ to $\fm$.
\end{lemma}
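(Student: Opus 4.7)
The plan is to treat two cases according to whether $\fn$ admits a codimension-one core divisor, and to combine Lemma~\ref{koly-join1} with a Chebotarev-density argument of the type used in the proofs of Lemmas~\ref{koly-join1} and \ref{change} to handle the harder case.

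First I would deal with the favourable case in which there exists a prime $\fq$ dividing $\fn$ such that $\fn/\fq$ is itself a core vertex. In this case one sets $\fm := \fn/\fq$, and an application of Lemma~\ref{koly-join1} to the pair of core vertices $\fn$ and $\fn/\fq$ produces the required path in $\cX^0$, since $\nu(\fm) = \nu(\fn) - 1$ by construction.

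The main work is in the complementary case, in which $\fn/\fq$ fails to be a core vertex for every prime $\fq$ dividing $\fn$. Here Lemma~\ref{diff-lem} combined with $\lambda^{*}(\fn) = 0$ (which holds since $\fn$ is a core vertex, by Remark~\ref{core rem}(i)) forces $\lambda^{*}(\fn/\fq) = 1$ for every such $\fq$. I would fix such a prime $\fq$, set $\fm := \fn/\fq$, and choose non-zero classes $c \in H^1_{\cF(\fn)}(K, A \otimes_R \Bbbk)$ and $c^{*} \in H^1_{\cF^{*}(\fm)}(K, (A \otimes_R \Bbbk)^{*}(1))$. Since the hypothesis $p > 3$ guarantees $2 < p$, Lemma~\ref{chebotarev} then supplies a prime $\fr \in \cP$ with $\fr \nmid \fn$ such that both $\loc_{\fr}(c)$ and $\loc_{\fr}(c^{*})$ are non-zero. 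Proposition~\ref{rankind} applied to $\fm$ and $\fr$ yields $\lambda^{*}(\fm\fr) = 0$, so that $\fm\fr$ is a core vertex of size $\nu(\fn)$, whilst Lemma~\ref{joincore}(i) applied to $\fn$ and $\fr$ shows that $\fn\fr$ is a core vertex joined to $\fn$ by an edge in $\cX^0$. Since $\fn\fr = (\fm\fr)\fq$ is a core vertex obtained from the core vertex $\fm\fr$ by multiplying by a single prime, Lemma~\ref{koly-join1} provides a path in $\cX^0$ from $\fn\fr$ to $\fm\fr$, and hence, by concatenation, a path from $\fn$ to $\fm\fr$ in $\cX^0$.

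Although $\fm\fr$ has the same size as $\fn$, the effect of this construction is to replace $\fn$ by a new core vertex, reachable from $\fn$ in $\cX^0$, in which the prime divisor $\fq$ has been exchanged for $\fr$. I would then iterate the construction: if $\fm\fr$ admits a codimension-one core divisor, the favourable case applies and the lemma follows via Lemma~\ref{koly-join1}; otherwise, I would repeat the Chebotarev step applied to $\fm\fr$ in place of $\fn$, producing a further core vertex of size $\nu(\fn)$ reachable from $\fn$ by a path in $\cX^0$. The main obstacle is to establish termination of this iteration in finitely many steps; the plan is to ensure, by imposing one further non-vanishing condition on $\fr$ in the Chebotarev step (still compatible with the constraint $s + t < p$ thanks to $p > 3$), that a suitable combinatorial invariant of the current core vertex—for example, the number of its prime divisors whose quotient is not a core vertex—strictly decreases at each iteration. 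Verifying that the additional Chebotarev conditions needed to make this invariant decrease can always be simultaneously imposed is the most delicate part of the argument, and is where the assumption $p > 3$ is essential.
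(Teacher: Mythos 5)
Your proposal correctly handles the easy case (where some $\fn/\fq$ is already a core vertex, via Lemma~\ref{koly-join1}) and correctly begins the harder case by producing, via Lemma~\ref{chebotarev} and Proposition~\ref{rankind}, a core vertex $\fm\fr$ of the \emph{same} size as $\fn$ that is reachable from $\fn$ in $\cX^0$. But from that point on there is a genuine gap, and the plan you propose to close it cannot work.

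First, your argument nowhere makes use of the hypothesis $\nu(\fn) > \lambda^*(1)$. This hypothesis is not incidental: by Corollary~\ref{diff-cor}, every core vertex has at least $\lambda^*(1)$ prime factors, so if $\nu(\fn)=\lambda^*(1)$ the desired smaller core vertex does not exist at all. Any proof that does not invoke the strict inequality must be wrong. Second, the iteration you set up has no demonstrated termination, and the fix you suggest---imposing extra non-vanishing conditions in the Chebotarev step so that ``the number of prime divisors whose quotient is not a core vertex'' strictly decreases---runs head-on into the constraint $s+t<p$ in Lemma~\ref{chebotarev}: tracking all divisors of $\fm\fr$ would require on the order of $\nu(\fn)$ simultaneous conditions, and $\nu(\fn)$ can be arbitrarily large. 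The assumption $p>3$ buys you at most a handful of conditions, not $\nu(\fn)$ of them.

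The paper avoids any iteration by choosing the prime $\fq$ to delete much more carefully, and that choice is exactly where $\nu(\fn)>\lambda^*(1)$ enters. Under the reduction that no $\fn/\fq$ is a core vertex, Lemma~\ref{joincore}(ii) (contrapositive) forces $H^1_{\cF(\fn)}(K,A\otimes_R\Bbbk)=H^1_{\cF_\fn}(K,A\otimes_R\Bbbk)$, and then the computation
\[
\lambda(1)-\dim_\Bbbk H^1_{\cF_\fn}(K,A\otimes_R\Bbbk)=\lambda(1)-r=\lambda^*(1)<\nu(\fn)
\]
shows that the sum-of-localizations map $H^1_\cF(K,A\otimes_R\Bbbk)\to\bigoplus_{\fq\mid\fn}H^1_f(K_\fq,A\otimes_R\Bbbk)$ is not surjective. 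This codimension argument produces a specific $\fq\mid\fn$ with $H^1_{\cF_\fn}(K,A\otimes_R\Bbbk)=H^1_{\cF_{\fn/\fq}}(K,A\otimes_R\Bbbk)$, and for \emph{this} $\fq$, after a single application of Lemma~\ref{chebotarev} with only two non-vanishing conditions (so $s+t=2<p$ suffices), a further dimension count shows that some $\fs\mid\fm=\fn/\fq$ (with $\fs\neq\fr$) has non-zero localization to $H^1_{\rm tr}(K_\fs,A\otimes_R\Bbbk)$ from $H^1_{\cF(\fm\fr)}(K,A\otimes_R\Bbbk)$, whence $\fm\fr/\fs$ is a core vertex of size $\nu(\fn)-1$ joined to $\fm\fr$, and thus to $\fn$, in $\cX^0$. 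You should replace your open-ended iteration with this single-pass argument; in particular, identify the well-chosen $\fq$ via the non-surjectivity of the localization map, which is where the hypothesis $\nu(\fn)>\lambda^*(1)$ does its work.
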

\begin{proof}
By Lemma~\ref{koly-join1}, we may assume that $\fn/\fq$ is not a core vertex for any $\fq \mid \fn$.
Then by Lemma~\ref{joincore}(ii), we have
%If the localization map $H^1_{\cF(\fn)}(K, A \otimes_R \Bbbk) \to
%H^1_{\rm tr}(K_\fq, A \otimes_R \Bbbk)$ is non-zero for some prime $\fq \mid \fn$,
%then $\fn/\fq$ is a core vertex and there is a path in $\cX^0$ from $\fn/\fq$ to $\fn$
%by Lemma \ref{joincore} (ii).
%Thus we may assume that
$$
H^1_{\cF(\fn)}(K, A \otimes_R \Bbbk) = H^1_{\cF_\fn}(K, A \otimes_R \Bbbk).
$$
Since
$\lambda(1) - \dim_{\Bbbk}H^{1}_{\cF_{\fn}}(K, A \otimes_{R} \Bbbk) = \lambda(1) - \lambda(\fn) = \lambda(1) - r = \lambda^{*}(1) < \nu(\fn)$
and
$$
H^{1}_{\cF_{\fn}}(K, A \otimes_{R} \Bbbk) = \ker\left( H^{1}_{\cF}(K, A \otimes_{R} \Bbbk) \to \bigoplus_{\fq \mid \fn}H^{1}_{f}(K_{\fq}, A \otimes_{R} \Bbbk) \right),
$$
the map $H^{1}_{\cF}(K, A \otimes_{R} \Bbbk) \to \bigoplus_{\fq \mid \fn}H^{1}_{f}(K_{\fq}, A \otimes_{R} \Bbbk)$ is not surjective.
For any prime $\fq \mid \fn$, if $H^1_{\cF_\fn}(K, A \otimes_R \Bbbk) \neq H^1_{\cF_{\fn/\fq}}(K, A \otimes_R \Bbbk)$, there is an element $x \in H^{1}_{\cF}(K, A \otimes_{R} \Bbbk)$ such that
${\rm loc}_{\fq}(x) \neq 0$ and ${\rm loc}_{\fq'}(x) = 0$ for each prime $\fq' \mid \fn/\fq$.
Hence there is a prime $\fq \mid \fn$ such that
$$
H^1_{\cF_\fn}(K, A \otimes_R \Bbbk) = H^1_{\cF_{\fn/\fq}}(K, A \otimes_R \Bbbk)
$$
since the map $H^{1}_{\cF}(K, A \otimes_{R} \Bbbk) \to \bigoplus_{\fq \mid \fn}H^{1}_{f}(K_{\fq}, A \otimes_{R} \Bbbk)$ is not surjective.
Let $\fm = \fn/\fq$.
%If $\fm$ is a core vertex, then there is a path in $\cX^0$ from $\fm$ to $\fn$ by Lemma \ref{koly-join1}.
%Hence we may assume that $\fm$ is not a core vertex.
Since $\fm$ is not a core vertex, we have $\lambda^*(\fm) = 1$ by Lemma~\ref{diff-lem}.
Hence by Lemma~\ref{chebotarev}, there is a prime $\fr \in \cP$ with $\fr \nmid \fn$ such that the maps
\begin{align*}
H^1_{\cF_\fn}(K, A \otimes_R \Bbbk) \to H^1_{f}(K_\fr, A \otimes_R \Bbbk)
\end{align*}
and
\begin{align*}
H^1_{\cF^*(\fm)}(K, (A \otimes_R \Bbbk)^*(1)) \to H^1_{f}(K_{\fr}, (A \otimes_R \Bbbk)^*(1))
\end{align*}
are non-zero.
Then $\fm\fr$ and $\fn\fr$ are core vertices by Proposition~\ref{rankind} and Lemma~\ref{joincore}(i).
Hence by using Lemma~\ref{koly-join1}, we see that there is a path in $\cX^0$ from $\fn$ to $\fm\fr$.
%Then $\fm\fr$ is a core vertex by Proposition \ref{rankind} and there is a path in $\cX^0$ from $\fn$ to $\fm\fr$ by Lemma \ref{change}.

Since the map $H^1_{\cF^*(\fm)}(K, (A \otimes_R \Bbbk)^*(1)) \to H^1_{f}(K_\fr, (A \otimes_R \Bbbk)^*(1))$ is surjective,
we have $H^1_{\cF(\fm)}(K, A \otimes_R \Bbbk) = H^1_{\cF^{\fr}(\fm)}(K, A \otimes_R \Bbbk)$ by the global duality.
Hence we get
\begin{align*}
H^1_{\cF(\fm\fr)}(K, A \otimes_R \Bbbk) &= H^1_{\cF^{\fr}(\fm)}(K, A \otimes_R \Bbbk) \cap H^1_{\cF(\fm\fr)}(K, A \otimes_R \Bbbk)
\\
&= H^1_{\cF_{\fr}(\fm)}(K, A \otimes_R \Bbbk).
\end{align*}
Furthermore, we have
$$
\dim_{\Bbbk}H^{1}_{\cF_{\fm\fr}}(K, A \otimes_{R} \Bbbk) = \dim_{\Bbbk}H^{1}_{\cF_{\fn\fr}}(K, A \otimes_{R} \Bbbk) = r - 1
$$
since the map $H^1_{\cF_\fn}(K, A \otimes_R \Bbbk) \to H^1_{f}(K_\fr, A \otimes_R \Bbbk)$ is non-zero and
$$
H^{1}_{\cF_{\fm}}(K, A \otimes_{R} \Bbbk) = H^1_{\cF_\fn}(K, A \otimes_R \Bbbk) = H^1_{\cF(\fn)}(K, A \otimes_R \Bbbk).
$$
Since $\lambda(\fm\fr) = r$, we conclude that the sum of localization maps
$$
H^1_{\cF_{\fr}(\fm)}(K, A \otimes_R \Bbbk) = H^1_{\cF(\fm\fr)}(K, A \otimes_R \Bbbk) \to
\bigoplus_{\fs \mid \fm}H^{1}_{\rm tr}(K_{\fs}, A \otimes_{R} \Bbbk)
$$
is non-zero.
Thus there is a prime $\fs \mid \fm$ such that
the localization map
$$
H^1_{\cF(\fm\fr)}(K, A \otimes_R \Bbbk) \to H^1_{\rm tr}(K_\fs, A \otimes_R \Bbbk)
$$ is non-zero.
By Lemma~\ref{joincore}(ii), %and Lemma \ref{koly-join1},
$\fm\fr/\fs$ is a core vertex.
Hence by Lemma~\ref{koly-join1} there is a path in $\cX^0$ from $\fm\fr/\fs$ to $\fm\fr$. This completes the proof.
\end{proof}

\begin{theorem}[{\cite[Th. 4.3.12]{MRkoly}}]\label{connected}
Suppose that Hypotheses \ref{hyp1}, \ref{hyp2}, and \ref{hyp large}.
If $p >3$, then the graph $\cX^0$ is connected.
\end{theorem}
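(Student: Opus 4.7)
The plan is to combine the descent result of Lemma~\ref{koly-join2} with the minimum-rank connectedness of Corollary~\ref{koly-join3} in order to reduce an arbitrary core vertex to the base level $\nu = \lambda^*(1)$. Concretely, I would argue that any two core vertices $\fn_1, \fn_2 \in \cN$ lie in the same connected component of $\cX^0$ by exhibiting, for each $\fn_i$, a path in $\cX^0$ down to a core vertex $\fm_i$ with $\nu(\fm_i) = \lambda^*(1)$, and then invoking the already-established result for core vertices at the minimum level.

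First, I would fix an arbitrary core vertex $\fn \in \cN$. By Corollary~\ref{diff-cor} one has $\nu(\fn) \geq \lambda^*(1)$. If $\nu(\fn) > \lambda^*(1)$, then Lemma~\ref{koly-join2} produces a core vertex $\fm$ with $\nu(\fm) = \nu(\fn) - 1$ such that $\fn$ and $\fm$ lie in the same connected component of $\cX^0$. Iterating this descent at most $\nu(\fn) - \lambda^*(1)$ times, one obtains a core vertex $\fn^\flat$ with $\nu(\fn^\flat) = \lambda^*(1)$ that is connected to $\fn$ in $\cX^0$ by concatenating the paths produced at each step. (Note that the existence of at least one core vertex at the minimum level $\lambda^*(1)$ is guaranteed by Corollary~\ref{diff-cor}, so the descent terminates at the correct depth.)

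Next, given two core vertices $\fn_1, \fn_2$, apply the previous paragraph to each to obtain core vertices $\fn_1^\flat, \fn_2^\flat$ with $\nu(\fn_i^\flat) = \lambda^*(1)$, each connected in $\cX^0$ to the respective $\fn_i$. Since $p > 3$, Corollary~\ref{koly-join3} yields a path in $\cX^0$ connecting $\fn_1^\flat$ to $\fn_2^\flat$. Concatenating the three paths (from $\fn_1$ to $\fn_1^\flat$, from $\fn_1^\flat$ to $\fn_2^\flat$, and from $\fn_2^\flat$ to $\fn_2$) produces a path from $\fn_1$ to $\fn_2$, proving connectedness of $\cX^0$.

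I expect no genuine obstacle here: the heavy lifting has already been done in Corollary~\ref{koly-join3} (which uses the hypothesis $p > 3$ essentially, via the Chebotarev application of Lemma~\ref{change}) and in Lemma~\ref{koly-join2} (the descent step). The only point deserving care is to ensure that the descent process indeed terminates exactly at the minimum level $\nu = \lambda^*(1)$ rather than at some intermediate value, but this is immediate from the strict inequality $\nu(\fm) = \nu(\fn) - 1$ in Lemma~\ref{koly-join2} together with the lower bound $\nu(\fn) \geq \lambda^*(1)$ from Corollary~\ref{diff-cor}.
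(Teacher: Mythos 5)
Your proof is correct and follows essentially the same route as the paper: both use Lemma~\ref{koly-join2} to descend any core vertex to one of minimal depth $\nu = \lambda^*(1)$, invoking Corollary~\ref{diff-cor} for the lower bound, and then apply Corollary~\ref{koly-join3} (where $p>3$ enters) to connect any two minimal-depth core vertices. The extra remark on why the descent terminates precisely at depth $\lambda^*(1)$ is a small but sensible clarification, not a genuine deviation.
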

\begin{proof}
%For $i \in \{1, 2 \}$, let $\fn_{i} \in \cN$ be a core vertex.
Let $\fn_{1}, \fn_{2} \in \cN$ be core vertices.
By Corollary~\ref{diff-cor} and Lemma~\ref{koly-join2}, there are core vertices $\fm_{1}, \fm_{2} \in \cN$ with $\lambda(\fm_{1}) = \lambda(\fm_{2}) = \lambda^{*}(1)$
such that there are paths in $\cX^0$ from $\fn_{1}$ to $\fm_{1}$ and $\fn_{2}$ to $\fm_{2}$.
Since $p >3$, there is a path in $\cX^{0}$ from $\fm_{1}$ to $\fm_{2}$ by Corollary~\ref{koly-join3}.
Hence the graph $\cX^{0}$ is connected.
\end{proof}

\begin{lemma}\label{inj-koly}
Let $\fn \in \cN$ and $\fq \in \cP$ with $\fq \nmid \fn$.
If $\fn$ and $\fn\fq$ are core vertices and the localization map
$H^1_{\cF(\fn)}(K, A \otimes_{R} \Bbbk) \to H^1_f(K_\fq, A \otimes_{R} \Bbbk)$ is non-zero, then the maps
\begin{align*}
\varphi_{\fq}^{\rm fs} \colon {\bigcap}^r_R H^1_{\cF(\fn)}(K, A) \otimes G_{\fn} \to {\bigcap}^{r-1}_R H^1_{\cF_\fq(\fn)}(K, A) \otimes G_{\fn\fq}
\end{align*}
and
\begin{align*}
v_{\fq} \colon {\bigcap}^r_R H^1_{\cF(\fn\fq)}(K, A) \otimes G_{\fn\fq} \to {\bigcap}^{r-1}_R H^1_{\cF_\fq(\fn)}(K, A) \otimes G_{\fn\fq}
\end{align*}
are isomorphisms.
\end{lemma}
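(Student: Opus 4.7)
My plan is to exhibit both maps as isomorphisms of free rank-one $R$-modules by producing explicit split short exact sequences of free $R$-modules via global duality. Since $\fn$ and $\fn\fq$ are both core vertices, Remark~\ref{core rem}(ii) implies that $H^{1}_{\cF(\fn)}(K,A)$ and $H^{1}_{\cF(\fn\fq)}(K,A)$ are free of rank $r$ over $R$, so their $r$-th exterior biduals coincide with the corresponding exterior powers and I may work with $\bigwedge$ instead of $\bigcap$ throughout.

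I would first apply Theorem~\ref{gd} to the pair $\cF_\fq(\fn) \subseteq \cF(\fn)$, which differ only at $\fq$ (strict versus finite, with duals relaxed versus finite), to obtain the exact sequence
\[
0 \to H^{1}_{\cF_\fq(\fn)}(K,A) \to H^{1}_{\cF(\fn)}(K,A) \xrightarrow{{\rm loc}_\fq} H^{1}_{f}(K_\fq,A) \to H^{1}_{(\cF^{*})^{\fq}(\fn)}(K,A^{*}(1))^{*} \to H^{1}_{\cF^{*}(\fn)}(K,A^{*}(1))^{*} \to 0.
\]
The last term vanishes because $\fn$ is a core vertex. The hypothesis that the mod-$\fp$ localization is non-zero, combined with $\dim_{\Bbbk}H^{1}_{f}(K_\fq,A_{\Bbbk}) = 1$, shows that the corresponding sequence for $A_{\Bbbk}$ forces $H^{1}_{(\cF^{*})^{\fq}(\fn)}(K,A_{\Bbbk}^{*}(1)) = 0$. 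Via Corollary~\ref{dualselisom}, together with the elementary fact that any non-zero finitely generated module over the artinian local ring $R$ contains a non-zero $\fp$-torsion element, this lifts to the vanishing of $H^{1}_{(\cF^{*})^{\fq}(\fn)}(K,A^{*}(1))$ over $R$. The resulting short exact sequence $0 \to H^{1}_{\cF_\fq(\fn)}(K,A) \to H^{1}_{\cF(\fn)}(K,A) \to H^{1}_{f}(K_\fq,A) \to 0$ splits since $H^{1}_{f}(K_\fq,A) \simeq R$ is free, which in particular proves that $H^{1}_{\cF_\fq(\fn)}(K,A)$ is free of rank $r-1$. A parallel application of Theorem~\ref{gd} to $\cF_\fq(\fn) \subseteq \cF(\fn\fq)$, exploiting the self-duality of the transverse condition and the core-vertex hypothesis on $\fn\fq$, yields the analogous split short exact sequence
\[
0 \to H^{1}_{\cF_\fq(\fn)}(K,A) \to H^{1}_{\cF(\fn\fq)}(K,A) \xrightarrow{v_\fq} H^{1}_{\rm tr}(K_\fq,A) \to 0.
\]

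Because the map $\varphi_\fq^{\rm fs} \colon H^{1}_{f}(K_\fq,A) \xrightarrow{\sim} H^{1}_{\rm tr}(K_\fq,A) \otimes G_\fq$ recalled in \S\ref{section fs} is a canonical isomorphism, both $\varphi_{\fq}^{\rm fs}$ and $v_{\fq}$ in the statement are, up to the tensor factor $G_\fn$, induced by surjections of rank-$r$ free $R$-modules onto $R \otimes G_\fq$ with common kernel equal to the rank-$(r-1)$ free module $H^{1}_{\cF_\fq(\fn)}(K,A)$. Choosing a basis $e_1,\ldots,e_r$ of each source that is adapted to the splitting (so $e_1$ maps to a generator of $R \otimes G_\fq$ and $e_2,\ldots,e_r$ span the kernel), one sees that the induced map ${\bigwedge}^{r}(-) \otimes G_{\fn} \to {\bigwedge}^{r-1}H^{1}_{\cF_\fq(\fn)}(K,A) \otimes G_{\fn\fq}$ sends a generator to a generator, and is therefore an isomorphism of free rank-one $R$-modules.

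The only genuinely delicate step is the passage from the modular vanishing of $H^{1}_{(\cF^{*})^{\fq}(\fn)}(K,A_{\Bbbk}^{*}(1))$ to the integral vanishing over $R$, which hinges on combining Corollary~\ref{dualselisom} with the artinian-local structure of $R$; the rest of the argument is a careful identification of dual Selmer structures followed by elementary linear algebra on free modules.
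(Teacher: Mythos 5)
Your proposal is correct, and in broad outline it parallels the paper's argument: both establish the split short exact sequences
\[
0 \to H^{1}_{\cF_\fq(\fn)}(K,A) \to H^{1}_{\cF(\fn)}(K,A) \to H^{1}_{f}(K_\fq,A) \to 0, \qquad
0 \to H^{1}_{\cF_\fq(\fn)}(K,A) \to H^{1}_{\cF(\fn\fq)}(K,A) \to H^{1}_{\rm tr}(K_\fq,A) \to 0
\]
of free $R$-modules via global duality, then conclude by the elementary observation that the induced maps on $r$-th exterior (bi)duals of rank-$r$ free modules with common rank-$(r-1)$ free kernel are isomorphisms. Where you diverge from the paper is in the justification that the integral localization map $H^{1}_{\cF(\fn)}(K,A) \to H^{1}_{f}(K_\fq,A)$ is surjective. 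The paper argues directly: it first shows $H^{1}_{\cF(\fn)}(K,A)\otimes_R\Bbbk \simeq H^{1}_{\cF(\fn)}(K,A\otimes_R\Bbbk)$ (injectivity by Lemma~\ref{injective}, then equal dimensions), and then applies Nakayama's lemma to promote the given mod-$\fp$ surjectivity to integral surjectivity. You instead run global duality over $\Bbbk$, read off the vanishing of $H^{1}_{(\cF^{*})^{\fq}(\fn)}(K,A_{\Bbbk}^{*}(1))$, and lift that vanishing to $R$ by arguing that a nonzero finitely generated module over an artinian local ring has nonzero $\fp$-torsion and then invoking the control isomorphism $H^1_{(\cF^{*})^{\fq}(\fn)}(K, A^\ast(1)[\fp]) \simeq H^1_{(\cF^{*})^{\fq}(\fn)}(K, A^\ast(1))[\fp]$. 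That step is correct, but it uses Corollary~\ref{dualselisom} for the modified Selmer structure $(\cF^{*})^{\fq}(\fn)$ rather than the fixed one $\cF^\ast$ for which the corollary is literally stated. One should point out that the proof of Corollary~\ref{dualselisom} transfers verbatim to any dual Selmer structure whose primal local conditions surject onto the induced conditions mod $I$ (which is automatic here since $(\cF^{*})^{\fq}(\fn)$ is the dual of $\cF_{\fq}(\fn)$), but this small verification should be flagged rather than taken silently. Both routes buy essentially the same thing; the paper's Nakayama argument is slightly more economical because it stays on the side of $A$ and avoids extending Corollary~\ref{dualselisom}, while your route is perhaps more uniform in that it keeps everything in terms of vanishing of dual Selmer groups, which is the language of core vertices.
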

\begin{proof}
Note that $H^{1}_{\cF(\fn)}(K, A)$ is free of rank $r$ by Remark \ref{core rem}.
%since $\fn$ is a core vertex.
The natural map
$$
H^{1}_{\cF(\fn)}(K, A) \otimes_{R} \Bbbk \to H^{1}_{\cF(\fn)}(K, A \otimes_{R} \Bbbk)
$$
%is injective by Lemma \ref{injective}. Furthermore, since
is an isomorphism by Lemma~\ref{injective} and the fact that
$$
\dim_{\Bbbk} H^{1}_{\cF(\fn)}(K, A) \otimes_{R} \Bbbk = r = \lambda(\fn) = \dim_{\Bbbk} H^{1}_{\cF(\fn)}(K, A \otimes_{R} \Bbbk).
$$
%the map is an isomorphism. }
%Since $\fn$ is a core vertex, by the global duality, we have a split exact sequence of free $R$-modules:
%$$
%0 \to H^{1}_{\cF(\fn)}(K, A) \to H^{1}_{\cF^{\fn}}(K, A) \to \bigoplus_{\fq \mid \fn}H^{1}_{/{\rm tr}}(K_{\fq}, A) \to 0.
%$$
%Hence by Corollary \ref{dualselisom} and the global duality, we have the following diagram
%\begin{align*}
%\xymatrix{
%0 \ar[r] & H^{1}_{\cF(\fn)}(K, A) \otimes_{R} \Bbbk \ar[r] \ar[d] & H^{1}_{\cF^{\fn}}(K, A) \otimes_{R} \Bbbk \ar[r] \ar[d] &
%\bigoplus_{\fq \mid \fn}H^{1}_{/{\rm tr}}(K_{\fq}, A) \otimes_{R} \Bbbk \ar[r] \ar[d] & 0
%\\
%0 \ar[r] & H^{1}_{\cF(\fn)}(K, A \otimes_{R} \Bbbk) \ar[r] & H^{1}_{\cF^{\fn}}(K, A \otimes_{R} \Bbbk) \ar[r] &
%\bigoplus_{\fq \mid \fn}H^{1}_{/{\rm tr}}(K_{\fq}, A \otimes_{R} \Bbbk) \ar[r] & 0
%}
%\end{align*}
%where the vertical maps are the natural maps and the left vertical map is an isomorphism by Corollary \ref{selisom}.
%By Lemma \ref{s-iso}, the middle vertical map is an isomorphism, and so is the left vertical map.
Thus the localization map
$$
H^1_{\cF(\fn)}(K, A) \to H^1_f(K_\fq, A)
$$ is surjective since
$H^{1}_{f}(K_{\fq}, A) \otimes_{R} \Bbbk \simeq H^{1}_{f}(K_{\fq}, A \otimes_{R} \Bbbk)$ and
the map $H^1_{\cF(\fn)}(K, A \otimes_{R} \Bbbk) \to H^1_f(K_\fq, A \otimes_{R} \Bbbk)$ is non-zero.
Hence, by the global duality, we have a split exact sequence of free $R$-modules
\begin{align*}
0 \to H^1_{\cF_\fq(\fn)}(K, A) \to H^1_{\cF(\fn)}(K, A) \to H^1_f(K_\fq, A) \to 0
\end{align*}
and
$H^{1}_{(\cF^{*})^{\fq}(\fn)}(K, A^{*}(1)) = H^{1}_{\cF^{*}(\fn)}(K, A^{*}(1)) = 0$. Again, by the global duality and the fact that $H^{1}_{(\cF^{*})^{\fq}(\fn)}(K, A^{*}(1)) = 0$, we have a split exact sequence of free $R$-modules
\begin{align*}
0 \to H^1_{\cF_\fq(\fn)}(K, A) \to H^1_{\cF(\fn\fq)}(K, A) \to H^1_{\rm tr}(K_\fq, A) \to 0.
\end{align*}
Since the $R$-modules $H^1_{\cF(\fn)}(K, A)$ and $H^1_{\cF(\fn\fq)}(K, A)$ are free of rank $r$, the maps $\varphi_{\fq}^{\rm fs}$ and $v_{\fq}$
%\begin{align*}
%{\bigcap}^r_R H^1_{\cF(\fn)}(K, A) \otimes G_{\fn} \to {\bigcap}^{r-1}_R H^1_{\cF_\fq(\fn)}(K, A) \otimes G_{\fn\fq}
%\end{align*}
%and
%\begin{align*}
%{\bigcap}^r_R H^1_{\cF(\fn\fq)}(K, A) \otimes G_{\fn\fq} \to {\bigcap}^{r-1}_R H^1_{\cF_\fq(\fn)}(K, A) \otimes G_{\fn\fq}
%\end{align*}
are isomorphisms.
\end{proof}

\begin{theorem}\label{thm koly}
Suppose Hypotheses \ref{hyp1}, \ref{hyp2}, and \ref{hyp large}.
Let $\fn \in \cN$ be a core vertex.
If $p>3$, then the projection map
\begin{align*}
{\rm KS}_r(A, \cF) \to {\bigcap}^r_R H^1_{\cF(\fn)}(K, A) \otimes G_\fn; \ \kappa \mapsto \kappa_{\fn}
\end{align*}
is an isomorphism.
In particular, ${\rm KS}_r(A, \cF)$ is a free $R$-module of rank one.
\end{theorem}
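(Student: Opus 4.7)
My plan is to prove Theorem~\ref{thm koly} in two main steps: surjectivity of the projection $\Phi_\fn \colon \kappa \mapsto \kappa_\fn$ via the regulator map from Stark systems, and injectivity via propagation along the graph $\cX^0$ combined with an induction on $\lambda^\ast$.

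For surjectivity, I would study the composition
\[
\Phi_\fn \circ {\rm Reg}_r \colon {\rm SS}_r(A, \cF) \to {\rm KS}_r(A, \cF) \to {\bigcap}^r_R H^1_{\cF(\fn)}(K, A) \otimes G_\fn,
\]
which by the definition of the regulator map sends $\epsilon \mapsto \big({\bigwedge}_{\fq \mid \fn}\varphi_\fq^{\rm fs}\big)(\epsilon_\fn)$. The inclusion of Selmer structures $(\cF^\ast)_\fn \subseteq \cF^\ast(\fn)$, together with the assumption that $\fn$ is a core vertex, forces $H^1_{(\cF^\ast)_\fn}(K,A^\ast(1))$ to vanish, so Theorem~\ref{thm stark}(i) makes the projection $\epsilon \mapsto \epsilon_\fn$ an isomorphism ${\rm SS}_r(A, \cF) \simeq {\bigcap}^{r+\nu(\fn)}_R H^1_{\cF^\fn}(K, A)$. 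The split exact sequence of free $R$-modules in Remark~\ref{core rem}(ii), combined with Proposition~\ref{prop self injective} and the fact that each local $\varphi_\fq^{\rm fs}$ is an isomorphism, shows that ${\bigwedge}_{\fq \mid \fn}\varphi_\fq^{\rm fs}$ is itself an isomorphism ${\bigcap}^{r+\nu(\fn)}_R H^1_{\cF^\fn}(K, A) \xrightarrow{\sim} {\bigcap}^r_R H^1_{\cF(\fn)}(K, A) \otimes G_\fn$. The composite is therefore an isomorphism, which yields both the surjectivity of $\Phi_\fn$ and the injectivity of ${\rm Reg}_r$.

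For injectivity of $\Phi_\fn$, suppose $\kappa \in {\rm KS}_r(A,\cF)$ satisfies $\kappa_\fn = 0$. For each core vertex $\fm$, Theorem~\ref{connected} supplies a path $\fn = \fn_0, \fn_1, \ldots, \fn_k = \fm$ in $\cX^0$. At each edge, Lemma~\ref{inj-koly} asserts that both $v_{\fq_i}$ and $\varphi_{\fq_i}^{\rm fs}$ are isomorphisms, so the finite-singular relation $v_{\fq_i}(\kappa_{\fn_i\fq_i}) = \varphi_{\fq_i}^{\rm fs}(\kappa_{\fn_i})$ propagates the vanishing along the path, giving $\kappa_\fm = 0$ for every core vertex. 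The remaining case of a non-core $\fl \in \cN$ is the main obstacle, and I plan to handle it by induction on $s := \lambda^\ast(\fl)$. For $s \ge 1$, by Proposition~\ref{rankind} and Chebotarev-type arguments in the spirit of Lemmas~\ref{chebotarev} and~\ref{injective}, I would select primes $\fq_1, \ldots, \fq_t \in \cP$ coprime to $\fl$ satisfying simultaneously (i) $\lambda^\ast(\fl\fq_i) = s-1$ for each $i$, so that the inductive hypothesis gives $\kappa_{\fl\fq_i} = 0$ and hence $\varphi_{\fq_i}^{\rm fs}(\kappa_\fl) = 0$; and (ii) the joint localization $\bigoplus_i {\rm loc}_{\fq_i} \colon H^1_{\cF(\fl)}(K, A) \to \bigoplus_i H^1_f(K_{\fq_i}, A)$ is injective. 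By Corollary~\ref{bidual-ker}, condition (i) places $\kappa_\fl$ inside ${\bigcap}^r_R \bigcap_i \ker({\rm loc}_{\fq_i})$, and (ii) makes this bidual vanish, forcing $\kappa_\fl = 0$. The delicate point is arranging both density conditions on the primes simultaneously, which can be achieved by choosing the $\fq_i$ iteratively.
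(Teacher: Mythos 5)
Your proposal is correct and follows essentially the same strategy as the paper's proof: for surjectivity, compose ${\rm Reg}_r$ with the projection and use the isomorphism from Theorem~\ref{thm stark}(i) together with the split exact sequence of free modules at a core vertex to show ${\bigwedge}_{\fq\mid\fn}\varphi_\fq^{\rm fs}$ is an isomorphism; for injectivity, propagate vanishing to all core vertices via Theorem~\ref{connected} and Lemma~\ref{inj-koly}, then induct on $\lambda^\ast$ for non-core vertices using Chebotarev to choose auxiliary primes, with Corollary~\ref{bidual-ker} and the finite-singular relation providing the key step.

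The only minor packaging difference is that the paper phrases the inductive step as a proof by contradiction (assume $\kappa_\fm\neq 0$, deduce some $\varphi_\fq^{\rm fs}(\kappa_\fm)\neq 0$, then get $0 = v_\fq(\kappa_{\fm\fq}) = \varphi_\fq^{\rm fs}(\kappa_\fm)$, a contradiction), whereas you argue directly that all $\varphi_{\fq_i}^{\rm fs}(\kappa_\fl)$ vanish and joint injectivity forces $\kappa_\fl=0$; these are logically equivalent. The ``delicate point'' you flag about arranging both density conditions simultaneously is exactly how the paper proceeds: Lemma~\ref{chebotarev} is applied iteratively so that each new prime has non-zero localization on a fixed element of the remaining kernel (building up joint injectivity on $H^1_{\cF(\fm)}(K,A\otimes_R\Bbbk)$) while also having non-zero localization on $H^1_{\cF^\ast(\fm)}(K,(A\otimes_R\Bbbk)^\ast(1))$ to guarantee the drop in $\lambda^\ast$ via Proposition~\ref{rankind}. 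One small point worth being explicit about is that after establishing $\bigcap_{\fq\mid\fr}\ker({\rm loc}_\fq) = 0$ over $\Bbbk$, one must pass to $R$ using Corollary~\ref{selisom} (and the injectivity of $H^1_f(K_\fq,A\otimes_R\Bbbk)\hookrightarrow H^1_f(K_\fq,A)$) before applying Corollary~\ref{bidual-ker}; the paper does this but your sketch does not spell it out.
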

\begin{proof}
Since $\fn \in \cN$ is a core vertex, by the global duality, we have a split exact sequence of free $R$-modules:
$$
0 \to H^{1}_{\cF(\fn)}(K, A) \to H^{1}_{\cF^{\fn}}(K, A) \to \bigoplus_{\fq \mid \fn}H^{1}_{/{\rm tr}}(K_{\fq}, A) \to 0.
$$
Hence, by Hypothesis \ref{hyp large}, the map
\begin{eqnarray}\label{phiisom}
{\bigwedge}_{\fq \mid \fn}\varphi_\fq^{\rm fs} \colon
{\bigcap}_R^{r+\nu(\fn)} H^1_{\cF^\fn}(K,A) \to {\bigcap}_R^r H^1_{\cF(\fn)}(K,A) \otimes G_\fn
\end{eqnarray}
is an isomorphism.
By using the map ${\rm Reg}_r \colon {\rm SS}_r(A, \cF) \to {\rm KS}_r(A, \cF)$ and by Theorem \ref{thm stark}(i), we conclude that the map
${\rm KS}_r(A, \cF) \to  {\bigcap}^{r}_{R} H^1_{\cF(\fn)}(K, A) \otimes G_\fn$ is surjective.

Let $\kappa \in {\rm KS}_r(A, \cF)$ with $\kappa_\fn = 0$.
To prove injectivity, we will show that $\kappa_\fm = 0$ for any ideal $\fm \in \cN$ by induction on $\lambda^*(\fm)$.
If $\lambda^*(\fm) = 0$, then $\fm$ is a core vertex.
Hence by Theorem \ref{connected}, Lemma \ref{inj-koly}, and $\kappa_\fn = 0$,
we have $\kappa_\fm = 0$.
Suppose that $\lambda^*(\fm) > 0$ and $\kappa_\fm \neq 0$.
By Lemma \ref{chebotarev},  we can take an ideal $\fr \in \cN$ coprime to $\fm$ such that
the localization maps
\begin{align*}
H^1_{\cF(\fm)}(K, A \otimes_R \Bbbk) \to H^1_f(K_{\fq}, A \otimes_R \Bbbk)
\end{align*}
and
\begin{align*}
H^1_{\cF^*(\fm)}(K, (A \otimes_R \Bbbk)^*(1)) \to H^1_f(K_{\fq}, (A \otimes_R \Bbbk)^*(1))
\end{align*}
are non-zero for any prime $\fq \mid \fr$ and that
the sum of localization maps
$$
H^1_{\cF(\fm)}(K, A \otimes_R \Bbbk) \to \bigoplus_{\fq \mid \fr}H^{1}_{f}(K_{\fq}, A \otimes_R \Bbbk)
$$
is injective.
Then the map
$$
H^1_{\cF(\fm)}(K, A) \to \bigoplus_{\fq \mid \fr}H^{1}_{f}(K_{\fq}, A)
$$
is also injective by using Corollary \ref{selisom} and the injectivity of the maps $H^{1}_{f}(K_{\fq}, A \otimes_{R} \Bbbk) \to H^{1}_{f}(K_{\fq}, A)$ induced by the map $\Bbbk \hookrightarrow R$ for any prime $\fq \in \cP$.
By taking the dual of this map, we see that
$$
0 = \bigcap_{\fq \mid \fr}\ker\left(\varphi^{\rm fs}_{\fq} \colon {\bigcap}^{r}_{R}H^1_{\cF(\fm)}(K, A) \otimes G_{\fm} \to {\bigcap}^{r-1}_{R}H^1_{\cF_{\fq}(\fm)}(K, A) \otimes {G_{\fm \fq}}\right)
$$
by Corollary \ref{bidual-ker}.
Hence there is a prime $\fq \mid \fr$ such that $\varphi_\fq^{\rm fs}(\kappa_\fm) \neq 0$, since we suppose $\kappa_{\fm} \neq 0$.
Furthermore, we have $\lambda^\ast(\fm\fq) = \lambda^\ast(\fm) - 1$ by Proposition \ref{rankind}, and so we conclude that
$\kappa_{\fm\fq} = 0$ by the induction hypothesis.
By the definition of Kolyvagin system, we have $0 = v_{\fq}(\kappa_{\fm\fq}) = \varphi_\fq^{\rm fs}(\kappa_\fm) \neq 0$.
This is a contradiction. Thus $\kappa_\fm = 0$.
\end{proof}

\begin{remark}
The proof of Theorem~\ref{connected} is parallel to that of \cite[Th. 4.3.12]{MRkoly}.
However, the notion of exterior power bidual plays a critical role in the proof of Theorem~\ref{thm koly}, allowing us to overcome the problem discussed by Mazur and Rubin in \cite[Rem. 11.9]{MRselmer}. More precisely, it is crucial for the proof of Theorem \ref{thm koly} that if $\kappa_\fm$ does not vanish, then there exists a prime ideal $\fq $ such that $\varphi_\fq^{\rm fs}(\kappa_\fm)$ does not vanish and the corresponding fact is not true if one defines Kolyvagin systems by using exterior powers rather than exterior power biduals.  This is the reason why Mazur and Rubin could not prove any result that corresponds to Theorem~\ref{thm koly}.
\end{remark}

The following lemma will be used in the proof of Theorem \ref{main}(iii).

\begin{lemma}\label{fitt-ind-1}
Let $\fn \in \cN$. Then for each natural number $i$ one has
$$ \sum_{\fq \in \cP,\  \fq \nmid \fn}\Fitt_{R}^{i-1}(H^{1}_{\cF^{*}(\fn\fq)}(K, A^{*}(1))^{*})\subseteq
\Fitt_{R}^{i}( H^{1}_{\cF^{*}(\fn)}(K, A^{*}(1))^{*}),
$$
with equality if $\Ann_{R}( H^{1}_{\cF(\fn)}(K, A))$ vanishes.
\end{lemma}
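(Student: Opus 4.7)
The plan is to produce, via global duality, explicit finite free presentations of the two Selmer modules in such a way that the presentation matrix for $H^1_{\cF^{*}(\fn\fq)}(K, A^{*}(1))^{*}$ appears as a sub-matrix of the presentation matrix for $H^1_{\cF^{*}(\fn)}(K, A^{*}(1))^{*}$; the inclusion of Fitting ideals will then be immediate from the sub-matrix relation among minors. The equality direction under the annihilator hypothesis will require an additional Chebotarev-density argument and is the main obstacle.

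For the inclusion, for each $\fq \in \cP$ with $\fq \nmid \fn$, I would fix (by Lemma~\ref{chebotarev}) an auxiliary $\fm_0 \in \cN$ coprime to $\fn\fq$ such that $\fn\fq\fm_0$ is a core vertex. Since strict conditions are contained in transverse ones, the vanishing $H^{1}_{\cF^{*}(\fn\fq\fm_0)}(K, A^{*}(1)) = 0$ forces both $H^{1}_{\cF^{*}_{\fm_0}(\fn\fq)}(K, A^{*}(1))$ and $H^{1}_{\cF^{*}_{\fm_0\fq}(\fn)}(K, A^{*}(1))$ to vanish. Combining this with Remark~\ref{core rem}(ii) and two further applications of global duality yields freeness $H^{1}_{\cF^{\fm_0}(\fn\fq)}(K,A) \simeq R^{r+\nu(\fm_0)}$ and $H^{1}_{\cF^{\fm_0\fq}(\fn)}(K,A) \simeq R^{r+\nu(\fm_0)+1}$, which when fed into Theorem~\ref{gd} for the pairs $(\cF(\fn\fq), \cF^{\fm_0}(\fn\fq))$ and $(\cF(\fn), \cF^{\fm_0\fq}(\fn))$ produces the finite free presentations
\begin{align*}
0 \to H^1_{\cF(\fn\fq)}(K,A) \to R^{r+\nu(\fm_0)} \xrightarrow{M'_\fq} R^{\nu(\fm_0)} &\to H^1_{\cF^{*}(\fn\fq)}(K,A^{*}(1))^{*} \to 0,\\
0 \to H^1_{\cF(\fn)}(K,A) \to R^{r+\nu(\fm_0)+1} \xrightarrow{M_\fq} R^{\nu(\fm_0)+1} &\to H^1_{\cF^{*}(\fn)}(K,A^{*}(1))^{*} \to 0.
\end{align*}

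Applying global duality once more to the pair $(\cF^{\fm_0}(\fn\fq), \cF^{\fm_0\fq}(\fn))$, which agree at every place except $\fq$ where transverse $\subseteq$ full, yields the split exact sequence
\[ 0 \to H^{1}_{\cF^{\fm_0}(\fn\fq)}(K,A) \to H^{1}_{\cF^{\fm_0\fq}(\fn)}(K,A) \to H^{1}_{/{\rm tr}}(K_\fq, A) \to 0, \]
the splitting coming from freeness of the quotient $H^{1}_{/{\rm tr}}(K_\fq, A) \simeq R$. Choosing a basis of the larger module adapted to this splitting, the matrix $M_\fq$ takes the block form
\[ M_\fq = \begin{pmatrix} M'_\fq & \ast \\ \ast & \ast \end{pmatrix},\]
whose upper-left $\nu(\fm_0) \times (r+\nu(\fm_0))$ sub-block is exactly $M'_\fq$. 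Since $\Fitt^{i}_{R}(H^{1}_{\cF^{*}(\fn)}^{*})$ is generated by the $(\nu(\fm_0)+1-i)$-minors of $M_\fq$ and $\Fitt^{i-1}_{R}(H^{1}_{\cF^{*}(\fn\fq)}^{*})$ by those of $M'_\fq$, every minor of the latter is automatically a minor of the former, which gives $\Fitt^{i-1}_{R}(H^{1}_{\cF^{*}(\fn\fq)}^{*}) \subseteq \Fitt^{i}_{R}(H^{1}_{\cF^{*}(\fn)}^{*})$; summing over all such $\fq$ then yields the desired inclusion.

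For the equality direction under the hypothesis $\Ann_{R}(H^{1}_{\cF(\fn)}(K,A)) = 0$, the same presentations are used together with a Chebotarev-density argument: one must show that every $(\nu(\fm_0)+1-i)$-minor of $M_\fq$ involving the bordering row or column (i.e.\ every generator of $\Fitt^{i}_{R}(H^{1}_{\cF^{*}(\fn)}^{*})$ not already accounted for by $M'_\fq$) arises as a $(\nu(\fm_0)+1-i)$-minor of $M'_{\fq'}$ for some $\fq' \in \cP$. The faithfulness of $H^{1}_{\cF(\fn)}(K,A) = \ker M_{0}$ guarantees that this kernel contains classes whose local behaviour at auxiliary primes can be varied freely. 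The main obstacle will be to translate the abstract condition $\Ann_{R}(H^{1}_{\cF(\fn)}(K,A)) = 0$ into a concrete realizability statement for minors: one must combine this faithfulness with a careful application of Lemma~\ref{chebotarev} to produce, for each target generator, a prime $\fq'$ whose Frobenius action on $A$ and $A^{*}(1)$ forces the bordering row and column of $M_{\fq'}$ to realize the desired minor via an expansion along the new row, thereby placing every element of $\Fitt^{i}_{R}(H^{1}_{\cF^{*}(\fn)}^{*})$ into the sum over $\fq'$ of the $\Fitt^{i-1}_{R}(H^{1}_{\cF^{*}(\fn\fq')}^{*})$.
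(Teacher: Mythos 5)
Your argument for the inclusion is essentially correct. The block decomposition
\[
M_\fq = \begin{pmatrix} M'_\fq & \ast \\ \ast & \ast \end{pmatrix}
\]
(which relies on the splitting $H^{1}_{\cF^{\fm_0\fq}(\fn)}(K,A) \simeq H^{1}_{\cF^{\fm_0}(\fn\fq)}(K,A) \oplus H^1_{/{\rm tr}}(K_\fq,A)$ on the source and the obvious splitting of $\bigoplus_{\fr\mid\fm_0\fq}H^1_{/f}(K_\fr,A)$ on the target) does exhibit $M'_\fq$ as a sub-block of $M_\fq$, and both Fitting ideals are generated by minors of the common size $\nu(\fm_0)+1-i$, so the inclusion is immediate. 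This is a more explicit unfolding of the paper's route, which instead applies its Lemma~\ref{fitt-lemma} to the Selmer structure $\cF(\fn)$ (after first checking that $\cF(\fn)$ inherits Hypothesis~\ref{hyp large}) to get the identity $\Fitt_R^i(H^1_{\cF^\ast(\fn)}(K,A^\ast(1))^\ast)=\sum_{\fq\nmid\fn}\Fitt_R^{i-1}(H^1_{(\cF^\ast)_\fq(\fn)}(K,A^\ast(1))^\ast)$, and then uses the surjection $H^1_{\cF^\ast(\fn\fq)}(K,A^\ast(1))^\ast\twoheadrightarrow H^1_{(\cF^\ast)_\fq(\fn)}(K,A^\ast(1))^\ast$. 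Both arguments rest on the same underlying linear algebra.

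The equality direction, however, is not actually proved. You state what would need to happen (every bordering minor of $M_\fq$ should be realizable as a minor of some $M'_{\fq'}$) and appeal vaguely to Chebotarev, but you do not identify the mechanism that makes this work, and the approach as sketched does not close. The key observation you are missing is this: the difference between your $M'_\fq$ and what Lemma~\ref{fitt-lemma} naturally produces is precisely the difference between the transverse condition at $\fq$ (giving $H^1_{\cF^\ast(\fn\fq)}$) and the strict condition at $\fq$ (giving $H^1_{(\cF^\ast)_\fq(\fn)}$), and these two Selmer modules coincide exactly when the localization map $H^1_{\cF(\fn)}(K,A)\to H^1_f(K_\fq,A)$ is \emph{surjective} (for then, by global duality, $H^1_{\cF^\ast(\fn)}(K,A^\ast(1))=H^1_{(\cF^\ast)^\fq(\fn)}(K,A^\ast(1))$, and intersecting with the transverse condition forces intersection with $H^1_f\cap H^1_{\rm tr}=0$). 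This is where the hypothesis $\Ann_R(H^1_{\cF(\fn)}(K,A))=0$ enters: choose $e\in H^1_{\cF(\fn)}(K,A)$ with $\Ann_R(e)=0$ and a generator $x$ of $R[\fp]$, and apply Lemma~\ref{chebotarev} to find $\fm\in\cN$ coprime to $\fn$ with $H^1_{(\cF^\ast)_\fm(\fn)}(K,A^\ast(1))=0$ and ${\rm loc}_\fq(xe)\neq 0$ for all $\fq\mid\fm$. The latter forces surjectivity of ${\rm loc}_\fq$ on $H^1_{\cF(\fn)}(K,A)$ for those $\fq$, so the transverse/strict distinction collapses, and then a single further application of the Fitting identity restricted to the primes dividing $\fm$ gives the reverse inclusion. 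Without this identification of the correct set of auxiliary primes, the "realizability of minors" programme you describe has no way to get off the ground, since for a general $\fq$ the ideal $\Fitt_R^{i-1}(H^1_{\cF^\ast(\fn\fq)}(K,A^\ast(1))^\ast)$ is strictly smaller than $\Fitt_R^{i-1}(H^1_{(\cF^\ast)_\fq(\fn)}(K,A^\ast(1))^\ast)$.
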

\begin{proof}
Note that, if $\cF$ satisfies Hypothesis \ref{hyp large}, then so does $\cF(\fn)$. In fact,  by Lemma \ref{chebotarev}, we can take an ideal $\fd \in \cN$ coprime to $\fn$ such that $H^{1}_{(\cF^{*})_{\fd}(\fn)}(K, A^{*}(1))$ vanishes. Then, by the global duality, we have an  exact sequence
$$
0 \to H^{1}_{\cF^{\fd}(\fn)}(K, A) \to H^{1}_{\cF^{\fd\fn}}(K, A) \to \bigoplus_{\fq \mid \fn} H^{1}_{/{\rm tr}}(K_{\fq}, A) \to 0.
$$
Since $H^{1}_{\cF^{\fd\fn}}(K, A) \simeq R^{r+\nu(\fd\fn)}$ by Remark \ref{free rem} and $H^{1}_{/{\rm tr}}(K_{\fq}, A) \simeq R$ for any prime $\fq \mid \fn$, we conclude that $H^{1}_{\cF^{\fd}(\fn)}(K, A) \simeq R^{r+\nu(\fd)}$. %Here we use the fact that any projective module is free since $R$ is a local ring.

Hence we can apply Lemma \ref{fitt-lemma} for the Selmer structure $\cF(\fn)$ % (note that $S(\cF(\fn)) = S \cup \{\fq \mid \fn \}$),
and we have
\begin{align*}
\Fitt_{R}^{i}( H^{1}_{\cF^{*}(\fn)}(K, A^{*}(1))^{*}) =
\sum_{\fq \in \cP, \ \fq \nmid \fn}\Fitt_{R}^{i-1}(H^{1}_{(\cF^{*})_{\fq}(\fn)}(K, A^{*}(1))^{*}).  \end{align*}
(Note that $S(\cF(\fn)) = S \cup \{\fq \mid \fn \}$ and so the primes running in the sum on the right hand side are restricted to $\fq \in \cP$ with $\fq \nmid \fn$.)
Since the natural maps $H^{1}_{\cF^{*}(\fn\fq)}(K, A^{*}(1))^{*} \to
H^{1}_{(\cF^{*})_{\fq}(\fn)}(K, A^{*}(1))^{*}$ are surjective,
we have
$$
\sum_{\fq \in \cP, \ \fq \nmid \fn}\Fitt_{R}^{i-1}(H^{1}_{\cF^{*}(\fn\fq)}(K, A^{*}(1))^{*})
\subseteq   \sum_{\fq \in \cP, \ \fq \nmid \fn}\Fitt_{R}^{i-1}(H^{1}_{(\cF^{*})_{\fq}(\fn)}(K, A^{*}(1))^{*}).
$$

It is therefore enough to show that if $\Ann_{R}( H^{1}_{\cF(\fn)}(K, A))$ vanishes, then the reverse inclusion is also valid.

Under this assumption, there exists an element $e$ of $H^{1}_{\cF(\fn)}(K, A)$ with $\Ann_{R}(e) = 0$. Let $x$ be a generator of $R[\fp]$.
Then by Lemma \ref{chebotarev}, there is an ideal $\fm \in \cN$ coprime to $\fn$ such that ${\rm loc}_{\fq}(xe) \neq 0$ for any prime $\fq \mid \fm$ and $H^{1}_{(\cF^\ast)_{\fm}(\fn)}(K, A^{*}(1)) = 0$.
Since $H^{1}_{f}(K_{\fq}, A) \simeq R$, it follows from the fact that ${\rm loc}_{\fq}(xe) \neq 0$ that the map
$H^{1}_{\cF(\fn)}(K, A) \to H^{1}_{f}(K_{\fq}, A)$ is surjective for any prime $\fq \mid \fm$.
Hence, by the global duality, we have $H^{1}_{\cF^{*}(\fn)}(K, A^{*}(1)) = H^{1}_{(\cF^{*})^{\fq}(\fn)}(K, A^{*}(1))$. Therefore we have
$$
H^{1}_{\cF^{*}(\fn\fq)}(K, A^{*}(1)) = H^{1}_{\cF^{*}(\fn)}(K, A^{*}(1)) \cap H^{1}_{\cF^{*}(\fn\fq)}(K, A^{*}(1)) = H^{1}_{(\cF^{*})_{\fq}(\fn)}(K, A^{*}(1)).
$$
Again by Lemma \ref{fitt-lemma}, we have
\begin{align*}
\Fitt_{R}^{i}( H^{1}_{\cF^{*}(\fn)}(K, A^{*}(1))^{*} )
&= \sum_{\fq \in \cP, \ \fq \mid \fm}\Fitt_{R}^{i-1}(H^{1}_{(\cF^{*})_{\fq}(\fn)}(K, A^{*}(1))^{*})
\\
&=  \sum_{\fq \in \cP, \ \fq \mid \fm}\Fitt_{R}^{i-1}(H^{1}_{\cF^{*}(\fn\fq)}(K, A^{*}(1))^{*})
\\
&\subseteq  \sum_{\fq \in \cP, \ \fq \nmid \fn}\Fitt_{R}^{i-1}(H^{1}_{\cF^{*}(\fn\fq)}(K, A^{*}(1))^{*}).
\end{align*}
This completes the proof.
\end{proof}

\begin{corollary}\label{fitt-ind} For each natural number $i$ one has
$$
\sum_{\fm \in \cN,\  \nu(\fm) = i}\Fitt_{R}^{0}(H^{1}_{\cF^{*}(\fm)}(K, A^{*}(1))^{*})\subseteq   \Fitt_{R}^{i}( H^{1}_{\cF^{*}}(K, A^{*}(1))^{*}),
$$
with equality provided that $\Ann_{R}(H^{1}_{\cF(\fn)}(K, A))$ vanishes for all ideals $\fn$ in $\cN$.
\end{corollary}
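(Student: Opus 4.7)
The plan is to deduce the corollary by iterating Lemma \ref{fitt-ind-1} exactly $i$ times, via a straightforward induction on $i$. In order for the induction to close properly, I would prove the following slightly more general statement: for every ideal $\fn_0 \in \cN$ and every non-negative integer $i$,
\begin{equation*}
\sum_{\substack{\fm \in \cN,\ \nu(\fm) = i \\ (\fm,\fn_0)=1}} \Fitt_R^0\bigl(H^1_{\cF^*(\fn_0\fm)}(K, A^*(1))^*\bigr) \subseteq \Fitt_R^i\bigl(H^1_{\cF^*(\fn_0)}(K, A^*(1))^*\bigr),
\end{equation*}
with equality whenever $\Ann_R(H^1_{\cF(\fn)}(K,A))$ vanishes for every $\fn \in \cN$. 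Setting $\fn_0 = 1$ then recovers the statement of Corollary~\ref{fitt-ind}.

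The base case $i=0$ is trivial since $\fm = 1$ is the unique element of $\cN$ with $\nu(\fm)=0$, and both sides reduce to $\Fitt_R^0(H^1_{\cF^*(\fn_0)}(K,A^*(1))^*)$. For the inductive step, I would assume the statement for $i-1$ (and all $\fn_0 \in \cN$) and rewrite each term indexed by $\fm$ on the left by factoring out some prime divisor, yielding
\begin{equation*}
\sum_{\substack{\fm,\ \nu(\fm)=i \\ (\fm,\fn_0)=1}} \Fitt_R^0\bigl(H^1_{\cF^*(\fn_0\fm)}\bigr) \;\subseteq\; \sum_{\substack{\fq \in \cP \\ \fq \nmid \fn_0}} \sum_{\substack{\fm',\ \nu(\fm')=i-1 \\ (\fm',\fn_0\fq)=1}} \Fitt_R^0\bigl(H^1_{\cF^*(\fn_0\fq\fm')}\bigr),
\end{equation*}
where each $\fm$ of weight $i$ may be produced with multiplicity $i$, which is immaterial for ideal sums. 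Applying the inductive hypothesis with $\fn_0\fq$ in place of $\fn_0$ bounds the inner sum by $\Fitt_R^{i-1}(H^1_{\cF^*(\fn_0\fq)}(K,A^*(1))^*)$; summing over $\fq \nmid \fn_0$ and invoking Lemma~\ref{fitt-ind-1} with $\fn = \fn_0$ yields the desired inclusion into $\Fitt_R^i(H^1_{\cF^*(\fn_0)}(K,A^*(1))^*)$.

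For the equality statement, the annihilator hypothesis on every $\fn \in \cN$ automatically transmits to all the intermediate ideals $\fn_0\fq$, $\fn_0\fq\fq',\dots$ that appear, so each application of Lemma~\ref{fitt-ind-1} in the induction is itself an equality; by the inductive hypothesis the rewriting step is an equality as well, and one obtains equality overall. There is no real obstacle here beyond bookkeeping: the only point that requires care is checking that the rewriting of the sum in the inductive step is exhaustive (every squarefree $\fm$ of weight $i$ with $(\fm,\fn_0)=1$ appears as some $\fq\fm'$), which is immediate from the factorization of squarefree ideals.
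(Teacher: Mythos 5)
Your proposal is correct and follows essentially the same approach as the paper: the paper's proof is the one-line observation that the corollary follows by iterating Lemma~\ref{fitt-ind-1}, and your argument simply fills in the bookkeeping of that iteration via the auxiliary induction on $\fn_0$. The generalisation to an arbitrary base ideal $\fn_0$ is exactly what makes the induction close, and the treatment of multiplicities in the double sum (which is immaterial for ideal sums) and the observation that the global annihilator hypothesis covers every intermediate $\fn_0\fq$ are the right points of care.
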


\begin{proof} This result follows directly from Lemma \ref{fitt-ind-1}.
\end{proof}

\begin{proof}[Proof of Theorem \ref{main}]

Claim (i) follows from Theorem~\ref{thm stark}(i), Theorem~\ref{thm koly}, and the isomorphism (\ref{phiisom}).

To prove claim (ii) it is enough to consider the case that $\kappa$ is a basis of ${\rm KS}_{r}(A, \cF)$. To do this we fix an ideal $\fn$ in $ \cN$ and a generator of each $G_{\fq}$ and we regard $\kappa_{\fn}$ as an element of ${\bigcap}^{r}_{R}H^{1}_{\cF(\fn)}(K, A)$.
By claim (i), there exists a basis $\epsilon$ of ${\rm SS}_{r}(A, \cF)$ such that ${\rm Reg}_{r}(\epsilon) = \kappa$.

By using Lemma \ref{chebotarev}, we can take an ideal $\fm \in \cN$ with $\fn \mid \fm$ and $H^{1}_{(\cF^{*})_{\fm}}(K, A^{*}(1)) = 0$.
Then by global duality, we have an exact sequence
\begin{align}\label{fitt-exact}
0 \to H^{1}_{\cF(\fn)}(K, A) \to H^{1}_{\cF^{\fm}}(K, A) \to X \to H^{1}_{\cF^{*}(\fn)}(K, A^{*}(1))^{*} \to 0
\end{align}
where $X = \bigoplus_{\fq \mid \fn}H^{1}_{/ {\rm tr}}(K_{\fq}, A) \oplus \bigoplus_{\fq \mid \frac{\fm}{\fn}}H^{1}_{/f}(K_{\fq}, A )$.
Note that, under Hypothesis~\ref{hyp large}, the $R$-modules $X$ and $H^{1}_{\cF^{\fm}}(K, A)$ are respectively free of ranks $\nu(\fm)$ and $r + \nu(\fm)$. Since $\epsilon_{\fm}$ is a generator of ${\bigcap}^{r+\nu(\fm)}_{R}H^{1}_{\cF^{\fm}}(K, A)$ by Theorem~\ref{thm stark}(i),
$\kappa_{\fn}$ is a generator of the $R$-module
$$
\im\left({\bigwedge}_{\fq \mid \fn}\varphi_{\fq}^{\rm fs} \colon {\bigcap}^{r+\nu(\fm)}_{R}H^{1}_{\cF^{\fm}}(K,A) \to {\bigcap}^{r}_{R}H^{1}_{\cF(\fn)}(K, A)\right)
$$
by the definitions of Stark system and the map ${\rm Reg}_{r}$.
Hence by Lemma~\ref{prop injective}(ii) and the exact sequence (\ref{fitt-exact}), we have
\begin{align*}
\Fitt_{R}^{0}( H^{1}_{\cF^{*}(\fn)}(K, A^{*}(1))^{*})
%&= \left\langle \im(F) \ \middle| \ F \in \im\left( {\bigcap}^{r+\nu(\fm)}_{R}H^{1}_{\cF^{\fm}}(K,A) \to {\bigcap}^{r}_{R}H^{1}_{\cF(\fn)}(K, A)\right) \right\rangle_{R}
= \im (\kappa_{\fn}).
\end{align*}

Next, we will prove claim(iii). In view of claim(ii) and Corollary \ref{fitt-ind}, we only need to show that
$\Ann_{R}( H^{1}_{\cF(\fn)}(K, A))$ vanishes for any ideal $\fn \in \cN$ when $R$ is a principal ideal ring.

Let $\fn \in \cN$. Then, by using Lemma \ref{chebotarev}, take an ideal $\fm \in \cN$ with $\fn \mid \fm$ and $H^{1}_{(\cF^{*})_{\fm}}(K, A^{*}(1))$ vanishes. Then we have
$$
{\rm rank}_{R}(X) = \nu (\fm) < r + \nu(\fm) = {\rm rank}_{R}(H^{1}_{\cF^{\fm}}(K, A)).
$$
Since $R$ is principal, there is an injection
$$
R^{r} \hookrightarrow \ker(H^1_{\cF^\fm}(K,A) \to X)=H^{1}_{\cF(\fn)}(K, A),
$$
by the elementary divisor theorem. Hence the ideal $\Ann_{R}( H^{1}_{\cF(\fn)}(K, A))$ vanishes, as required.
\end{proof}

\subsection{Kolyvagin Systems over Gorenstein orders}

In this subsection, we use the same notation as in \S\ref{one-dim case}.
Furthermore, {\it we assume that Hypothesis~\ref{hyp1'} and $(T/p^{m}T, \cF, \cP_{m})$ satisfies Hypothesis~\ref{hyp large} for any positive integer $m$}.

Let $m$ be a positive integer and $\fn \in \cN_{m+1}$ a core vertex for $(T/p^{m+1}T, \cF)$. Then $\fn$ is also a core vertex for $(T/p^{m}T, \cF)$ by Corollary \ref{dualselisom}. Hence in the same way as in \S\ref{one-dim case}, we can construct a map
$$
{\rm KS}_{r}(T/p^{m+1}T, \cF) \to {\rm KS}_{r}(T/p^{m}T, \cF).
$$
such that the diagram
\begin{align*}
\xymatrix{
{\rm SS}_{r}(T/p^{m+1}T, \cF) \ar[d] \ar[r]^{{\rm Reg}_{r}} & {\rm KS}_{r}(T/p^{m+1}T, \cF) \ar[d]
\\
{\rm SS}_{r}(T/p^{m}T, \cF) \ar[r]^{{\rm Reg}_{r}} & {\rm KS}_{r}(T/p^{m}T, \cF)
}
\end{align*}
commutes.

\begin{definition}
We define the module ${\rm KS}_{r}(T, \cF)$ of Kolyvagin systems for $(T, \cF)$ to be the inverse limit
$$
{\rm KS}_{r}(T, \cF) := \varprojlim_{m}{\rm KS}_{r}(T/p^{m}T, \cF).
$$
\end{definition}
The maps ${\rm Reg}_{r} \colon {\rm SS}_{r}(T/p^{m}T, \cF) \to {\rm KS}_{r}(T/p^{m}T, \cF)$ %define a map
induce a homomorphism (also denoted by ${\rm Reg}_{r}$)
$$
{\rm Reg}_{r} \colon {\rm SS}_{r}(T, \cF) \to {\rm KS}_{r}(T, \cF).
$$
By Corollary \ref{dualselisom} and Theorem \ref{main}(ii), we have $I_{i}(\kappa^{(m+1)})\cR/(p^{m}) \subseteq   I_{i}(\kappa^{(m)})$ for any $\kappa = \{\kappa^{(n)}\} \in {\rm KS}_{r}(T, \cF)$ and non-negative integer $i$. Hence we can define an ideal $I_{i}(\kappa)$ of $\cR$ to be the inverse limit
$$
I_{i}(\kappa) := \varprojlim_{m}I_{i}(\kappa^{(m)}).
$$

\begin{theorem}\label{thm koly'}
Suppose that $p>3$.
\begin{itemize}
\item[(i)] The map ${\rm Reg}_{r} \colon {\rm SS}_{r}(T, \cF) \to {\rm KS}_{r}(T, \cF)$ constructed above is an isomorphism. In particular, the $\cR$-module ${\rm KS}_{r}(T, \cF)$ is free of rank one.

\item[(ii)] For each $\kappa$ in ${\rm KS}_r(T,\cF)$ one has
\[ I_{0}(\kappa) \subseteq   {\rm Fitt}_\cR^0  \left(H^1_{\cF^*}(K, T^\vee(1))^\vee \right),\]
with equality if $\kappa$ is a basis of ${\rm KS}_r(T,\cF)$.

\item[(iii)] Fix $\kappa$ in ${\rm KS}_r(A,\cF)$. Then for each non-negative integer $i$ one has
$$I_i(\kappa)\subseteq   {\rm Fitt}_{\cR}^i(H^1_{\cF^\ast}(K,T^\vee(1))^\vee),$$
with equality if $\cR$ is a principal ideal ring and $\kappa$ is a basis of ${\rm KS}_r(T,\cF)$.

%\item[(iii)] If $\cR$ is a principal ring, i.e., $\cR = \cO$, and $\kappa$ is an $\cR$-basis of ${\rm KS}_{r}(T, \cF)$, then for any non-negative integer $i$ one has
%$$
%I_{i}(\kappa) = \Fitt_{\cR}^{i}\left(H^{1}_{\cF^{*}}(K, T^{\vee}(1))^{\vee}\right).
%$$
\end{itemize}
\end{theorem}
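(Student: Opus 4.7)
The plan is to deduce Theorem~\ref{thm koly'} from the corresponding finite-level results (Theorem~\ref{main}) by passage to the inverse limit, in exact analogy with the derivation of Theorem~\ref{thm stark'} from Theorem~\ref{thm stark}. Throughout, we use the fact that Hypothesis~\ref{hyp1'} guarantees that $T/p^{m}T$ satisfies Hypotheses~\ref{hyp1} and \ref{hyp2} for every positive integer $m$, so Theorem~\ref{main} applies at each finite level (since we also assume $p>3$ and Hypothesis~\ref{hyp large}).

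For part (i), Theorem~\ref{main}(i) implies that every regulator map
\[
{\rm Reg}_{r} \colon {\rm SS}_{r}(T/p^{m}T, \cF) \to {\rm KS}_{r}(T/p^{m}T, \cF)
\]
is an isomorphism, and the commutativity of the diagram displayed immediately before the definition of ${\rm KS}_{r}(T,\cF)$ shows that these isomorphisms are compatible with transition maps. Passing to the inverse limit therefore yields the required isomorphism ${\rm Reg}_{r} \colon {\rm SS}_{r}(T, \cF) \to {\rm KS}_{r}(T, \cF)$, and the freeness of rank one then follows from Theorem~\ref{thm stark'}(i).

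For parts (ii) and (iii), fix $\kappa = (\kappa^{(m)})_{m}$ in ${\rm KS}_{r}(T, \cF)$. Combining part (i) with the surjectivity of the Stark-system transition maps established in Lemma~\ref{compatible}(i), we see that the Kolyvagin-system transition maps ${\rm KS}_{r}(T/p^{m+1}T, \cF) \to {\rm KS}_{r}(T/p^{m}T, \cF)$ are also surjective; hence if $\kappa$ is a basis of ${\rm KS}_{r}(T, \cF)$, then each $\kappa^{(m)}$ is a basis of ${\rm KS}_{r}(T/p^{m}T, \cF)$. Applying Theorem~\ref{main}(ii), respectively Theorem~\ref{main}(iii) in the case that $\cR$ (and hence each $\cR/(p^{m})$) is a principal ideal ring, at each finite level yields the inclusions
\[
I_{i}(\kappa^{(m)}) \subseteq {\rm Fitt}_{\cR/(p^{m})}^{i}\!\left( H^{1}_{\cF^{*}}(K, (T/p^{m}T)^{\vee}(1))^{\vee} \right),
\]
with equality whenever $\kappa^{(m)}$ is a basis (and, in the case of (iii), $\cR/(p^m)$ is a principal ideal ring).

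The final step is to identify the inverse limit of these Fitting ideals with ${\rm Fitt}_{\cR}^{i}(H^{1}_{\cF^{*}}(K, T^{\vee}(1))^{\vee})$. For this we use Corollary~\ref{dualselisom} to rewrite $H^{1}_{\cF^{*}}(K, (T/p^{m}T)^{\vee}(1))^{\vee}$ as $H^{1}_{\cF^{*}}(K, T^{\vee}(1))^{\vee} \otimes_{\cR} \cR/(p^{m})$, and then combine with the standard base-change behaviour of Fitting ideals together with the fact that, since $\cR$ is a complete noetherian local ring, every ideal of $\cR$ is closed in the $\fp$-adic topology. This is exactly the argument used in the proof of Theorem~\ref{thm stark'}(ii), and once performed it converts the finite-level inclusions and equalities into the statements of (ii) and (iii). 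The main technical obstacle I anticipate is verifying the compatibility of the inverse-limit formation of $I_{i}(\kappa)$ with the identification of the Fitting ideals just described, but this is handled uniformly by the completeness and noetherian property of $\cR$, precisely as in the Stark-system case.
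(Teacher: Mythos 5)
Your proof is correct and follows essentially the same route as the paper: the paper simply states that all three claims follow directly from the respective claims of Theorem~\ref{main}, and your argument fills in precisely the inverse-limit details (surjectivity of transition maps via part (i) and Lemma~\ref{compatible}(i), level-by-level application of Theorem~\ref{main}, and identification of the limit of Fitting ideals via Corollary~\ref{dualselisom} and closedness of ideals in the complete noetherian ring $\cR$), mirroring the derivation of Theorem~\ref{thm stark'} from Theorem~\ref{thm stark}.
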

\begin{proof} Claims (i), (ii) and (iii) follow as direct consequences of the respective claims in Theorem \ref{main}.
\end{proof}

\begin{remark}\label{difficulty remark2} For each $\kappa$ in ${\rm KS}_{r}(T, \cF)$ and each non-negative integer $i$, Theorem~\ref{thm koly'}(i)  allows us to define an ideal of $\cR$ by setting
\[ I'_i(\kappa) := I_i({\rm Reg}_r^{-1}(\kappa)),\]
where the right hand side is as defined in Definition \ref{stark ideal gorenstein}. If $\cR$ is a principal ring and $\kappa$ is a $\cR$-basis of ${\rm KS}_r(T,\cF)$, then Theorems \ref{thm koly'}(iii) and \ref{thm stark'}(ii)(c) combine to imply that $I'_i(\kappa) = I_i(\kappa)$ for all $i$ but we do not know if this is true more generally. \end{remark}

%we defined a canonical homomorphism $\cL \colon {\rm SS}_{r}(T, \cF) \to {{\bigcap}}^{r}_{\cR}H^{1}_{\cF}(K, T)$.

\section{Euler systems and Kolyvagin systems}\label{euler sys sec}

In this section, we shall give a natural construction of higher rank Kolyvagin systems from higher rank Euler systems (see Theorem \ref{derivable1} and Corollary \ref{higher der}).

By using results in previous sections, we shall then show that higher rank Euler systems control Selmer modules (see Corollaries \ref{derivable cor}, \ref{remark surjective} and \ref{main cor}).

\subsection{Definition} \label{euler sys sec 1}

Let $K$ be a number field. Let $p$ be a prime number. Let $Q$ be a finite extension of $\QQ_p$, and $\cO$ the ring of integers of $Q$. Let $\mathcal{Q}$ be a finite dimensional semisimple commutative $Q$-algebra. Let $\cR$ be a semilocal Gorenstein $\cO$-order in $\mathcal{Q}$. Let $T$ be a free $\cR$-module of finite rank with a continuous $\cR$-linear action of $G_K$. We assume that $S_{\rm ram}(T)$ is finite, and choose a finite set $S$ of places of $K$ such that
$$S_{\infty}(K)\cup S_p(K) \cup S_{\rm ram}(T) \subseteq  S.$$
For a prime $\fq \notin S$, we set
$$P_\fq(x):=\det(1-{\rm Fr}_\fq^{-1}x \mid T^\ast(1)) \in \cR[x],$$
where $T^\ast(1):=\Hom_\cR(T, \cR(1))$.
Let $\cK/K$ be an abelian pro-$p$ extension such that all infinite places $v \in S_\infty(K)$ split completely in $\cK$. We define a set of subfields of $\cK/K$ by
$$\Omega(\cK/K):=\{F \mid K\subseteq  F\subseteq  \cK, \text{ $F/K$ is finite}\}.$$
For a field $F$ in $\Omega(\cK/K)$, set
\begin{align*}
S(F) &:= S\cup S_{\rm ram}(F/K),
\\
\cG_F &:= \Gal(F/K).
\end{align*}
%Fix a finite set $\Sigma$ of places of $K$ such that
%$$\Sigma \cap S=\emptyset \text{ and } \cK/K \text{ is unramified at every $v\in \Sigma$}.$$

%Recall that, for any finite set $U$ of places of $K$ with $S(F)\subseteq  U$ and $\Sigma\cap U=\emptyset$, and for any continuous $\Gal(F_U/F)$-module $N$, one can consider a `$\Sigma$-modified cohomology complex' $R\Gamma_\Sigma(\cO_{F,U},N)$, which lies in the exact triangle
%$$R\Gamma_\Sigma(\cO_{F,U},N) \to R\Gamma(\cO_{F,U},N) \to \bigoplus_{w \in \Sigma_F}R\Gamma_f(F_w, N)\to.$$
%(See \cite[\S2.3.1]{sbA}). We set $H^i_\Sigma(\cO_{F,U},N):=H^i(R\Gamma_\Sigma(\cO_{F,U},N))$.

%Recall that, for any finite set $U$ of places of $K$ with $S(F)\subseteq  U$ and $\Sigma\cap U=\emptyset$, one can consider `$\Sigma$-modified cohomology' $R\Gamma_\Sigma(\cO_{F,U}, T)$, which lies in the exact triangle
%$$R\Gamma_\Sigma(\cO_{F,U},T) \to R\Gamma(\cO_{F,U},T) \to \bigoplus_{v \in \Sigma_F}R\Gamma_f(F_v,T)\to.$$
%We denote $H^i(R\Gamma_\Sigma(\cO_{F,U},T))$ by $H^i(\cO_{F,U},T)$.

In the following, we assume

\begin{hypothesis}\label{hyp free}\
\begin{itemize}
\item[(i)] $H^1(\cO_{F, S(F)}, T)$ is a reflexive $\cR[\cG_F]$-module for every $F \in \Omega(\cK/K)$.
\item[(ii)] $H^0(F, T)=0$ for every $F \in \Omega(\cK/K)$.
\end{itemize}
\end{hypothesis}

\begin{remark}\label{hyp free rem}
Since $\cR$ is a Gorenstein $\cO$-order, Hypothesis~\ref{hyp free}(i) is satisfied if and only if each group $H^1(\cO_{F, S(F)}, T)$ is free as an $\cO$-module. (See \cite[Th. 6.2]{bassgorenstein}.)
\end{remark}

\begin{remark} \label{rem torsion free}
When $\cR=\cO=\ZZ_p$ and $T=\ZZ_p(1)$, Hypothesis \ref{hyp free}(i) is equivalent to the condition that the $p$-completion of the unit group $\cO_{F,S(F)}^\times$ is torsion-free for every $F \in \Omega(\cK/K)$. This condition often appears in the context of Stark conjectures, and we usually choose another set $\Sigma$ of places to avoid assuming the condition, by considering the `$\Sigma$-modified unit group' $\cO_{F,S(F), \Sigma}^\times$ (see \cite{rubinstark}, where our $\Sigma$ is denoted by $T$). For a general $p$-adic representation $T$, we can consider the `$\Sigma$-modified cohomology' in a similar way to avoid assuming Hypothesis \ref{hyp free}(i). For details, see \cite[\S 2.3]{sbA}. In this article, we do not consider such modified cohomology theory for simplicity.
\end{remark}

The definition of higher rank Euler systems is as follows.

\begin{definition}[{\cite[Definition 2.3]{sbA}}]
Let $r$ be a non-negative integer.
An element
$$(c_F)_F \in \prod_{F \in \Omega(\cK/K)} {{\bigcap}}_{\cR[\cG_F]}^rH^1(\cO_{F,S(F)},T)$$
is said to be an Euler system of rank $r$ for ($T,\cK$) if
$${\rm Cor}_{F'/F}(c_{F'})=\left(\prod_{\fq \in S(F')\setminus S(F)} P_\fq ({\rm Fr}_\fq^{-1})\right)c_F \ \text{ in } \ {{\bigcap}}_{\cR[\cG_F]}^r H^1(\cO_{F,S(F')},T)$$
for any $F,F' \in \Omega(\cK/K)$ with $F \subseteq  F'$, where
$${\rm Cor}_{F'/F}: {\bigcap}_{\cR[\cG_{F'}]}^r H^1(\cO_{F',S(F')}, T) \to {\bigcap}_{\cR[\cG_F]}^r H^1(\cO_{F,S(F')},T)$$
is the map induced by the corestriction map.

The set of Euler systems of rank $r$ (for $(T, \cK)$) is denoted by ${\rm ES}_r(T,\cK)$. This has a natural structure of $\cR[[\Gal(\cK/K)]]$-module.
\end{definition}

\begin{remark}\label{remark free}
If Hypothesis \ref{hyp free}(i) is satisfied, then we have
$${\bigcap}_{\cR[\cG_F]}^1 H^1(\cO_{F, S(F)}, T)=H^1(\cO_{F, S(F)}, T)^{\ast \ast}=H^1(\cO_{F, S(F)}, T)$$
for every $F\in \Omega(\cK/K)$, so we can regard an Euler system of rank one as an element in $ \prod_{F \in \Omega(\cK/K)} H^1(\cO_{F,S(F)},T)$. Thus our definition generalizes the classical definition of Euler systems given in \cite[Def. 2.1.1]{R}.
\end{remark}

%\begin{remark}\label{faithful rem} Following \cite{sbA} we shall say that a system $c$ in ${\rm ES}_r(T,\cK)$ is `faithful' if for each $F$ in
%$\Omega(\cK/K)$ and each primitive idempotent $e$ of $\QQ_p[\cG_F]$ one has
%
%\[ e\cdot c_F \not= 0 \Longleftrightarrow e\cdot {{\bigcap}}_{\cR[\cG_F]}^rH^1(\cO_{F,S(F)},\QQ_p\otimes_{\ZZ_p}T) \not= 0.\]
%
%The set of such systems forms an $\cR[[\Gal(\cK/K)]]$-submodule of ${\rm ES}_r(T,\cK)$ that we denote by ${\rm ES}_r(T,\cK)$. \end{remark}

\subsection{The canonical Selmer structure}\label{section canonical}

The canonical Selmer structure $\cF_{\rm can}$ on $T$ (see \cite[Def. 3.2.1]{MRkoly}) is the following data:
\begin{itemize}
\item $S(\cF_{\rm can}):=S_\infty(K) \cup S_p(K) \cup S_{\rm ram}(T)$;
\item for $v\in S(\cF_{\rm can})$,
$$H_{\cF_{\rm can}}^1(K_v,T):=\begin{cases}
\ker(H^1(K_v,T) \to  H^1(K_v^{\rm ur}, T \otimes_{\ZZ_p}\QQ_p)) &\text{if $v \notin S_\infty(K)\cup S_p(K)$,}\\
H^1(K_v,T) &\text{if $v \in S_\infty(K)\cup S_p(K)$.}
\end{cases}
$$
\end{itemize}
Here $K_v^{\rm ur}$ denotes the maximal unramified extension of $K_v$.
%As in \S \ref{section sel}, for a Selmer structure $\cF$ on $T$, we denote the $\Sigma$-modified Selmer module $H^1_{\Sigma,\cF}(K,T)$ simply by $H^1_{\cF}(K,T)$.

The significance of this Selmer structure is explained by the following well-known result (taken from \cite[Cor. B.3.5]{R}). %The following is well-known.

\begin{lemma} \label{lemma unram} Let $c $ be an Euler system of rank one for $(T,\cK)$. Assume that $\cK$ contains a $\ZZ_p^d$-extension of $K$ for some $d\geq 1$, in which no finite place of $K$ splits completely. Then $c_F$ belongs to $H^1_{\cF_{\rm can}}(F,T)$ for every $F$ in $\Omega(\cK/K)$.
\end{lemma}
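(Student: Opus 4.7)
The plan is to reduce, via a case analysis over places $v$ of $K$, to the only non-trivial situation: a finite place $v$ with $v\nmid p$ and $v \in S(F)$ (so either $v \in S_{\rm ram}(T)$ or $v$ ramifies in $F/K$). Indeed, for $v \in S_\infty(K) \cup S_p(K)$ the canonical local condition is the entire local cohomology and there is nothing to prove, while for $v \notin S(F)$ the inclusion $c_F \in H^1(\cO_{F,S(F)}, T)$ gives an unramified restriction to each place $w\mid v$ of $F$, i.e.\ a class in $H^1_{\rm ur}(F_w, T) = H^1_{\cF_{\rm can}}(F_w, T)$. So it remains to treat the finite, non-$p$-adic places $v \in S(F)$.

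In this remaining case, set $V := T \otimes_{\ZZ_p} \QQ_p$ and observe that the map $H^1(F_w, T) \to H^1(F_w^{\rm ur}, V)$ factors through $H^1(F_w, V)$, with image contained in the ``singular quotient'' $H^1_{\rm s}(F_w, V) := H^1(F_w, V)/H^1_{\rm ur}(F_w, V)$. Because $v \nmid p$, the inertia group $I_w$ acts on $V$ through a finite quotient and so $H^1_{\rm s}(F_w, V)$ is a finite-dimensional $\QQ_p$-vector space. Hence it suffices to show that the image of ${\rm loc}_w(c_F)$ in $H^1_{\rm s}(F_w, V)$ is divisible by arbitrarily high powers of $p$.

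For this, I would exploit the given $\ZZ_p^d$-extension $K_\infty/K$ inside $\cK$. Since no finite place of $K$ splits completely in $K_\infty$, the decomposition group of $w$ in $\Gal(FK_\infty/F)$ is an infinite pro-$p$ group, so one can choose a tower $F = F_0 \subsetneq F_1 \subsetneq F_2 \subsetneq \cdots$ in $\Omega(\cK/K)$ together with places $w_n \mid w$ in $F_n$ such that $[F_{n,w_n}:F_w] = p^n$. The Euler system distribution relation then yields
\[
{\rm Cor}_{F_n/F}(c_{F_n}) \;=\; \Bigl(\!\prod_{\fq \in S(F_n)\setminus S(F)} P_\fq({\rm Fr}_\fq^{-1})\Bigr)\cdot c_F,
\]
and, after localizing at $w$, the same identity holds in $H^1(F_w, T)$ with the corestriction ${\rm Cor}_{F_n/F}$ replaced by the sum of local corestrictions $\bigoplus_{w_n \mid w} {\rm Cor}_{F_{n,w_n}/F_w}$. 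The key local observation is that, after passing to $H^1_{\rm s}(F_w, V)$, each map ${\rm Cor}_{F_{n,w_n}/F_w}$ factors through ${\rm Cor}\circ{\rm Res}$ and so becomes divisible by $[F_{n,w_n}:F_w] = p^n$ in the finite-dimensional $\QQ_p$-space $H^1_{\rm s}(F_w, V)$; thus its image tends to $0$.

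Finally, to recover the conclusion for $c_F$ itself rather than a multiple of it, I would note that the Euler factor $\prod_{\fq \in S(F_n)\setminus S(F)} P_\fq({\rm Fr}_\fq^{-1})$ acts on $H^1_{\rm s}(F_w, V)$ as a scalar (a unit times a power of $p$) that is invertible over $\QQ_p$ for each $n$, since the primes $\fq$ appearing are different from $v$ (the only way $v$ could enter the product is if $v$ were ramified in $F_n/F$ but unramified in $F/K$, which is excluded by $v \in S(F)$). Combining the two previous steps then forces the image of ${\rm loc}_w(c_F)$ in $H^1_{\rm s}(F_w, V)$ to be $0$, completing the proof. The main technical obstacle is the bookkeeping in this last paragraph: checking that the Euler factors $P_\fq({\rm Fr}_\fq^{-1})$ act invertibly on $H^1_{\rm s}(F_w, V)$ even though they may be zero-divisors in $\cR$, together with verifying that the local corestriction really does factor through multiplication by $[F_{n,w_n}:F_w]$ on the singular quotient.
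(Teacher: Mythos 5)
Your overall strategy is the right one and closely follows the proof of \cite[Cor.\ B.3.5]{R}, which is what the paper cites for this lemma. The reduction to finite places $v\nmid p$ is sound (note, for places $v\in S(F)$ that are unramified for $T$ and for $F/K$, the canonical condition is literal unramifiedness of the $T$-valued class; but since $H^1(I_w,T)\cong T(-1)$ is torsion-free when $v\nmid p$, the map $H^1_s(F_w,T)\to H^1_s(F_w,V)$ is injective, so vanishing in $H^1_s(F_w,V)$ does force unramifiedness). The idea of descending from a $\ZZ_p^d$-tower to obtain arbitrary $p$-divisibility of the singular image is also correct.

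However, the ``bookkeeping'' you flag in your last paragraph is not a mere technical obstacle --- as written, the argument genuinely breaks there. The Euler factors $P_\fq({\rm Fr}_\fq^{-1})$ are elements of $\cR[\cG_F]$, not scalars, and they are very frequently \emph{not} invertible even after inverting $p$: for example, the augmentation $\cR[\cG_F]\to\cR$ sends $P_\fq({\rm Fr}_\fq^{-1})$ to $P_\fq(1)=\det(1-{\rm Fr}_\fq^{-1}\mid T^\ast(1))$, which is divisible by $p$ whenever $\fq\in\cP$, and over $\QQ_p$ the factor can vanish at characters of $\cG_F$ occurring in $\bigoplus_{w\mid v}H^1_s(F_w,V)$. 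So one cannot simply cancel $E_n:=\prod_\fq P_\fq({\rm Fr}_\fq^{-1})$, and even when $E_n$ is invertible one has no a priori control over $\Vert E_n^{-1}\Vert$. The correct fix is that there is nothing to cancel: a $\ZZ_p^d$-extension of a number field is unramified outside $p$. Hence if the $F_n$ are taken inside $FK_\infty$ (as you intend), one has $S_{\rm ram}(F_n/K)\subseteq S_{\rm ram}(F/K)\cup S_p(K)$, so $S(F_n)=S(F)$, the product defining $E_n$ is empty, and the distribution relation is simply ${\rm Cor}_{F_n/F}(c_{F_n})=c_F$. This also guarantees that $F_{n,w_n}/F_w$ is unramified for $v\nmid p$, which your local corestriction computation silently requires.

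A smaller but real imprecision: the restriction $H^1_s(F_w,V)\to H^1_s(F_{n,w_n},V)$ is injective but in general \emph{not} surjective, so it is not correct to say that ${\rm Cor}_{F_{n,w_n}/F_w}$ ``factors through ${\rm Cor}\circ{\rm Res}$.'' One should instead identify $H^1_s(F_{n,w_n},V)\cong H^1(I_w,V)^{{\rm Fr}_w^{p^n}=1}$ (using that the tower is unramified at $w$) and decompose the finite-dimensional $\QQ_p$-space $H^1(I_w,V)$ into generalized ${\rm Fr}_w$-eigenspaces: on the eigenvalue-$1$ part the corestriction is $p^n\cdot(\text{projection})$, on nontrivial $p$-power root-of-unity eigenspaces it is $0$ for $n\gg 0$, and other eigenvalues contribute nothing to the $\Frob^{p^n}$-invariants. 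This yields that the image of the integral corestriction lies in $p^n\Lambda$ for a fixed $\ZZ_p$-lattice $\Lambda$ independent of $n$, which is what you need. With these two repairs your argument becomes correct.
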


%\begin{remark}
%{\color{red}In the rank one case, a Kolyvagin system constructed from an Euler system lies in $\cF_{\rm can}$ due to Lemma~\ref{lemma unram}. Furthermore, one can show that, under Hypothesis~\ref{hyp free}, the definition of higher rank Euler system is independent of the choice of the set $S$ by using Lemma~\ref{lemma unram}. }
%\end{remark}

In practice, one usually takes $\cK$ to be a sufficiently large abelian pro-$p$ extension. For later purposes, we now state the usual assumptions on $\cK$ as an explicit hypothesis.
%(For the definition of $\cP$, see \S \ref{koly sect} below.)}

\begin{hypothesis}\label{hyp K}
The field $\cK$ contains $K(\fq)$ for every $\fq \notin S$ and a $\ZZ_p^d$-extension of $K$ for some $d\geq 1$, in which no finite place of $K$ splits completely.
\end{hypothesis}

\begin{remark}
This hypothesis is included in the definition of Euler systems given in \cite[Def. 2.1.1]{R}.
\end{remark}

%If $A$ is a quotient of $T$, then the canonical Selmer structure on $A$ is defined by setting $$H^1_{\cF_{\rm can}}(K_v,A):=\im (H^1_{\cF_{\rm can}}(K_v,T) \to H^1(K_v,A)).$$
%This construction is given in \cite[Example 2.1.7]{MRkoly}.

\subsection{Kolyvagin derivatives}\label{koly sect}
We review the construction of `Kolyvagin derivatives' in the higher rank case (see \cite[\S4.3.1]{sbA}).
%{\it In the following, assume that $p$ is odd}.
We fix $M$, a power of $p$. We also fix $E \in \Omega(\cK/K)$ such that $E/K$ is unramified outside $S$ and that $K(1) \subseteq  E$. (Recall that $K(1)$ denotes the maximal $p$-extension inside the Hilbert class field of $K$.) %We set $\Gamma:=\Gal(E/K)$.
We denote $\overline \cR:=\cR/(M)$, $R:=\overline \cR [\Gal(E/K)]$, $A:=T/MT$,
$\cT:={\rm Ind}_{G_K}^{G_E}(T)$, and $\cA:={\rm Ind}_{G_K}^{G_E}(A)=\cT/M\cT$.

%Note that $R$ is a self-injective ring. In particular, it is artinian, so it decomposes into a finite product of self-injective local rings. Since any $R$-module can be considered component-wise, we may assume that $R$ is local.

We shall recall some notation from \S \ref{section hyp} and set some new notation. We consider the set $\cP$ of primes $\fq \notin S$ such that
\begin{itemize}
\item $\fq$ splits completely in $K(\mu_M,(\cO_K^\times)^{1/M})K(1)$,
\item $\cA/({\rm Fr}_\fq-1)\cA \simeq R$ as $R$-modules.
\end{itemize}
%{\color{blue}Note that, although the role of $K(1)$ in \S \ref{section hyp} is replaced by $E$ here, this does not affect the arguments in the previous sections. As long as there is a subset $\cQ \subseteq \cP$ of positive density as in Lemma \ref{chebotarev}, it is allowed to shrink $\cP$.
%Note that $\cP$ coincides with that defined in \S \ref{section hyp} if we assume Hypothesis \ref{hyp1}(ii) for $\cA$.
Note that $\cP$ contains that defined in \S \ref{section hyp} if we assume Hypothesis~\ref{hyp1}(ii) for $\cA$.
Let $\cN=\cN(\cP)$ be the set of square-free products of primes in $\cP$.
%One sees by class field theory that, if $\fq \in \cP$, then $[K(\fq):K(1)]$ is divisible by $M$. Since $K(\fq)/K(1)$ is cyclic, there is a unique subfield such that $[K(\fq)_M:K(1)]=M$. {\it In the following, we denote $K(\fq)_M$ also by $K(\fq)$}.
We set
$$G_\fq :=\Gal(K(\fq)/K(1)) \simeq \Gal(E(\fq)/E),$$
where $E(\fq):=E\cdot K(\fq)$.
%By our convention, we have $\# G_\fq=M$.
For $\fn\in \cN$, we set
$K(\fn):=\prod_{\fq \mid \fn} K(\fq)$ (compositum) and $E(\fn ):=E\cdot K(\fn)$.
We also set $\cG_\fn:=\Gal(E(\fn)/K)$ and $\cH_\fn:=\Gal(E(\fn)/E)$.
Note that we have natural identifications
$$
\cH_\fn=\Gal(K(\fn)/K(1)) =\prod_{\fq \mid \fn} G_\fq.
$$

%Now we consider Euler systems.

For an Euler system $c$ of rank $r$ and $\fn \in \cN$, we set
$$
c_\fn:=c_{E(\fn)} \in {{\bigcap}}_{\cR[\cG_\fn]}^r H^1 (\cO_{E(\fn),S_\fn},T),
$$
where $S_\fn :=S\cup \{\fq \mid \fn\}(=S(E(\fn)))$.
Fix a generator $\sigma_\fq$ of $G_\fq$ for each $\fq$, and consider the `derivative operator'
$$
D_\fq:=\sum_{i=1}^{| G_\fq|-1}i\sigma_\fq^i \in \ZZ[G_\fq].
$$
For $\fn \in \cN$, we set
$$
D_\fn:=\prod_{\fq \mid \fn }D_\fq \in \ZZ[\cH_\fn].
$$
(We set $D_1:=1$ for convention.)

Note that the natural `mod $M$' map $T \to T/MT=A$ induces a map
\begin{eqnarray} \label{mod M map}
{{\bigcap}}_{\cR[\cG_\fn]}^r H^1(\cO_{E(\fn),S_\fn},T) \to {{\bigcap}}_{\overline \cR[\cG_\fn]}^r H^1(\cO_{E(\fn),S_\fn},A).
\end{eqnarray}
We explain the construction of this map, since we need Hypothesis~\ref{hyp free}(i) here.
We set $H^1(T):= H^1(\cO_{E(\fn),S_\fn},T)$ and $H^1(A):=H^1(\cO_{E(\fn),S_\fn},A)$ for simplicity. Also, for the moment we denote $\cR[\cG_\fn]$ and $\overline \cR[\cG_\fn]$ simply by $\cR$ and $\overline \cR$ respectively (by abuse of notation). First, note that Hypothesis~\ref{hyp free}(i) implies that
$$\Ext_\cR^1(H^1(T),\cR)=0.$$
(See \cite[\S A.3]{sbA}.) From this, we see that
$$H^1(T)^\ast/M=\Hom_\cR(H^1(T), \cR)/M \simeq \Hom_{\overline \cR}(H^1(T)/M, \overline \cR)= (H^1(T)/M)^\ast,$$
where we abbreviate $X/MX$ to $X/M$. Since there is a natural map $H^1(T)/M \to H^1(A)$, we obtain a map
$$H^1(A)^\ast \to (H^1(T)/M)^\ast \simeq H^1(T)^\ast/M.$$
This map induces a map
\begin{eqnarray} \label{wedge induce}
{\bigwedge}_{\overline \cR}^r H^1(A)^\ast \to {\bigwedge}_{\overline \cR}^r (H^1(T)^\ast/M)=\left( {\bigwedge}_{\cR}^r H^1(T)^\ast \right)/M.
\end{eqnarray}
Then we obtain (\ref{mod M map}) as the following map:
\begin{eqnarray*}
{\bigcap}_\cR^r H^1(T) &\to& \Hom_\cR \left( {\bigwedge}_\cR^r H^1(T)^\ast, \overline \cR \right) \\
&=& \Hom_{\overline \cR} \left( \left({\bigwedge}_{ \cR}^r H^1(T)^\ast\right)/M, \overline \cR  \right) \\
&\stackrel{(\ref{wedge induce})}{\to}& \Hom_{\overline \cR} \left( {\bigwedge}_{\overline \cR}^r H^1(A)^\ast, \overline \cR\right) \\
&=& {\bigcap}_{\overline \cR}^r H^1(A).
\end{eqnarray*}
We denote the image of $c_\fn$ under (\ref{mod M map}) by $\bar c_\fn$. The following is well-known.

\begin{lemma}[{\cite[Lem. 4.4.2(i)]{R}}] \label{lem invariant}
The element $D_\fn \cdot \bar c_\fn$ lies in $\left({{\bigcap}}_{\overline \cR[\cG_\fn]}^r H^1(\cO_{E(\fn),S_\fn},A)\right)^{\cH_\fn}$.
\end{lemma}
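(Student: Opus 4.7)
The plan is to verify the $\cH_\fn$-invariance prime by prime: since $\cH_\fn = \prod_{\fq \mid \fn} G_\fq$ and each $G_\fq$ is cyclic (generated by $\sigma_\fq$), it suffices to show that $(\sigma_\fq - 1) D_\fn \bar c_\fn = 0$ for every $\fq \mid \fn$. The starting point is the telescoping identity $(\sigma_\fq - 1) D_\fq = |G_\fq| - N_\fq$ in $\ZZ[G_\fq]$, where $N_\fq := \sum_{g \in G_\fq} g$. Combining this with the factorisation $D_\fn = D_\fq D_{\fn/\fq}$ gives
\[ (\sigma_\fq - 1) D_\fn \bar c_\fn = (|G_\fq| - N_\fq) D_{\fn/\fq} \bar c_\fn. \]
Because $\fq$ splits completely in $K(\mu_M, (\cO_K^\times)^{1/M}) K(1)$ by the defining conditions of $\cP$ (see \S\ref{koly sect}), $M$ divides $|G_\fq|$, and since $\bar c_\fn$ is $M$-torsion the first summand vanishes. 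As $G_\fq$ and $\cH_{\fn/\fq}$ commute inside $\cH_\fn$, the problem then reduces to showing that $N_\fq \bar c_\fn = 0$ in ${{\bigcap}}_{\overline \cR[\cG_\fn]}^r H^1(\cO_{E(\fn), S_\fn}, A)$.

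Next, I would apply the Euler system relation to the extension $E(\fn)/E(\fn/\fq)$, for which $S(E(\fn)) \setminus S(E(\fn/\fq)) = \{\fq\}$, obtaining
\[ {\rm Cor}_{E(\fn)/E(\fn/\fq)}(c_\fn) = P_\fq({\rm Fr}_\fq^{-1}) \cdot c_{\fn/\fq} \]
in ${{\bigcap}}_{\cR[\cG_{\fn/\fq}]}^r H^1(\cO_{E(\fn/\fq), S_\fn}, T)$. Since $E(\fn)/E(\fn/\fq)$ is Galois with group $G_\fq$, the standard identity ${\rm Res}\circ{\rm Cor} = N_\fq$ on cohomology lifts functorially to the exterior biduals. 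The operator $P_\fq({\rm Fr}_\fq^{-1})$ acts on cohomology purely through the $G_K$-action on $T$ and therefore commutes with restriction; applying ${\rm Res}$ to the displayed Euler system relation and then reducing modulo $M$ via (\ref{mod M map}) yields
\[ N_\fq \bar c_\fn = P_\fq({\rm Fr}_\fq^{-1}) \cdot {\rm Res}(\bar c_{\fn/\fq}). \]

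It therefore remains to show that $P_\fq({\rm Fr}_\fq^{-1})$ acts as the zero endomorphism of $A = T/MT$. The Cayley-Hamilton theorem applied to ${\rm Fr}_\fq$ acting on $T^\ast(1)$ gives $P_\fq({\rm Fr}_\fq) = 0$ on $T^\ast(1)$, and a direct dual-basis computation shows that, via the tautological pairing $T \otimes T^\ast(1) \to \ZZ_p(1)$, the operator ${\rm Fr}_\fq|_{T^\ast(1)}$ is the transpose of $(N\fq)\cdot {\rm Fr}_\fq^{-1}|_T$, where $N\fq$ denotes the absolute norm of $\fq$. Since $P_\fq$ has scalar coefficients and is invariant under transposition, this translates into $P_\fq((N\fq)\cdot {\rm Fr}_\fq^{-1}) = 0$ as an operator on $T$. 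Now $\fq$ splits completely in $K(\mu_M)$ forces $N\fq \equiv 1 \pmod M$, so the operators $P_\fq({\rm Fr}_\fq^{-1})$ and $P_\fq((N\fq)\cdot{\rm Fr}_\fq^{-1})$ on $T$ are congruent modulo $M$. Hence $P_\fq({\rm Fr}_\fq^{-1})$ vanishes on $A$, which kills the right-hand side above.

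The main obstacle will be ensuring that each of these manipulations remains valid at the level of exterior power biduals, rather than only of $H^1$. Specifically, I will need to verify that corestriction and restriction induce compatible maps between the biduals over the varying coefficient rings $\cR[\cG_\fn]$ and $\cR[\cG_{\fn/\fq}]$, that the identity ${\rm Res}\circ {\rm Cor} = N_\fq$ lifts functorially to these biduals, and that the reduction-mod-$M$ map (\ref{mod M map})---whose construction already rests on Hypothesis~\ref{hyp free}(i)---commutes with the preceding Euler system operations. These compatibilities can be secured by the machinery of \S\ref{ext bidual sec}, in particular Proposition~\ref{prop injective} and Corollary~\ref{morph}, but require some careful bookkeeping to execute cleanly.
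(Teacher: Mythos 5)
You correctly begin with the telescoping identity $(\sigma_\fq - 1) D_\fq = |G_\fq| - \N_{G_\fq}$ and, after killing the $|G_\fq|$-term modulo $M$, you must show that $\N_{G_\fq} D_{\fn/\fq}\bar c_\fn = 0$. At this point you drop the factor $D_{\fn/\fq}$ and propose to establish the much stronger claim $\N_{G_\fq}\bar c_\fn = 0$. This is false in general: from ${\rm Res}\circ{\rm Cor} = \N_{G_\fq}$ and the distribution relation one gets $\N_{G_\fq}\bar c_\fn = P_\fq({\rm Fr}_\fq^{-1})\cdot{\rm Res}(\bar c_{\fn/\fq})$, and the right-hand side is typically nonzero (consider the cyclotomic-unit Euler system for $T = \ZZ_p(1)$, where $P_\fq(x)=1-x$ and ${\rm Fr}_\fq$ acts non-trivially on $\bar c_{\fn/\fq}$ through the Galois group).

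The underlying confusion is your assertion that ``the operator $P_\fq({\rm Fr}_\fq^{-1})$ acts on cohomology purely through the $G_K$-action on $T$'', which leads you to try to conclude by showing that the endomorphism $P_\fq({\rm Fr}_\fq^{-1})\in\End_\cO(A)$ is zero. That conflates two distinct objects bearing the same notation. In the distribution relation, $P_\fq({\rm Fr}_\fq^{-1})$ is an element of the group ring $\cR[\cG_{\fn/\fq}]$, with ${\rm Fr}_\fq$ the arithmetic Frobenius in $\cG_{\fn/\fq}=\Gal(E(\fn/\fq)/K)$, and it acts on the $\cR[\cG_{\fn/\fq}]$-module $H^1(\cO_{E(\fn/\fq),S_\fn},T)$ through the Galois-module structure of that cohomology group, not through the coefficient action of $G_K$ on $T$. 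Your Cayley--Hamilton computation does show that the endomorphism of $A$ determined by the $G_K$-module structure is zero, but this is logically independent of whether the group-ring element $P_\fq({\rm Fr}_\fq^{-1})\in\cR[\cG_{\fn/\fq}]$ annihilates any given cohomology class, and in general it does not.

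What repairs the argument is precisely the factor $D_{\fn/\fq}$ you discarded: the paper argues by induction on $\nu(\fn)$, using the inductive hypothesis to conclude that $D_{\fn/\fq}\bar c_{\fn/\fq}$ is $\cH_{\fn/\fq}$-invariant. Since ${\rm Fr}_\fq\in\cH_{\fn/\fq}$ and $P_\fq({\rm Fr}_\fq^{-1})-P_\fq(1)$ is divisible by ${\rm Fr}_\fq^{-1}-1$ in the group ring, this difference annihilates $D_{\fn/\fq}\bar c_{\fn/\fq}$ modulo $M$; the remaining term $P_\fq(1)D_{\fn/\fq}\bar c_{\fn/\fq}$ vanishes because $M\mid P_\fq(1)$, which (like $M\mid |G_\fq|$) follows from the defining conditions on the primes in $\cP$. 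Without the inductive step and the factor $D_{\fn/\fq}$ that makes it apply, your argument cannot be completed.
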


\begin{proof}
We use the identity
\begin{eqnarray}\label{telescoping}
(\sigma_\fq-1)D_\fq = |G_\fq| - \N_{G_\fq},
\end{eqnarray}
where $\N_{G_\fq}:=\sum_{\sigma \in G_\fq} \sigma$. (This is checked by direct computation.)

We shall show that
$$(\sigma-1)D_\fn \cdot c_\fn \in M \cdot {\bigcap}_{\cR[\cG_\fn]}^r H^1(\cO_{E(\fn),S_\fn},T)$$
for every $\sigma \in \cH_\fn$. We prove this by induction on $\nu(\fn)$. When $\nu(\fn)=0$, i.e. $\fn=1$, there is nothing to prove. When $\nu(\fn)>0$, it is sufficient to show that
$$(\sigma_\fq-1)D_\fn \cdot c_\fn \in M \cdot {\bigcap}_{\cR[\cG_\fn]}^r H^1(\cO_{E(\fn),S_\fn},T)$$
for every $\fq \mid \fn$
(since the augmentation ideal of $\ZZ[\cH_\fn]$ is generated by the elements
$\sigma_\fq-1$ for any $\fq \mid \fn$).
Using (\ref{telescoping}), we compute
\begin{eqnarray*}
(\sigma_\fq-1)D_\fn c_\fn &=&(| G_\fq| -\N_{G_\fq})D_{\fn/\fq} c_\fn \\
&=&| G_\fq| D_{\fn/\fq}c_\fn - P_\fq({\rm Fr}_\fq^{-1}) D_{\fn/\fq} c_{\fn/\fq},
\end{eqnarray*}
where the second equality follows from the definition of Euler systems. Here we regard $c_{\fn/\fq}$ as an element in ${\bigcap}_{\cR[\cG_\fn]}^r H^1(\cO_{E(\fn),S_\fn},T)$ via the restriction map. (To relate the corestriction map on $\bigcap^r$ with $\N_{G_\fq}$, a slightly delicate consideration is needed, but we omit the detail. See \cite[Rem. 2.12]{sano} concerning this issue.)
Note that
$$P_\fq({\rm Fr}_\fq^{-1}) D_{\fn/\fq} c_{\fn/\fq} =(P_\fq({\rm Fr}_\fq^{-1})-P_\fq(1)) D_{\fn/\fq} c_{\fn/\fq} + P_\fq(1) D_{\fn/\fq} c_{\fn/\fq},$$
and that we know by induction hypothesis that the first term in the right hand side vanishes modulo $M$. Hence, since both $| G_\fq|$ and $P_\fq(1)$ are divisible by $M$ (by the definition of $\cP$), we conclude that $(\sigma_\fq-1)D_\fn c_\fn$ vanishes modulo $M$. This proves the lemma.
\end{proof}

% that $D_\fn \cdot \bar c_\fn$ lies in $\left({{\bigcap}}_{\overline \cR[\cG_\fn]}^r H^1(\cO_{E(\fn),S_\fn},A)\right)^{\cH_\fn}$ (see \cite[Lem. 4.4.2(i)]{R}).

One can also show that
\begin{align*}
\left({{\bigcap}}_{\overline \cR[\cG_\fn]}^r H^1(\cO_{E(\fn),S_\fn},A)\right)^{\cH_\fn} =
{{\bigcap}}_{R}^r H^1(\cO_{E(\fn),S_\fn},A)^{\cH_\fn}.
\end{align*}
(See \cite[Prop. A.4]{sbA}).
Furthermore, under Hypothesis \ref{hyp free}(ii), we have
$$H^1(\cO_{E(\fn),S_\fn},A)^{\cH_\fn}=H^1(\cO_{E,S_\fn},A)=H^1(\cO_{K,S_\fn},\cA)$$
(see \cite[\S4.3.1]{sbA}).
Combining these observations with Lemma \ref{lem invariant}, we have proved the following

\begin{proposition}
Assume Hypothesis \ref{hyp free}. Then for each $\fn \in \cN$ one has
$$\kappa'(c_\fn):=D_\fn \cdot \bar c_\fn \in {{\bigcap}}_R^rH^1 (\cO_{K,S_\fn},\cA).$$
\end{proposition}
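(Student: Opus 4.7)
The plan is to build the proof around ingredients that are either already in place or immediate from the existing literature. Lemma \ref{lem invariant}, which has just been established, shows that the element $\kappa'(c_\fn) = D_\fn \cdot \bar c_\fn$ lies in
$$\left({\bigcap}_{\overline{\cR}[\cG_\fn]}^r H^1(\cO_{E(\fn),S_\fn},A)\right)^{\cH_\fn}.$$
All that remains is therefore the purely algebraic identification of this invariant submodule with ${\bigcap}_R^r H^1(\cO_{K,S_\fn},\cA)$.

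I would accomplish this identification in two stages. In the first stage, I would invoke the functoriality result \cite[Prop. A.4]{sbA}, which commutes $\cH_\fn$-invariants past the exterior bidual: since $R = \overline{\cR}[\cG_\fn]^{\cH_\fn} = \overline{\cR}[\Gal(E/K)]$, this yields
$$\left({\bigcap}_{\overline{\cR}[\cG_\fn]}^r H^1(\cO_{E(\fn),S_\fn},A)\right)^{\cH_\fn} = {\bigcap}_R^r H^1(\cO_{E(\fn),S_\fn},A)^{\cH_\fn}.$$
This step does not require anything new beyond the cited general fact; the only point to check is that the ring over which one takes the bidual changes correctly when passing to invariants, which is precisely the content of loc. cit.

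In the second stage, I would identify the cohomology group on the right using standard Galois cohomology. Shapiro's lemma applied to $\cA = \mathrm{Ind}_{G_K}^{G_E}(A)$ gives a canonical isomorphism $H^1(\cO_{K,S_\fn},\cA) \cong H^1(\cO_{E,S_\fn},A)$, so the remaining point is the equality $H^1(\cO_{E(\fn),S_\fn},A)^{\cH_\fn} = H^1(\cO_{E,S_\fn},A)$. This comes from the inflation part of the inflation-restriction sequence for the extension $E(\fn)/E$, and the main obstacle is the vanishing of the kernel $H^1(\cH_\fn,A^{G_{E(\fn)}})$ and of the obstruction in $H^2(\cH_\fn,A^{G_{E(\fn)}})$. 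This is where Hypothesis \ref{hyp free}(ii) plays its essential role: it forces $T^{G_{E(\fn)}}=0$, and one then uses the argument of \cite[\S 4.3.1]{sbA} to transfer this vanishing to $A$ in the appropriate sense and conclude that inflation is bijective onto the $\cH_\fn$-invariants. Putting the two stages together produces the claimed containment of $\kappa'(c_\fn)$ in ${\bigcap}_R^r H^1(\cO_{K,S_\fn},\cA)$.
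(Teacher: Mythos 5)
Your argument is essentially identical to the paper's: both start from Lemma~\ref{lem invariant}, invoke \cite[Prop. A.4]{sbA} to commute $\cH_\fn$-invariants past the exterior bidual, and then identify $H^1(\cO_{E(\fn),S_\fn},A)^{\cH_\fn}$ with $H^1(\cO_{K,S_\fn},\cA)$ via inflation-restriction and Shapiro's lemma, citing \cite[\S 4.3.1]{sbA} and Hypothesis~\ref{hyp free}(ii). One minor slip in the motivational aside: $R = \overline{\cR}[\Gal(E/K)]$ is the quotient $\overline{\cR}[\cG_\fn]/\cI_{\cH_\fn}$ (the $\cH_\fn$-coinvariants), not the invariant subring $\overline{\cR}[\cG_\fn]^{\cH_\fn}$; the relevant point is that $\cI_{\cH_\fn}$ kills the $\cH_\fn$-invariant submodule, making it an $R$-module, and this change of base ring is exactly what \cite[Prop. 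A.4]{sbA} handles.
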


The element $\kappa'(c_\fn)$ is called a Kolyvagin derivative.

%\begin{theorem} \label{derivable1}
%For any $c \in {\rm ES}_r(T,\cK)$, we have
%\begin{itemize}
%\item[(i)] For any $\fn \in \cN$, we have
%$$\kappa'(c_\fn) \in {{\bigcap}}_R^r H^1_{\cF^\fn}(K,\cA),$$
%where $\cF:=\cF_{\rm can}$ is the canonical Selmer structure for $\cA$.
%\item[(ii)] For any $\fn \in \cN$ and $\fq \mid \fn$, we have
%$$v_\fq(\kappa'(c_\fn))=\varphi_\fq^{\rm fs}(\kappa'(c_{\fn/\fq})) \text{ in }{{\bigcap}}_R^{r-1}H^1_{\cF^\fn}(K,\cA).$$
%\end{itemize}
%\end{theorem}

\subsection{Construction of Kolyvagin systems} \label{construction koly}

In this subsection, we modify the Kolyvagin derivatives $(\kappa'(c_\fn))_\fn$ to construct a Kolyvagin system. Theorem \ref{derivable1} below is the main result of this section.

We write $\cI_\fn$ for the augmentation ideal of $\ZZ[\cH_\fn]$. We recall that the cyclic subgroup of $\cI_\fn^{\nu(\fn)}/\cI_\fn^{\nu(\fn)+1}$ generated by $\prod_{\fq \mid \fn} (\sigma_\fq -1)$ is a direct summand, and is isomorphic to $G_\fn:=\bigotimes_{\fq \mid \fn} G_\fq$:
\begin{align*}
G_\fn=\bigotimes_{\fq \mid \fn} G_\fq &\stackrel{\sim}{\to} \left\langle \prod_{\fq \mid \fn}(\sigma_\fq-1)\right\rangle \subseteq  \cI_\fn^{\nu(\fn)}/\cI_\fn^{\nu(\fn)+1}
\\
\bigotimes_{\fq \mid \fn } \sigma_{\fq} &\mapsto \prod_{\fq \mid \fn}(\sigma_\fq -1).
\end{align*}
(See \cite[Prop. 4.2]{MR}.) We often identify $G_\fn$ with $ \left\langle \prod_{\fq \mid \fn}(\sigma_\fq-1)\right\rangle $. In particular, we regard a Kolyvagin system for $(\cA, \cF)$ (for the definition, see \S \ref{defkoly}) as an element in $\prod_{\fn \in \cN} {\bigcap}_R^r H^1_{\cF(\fn)}(K,\cA) \otimes \left \langle \prod_{\fq \mid \fn}(\sigma_\fq -1)\right \rangle$.

For $\fq \in \cP$, we shall denote $P_\fq({\rm Fr}_\fq^{-1})$ simply by $P_\fq$. If $\fq $ does not divide $\fn\in \cN$, then $\fq$ is unramified in $E(\fn)$, so we can regard $P_\fq \in \cR[\cH_\fn]$. Furthermore, since $P_\fq(1) \equiv 0 \text{ (mod $M$)}$, we can regard $P_\fq \in  \overline \cR \otimes \cI_\fn/\cI_\fn^2$, which we denote by $P_\fq^\fn$. (Recall that $\overline \cR:=\cR/(M)$.) %In the following, we abbreviate $\otimes_\ZZ$ to $\otimes$.

For $\fn \in \cN$, we define an element $\cD_\fn \in \overline \cR \otimes \left\langle \prod_{\fq \mid \fn}(\sigma_\fq-1)\right\rangle$ as follows. We write $\fn=\fq_1\cdots \fq_\nu$ ($\nu=\nu(\fn)$). We define
$$\cD_\fn:=\det\left(
\begin{array}{ccccc}
	0 &P_{\fq_1}^{\fq_2} &\cdots & &P_{\fq_1}^{\fq_\nu} \\
	P_{\fq_2}^{\fq_1} & 0 & P_{\fq_2}^{\fq_3} & \cdots & P_{\fq_2}^{\fq_\nu} \\
	\vdots & P_{\fq_3}^{\fq_2} & \ddots & &\vdots\\
	\vdots & \vdots & &\ddots &\vdots\\
	P_{\fq_\nu}^{\fq_1} &P_{\fq_\nu}^{\fq_2} &\cdots & & 0
\end{array}
\right) \in  \overline \cR \otimes \left\langle \prod_{\fq \mid \fn}(\sigma_\fq-1)\right\rangle.$$
(Compare \cite[Def. 6.1]{MR} and \cite[Def. 4.4]{sanojnt}.) One checks that this does not depend on the choice of the labeling $\fq_1,\ldots,\fq_\nu$ of the prime divisors of $\fn$.

Now we consider the following modification of $(\kappa'(c_\fn))_\fn$:
$$\kappa(c)_\fn:=\sum_{\fd \mid \fn} \left(\kappa'(c_\fd) \otimes \prod_{\fq \mid \fd}(\sigma_\fq-1)\right)\cD_{\fn/\fd} \in {\bigcap}_R^r H^1(\cO_{K,S_\fn},\cA) \otimes \left \langle \prod_{\fq \mid \fn}(\sigma_\fq -1)\right \rangle.$$
(Each $\kappa'(c_\fd) \in {\bigcap}_R^r H^1(\cO_{K,S_\fd},\cA)$ is regarded as an element of ${\bigcap}_R^r H^1(\cO_{K,S_\fn},\cA)$.) One easily checks that
$$\sum_{\fd \mid \fn} \left(\kappa'(c_\fd) \otimes \prod_{\fq \mid \fd}(\sigma_\fq-1)\right)\cD_{\fn/\fd} =\sum_{\tau \in \mathfrak{S}(\fn)} {\rm sgn}(\tau)\kappa'(c_{\fd_\tau}) \otimes \prod_{\fq \mid \fd_\tau}(\sigma_\fq-1) \prod_{\fq \mid \fn/\fd_\tau}P_{\tau(\fq)}^\fq,$$
where $\mathfrak{S}(\fn)$ is the set of permutations of prime divisors of $\fn$, and $\fd_\tau:=\prod_{\tau(\fq)=\fq}\fq$. So one can also write
$$ \kappa(c)_\fn= \sum_{\tau \in \mathfrak{S}(\fn)} {\rm sgn}(\tau)\kappa'(c_{\fd_\tau}) \otimes \prod_{\fq \mid \fd_\tau}(\sigma_\fq-1) \prod_{\fq \mid \fn/\fd_\tau}P_{\tau(\fq)}^\fq.$$
Note that this construction is parallel to that given by Mazur and Rubin in \cite[(33) in Appendix A]{MRkoly}.

We need the following hypothesis which corresponds to an assumption in \cite[Th. 3.2.4]{MRkoly}.

\begin{hypothesis}\label{hyp local}
%\begin{itemize}
%\item[(i)] $T/({\rm Fr}_\fq -1)T$ is a cyclic $\cR$-module for every $\fq \in \cP$.
%\item[(ii)]
${\rm Fr}_\fq^{p^k}-1$ is injective on $T$ for every $\fq \in \cP$ and $k \geq 0$.
%\end{itemize}
\end{hypothesis}
%
%This hypothesis corresponds to an assumption in \cite[Th. 3.2.4]{MRkoly}.

Now we state the main theorem of this section.

\begin{theorem} \label{derivable1}
Let $r$ be a positive integer and $c \in {\rm ES}_r(T,\cK)$. Let $\cF:=\cF_{\rm can}$ be the canonical Selmer structure (see \S \ref{section canonical}). Assume Hypotheses \ref{hyp free}, \ref{hyp K}
and \ref{hyp local}.
Then, for every $\fn \in \cN$, we have
$$
\kappa(c)_\fn \in {{\bigcap}}_R^r H^1_{\cF(\fn)}(K,\cA) \otimes \left \langle \prod_{\fq \mid \fn}(\sigma_\fq -1)\right \rangle
$$
and
$$
v_\fq(\kappa(c)_\fn)=\varphi_\fq^{\rm fs}(\kappa(c)_{\fn/\fq}). %\text{ in }R \otimes \left \langle \prod_{\fq \mid \fn}(\sigma_\fq -1)\right \rangle.
$$
for every $\fq \mid \fn$. In particular, $\kappa(c):=(\kappa(c)_\fn)_\fn \in {\rm KS}_r(\cA,\cF)$.
\end{theorem}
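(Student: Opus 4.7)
The plan is to reduce the higher-rank claim to the rank-one case (due to Mazur and Rubin) using the rank-reduction formalism of Proposition~\ref{reduction}, and then invoke the classical Kolyvagin derivative computation for rank-one Euler systems.

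By Proposition~\ref{reduction}, verifying $\kappa(c)_\fn \in \bigcap_R^r H^1_{\cF(\fn)}(K,\cA)$ amounts to checking, for every $\Phi \in \bigwedge_R^{r-1} H^1(\cO_{K,S_\fn},\cA)^\ast$, that $\Phi(\kappa(c)_\fn)$ lies in $H^1_{\cF(\fn)}(K,\cA)$; the finite-singular relation reduces similarly. The key intermediate step would be to show $\Phi(\kappa(c)_\fn) = \kappa(c^\Phi)_\fn$, where $c^\Phi \in {\rm ES}_1(T,\cK)$ is a rank-one Euler system obtained by contracting $c$ with a compatible family of maps lifting $\Phi$ through the tower $\{E(\fm)\}_{\fm\in\cN}$. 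The existence of such a lift relies on Corollary~\ref{morph} and Hypothesis~\ref{hyp free}(i), which (via Remark~\ref{hyp free rem}) ensures that the relevant cohomology groups are reflexive and that contraction commutes both with corestriction along the tower and with the mod-$M$ reduction map described in \S\ref{koly sect}. Once this identification is established, the canonical local condition at places in $S$ reduces to Lemma~\ref{lemma unram} applied to $c^\Phi$ (under Hypothesis~\ref{hyp K}), and the verification at primes $\fq\mid\fn$ reduces to the rank-one Mazur-Rubin computation.

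The rank-one local analysis at each $\fq\mid\fn$ proceeds using the derivative identity $(\sigma_\fq-1)D_\fq = |G_\fq|-N_{G_\fq}$, the Euler system distribution relation ${\rm Cor}_{E(\fd)/E(\fd/\fq)}(c_\fd) = P_\fq({\rm Fr}_\fq^{-1})c_{\fd/\fq}$, and the congruences $|G_\fq|, P_\fq(1) \in M\cR$. The combinatorial definition of $\cD_{\fn/\fd}$ is tailored so that, after expansion, the contributions to the finite part of ${\rm loc}_\fq(\kappa(c)_\fn)$ coming from terms indexed by $\fd$ with $\fq\nmid\fd$ cancel precisely against those coming from terms with $\fq\mid\fd$. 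The surviving transverse part simultaneously delivers the transverse local condition and, via the explicit formula for $\varphi_\fq^{\rm fs}$ in terms of $Q_\fq({\rm Fr}_\fq^{-1})$, the finite-singular relation $v_\fq(\kappa(c)_\fn)=\varphi_\fq^{\rm fs}(\kappa(c)_{\fn/\fq})$. Hypothesis~\ref{hyp local} ensures that the identification $H^1_f(K_\fq,\cA) \simeq \cA/({\rm Fr}_\fq-1)\cA$ used in this step is a genuine isomorphism.

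The main obstacle is the naturality of the contraction $c \mapsto c^\Phi$: one must verify that a chosen lift of $\Phi$ through the tower respects the Euler system distribution relations, so that $c^\Phi$ really is a rank-one Euler system, and that $\kappa(c^\Phi)_\fn$ agrees with $\Phi(\kappa(c)_\fn)$ after all the intermediate operations (derivative, mod-$M$ reduction, tensoring with $\prod_{\fq\mid\fn}(\sigma_\fq-1)$, weighting by $\cD_{\fn/\fd}$). This functorial compatibility is where Hypothesis~\ref{hyp free}(i) and Corollary~\ref{morph} play their essential roles; with them in place, the result follows from the classical rank-one theorem of Mazur and Rubin.
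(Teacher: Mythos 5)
Your proposal is correct and follows essentially the same route as the paper: Proposition~\ref{reduction} reduces the membership and finite--singular claims to checking them after contraction by each $\Phi \in {\bigwedge}_R^{r-1}H^1(\cO_{K,S_\fn},\cA)^\ast$, one lifts $\Phi$ to a compatible family $\Psi \in \varprojlim_F {\bigwedge}^{r-1}_{\cR[\cG_F]}H^1(\cO_{F,S(F)},T)^\ast$ so that $\Phi(\kappa'(c_\fd)) = \kappa'(\Psi(c)_\fd)$ for all $\fd\mid\fn$ (this is precisely Lemma~\ref{key lemma}), and the claim then follows from the rank-one Mazur--Rubin computation applied to the rank-one Euler system $\Psi(c)$. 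The only small point to flag is that the paper constructs the lift not via Corollary~\ref{morph} but by chaining three surjections (the restriction map on exterior powers of duals, using self-injectivity; the reduction modulo $M$; and the transition maps of the inverse limit, whose surjectivity uses Hypothesis~\ref{hyp free}(i) and a result from \cite{sano}), so your citation of Corollary~\ref{morph} is slightly off, but this does not affect the validity of the argument.
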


The proof of Theorem \ref{derivable1} will be given in the next subsection. For the moment, however, we use the result to derive several important consequences.

\begin{corollary} \label{higher der}
Let $r$ and $\cF$ be as in Theorem \ref{derivable1}. Assume Hypotheses \ref{hyp free}, \ref{hyp K} and \ref{hyp local}.
Choose a subfield $F$ of $E/K$ and set $A_F:={\rm Ind}_{G_K}^{G_F}(T/MT)$. Then there is a canonical `higher Kolyvagin derivative' homomorphism
$$\cD_r=\cD_r^F: {\rm ES}_r(T,\cK) \to {\rm KS}_r(A_F,\cF).$$
%In the rank one case (namely, the case $r=1$), the construction was due to Mazur and Rubin \cite[Th. 3.2.4]{MRkoly}. Since our construction coincides with that by Mazur and Rubin when $r=1$, our result is a natural generalization of theirs.
\end{corollary}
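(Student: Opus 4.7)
The plan is to combine Theorem~\ref{derivable1} with a canonical change-of-rings procedure. First, I would apply Theorem~\ref{derivable1} to obtain the Kolyvagin system $\kappa(c) \in {\rm KS}_r(\cA, \cF)$ attached to the given Euler system $c \in {\rm ES}_r(T, \cK)$. The problem then reduces to constructing a canonical $\cR$-linear homomorphism ${\rm KS}_r(\cA, \cF) \to {\rm KS}_r(A_F, \cF)$ and defining $\cD_r^F(c)$ to be the image of $\kappa(c)$.

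To construct the latter map, set $R := \overline \cR[\Gal(E/K)]$ and $R_F := \overline \cR[\Gal(F/K)]$. The natural surjection $\Gal(E/K) \twoheadrightarrow \Gal(F/K)$ induces both a surjective homomorphism of self-injective local rings $R \twoheadrightarrow R_F$ and a compatible $G_K$-equivariant surjection $\cA \twoheadrightarrow A_F$, which identifies $A_F$ with $\cA \otimes_R R_F$. Under Shapiro's Lemma this corresponds to corestriction $H^1(E, A) \to H^1(F, A)$, and is functorially compatible with the induced Selmer conditions, with the singular projections $v_\fq$, and with the finite-singular comparison maps $\varphi_\fq^{\rm fs}$ for $\fq \in \cP$.

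For each $\fn \in \cN$, I would then invoke Corollary~\ref{morph} as follows. Using Lemma~\ref{chebotarev}, choose an auxiliary $\fm \in \cN$ with $\fn \mid \fm$ such that both $H^1_{(\cF^*)_\fm}(K, \cA^\vee(1))$ and $H^1_{(\cF^*)_\fm}(K, A_F^\vee(1))$ vanish. By Hypothesis~\ref{hyp large} and Remark~\ref{free rem}, $H^1_{\cF^\fm}(K, \cA)$ is then free of rank $r + \nu(\fm)$ over $R$, and the natural map identifies $H^1_{\cF^\fm}(K, A_F)$ with $H^1_{\cF^\fm}(K, \cA) \otimes_R R_F$. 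Corollary~\ref{morph}, applied to the evident commutative square
\[
\xymatrix{
H^1_{\cF(\fn)}(K, \cA) \ar[r] \ar[d] & H^1_{\cF^\fm}(K, \cA) \ar[d]
\\
H^1_{\cF(\fn)}(K, A_F) \ar@{^{(}->}[r] & H^1_{\cF^\fm}(K, \cA) \otimes_R R_F,
}
\]
then yields a natural homomorphism
\[
\pi_\fn \colon {\bigcap}^r_R H^1_{\cF(\fn)}(K, \cA) \otimes G_\fn \to {\bigcap}^r_{R_F} H^1_{\cF(\fn)}(K, A_F) \otimes G_\fn
\]
that is independent of the auxiliary choice of $\fm$.

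Finally, setting $\cD_r^F(c) := (\pi_\fn(\kappa(c)_\fn))_\fn$ gives the desired Kolyvagin system for $(A_F, \cF)$, since the finite-singular relation for $\kappa(c)$ transports to that for $\cD_r^F(c)$ by the functoriality recorded above. The chief technical subtlety is the independence of $\pi_\fn$ from the choice of $\fm$ together with the compatibility of the resulting maps with $v_\fq$ and $\varphi^{\rm fs}_\fq$; both points are the content of Corollary~\ref{morph} and are handled entirely by the framework of exterior biduals developed in Section~\ref{ext bidual sec}.
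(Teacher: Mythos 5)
Your proposal follows essentially the same route as the paper: apply Theorem~\ref{derivable1} to obtain $\kappa(c)\in{\rm KS}_r(\cA,\cF)$, then compose with a change-of-rings homomorphism ${\rm KS}_r(\cA,\cF)\to{\rm KS}_r(A_F,\cF)$. The paper's own proof is only two sentences and describes this second map simply as ``the natural map induced by $\cA\to A_F$'', with no further justification. Your explication of that map via Corollary~\ref{morph} — choosing an auxiliary $\fm$ with $\fn\mid\fm$ for which $H^1_{\cF^\fm}(K,\cA)$ is free, then running the biduals of the Selmer groups through this free module and its base change — is precisely the mechanism the paper uses in the analogous transition-map construction for Stark systems in \S\ref{one-dim case} (Lemma~\ref{compatible}(i)) and for Kolyvagin systems in \S5.4. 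So the underlying idea is the same, and making it explicit is valuable.

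There is, however, a mismatch you should address. To carry out the Corollary~\ref{morph} argument you invoke Lemma~\ref{chebotarev}, which needs Hypothesis~\ref{hyp1}; Hypothesis~\ref{hyp large} together with Remark~\ref{free rem}, for the freeness of $H^1_{\cF^\fm}(K,\cA)$; and (implicitly) Lemma~\ref{injective} to identify $H^1_{\cF^\fm}(K,\cA)\otimes_R R_F$ with $H^1_{\cF^\fm}(K,A_F)$, which again needs Hypothesis~\ref{hyp1}. None of these appears among the stated hypotheses of Corollary~\ref{higher der}, which assumes only Hypotheses~\ref{hyp free}, \ref{hyp K} and \ref{hyp local}. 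As written your argument therefore proves a weaker statement than the one asserted. The paper's two-line proof does not explain how the ``natural map'' exists under the weaker hypothesis set either, and every downstream use of Corollary~\ref{higher der} (Corollaries~\ref{derivable cor}, \ref{remark surjective}, \ref{main cor}) reinstates Hypotheses~\ref{hyp1}, \ref{hyp2} and \ref{hyp large}, so this is likely an imprecision in the paper rather than a defect peculiar to your argument. Nonetheless you should either make the extra hypotheses explicit and note the discrepancy, or supply a construction of the change-of-rings map that genuinely avoids them; at present neither the paper nor your write-up does the latter.
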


\begin{proof}
The construction of Kolyvagin systems given in Theorem \ref{derivable1} gives a homomorphism
$${\rm ES}_r(T,\cK) \to {\rm KS}_r(\cA,\cF); \ c \mapsto \kappa(c).$$
The map $\cD_r^F$ is obtained by composing this map with the natural map
$${\rm KS}_r(\cA,\cF)\to {\rm KS}_r(A_F,\cF)$$
induced by $\cA(={\rm Ind}_{G_K}^{G_E}(T/MT) ) \to A_F$.
\end{proof}

\begin{remark}\label{remark E}
Although we take $F$ as a subfield of $E/K$ in Corollary~\ref{higher der}, this condition is not essential, since for an arbitrary finite abelian ($p$-)extension $F/K$ one can take $E$ so that $F \cdot K(1) \subseteq E$ by enlarging $\cK$ and $S$ if necessary. The role of the field $E$ is auxiliary (in fact, $\cD_r^F$ is independent of the choice of $E$), and so one can think of $F$ in Corollary~\ref{higher der} as arbitrary.
\end{remark}

%\begin{remark}\label{replace E}
%Although we chose a field $E$ which contains $K(1)$ and set $\cA:= {\rm Ind}_{G_K}^{G_E}(T/MT)$, this choice is not essential. In fact, we can construct `$\Gal(F/K)$-equivariant' Kolyvagin systems for an arbitrary finite abelian ($p$-)extension $F/K$ in the following way: choose $E$ so that $F\cdot K(1) \subseteq E$ (enlarge $\cK$ and $S$ if necessary) and consider the map
%$${\rm ES}_r(T,\cK) \stackrel{\cD_r}{\to} {\rm KS}_r({\rm Ind}_{G_K}^{G_E}(T/MT), \cF) \to {\rm KS}_r({\rm Ind}_{G_K}^{G_F}(T/MT), \cF),$$
%where the second map is induced by the natural surjection ${\rm Ind}_{G_K}^{G_E}(T/MT) \to {\rm Ind}_{G_K}^{G_F}(T/MT)$. The composition map does not depends on the choice of $E$. Thus Euler systems of rank $r$ supply Kolyvagin systems of rank $r$ with coefficients in $\overline \cR[\Gal(F/K)]$ for any $F$.

%\end{remark}

%\begin{hypothesis}\label{hyp basic}
%\begin{itemize}
%\item[(i)] $T/({\rm Fr}_\fq -1)T$ is a cyclic $\cR$-module for every $\fq \in \cP$.
%\item[(ii)]
%\end{itemize}
%\end{hypothesis}

%In the following result we the ideals $I'_i(\kappa)$ from Definition \ref{difficulty remark}.

\begin{corollary}\label{derivable cor} Suppose $p>3$. Let $r $ be a positive integer, $c \in {\rm ES}_r(T,\cK)$ and $\cF:=\cF_{\rm can}$. Choose a subfield $F$ of $E/K$ and set $A_F:={\rm Ind}_{G_K}^{G_F}(T/MT)$. Assume Hypotheses \ref{hyp free}, \ref{hyp K} and \ref{hyp local}, and Hypotheses \ref{hyp1}, \ref{hyp2} and \ref{hyp large} for $A_F$ and $\cF$. Let $\kappa(c):=\cD_r^F(c) \in {\rm KS}_r(A_F,\cF)$ be the Kolyvagin system constructed in Corollary \ref{higher der}.
\begin{itemize}
\item[(i)] For $\fn$ in $\cN$ one has $\im (\kappa(c)_\fn) \subseteq  {\rm Fitt}_{\overline \cR[\cG_F]}^0(H^1_{\cF(\fn)^\ast}(K,A_F^\ast(1))^\ast)$.
 In particular, one has
$$
\im(c_F)\subseteq  {\rm Fitt}_{\overline \cR[\cG_F]}^0(H^1_{\cF^\ast}(K,A_F^\ast(1))^\ast).
$$
Here we regard $c_F \in {\bigcap}_{\cR[\cG_F]}^r H^1(\cO_{F,S},T)$ as an element in ${\bigcap}_{\overline \cR[\cG_F]}^r H^1(\cO_{F,S},A)\simeq \bigcap_{\overline \cR[\cG_F]}^r H^1(\cO_{K,S},A_F)$ by using the natural map (\ref{mod M map}).
%The equality holds if $\kappa(c)$ is a basis of ${\rm KS}_r(\cA,\cF)$.
\item[(ii)] %Suppose that $R$ is a principal ideal ring.
For every non-negative integer $i$, we have
$$I_i(\kappa(c))\subseteq  {\rm Fitt}_{\overline \cR[\cG_F]}^i(H^1_{\cF^\ast}(K,A_F^\ast(1))^\ast).$$
\end{itemize}
%If, in addition, the conditions (i) and (ii) in Remark \ref{remark surjective} are satisfied, then the following claims are also valid.
%\begin{itemize}
%\item[(iii)] One has
%$$\langle  \im (c_E) \mid c \in {\rm ES}_r(T,\cK)\rangle_R = {\rm Fitt}_R^0(H^1_{\cF^\ast}(K,\cA^\ast(1))^\ast).$$
%\item[(iv)] For each non-negative integer $i$ one has
%
%$$\langle  I'_i(\kappa(c)) \mid c \in {\rm ES}_r(T,\cK)\rangle_R = {\rm Fitt}_R^i(H^1_{\cF^\ast}(K,\cA^\ast(1))^\ast).$$
%The equlity holds if $\kappa(c)$ is a basis of ${\rm KS}_r(\cA,\cF)$.
%
%\item[(v)] If $R$ is a principal ideal ring, then for each non-negative integer $i$ one has
%$$\langle  I_i(\kappa(c)) \mid c \in {\rm ES}_r(T,\cK)\rangle_R = {\rm Fitt}_R^i(H^1_{\cF^\ast}(K,\cA^\ast(1))^\ast).$$
%\end{itemize}
\end{corollary}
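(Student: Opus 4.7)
The plan is to read off both assertions as immediate consequences of Theorem~\ref{main} applied to the Kolyvagin system $\kappa(c) = \cD_r^F(c) \in {\rm KS}_r(A_F, \cF)$ supplied by Corollary~\ref{higher der}, with coefficient ring $R := \overline{\cR}[\cG_F]$. I would first note that $R$ satisfies the standing hypotheses of \S\ref{pre}: $\overline{\cR} = \cR/(M)$ is zero-dimensional Gorenstein local (since $\cR$ is a Gorenstein $\cO$-order), and, because $\cK/K$ is abelian pro-$p$, the finite group $\cG_F$ is a $p$-group, so $\overline{\cR}[\cG_F]$ remains local with residue field $\Bbbk$ and self-injective. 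The residual hypotheses of Theorem~\ref{main} are precisely those imposed on $(A_F, \cF)$ in the corollary's statement.

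For claim~(i), Theorem~\ref{main}(ii) applied to $\kappa(c)$ gives, for every $\fn \in \cN$, the inclusion
\[ \im(\kappa(c)_\fn) \subseteq {\rm Fitt}_R^0 \bigl(H^1_{\cF(\fn)^*}(K, A_F^*(1))^*\bigr). \]
The ``in particular'' part of (i) then reduces to identifying $\kappa(c)_1$ with $c_F \bmod M$. Inspecting the construction in \S\ref{construction koly}, only the divisor $\fd = 1$ of $\fn = 1$ contributes, and with $\cD_1 = 1$ (the empty determinant) and $D_1 = 1$ one gets $\kappa(c)_1 = \kappa'(c_1) = \bar c_1 = c_E \bmod M$ in $\bigcap^r_{\overline{\cR}[\cG_E]} H^1(\cO_{E,S}, A)$. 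The natural projection $\cA \to A_F$ corresponds via Shapiro's lemma to the corestriction $\Cor_{E/F}$ on Galois cohomology, and the Euler system distribution relation yields $\Cor_{E/F}(c_E) = c_F$: indeed $S(E) = S(F) = S$ because $E/K$ (and a fortiori $F/K$) is unramified outside $S$, so the Euler factors disappear. Hence the image of $\kappa(c)_1$ under ${\rm KS}_r(\cA, \cF) \to {\rm KS}_r(A_F, \cF)$ coincides with the mod-$M$ reduction of $c_F$, under the identification provided in the statement.

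For claim~(ii), Theorem~\ref{main}(iii) applied to the same $\kappa(c)$ directly delivers
\[ I_i(\kappa(c)) \subseteq {\rm Fitt}_R^i\bigl(H^1_{\cF^*}(K, A_F^*(1))^*\bigr) \]
for every non-negative integer $i$, which is what is required.

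In this setting the real work has already been done upstream: Theorem~\ref{derivable1}/Corollary~\ref{higher der} delivers the Kolyvagin system and Theorem~\ref{main} translates it into Fitting ideal information. The only mildly technical point in the present proof is to track through the functoriality of Corollary~\ref{morph} and Shapiro's lemma to verify that the composite ``reduce mod $M$, then corestrict from $E$ to $F$'' map sends $c_E \in \bigcap^r_{\cR[\cG_E]}H^1(\cO_{E,S}, T)$ to $c_F \bmod M$ in $\bigcap^r_R H^1(\cO_{K,S}, A_F)$; this is routine given the framework of exterior power biduals developed in \S\ref{ext bidual sec}, and is the only step that requires more than pure citation.
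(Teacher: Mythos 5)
Your proof is correct and takes essentially the same route as the paper's, which simply cites Theorem~\ref{main}(ii) and (iii) together with the observation that $\kappa(c)_1 = c_F$. The extra work you do -- unwinding the $\fd=1$ term of the construction, tracking the projection $\cA \to A_F$ through Shapiro's lemma as a corestriction, and invoking the norm relation (noting the Euler factors vanish since $S(E)=S(F)=S$) -- is exactly the justification the paper compresses into the clause ``noting that $\kappa(c)_1 = c_F$''.
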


\begin{proof}
This follows directly from Theorems~\ref{main}(ii) and (iii), noting that $\kappa(c)_1=c_F$.
%(One can show that $\im (c_{1})=I_{0}(\kappa(c))$.)
%In a similar way, to deduce all of the remaining claims from Theorems \ref{derivable1}, \ref{main} and \ref{thm stark}(ii) it is enough to note that
%, under the assumed surjectivity of the given map, the argument used to establish \cite[Th. 4.16]{sbA} shows that
%$\{ \kappa(c) \mid c \in {\rm ES}_r(T,\cK)\}$ is equal to ${\rm KS}_r(\cA,\cF)$, by Remark \ref{remark surjective}.
\end{proof}

\begin{remark}
The set $\cP$ defined in \S \ref{koly sect} for $\cA$ is in general smaller than the corresponding set that is defined in \S \ref{section hyp} for $A_F$. However, this difference does not matter since, as long as we can choose a subset of $\cP$ of positive density as in Lemma \ref{chebotarev} (by the Chebotarev density theorem), the theory of Stark and Kolyvagin systems work. We shall implicitly consider such a smaller set $\cP$ also in the statements of the results below.
\end{remark}

 In the following result we recall the ideals $I'_i(\kappa)$ from Remark \ref{difficulty remark}.

 \begin{corollary} \label{remark surjective}
 Let $p$, $r$, $\cF$, $F$ and $A_F$ be as in Corollary \ref{derivable cor}. For $c \in {\rm ES}_r(T,\cK)$, we set $\kappa(c):=\cD_r^F(c) \in {\rm KS}_r(A_F,\cF)$. Assume Hypotheses~\ref{hyp free}, \ref{hyp K} and \ref{hyp local}, and Hypotheses~\ref{hyp1}, \ref{hyp2} and \ref{hyp large} for $\cA$ and $\cF$. %We regard $c_F \in {\bigcap}_{\cR[\cG_F]}^r H^1(\cO_{F,S},T)$ as an element in ${\bigcap}_{\overline \cR[\cG_F]}^r H^1(\cO_{F,S},A)\simeq \bigcap_{\overline \cR[\cG_F]}^r H^1(\cO_{K,S},A_F)$.

We also consider the following additional hypotheses.
 \begin{itemize}
\item[(a)] $Y_K(T):=\bigoplus_{v \in S_\infty(K)} H^0(K_v, T^\ast(1))$ is a free $\cR$-module of rank $r$;
%\item[(b)] the natural diagonal map
%%
%\begin{eqnarray*} H^0(K,\mathcal{A}^*(1)) \to \bigoplus_{v \in S\setminus S_\infty(K)}H^0(K_v,\mathcal{A}^*(1))\end{eqnarray*}
%%
%is surjective.
\item[(b)] $H^0(K_v,\mathcal{A}^*(1))$ vanishes for each prime $v \in S \setminus S_{\infty}(K)$.
\end{itemize}
 Then the following claims are valid.
\begin{itemize}
\item[(i)] One has
$$\langle  \im (c_F) \mid c \in {\rm ES}_r(T,\cK)\rangle_{\overline \cR[\cG_F]} \subseteq
{\rm Fitt}_{\overline \cR[\cG_F]}^0(H^1_{\cF^\ast}(K,A_F^\ast(1))^\ast),$$
with equality if hypotheses (a) and (b) are satisfied.

\item[(ii)] For each non-negative integer $i$ one has
$$\langle  I'_i(\kappa(c)) \mid c \in {\rm ES}_r(T,\cK)\rangle_{\overline \cR[\cG_F]} \subseteq
{\rm Fitt}_{\overline \cR[\cG_F]}^i(H^1_{\cF^\ast}(K,A_F^\ast(1))^\ast),$$
with equality if hypotheses (a) and (b) are satisfied.
%The equlity holds if $\kappa(c)$ is a basis of ${\rm KS}_r(A_F,\cF)$.
%
\item[(iii)] If $\overline \cR[\cG_F]$ is a principal ideal ring, then for each non-negative integer $i$ one has
$$\langle  I_i(\kappa(c)) \mid c \in {\rm ES}_r(T,\cK)\rangle_{\overline \cR[\cG_F]} \subseteq  {\rm Fitt}_{\overline \cR[\cG_F]}^i(H^1_{\cF^\ast}(K,A_F^\ast(1))^\ast),$$
with equality if hypotheses (a) and (b) are satisfied.
\end{itemize}
\end{corollary}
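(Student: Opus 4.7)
The plan is to separate the corollary into two pieces: the inclusions, which follow immediately from Corollary~\ref{derivable cor}, and the equalities under hypotheses (a) and (b), which all reduce to the single statement that $\cD_r^F \colon {\rm ES}_r(T,\cK) \to {\rm KS}_r(A_F,\cF)$ is surjective.

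For the inclusions, the construction in \S\ref{construction koly} gives $\kappa(c)_1 = \bar{c}_E$, which under the canonical quotient $\cA \to A_F$ corresponds exactly to the image of $c_F$ under the mod-$M$ map~(\ref{mod M map}). Corollary~\ref{derivable cor}(i) at $\fn = 1$ then yields the inclusion in (i), and Corollary~\ref{derivable cor}(ii) delivers the inclusion in (iii). For the inclusion in (ii), observe that ${\rm Reg}_r^{-1}(\kappa(c))$ lies in ${\rm SS}_r(A_F,\cF)$ by Theorem~\ref{main}(i), so Theorem~\ref{thm stark}(ii)(c) forces $I'_i(\kappa(c)) = I_i({\rm Reg}_r^{-1}(\kappa(c))) \subseteq {\rm Fitt}_{\overline{\cR}[\cG_F]}^i$. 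Summing over $c$ in each case yields the asserted containment.

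For the equalities under (a) and (b), assume the surjectivity of $\cD_r^F$. Then by Theorem~\ref{main}(i), which asserts that ${\rm KS}_r(A_F,\cF)$ is free of rank one, one may choose $c \in {\rm ES}_r(T,\cK)$ with $\kappa(c)$ a basis of ${\rm KS}_r(A_F,\cF)$. Part~(i) follows from the equality case of Theorem~\ref{main}(ii) at $\fn = 1$, which gives $\im(c_F) = \im(\kappa(c)_1) = {\rm Fitt}_{\overline{\cR}[\cG_F]}^0(H^1_{\cF^\ast}(K,A_F^\ast(1))^\ast)$. For part~(ii), Theorem~\ref{main}(i) ensures that ${\rm Reg}_r^{-1}(\kappa(c))$ is a basis of ${\rm SS}_r(A_F,\cF)$, whence Theorem~\ref{thm stark}(ii)(b),(c) give $I_\infty({\rm Reg}_r^{-1}(\kappa(c))) = \overline{\cR}[\cG_F]$ and hence $I'_i(\kappa(c)) = {\rm Fitt}_{\overline{\cR}[\cG_F]}^i$. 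For part~(iii), the equality case of Theorem~\ref{main}(iii), which requires that $\overline{\cR}[\cG_F]$ be a principal ideal ring, gives $I_i(\kappa(c)) = {\rm Fitt}_{\overline{\cR}[\cG_F]}^i$ directly.

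The main obstacle is therefore proving that $\cD_r^F$ is surjective under (a) and (b). Since the target is a free rank-one $\overline{\cR}[\cG_F]$-module and $\cD_r^F$ is $\overline{\cR}[\cG_F]$-linear, this is equivalent to showing that the image of $\cD_r^F$ is not contained in $\fp \cdot {\rm KS}_r(A_F,\cF)$ for the maximal ideal $\fp$. My strategy is to exploit the canonical rank-reduction formalism of \S\ref{ext bidual sec} (especially Proposition~\ref{reduction}) to descend the problem to the classical rank-one case, where the surjectivity of the Kolyvagin derivative map is due to Mazur and Rubin \cite[Th.~3.2.4]{MRkoly}. Hypothesis~(a) ensures that $Y_K(T)$ has the correct rank $r$ to support this reduction, while hypothesis~(b) eliminates non-archimedean local obstructions that would otherwise arise when lifting a rank-one Euler system back to a rank-$r$ Euler system with the prescribed higher Kolyvagin derivative. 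The principal technical difficulty will be verifying that the rank-reduction procedure commutes simultaneously with the Euler corestriction relations along all subfields $F \in \Omega(\cK/K)$ and with the finite-singular comparison maps defining a Kolyvagin system, a compatibility analysis that refines the one already appearing in Lemma~\ref{lem invariant} and Theorem~\ref{derivable1}.
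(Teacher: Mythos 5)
Your reduction of the corollary to the single statement that $\cD_r^F$ is surjective under hypotheses (a) and (b) is correct and matches the paper's proof, as does your verification that the inclusions follow from Corollary~\ref{derivable cor} and Theorems~\ref{main} and~\ref{thm stark}. But the crucial step — proving surjectivity of $\cD_r^F$ — is where your proposal has a genuine gap, and your proposed strategy is not the right one.

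First, the citation is wrong: Mazur and Rubin's Theorem~3.2.4 in \cite{MRkoly} establishes that every Euler system gives rise to a Kolyvagin system (i.e., the \emph{existence} of the derivative map), not that this map is \emph{surjective}. There is no general rank-one surjectivity result in \cite{MRkoly} to reduce to. Surjectivity is genuinely nontrivial: it asks for an Euler system whose derived Kolyvagin system generates the rank-one module ${\rm KS}_r(A_F,\cF)$, and this requires explicitly constructing Euler systems — not descending in rank. Second, even setting the citation aside, the proposed ``rank-reduction then lift back'' strategy does not engage with what hypotheses (a) and (b) are actually for: they are not about removing local obstructions to lifting a rank-one system, but about enabling the construction of a supply of ``basic Euler systems.'' The paper's actual proof proceeds quite differently. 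After reducing to surjectivity of ${\rm Reg}_r^{-1}\circ\cD_r \colon {\rm ES}_r(T,\cK)\to{\rm SS}_r(\cA,\cF)$, it invokes the modules of vertical and horizontal determinantal systems ${\rm VS}(T,\cK)$ and ${\rm HS}(\cA)$ from \cite{sbA}: hypothesis (a) (together with \ref{hyp free}) yields the map $\theta_{T,\cK}\colon {\rm VS}(T,\cK)\to{\rm ES}_r(T,\cK)$ whose image is the module of basic Euler systems, hypothesis (b) (with \ref{hyp1}) yields an isomorphism ${\rm HS}(\cA)\xrightarrow{\sim}{\rm SS}_r(\cA,\cF_S)$, and the surjectivity then drops out of the commutativity of a diagram (established in the proof of \cite[Th.~4.16]{sbA}) relating these maps, because ${\rm VS}(T,\cK)\twoheadrightarrow{\rm HS}(\cA)$. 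Your proposal does not contain, or gesture at, any version of this construction, so the surjectivity step is missing and would not be recoverable along the lines you describe.
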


\begin{proof} By Theorems~\ref{main} and \ref{thm stark} (and Remark \ref{difficulty remark}), it is sufficient to show that the validity of the given hypotheses (a) and (b) imply the existence of an Euler system $c$ such that $\kappa(c)$ is a basis of ${\rm KS}_r(A_F,\cF)$, or equivalently, that the homomorphism
 $$\cD_r^F: {\rm ES}_r(T,\cK) \to {\rm KS}_r(A_F,\cF)$$
 is surjective.

 Since the natural map
 $${\rm KS}_r(\cA, \cF) \to {\rm KS}_r(A_F,\cF)$$
 is surjective (by Theorem \ref{main}(i) and the argument in the proof of Lemma \ref{compatible}(i)), it is thus enough to show surjectivity of the map
 $$\cD_r=\cD_r^E: {\rm ES}_r(T,\cK) \to {\rm KS}_r(\cA,\cF),$$
 or equivalently, surjectivity of the composite
 $${\rm Reg}_r^{-1} \circ \cD_r: {\rm ES}_r(T,\cK) \to {\rm SS}_r(\cA,\cF).$$
 We prove surjectivity of this map by using results of the first and the third author in \cite{sbA}.

 To do this we recall that the Selmer structure $\cF_S$ on $\cA$ that is considered in \cite{sbA} is defined by setting
 \begin{itemize}
 \item $S(\cF_S):=S$;
\item for $v \in S$, $H^1_{\cF_S}(K_v,\cA):=H^1(K_v,\cA).$
 \end{itemize}

We also note that the stated hypotheses (a) and (b) above correspond to \cite[Hyp. 2.11 and 3.9]{sbA}.
 We shall recall some constructions given in \cite{sbA}.

 By \cite[Th. 2.17]{sbA}, under hypotheses (a) and \ref{hyp free}, there is a homomorphism
 $$\theta_{T,\cK}:{\rm VS}(T,\cK) \to {\rm ES}_r(T,\cK),$$
 where ${\rm VS}(T,\cK)$ is the module of `vertical determinantal systems' (see \cite[Def. 2.8]{sbA}). We define $\mathcal{E}^{\rm b}(T,\cK):= \im (\theta_{T,\cK})$. This is called the module of `basic Euler systems' (see \cite[Def. 2.18]{sbA}).

By \cite[Th. 3.11(ii)]{sbA}, under hypotheses (b) and \ref{hyp1}, there is an isomorphism
$$
{\rm HS}(\cA) \stackrel{\sim}{\to} {\rm SS}_r(\cA,\cF_S),
$$
where ${\rm HS}(\cA)$ is the module of `horizontal determinantal systems' (see \cite[Def. 3.2]{sbA}).
There is a natural surjection ${\rm VS}(T,\cK) \to {\rm HS}(\cA)$ (as in \cite[\S 4.3.2]{sbA}).

The argument in the proof of \cite[Th. 4.16]{sbA} shows the following diagram is commutative:
$$
\xymatrix{
\mathcal{E}^{\rm b} (T,\cK) \ar[rr]^{{\rm Reg}_r^{-1}\circ \cD_r}&  & {\rm SS}_r(\cA,\cF) \ar@{^{(}->}[r] &{\rm SS}_r(\cA,\cF_S)
\\
{\rm VS}(T,\cK) \ar@{->>}[u]^{\theta_{T,\cK}} \ar@{->>}[rr] & & {\rm HS}(\cA). \ar[ur]_{\simeq} &
}
$$
This diagram shows that ${\rm Reg}_r^{-1}\circ \cD_r$ is surjective, as required.
\end{proof}

  \begin{corollary} \label{main cor}
Suppose $p>3$. Let $r $ be a positive integer, $c \in {\rm ES}_r(T,\cK)$ and $\cF:=\cF_{\rm can}$. Choose a subfield $F$ of $E/K$ and set $T_F:={\rm Ind}_{G_K}^{G_F}(T)$. Assume Hypotheses~\ref{hyp free}, \ref{hyp K}, \ref{hyp local} and \ref{hyp1'} for $T$ and Hypothesis~\ref{hyp large} for $(T_F/p^{m}T_F, \cF, \cP_{m})$ for all positive integers $m$.
\begin{itemize}
\item[(i)] One has $\im(c_F) \subseteq   {\rm Fitt}_{\cR[\cG_F]}^0(H^1_{\cF^*}(K, T_F^\vee(1))^\vee )$.
\item[(ii)] Let $\kappa(c)_m:=\cD_r^F(c) \in {\rm KS}_r(T_F/p^m T_F,\cF)$ be the Kolyvagin system constructed in Corollary \ref{higher der} (with $M=p^m$). We set
$$\kappa(c):=(\kappa(c)_m)_m \in \varprojlim_m {\rm KS}_r(T_F/p^mT_F,\cF)={\rm KS}_r(T_F,\cF).$$
Then for each non-negative integer $i$ one has $I_i(\kappa(c))\subseteq \! {\rm Fitt}_{\cR[\cG_F]}^i(H^1_{\cF^\ast}(K,T_F^\vee(1))^\vee). $
%
%with equality if $\cR$ is a principal ideal ring and $\kappa$ is a basis of ${\rm KS}_r(T,\cF)$.
\item[(iii)] Assume, in addition, that $\bigoplus_{v \in S_\infty(K)} H^0(K_v, T^\ast(1))$ is a free $\cR$-module of rank $r$ and that $H^{0}(E_{w}, (T/pT)^\vee(1))$ vanishes for all non-archimedean places $w$ of $E$ above $S$. Then for each non-negative integer $i$ one has
$$ \langle  I'_i(\kappa(c)) \mid c \in {\rm ES}_r(T,\cK)\rangle_{ \cR[\cG_F]} = {\rm Fitt}_{ \cR[\cG_F]}^i(H^1_{\cF^\ast}(K,T_F^\vee(1))^\vee). $$
If $\cR[\cG_F]$ is a principal ideal ring, then one can replace the ideals $I'_i(\kappa(c))$ by $I_i(\kappa(c))$ in this equality.
\end{itemize}
\end{corollary}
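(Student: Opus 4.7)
The plan is to reduce each claim to its finite-level analogue for $A_m := T_F/p^m T_F$, which is provided by Corollaries~\ref{derivable cor} and \ref{remark surjective}, and then to pass to the inverse limit over $m$, mirroring the passages from Theorems~\ref{thm stark} and \ref{main} to Theorems~\ref{thm stark'} and \ref{thm koly'}. The hypotheses for the finite-level statements are in hand: Hypothesis~\ref{hyp1'} for $T$ implies the same property for $T_F$ (induction preserves this), and the remark preceding the definition of Stark systems over Gorenstein orders then shows that each $A_m$ satisfies Hypotheses~\ref{hyp1} and \ref{hyp2}; Hypotheses~\ref{hyp free}, \ref{hyp K}, \ref{hyp local} and \ref{hyp large} at each $m$ are assumed directly. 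By Corollary~\ref{dualselisom}, $H^1_{\cF^*}(K, A_m^*(1))$ identifies canonically with $H^1_{\cF^*}(K, T_F^\vee(1))[p^m]$, and taking Pontryagin duals gives $H^1_{\cF^*}(K, A_m^*(1))^\vee \simeq H^1_{\cF^*}(K, T_F^\vee(1))^\vee/p^m$. Consequently, for each $i \ge 0$,
$$\Fitt^i_{\cR/(p^m)[\cG_F]}\bigl(H^1_{\cF^*}(K, A_m^*(1))^\vee\bigr) = \Fitt^i_{\cR[\cG_F]}\bigl(H^1_{\cF^*}(K, T_F^\vee(1))^\vee\bigr) \cdot \cR/(p^m)[\cG_F].$$

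Parts (i) and (ii) then follow in a routine manner. For each $m$, Corollary~\ref{derivable cor}(i) with $\fn = 1$ and Corollary~\ref{derivable cor}(ii) give
$$\im(\overline{c_F}^{(m)}) \subseteq \Fitt^0_{\cR/(p^m)[\cG_F]}\bigl(H^1_{\cF^*}(K, A_m^*(1))^\vee\bigr), \qquad I_i(\kappa(c)_m) \subseteq \Fitt^i_{\cR/(p^m)[\cG_F]}\bigl(H^1_{\cF^*}(K, A_m^*(1))^\vee\bigr).$$
Combining with the identification displayed above and using the definition $I_i(\kappa(c)) := \varprojlim_m I_i(\kappa(c)_m)$, one passes to the inverse limit, exploiting the fact that the complete noetherian ring $\cR[\cG_F]$ has every ideal closed (cf.\ the proof of Theorem~\ref{thm stark'}).

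For part (iii), we first verify that the two additional hypotheses of Corollary~\ref{remark surjective} hold at each finite level $m$. Freeness of $Y_K(T)$ as an $\cR$-module of rank $r$ descends to freeness of $Y_K(T)/p^m$ over $\cR/(p^m)$, giving hypothesis (a). For hypothesis (b), the vanishing of $H^0(E_w, (T/pT)^\vee(1))$ propagates to all $m$: a non-zero $G_{E_w}$-fixed element of $(T/p^mT)^\vee(1)$ of order $p^a$ would, upon multiplication by $p^{a-1}$, produce a non-zero fixed element of $(T/pT)^\vee(1)$, and Shapiro's lemma then yields $H^0(K_v, \cA_m^*(1)) = \bigoplus_{w \mid v} H^0(E_w, (T/p^m T)^\vee(1)) = 0$ for every $v \in S \setminus S_\infty(K)$. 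Corollary~\ref{remark surjective}(ii) therefore applies at each level and gives $\langle I'_i(\kappa(c)_m) \mid c \in {\rm ES}_r(T, \cK)\rangle_{\cR/(p^m)[\cG_F]} = \Fitt^i_{\cR/(p^m)[\cG_F]}(\ldots)$. The $I_i$-version of the equality in the PID case will then follow from Remark~\ref{difficulty remark2}, combined with the inclusion in part (ii).

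The hard part is precisely the passage of this equality to the inverse limit, since an inverse limit of surjections from a fixed module need not be surjective; thus merely knowing that each $\kappa(c)_m$ can be matched by some $c$ depending on $m$ is not enough. The way I would overcome this obstacle is to construct a single Euler system $c \in {\rm ES}_r(T, \cK)$ whose image $\kappa(c)$ is a basis of ${\rm KS}_r(T_F, \cF)$: by Theorem~\ref{thm stark'}(ii)(c) this single $c$ then satisfies $I'_i(\kappa(c)) = \Fitt^i_{\cR[\cG_F]}(H^1_{\cF^*}(K, T_F^\vee(1))^\vee)$, giving the reverse inclusion directly. To produce such a $c$, the natural strategy is to lift the commutative diagram used in the proof of Corollary~\ref{remark surjective} to $\cR$-coefficients: the module ${\rm VS}(T, \cK)$ of vertical determinantal systems from \cite{sbA} is defined over $\cR$ (not $\cR/(p^m)$) and, under hypothesis (b) and Hypothesis~\ref{hyp1'}, surjects compatibly onto $\varprojlim_m {\rm HS}(\cA_m) \cong {\rm SS}_r(T_F, \cF_S)$; combining with the surjectivity of $\theta_{T,\cK}$ (which holds under (a) and Hypothesis~\ref{hyp free}) and with Theorems~\ref{thm stark'}(i) and \ref{thm koly'}(i) then gives the required surjectivity of $\cD_r^F: {\rm ES}_r(T, \cK) \to {\rm KS}_r(T_F, \cF)$ at the Iwasawa level.
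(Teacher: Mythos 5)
Your proof is correct and takes essentially the same route as the paper, namely reducing each claim to its finite-level analogue over $\cR/(p^m)[\cG_F]$ and then passing to the inverse limit. The paper's own proof is terser: for parts~(i) and~(ii) it simply invokes Theorem~\ref{thm koly'}(ii) and (iii) (the already-packaged $\cR$-level statement), after noting $I_0(\kappa(c)) = \im(c_F)$; and for part~(iii) it observes that the extra hypotheses are exactly conditions~(a) and~(b) of Corollary~\ref{remark surjective} for $T$ and ${\rm Ind}_{G_K}^{G_E}(T/pT)$, and then declares the result a straightforward consequence of Corollary~\ref{remark surjective}(iii) and Remark~\ref{difficulty remark2}. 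You instead re-derive (i) and (ii) from Corollaries~\ref{derivable cor} and \ref{remark surjective} plus an explicit limit argument, which is a slightly longer but entirely equivalent presentation.

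The genuinely useful contribution of your write-up is the treatment of part~(iii). You are right that the passage to the $\cR$-level is not purely formal: knowing that for each $m$ there is an Euler system $c$ (depending on $m$) with $\kappa(c)_m$ a basis of ${\rm KS}_r(T_F/p^mT_F,\cF)$ does not a priori yield a single $c$ over $\cR$ that generates ${\rm KS}_r(T_F,\cF)$. Your resolution --- lifting the commutative diagram from the proof of Corollary~\ref{remark surjective}, using that ${\rm VS}(T,\cK)$ is an $\cR$-module surjecting compatibly onto the system ${\rm HS}(\cA_m)$, so that the induced map onto $\varprojlim_m {\rm HS}(\cA_m) \simeq {\rm SS}_r(T_F,\cF_S)$ is surjective --- is the right idea; the remaining compactness/Nakayama argument for why surjectivity mod $p^m$ for all $m$ implies surjectivity onto the limit of rank-one free modules is standard and you could flag it explicitly. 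Your verification that the hypothesis on $(T/pT)^\vee(1)$ propagates to $(T/p^mT)^\vee(1)$ for all $m$, via multiplication by $p^{a-1}$, and the Shapiro/Mackey identification of $H^0(K_v,\cA_m^*(1))$, are both correct and make precise the paper's remark that the hypotheses of Corollary~\ref{main cor}(iii) and Corollary~\ref{remark surjective} correspond.
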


\begin{proof}
Claims (i) and (ii) follow directly from Theorem~\ref{thm koly'}(ii) and (iii) (after noting that $I_0(\kappa(c))=\im (c_F)$.)

In a similar way, since the additional hypotheses in claim (iii) are equivalent to the validity of the hypotheses (a) and (b) in Corollary~\ref{remark surjective} for the representations $T$ and ${\rm Ind}_{G_K}^{G_E}(T/pT)$ respectively, this claim is a straightforward consequence of Corollary~\ref{remark surjective}(iii) and Remark~\ref{difficulty remark2}.
\end{proof}

\begin{remark}\label{remark p=3}
In results above, we excluded the case $p=3$. This is due to the technical condition on $p$ in Lemma \ref{chebotarev}. However, it is possible to treat the case $p=3$, if we assume that $T$ is not `self-dual'. In fact, Mazur and Rubin proved the result corresponding to Lemma \ref{chebotarev} in \cite[Prop. 3.6.1]{MRkoly} under their running hypotheses, one of which is `either $T$ is not self-dual or $p>4$' (see \cite[(H.4) in \S 3.5]{MRkoly}).
\end{remark}

\subsection{The proof of Theorem \ref{derivable1}}

%In this subsection, we consider a rank $r$ Euler system $c \in {\rm ES}_r(T,\cK)$. %Recall that we can construct
%$$\kappa'(c_\fn):=D_\fn\bar c_\fn \in {{\bigcap}}_R^r H^1(\cO_{K,S_\fn},\cA). $$

In this subsection, we prove Theorem~\ref{derivable1}.

When $r=1$, this result can be proved by using the argument of Mazur and Rubin in \cite[Th. 3.2.4]{MRkoly}. More precisely, whilst our setting is more general than that of \cite{MRkoly}, since we work over a general number field $K$ and the coefficient ring of $T$ is a general Gorenstein order, it can be checked that the method of the proof of \cite[Th. 3.2.4]{MRkoly} also applies in this more general setting.
%However, the method of the proof of \cite[Th. 3.2.4]{MRkoly} can be applied directly to our setting.)

The essential idea is, therefore, to reduce the general case of Theorem~\ref{derivable1} to the case that $r=1$.

% though our setting is more general than theirs. In Appendix \ref{appB}, we describe a full proof of Theorem \ref{derivable1} in the case when $r=1$. For the moment, we admit that Theorem \ref{derivable1} holds for $r=1$ and for any rank one Euler systems, and prove the theorem for general $r$ by reducing it to the rank one case.

Throughout this subsection, we assume Hypotheses~\ref{hyp free}, \ref{hyp K} and \ref{hyp local}.
We remark that
$$
\Psi = (\Psi_F)_F \in \varprojlim_{F \in \Omega(\cK/K)} {{\bigwedge}}_{\cR[\cG_F]}^{r-1}H^1(\cO_{F,S(F)},T)^\ast
$$
induces a homomorphism
$$
\Psi \colon {\rm ES}_r(T,\cK) \to {\rm ES}_1(T,\cK); \ (c_F)_F\to (\Psi_F(c_F))_F.
$$
This construction was introduced by Rubin in \cite[\S6]{rubinstark} and also used by Perrin-Riou in \cite[\S1.2.3]{PR}.

The following lemma is a key.

\begin{lemma}\label{key lemma}
Let $\fn \in \cN$. Then, for every $\Phi \in {\bigwedge}_R^{r-1}H^1(\cO_{K,S_\fn},\cA)^\ast$, there exists $\Psi \in \varprojlim_F {\bigwedge}_{\cR[\cG_F]}^{r-1}H^1(\cO_{F,S(F)},T)^\ast$ such that for any $\fd \mid \fn$ we have
$$\Phi(\kappa'(c_\fd))=\kappa'(\Psi(c)_\fd) \text{ in } H^1_{\cF^\fn}(K,\cA).$$
(Note that $\Psi(c) \in {\rm ES}_1(T,\cK)$, and $\kappa'(\Psi(c)_\fd)$ denotes the Kolyvagin derivative of the rank one Euler system $\Psi(c)$.)
\end{lemma}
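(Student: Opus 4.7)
The plan is to construct $\Psi$ by lifting $\Phi$ to an Iwasawa-theoretic $(r-1)$-form on the cohomology of $T$ over the tower $\cK/K$, and then to exploit the naturality of all operations involved to deduce the claimed identity.

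First, I would invoke Shapiro's lemma to identify $H^1(\cO_{K,S_\fn}, \cA)$ with $H^1(\cO_{E,S_\fn}, A)$, viewing $\Phi$ as an element of $\bigwedge^{r-1}_R H^1(\cO_{E,S_\fn}, A)^\ast$. By $R$-linearity I may reduce to the case in which $\Phi$ is a pure wedge $\varphi_1 \wedge \cdots \wedge \varphi_{r-1}$ of functionals $\varphi_i \in H^1(\cO_{E, S_\fn}, A)^\ast$.

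Next, I would lift each $\varphi_i$ to a compatible family $\widetilde{\varphi}_{i,F} \in H^1(\cO_{F, S(F)}, T)^\ast$ indexed by $F \in \Omega(\cK/K)$, and set $\Psi_F := \widetilde{\varphi}_{1,F} \wedge \cdots \wedge \widetilde{\varphi}_{r-1,F}$. The key input is Hypothesis~\ref{hyp free}(i), which ensures that each cohomology group $H^1(\cO_{F, S(F)}, T)$ is a reflexive $\cR[\cG_F]$-module; as in the construction recalled in \S\ref{koly sect}, this reflexivity yields an isomorphism $H^1(\cO_{F, S(F)}, T)^\ast/M \cong H^1(\cO_{F, S(F)}, A)^\ast$, so that $\varphi_i$ may be lifted field-by-field. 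The crucial subtlety is to arrange these lifts to be compatible with the corestriction maps in the tower, so that $\widetilde{\varphi}_i := (\widetilde{\varphi}_{i,F})_F$ defines an element of $\varprojlim_F H^1(\cO_{F, S(F)}, T)^\ast$.

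Finally, the identity $\Phi(\kappa'(c_\fd)) = \kappa'(\Psi(c)_\fd)$ would follow by naturality: since $D_\fd$ and the mod $M$ reduction commute with evaluation by a multilinear functional on cohomology, one has
\begin{align*}
\Phi(\kappa'(c_\fd)) = \Phi(D_\fd \bar c_\fd) = D_\fd \overline{\widetilde{\Phi}(c_\fd)} = D_\fd \overline{\Psi_{E(\fd)}(c_\fd)} = \kappa'(\Psi(c)_\fd),
\end{align*}
where $\widetilde{\Phi} := \widetilde{\varphi}_1 \wedge \cdots \wedge \widetilde{\varphi}_{r-1}$.

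The main obstacle will be securing the compatibility of the lifts $\widetilde{\varphi}_{i,F}$ as $F$ varies: although individual lifts are provided by reflexivity, arranging them to agree under corestriction in the Iwasawa-theoretic tower is delicate, and will require a further application of Hypothesis~\ref{hyp free}(i) together with a Chebotarev-density argument in the style of Lemma~\ref{chebotarev} to produce sufficiently many independent lifts so that the needed coherence can be enforced.
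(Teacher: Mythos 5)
Your high-level strategy (lift $\Phi$ to an $\cR$-valued form $\Psi$ over the tower, then deduce the identity from naturality of the derivative operator $D_\fd$ and the mod-$M$ map) matches the paper's, and your final naturality computation is essentially correct. However, you have misidentified the mechanism needed for the crucial compatibility step, and this is a genuine gap.

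You propose to lift each functional $\varphi_i$ separately to a compatible family $(\widetilde\varphi_{i,F})_F$ over the whole tower and then to enforce coherence via Hypothesis~\ref{hyp free}(i) plus a Chebotarev-density argument. That is not the right tool here, and it is far more delicate than necessary. The paper's construction of $\Psi$ has a different, cleaner structure: first lift the \emph{whole} wedge $\Phi$ at the single level $E(\fn)$ — through the surjective restriction map $\bigwedge^{r-1}_{\overline\cR[\cG_\fn]} H^1(\cO_{E(\fn),S_\fn},A)^* \twoheadrightarrow \bigwedge^{r-1}_{R} H^1(\cO_{K,S_\fn},\cA)^*$ (surjective because the rings are self-injective) and then through the surjection $\bigwedge^{r-1}_{\cR[\cG_\fn]} H^1(\cO_{E(\fn),S_\fn},T)^* \twoheadrightarrow \bigwedge^{r-1}_{\overline\cR[\cG_\fn]} H^1(\cO_{E(\fn),S_\fn},T)^*/M$ — to obtain a single element $\Psi_\fn$. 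Then, and this is the point you miss, one extends $\Psi_\fn$ to a full system $\Psi=(\Psi_F)_F$ \emph{not} by constructing lifts at all levels and somehow matching them, but by observing that the transition maps of the inverse system $\varprojlim_F \bigwedge^{r-1}_{\cR[\cG_F]} H^1(\cO_{F,S(F)},T)^*$ are surjective (this is where Hypothesis~\ref{hyp free}(i) and \cite[Lem.~2.10]{sano} are invoked). Surjectivity of these transition maps lets one extend any element at a single level to a coherent family; no Chebotarev argument is needed or helps. Moreover, the structural observation that only the level $E(\fn)$ matters for the claimed identity — since every $\fd\mid\fn$ has $E(\fd)\subseteq E(\fn)$, so $\Psi_{E(\fd)}$ is determined from $\Psi_{E(\fn)}$ by the transition map — is what makes the single-level lift sufficient. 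Your reduction to pure wedges $\varphi_1\wedge\cdots\wedge\varphi_{r-1}$ is also unnecessary: the relevant surjections hold at the level of the full exterior power, so the whole of $\Phi$ can be lifted in one step.
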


Before proving this lemma, we use it to give a proof of Theorem \ref{derivable1}.

\begin{proof}[Proof of Theorem \ref{derivable1}]
We first show that, for each $\fn \in \cN$, the element
$$\kappa(c)_\fn=\sum_{\fd \mid \fn} \left(\kappa'(c_\fd) \otimes \prod_{\fq \mid \fd}(\sigma_\fq-1)\right)\cD_{\fn/\fd} $$
lies in ${{\bigcap}}_R^r H^1_{\cF(\fn)}(K,\cA) \otimes \left \langle \prod_{\fq \mid \fn}(\sigma_\fq -1)\right \rangle.$
By Proposition \ref{reduction}, it is sufficient to show that
\begin{equation} \label{trans}
\Phi(\kappa(c)_\fn)=\sum_{\fd \mid \fn} \left(\Phi(\kappa'(c_\fd)) \otimes \prod_{\fq \mid \fd}(\sigma_\fq-1)\right)\cD_{\fn/\fd} \in H^1_{\cF(\fn)}(K,\cA) \otimes \left \langle \prod_{\fq \mid \fn}(\sigma_\fq -1)\right \rangle
\end{equation}
for every $\Phi \in {\bigwedge}_R^{r-1}H^1(\cO_{K,S_\fn},\cA)^\ast$.

%Let $\Phi \in {\bigwedge}_R^{r-1}H^1(\cO_{K,S_\fn},\cA)^\ast$.
By Lemma \ref{key lemma}, there exists $\Psi \in \varprojlim_F {\bigwedge}_{\cR[\cG_F]}^{r-1}H^1(\cO_{F,S(F)},T)^\ast$ such that
$$\Phi(\kappa'(c_\fd))=\kappa'(\Psi(c)_\fd)$$
for every $\fd \mid \fn$, so (\ref{trans}) follows from the fact that Theorem \ref{derivable1} holds for the rank one Euler system $\Psi(c)$.

%(i) To prove that $\kappa'(c_\fn)$ lies in ${\bigcap}_R^rH^1_{\cF^\fn}(K,\cA)$, it is sufficient to show that
%$$\Phi(\kappa'(c_\fn)) \in H^1_{\cF^\fn}(K,\cA)$$
%for every $\Phi \in {\bigwedge}_R^{r-1} H^1(\cO_{K,S_\fn},\cA)^\ast$, by Lemma \ref{reduction}. By Lemma \ref{key lemma}, we know that
%$$\Phi(\kappa'(c_\fd))=\kappa'(\Psi(c)_\fd) $$
%for some $\Psi \in \varprojlim_F {\bigwedge}_{\cR[\cG_F]}^{r-1}H^1(\cO_{F,S(F)},T)^\ast$, and the right hand side clearly lies in $H^1_{\cF^\fn}(K,\cA)$. Thus we see that $\kappa'(c_\fn)$ lies in ${\bigcap}_R^rH^1_{\cF^\fn}(K,\cA)$.

Next, we show that
$$v_\fq(\kappa(c)_\fn)=\varphi_\fq^{\rm fs}(\kappa(c)_{\fn/\fq}) %\text{ in }{{\bigcap}}_R^{r-1}H^1_{\cF^\fn}(K,\cA)
$$
for every $\fn \in \cN$ and $\fq \mid \fn$. As in Definition \ref{koly ideal}, we fix an identification
$${\bigcap}_R^r H^1_{\cF(\fn)}(K,\cA)\otimes G_\fn={\bigcap}_R^r H^1_{\cF(\fn)}(K,\cA)$$
for each $\fn \in \cN$.
We note that, by definition,
$${{\bigcap}}_R^r H^1_{\cF(\fn)}(K,\cA)=\left( {{\bigwedge}}_R^r H^1_{\cF(\fn)}(K,\cA)^\ast\right)^\ast,$$
so $\kappa(c)_\fn \in {{\bigcap}}_R^r H^1_{\cF(\fn)}(K,\cA)$ is a map
$$\kappa(c)_\fn: {{\bigwedge}}_R^r H^1_{\cF(\fn)}(K,\cA)^\ast \to R.$$
We also note that $v_\fq(\kappa(c)_\fn)$ is the map
$${{\bigwedge}}_R^{r-1}H^1_{\cF(\fn)}(K,\cA)^\ast \to R; \ \Phi \mapsto \kappa(c)_\fn(v_\fq \wedge \Phi).$$
Since we identify ${\bigcap}_R^1 H^1_{\cF(\fn)}(K,\cA)=H^1_{\cF(\fn)}(K,\cA)$ and regard $\Phi(\kappa(c)_\fn) \in H^1_{\cF(\fn)}(K,\cA)$, we have
$$\kappa(c)_\fn(v_\fq \wedge \Phi)=(-1)^{r-1} v_\fq(\Phi(\kappa(c)_\fn)).$$
Similarly, $\varphi_\fq^{\rm fs}(\kappa(c)_{\fn/\fq}) \in {{\bigcap}}_R^{r-1}H^1_{\cF(\fn)}(K,\cA)$ is the map
$${{\bigwedge}}_R^{r-1}H^1_{\cF(\fn)}(K,\cA)^\ast \to R; \ \Phi \mapsto \kappa(c)_{\fn/\fq}(\varphi_\fq^{\rm fs} \wedge \Phi)=(-1)^{r-1}\varphi_\fq^{\rm fs}(\Phi(\kappa(c)_{\fn/\fq})).$$
So, to prove the equality of maps $v_\fq(\kappa(c)_\fn)=\varphi_\fq^{\rm fs}(\kappa(c)_{\fn/\fq})$, it is sufficient to prove that they send each $\Phi \in {{\bigwedge}}_R^{r-1}H^1_{\cF(\fn)}(K,\cA)^\ast$ to the same element, namely,
\begin{eqnarray} \label{fs}
v_\fq(\Phi(\kappa(c)_\fn))=\varphi_\fq^{\rm fs}(\Phi(\kappa(c)_{\fn/\fq}))
\end{eqnarray}
for every $\Phi \in {{\bigwedge}}_R^{r-1}H^1_{\cF(\fn)}(K,\cA)^\ast$.

By Lemma \ref{key lemma}, there exists $\Psi \in \varprojlim_F {\bigwedge}_{\cR[\cG_F]}^{r-1}H^1(\cO_{F,S(F)},T)^\ast$ such that
$$\Phi(\kappa(c)_\fn)=\kappa(\Psi(c))_\fn \text{ and }\Phi(\kappa(c)_{\fn/\fq})=\kappa(\Psi(c))_{\fn/\fq},$$
So (\ref{fs}) follows again from the fact that Theorem \ref{derivable1} holds for the rank one Euler system $\Psi(c)$.
\end{proof}

In the rest of this section we prove Lemma \ref{key lemma}.

At the outset we fix $\Phi$ in ${\bigwedge}_{R}^{r-1}H^1(\cO_{K,S_\fn},\cA)^\ast$. Then, since the restriction map
$$
{{\bigwedge}}_{\overline \cR [\cG_\fn]}^{r-1} H^1(\cO_{E(\fn),S_\fn},A)^\ast \to {{\bigwedge}}_R^{r-1}H^1(\cO_{E,S_\fn},A)^\ast= {{\bigwedge}}_{R}^{r-1}H^1(\cO_{K,S_\fn},\cA)^\ast
$$
is surjective (since rings we consider here are self-injective),
we can choose a lift $\widetilde \Phi \in{{\bigwedge}}_{\overline \cR[\cG_\fn]}^{r-1} H^1(\cO_{E(\fn),S_\fn},A)^\ast $ of $\Phi \in {\bigwedge}_{R}^{r-1}H^1(\cO_{K,S_\fn},\cA)^\ast$.

We regard $\widetilde \Phi$ as an element of ${{\bigwedge}}_{\overline \cR[\cG_\fn]}^{r-1}H^1(\cO_{E(\fn),S_\fn},T)^\ast/M$ via the map
$$
{{\bigwedge}}_{\overline \cR[\cG_\fn]}^{r-1} H^1(\cO_{E(\fn),S_\fn},A)^\ast   \to{{\bigwedge}}_{\overline \cR[\cG_\fn]}^{r-1}H^1(\cO_{E(\fn),S_\fn},T)^\ast/M
$$
induced by $T \to T/M=A$ (see (\ref{wedge induce})).
We also have a surjective homomorphism
$$
{{\bigwedge}}_{\cR[\cG_\fn]}^{r-1} H^1(\cO_{E(\fn),S_\fn},T)^\ast \to {{\bigwedge}}_{\overline \cR[\cG_\fn]}^{r-1}H^1(\cO_{E(\fn),S_\fn},T)^\ast/M,
$$
and we fix a lift $\Psi_\fn \in {{\bigwedge}}_{\cR[\cG_\fn]}^{r-1} H^1(\cO_{E(\fn),S_\fn},T)^\ast$ of $\widetilde \Phi \in  {{\bigwedge}}_{\overline \cR[\cG_\fn]}^{r-1}H^1(\cO_{E(\fn),S_\fn},T)^\ast/M$.
Then, since the transition maps of the inverse limit
$$
\varprojlim_{F\in \Omega(\cK/K)} {{\bigwedge}}_{\cR[\cG_F]}^{r-1} H^1(\cO_{F,S(F)},T)^\ast
$$
are surjective (by Hypothesis \ref{hyp free} and \cite[Lem. 2.10]{sano}), one can take
$$
\Psi=(\Psi_F)_F \in \varprojlim_{F\in \Omega(\cK/K)} {{\bigwedge}}_{\cR[\cG_F]}^{r-1} H^1(\cO_{F,S(F)},T)^\ast
$$
such that $\Psi_{E(\fn)}=\Psi_\fn$.

We shall show that this $\Psi$ satisfies the condition in Lemma \ref{key lemma}, namely that
$$\Phi(\kappa'(c_\fd))=\kappa'(\Psi(c)_\fd)$$
for all $\fd \mid \fn$.

We first note that
$$\kappa'(\Psi(c)_\fd)=D_\fd \cdot \widetilde \Phi_\fd(\bar c_\fd) \text{ in }H^1(\cO_{E(\fd),S_\fd},A),$$
where $\widetilde \Phi_\fd \in {\bigwedge}_{\overline \cR[\cG_\fd]}^{r-1}H^1(\cO_{E(\fd),S_\fd},A)^\ast$ is the restriction of $\widetilde \Phi \in{{\bigwedge}}_{\overline \cR[\cG_\fn]}^{r-1} H^1(\cO_{E(\fn),S_\fn},A)^\ast$.
Hence one has
$$\kappa'(\Psi(c)_\fd)=D_\fd \widetilde \Phi_\fd(\bar c_\fd)=\widetilde \Phi_\fd(D_\fd \bar c_\fd)=\Phi(\kappa'(c_\fd)),$$
where the last equality follows from the fact that $\widetilde \Phi_\fd$ is a lift of $\Phi$ by construction.
This completes the proof of Lemma~\ref{key lemma}.

\section{Rubin-Stark elements and ideal class groups}\label{app sec}

In this final section, we give a straightforward application of our theory in the original setting considered by Rubin in \cite{rubinstark}.

In this setting we shall (unconditionally) prove a strong refinement of previous results of Rubin and of B\"uy\"ukboduk that were obtained under the assumed validity of Leopoldt's Conjecture. In addition, by a slightly more careful application of the same methods one can also prove much stronger result in this direction (see Remark \ref{promise} below).

At the outset we fix an odd prime number $p$ (see Remark~\ref{remark p=3}).
%At the outset we fix a prime number $p$ with $p>3$.
We also fix a number field $K$ and a homomorphism
$$\chi: G_K \to \overline \QQ^\times,$$
that has finite prime-to-$p$ order. We fix an embedding $\overline \QQ \hookrightarrow \overline \QQ_p$ and set $\cO:=\ZZ_p[\im(\chi)]$.
We write $L$ for the field extension of $K$ that corresponds to $\ker (\chi)$ and set $\Delta:=\Gal(L/K)$. {\it We suppose that all archimedean  places of $K$ split completely in $L$}. (In particular, if $K$ is totally real, then we assume that $\chi$ is totally even.)

For a $\ZZ_p[\Delta]$-module $X$, we define its `$\chi$-part' by
$$X^\chi:=\{a \in \cO \otimes_{\ZZ_p} X \mid \sigma (a) = \chi(\sigma) a \text{ for every $\sigma \in \Delta$}\}.$$
We note that, since $|\Delta|$ is prime to $p$, this module is naturally isomorphic to $\cO\otimes_{\ZZ_p[\Delta]} X$, where $\cO$ is regarded as a $\ZZ_p[\Delta]$-algebra via $\chi$.

We write $T_\chi$ for a free $\cO$-module of rank one upon which $G_K$ acts by the rule
$$\sigma \cdot a:=\chi_{\rm cyc}(\sigma)\chi^{-1}(\sigma) a\quad (\sigma \in G_K, \ a \in T),$$
where $\chi_{\rm cyc}:G_K \to \ZZ_p^\times$ denotes the cyclotomic character. This $T_\chi$ is usually denoted by $\cO(1)\otimes \chi^{-1}$.

%We also write $\cF:=\cF_{\rm can}$ for the canonical Selmer structure on $T_\chi$ and recall that there is a natural identification
%$$H^1_\cF(K,T) \simeq (\ZZ_p \otimes_\ZZ \cO_{L}[1/p]^\times)^\chi,$$
Let $\cF_{\rm can}$ be the canonical Selmer structure on $T_\chi$ and recall that there is a natural identification
\begin{equation}\label{selmer ident} H^1_{(\cF_{\rm can})^{*}}(K,T_\chi^\vee(1))^\vee \simeq (\ZZ_p \otimes_\ZZ {\rm Cl}(\cO_L[1/p]))^\chi.\end{equation}
Here ${\rm Cl}(\cO_{L}[1/p])$ denotes the quotient of the ideal class group ${\rm Cl}(\mathcal{O}_L)$ of $L$ by the subgroup generated by the classes of all prime ideals that divide $p$. %(If ${\rm Cl}_L$ denotes the (full) ideal class group of $L$, then ${\rm Cl}(\cO_L[1/p])$ is identified with ${\rm Cl}_L/\langle \text{the class of $\mathfrak{p}$} \mid \ \mathfrak{p} \mid p\}\rangle$.)

We set $r:=|S_\infty(K)|$ and fix a finite set $S$ of places of $K$ such that
$$S_\infty(K)\cup  S_{\rm ram}(L/K) \subseteq S.$$
We always assume that $|S| > r$. (In particular, if $L/K$ is ramified then we can take $S = S_\infty(K)\cup  S_{\rm ram}(L/K)$.) %, and no finite place in $S$ splits completely in $L$.}

We quickly recall some notations from \S\ref{euler sys sec 1}. We fix a pro-$p$ abelian extension $\cK/K$ that is sufficiently large to ensure  Hypothesis \ref{hyp K} is satisfied (for $S\cup S_p(K)$), and we write $\Omega(\cK/K)$ for the set of subfields of $\cK/K$ that are finite over $K$. For a finite abelian extension $F/K$ we set $\cG_F:=\Gal(F/K)$ and $S(F):=S \cup S_{\rm ram}(F/K)$. For a set $\Sigma$ of places of $K$ with $S(F) \subseteq \Sigma$, we denote by $\Sigma_F$ the set of places of $F$ which lie above a place in $\Sigma$. The ring of $\Sigma_F$-integers of $F$ is denoted by $\cO_{F,\Sigma}$ and we write $\theta_{F/K,\Sigma}(s)$ for the $\Sigma$-truncated equivariant $L$-function for $F/K$ (as defined, for example, in \cite[\S 3.1]{bks1}).

Since $p$ is odd and $\cK/K$ is a pro-$p$ extension, all places in $S_\infty(K)$ split completely in $\cK$. We label, and hence order, the places in $S_\infty(K)$ as $\{v_1,\ldots,v_r\}$ and for each $v_i$ we fix a place $w_i$ of $\overline \QQ$ that lies above $v_i$. Since $|S| > r$ we can also fix a non-archimedean place $v_0$ in $S$ and a place $w_0$ of $\overline \QQ$ lying above $v_0$.

Then for each $F \in \Omega(\cK/K)$ and each finite set $\Sigma$ of places of $K$ with $S(F) \subseteq \Sigma$, the Dirichlet regulator map induces an isomorphism of $\RR[\cG_{LF}]$-modules
$$\RR \otimes_\ZZ {\bigwedge}_{\ZZ[\cG_{LF}]}^r \cO_{LF,\Sigma}^\times \stackrel{\sim}{\to} \RR  \otimes_\ZZ {\bigwedge}_{\ZZ[\cG_{LF}]}^r X_{LF,\Sigma}$$
where $X_{LF,\Sigma}$ denotes the kernel of the map $\bigoplus_{w \in \Sigma_{LF}}\ZZ \cdot w \to \ZZ$ that sends $\sum_w a_w w$ to $\sum_w a_w$.

Under the stated assumptions on $S$, the functions $s^{-r}\theta_{LF/K,\Sigma}(s)$ are holomorphic at $s=0$ (see \cite[Chap. I, Prop. 3.4]{tatebook}) and, following Rubin \cite{rubinstark}, one defines the `Rubin-Stark element' $\eta_{LF/K,\Sigma}$ to be the unique element of $\RR \otimes_\ZZ {\bigwedge}_{\ZZ[\cG_{LF}]}^r  \cO_{LF,\Sigma}^\times$ that the above isomorphism sends to
$$
\underset{s\to 0}{\lim} s^{-r}\theta_{LF/K,\Sigma}(s) \cdot (w_1-w_0)\wedge \cdots \wedge (w_r-w_0).
$$
If $\Sigma$ contains $S_p(K)$, then Kummer theory induces canonical isomorphisms
$$(\ZZ_p\otimes_\ZZ \cO_{LF,\Sigma}^\times)^\chi\simeq H^1(\cO_{LF,\Sigma},\ZZ_p(1))^\chi  \simeq H^1(\cO_{F,\Sigma}, T_\chi).$$
(Note that, since $[L:K]$ is prime to $p$, $L$ is disjoint from $\cK$ and so one can define the $\chi$-component of a $\ZZ_p[\cG_{LF}]$-module.)

In particular, after fixing an embedding $\RR \hookrightarrow \CC_p$ we define $\eta_{LF/K,\Sigma}^\chi$ to be the image of $\eta_{LF/K,\Sigma}$ under the composite
\begin{eqnarray*}
\RR \otimes_\ZZ {\bigwedge}_{\ZZ[\cG_{LF}]}^r  \cO_{LF,\Sigma}^\times &\subseteq& \CC_p \otimes_{\ZZ_p} {\bigwedge}_{\ZZ_p[\cG_{LF}]}^r H^1(\cO_{LF,\Sigma\cup S_p(K)},\ZZ_p(1)) \\
& \to& \CC_p \otimes_{\ZZ_p} {\bigwedge}_{\cO[\cG_F]}^r H^1(\cO_{F,\Sigma\cup S_p(K)},T_\chi)
\end{eqnarray*}
with the second map induced by the projection
$$H^1(\cO_{LF,\Sigma\cup S_p(K)},\ZZ_p(1))\to H^1(\cO_{LF,\Sigma\cup S_p(K)},\ZZ_p(1))^\chi=H^1(\cO_{F,\Sigma\cup S_p(K)},T_\chi).$$

For each $F$ in $\Omega(\cK/K)$ we set $S(F)_p:=S\cup S_{\rm ram}(F/K) \cup S_p(K)(=S(F)\cup S_p(K))$ and {\it we assume that the group $(\ZZ_p\otimes_\ZZ \cO_{LF,S(F)_p}^\times)^\chi$ is a free $\cO$-module.}

Under this hypothesis the Rubin-Stark conjecture \cite[Conj. B$'$]{rubinstark} predicts that the element $c_{F,\chi}^{\rm RS}:=\eta_{LF/K, S(F)}^\chi$
 belongs to ${\bigcap}_{\cO[\cG_{F}]}^r H^1(\cO_{F,S(F)_p},T_\chi)$ (regarded as a submodule of $\CC_p \otimes_{\ZZ_p} {\bigwedge}_{\cO[\cG_F]}^r H^1(\cO_{F,S(F)_p},T_\chi)$ via \cite[Prop. A.7]{sbA}).

In addition, if this conjecture is valid for every $F$ in $\Omega(\cK/K)$, then the collection
\[ c^{\rm RS}_\chi := (c_{F,\chi}^{\rm RS})_F\]
forms an Euler system of rank $r$ for the pair $(T_\chi,\cK)$ (for a proof of this see \cite[Prop. 6.1]{rubinstark} or \cite[Prop. 3.5]{sano}).

By applying Corollary \ref{main cor} in this setting, we can now prove the following result (the context of which is explained in Remark \ref{promise} below).

\begin{theorem} \label{RS theorem} Assume that
\begin{itemize}
\item[(a)] $\chi$ is neither trivial nor equal to the Teichm\"uller character, that
\item[(b)] no place in $S_p(K)$ splits completely in $L/K$, and that
\item[(c)] either $p>3$ or $\chi^2$ is not equal to the Teichm\"uller character.
\end{itemize}

For each non-negative integer $i$ define an ideal of $\mathcal{O}$ by setting
\[ I_i(T_\chi) := \langle I_i(\kappa(c)) \mid c \in {\rm ES}_r(T_\chi,\cK)\rangle_\mathcal{O},\]
where $\kappa(c) \in {\rm KS}_r(T_\chi,\cF)$ is the Kolyvagin system constructed from $c$. Then the following claims are valid.

\begin{itemize}
\item[(i)] For each $i$ one has $I_i(T_\chi)\subseteq {\rm Fitt}_{\mathcal{O}}^i((\ZZ_p\otimes_\ZZ {\rm Cl}(\mathcal{O}_L))^\chi)$.

\item[(ii)] If no place in $S\setminus S_\infty(K)$ splits completely in $L/K$, then the inclusions in claim (i) are equalities and there is an isomorphism of $\mathcal{O}$-modules
\[ (\ZZ_p\otimes_\ZZ {\rm Cl}(\mathcal{O}_L))^\chi \simeq \bigoplus_{i \ge 0} I_{i+1}(T_\chi)/I_i(T_\chi).\]
\item[(iii)] If the Rubin-Stark Conjecture is valid for $LF/K$ for each $F$ in $\Omega(\cK/K)$, then one has
 $\im(\eta_{L/K,S}^\chi) \subseteq {\rm Fitt}_{\cO}^0((\ZZ_p\otimes_\ZZ {\rm Cl}(\mathcal{O}_L))^\chi).$
\end{itemize}
\end{theorem}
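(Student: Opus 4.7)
The plan is to deduce Theorem~\ref{RS theorem} from Corollary~\ref{main cor} applied to the pair $(T,\cK) = (T_\chi, \cK)$ with coefficient ring $\cR = \cO$ and with $F = K$ (so that $\cG_F$ is trivial and $T_F = T_\chi$), choosing an auxiliary extension $E$ in $\cK$ to be any finite subextension containing $K(1)$. The output of Corollary~\ref{main cor} is a Fitting-ideal statement for the Selmer module $H^1_{(\cF_{\rm can})^*}(K, T_\chi^\vee(1))^\vee$, which by (\ref{selmer ident}) equals $(\ZZ_p\otimes {\rm Cl}(\cO_L[1/p]))^\chi$. The proof therefore consists of two distinct tasks: verifying the standing hypotheses of Corollary~\ref{main cor} for $T_\chi$, and then passing from the $\chi$-component of ${\rm Cl}(\cO_L[1/p])$ to that of ${\rm Cl}(\cO_L)$ by using the hypotheses of the theorem.

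First I would verify the standing hypotheses. Hypothesis~\ref{hyp K} is automatic by choice of $\cK$. Hypothesis~\ref{hyp free}(i), via Kummer theory and Remark~\ref{hyp free rem}, reduces to the assertion that $(\ZZ_p\otimes \cO_{LF,S(F)_p}^\times)^\chi$ is $\cO$-free for every $F \in \Omega(\cK/K)$, and this follows from the hypothesis $\chi \neq \omega$, since the only possible torsion comes from the $\chi$-component of $\mu_{p^\infty}(LF)$ (on which $\Delta$ acts through the reduction of $\chi_{\rm cyc}$, namely $\omega$). Hypothesis~\ref{hyp free}(ii) demands vanishing of $T_\chi^{G_F}$, which holds since $G_K$ acts by $\chi_{\rm cyc}\chi^{-1}$ and $F/K$ is pro-$p$ whilst $\chi$ is non-trivial of prime-to-$p$ order. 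Hypothesis~\ref{hyp local} holds because, for $\fq \in \cP$, Frobenius acts on the rank-one $\cO$-module $T_\chi$ as a unit $u \equiv 1 \pmod{p^m}$, and $\cO$ being unramified over $\ZZ_p$ contains no non-trivial $p$-power root of unity. The parts (i)--(iii) of Hypothesis~\ref{hyp1'} are standard under $\chi \neq 1,\omega$: (i) is automatic in rank one, (ii) holds by choosing $\tau$ in the kernel of $\chi_{\rm cyc}\chi^{-1}$, and (iii) follows by inflation-restriction using the prime-to-$p$ order of $\Gal(K(\overline{T_\chi})/K)$. Finally, Hypothesis~\ref{hyp large} for each $(T_\chi/p^m, \cF_{\rm can}, \cP_m)$ is an inductive application of Lemma~\ref{chebotarev}, combined with the $\cO/(p^m)$-freeness provided by Remark~\ref{free rem}; here condition (c) of the theorem is precisely what allows the use of Lemma~\ref{chebotarev} in the $p=3$ case, as indicated in Remark~\ref{remark p=3}.

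With all hypotheses verified, Corollary~\ref{main cor}(ii) yields $I_i(\kappa(c)) \subseteq \Fitt_\cO^i(H^1_{(\cF_{\rm can})^*}(K, T_\chi^\vee(1))^\vee)$ for every $c \in {\rm ES}_r(T_\chi, \cK)$. To pass to ${\rm Cl}(\cO_L)$, I exploit hypothesis (b) of the theorem: the natural surjection $(\ZZ_p\otimes {\rm Cl}(\cO_L))^\chi \twoheadrightarrow (\ZZ_p\otimes {\rm Cl}(\cO_L[1/p]))^\chi$ has kernel generated by classes of primes of $L$ above $p$, organised into $\Delta$-orbits indexed by $v \in S_p(K)$; each orbit contributes a quotient of $\cO[\Delta/D_v]^\chi$, which vanishes whenever $\chi|_{D_v} \neq 1$. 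Under hypothesis (b) this happens for every $v \in S_p(K)$, and hence the two $\chi$-components are canonically isomorphic, giving claim (i). For claim (iii), apply Corollary~\ref{main cor}(i) to $c = c_\chi^{\rm RS}$ to obtain $\im(\eta_{L/K, S \cup S_p}^\chi) \subseteq \Fitt_\cO^0((\ZZ_p\otimes {\rm Cl}(\cO_L))^\chi)$, and then use the Euler-factor relation $\eta_{L/K,S\cup S_p}^\chi = \eta_{L/K,S}^\chi \cdot \prod_{v \in S_p \setminus S}(1 - \chi^{-1}({\rm Fr}_v))$ together with the fact that each $1 - \chi^{-1}({\rm Fr}_v)$ is a unit in $\cO$ under hypothesis (b) (since a non-trivial prime-to-$p$ root of unity is not congruent to $1$ modulo $\fp$) to upgrade the containment to $\eta_{L/K, S}^\chi$.

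For claim (ii), I invoke the equality case of Corollary~\ref{main cor}(iii). The principal-ideal-ring hypothesis holds because $\cO$ is a discrete valuation ring. The auxiliary condition that $\bigoplus_{v \in S_\infty(K)} H^0(K_v, T_\chi^*(1))$ is $\cO$-free of rank $r$ is immediate since $T_\chi^*(1) = \cO(\chi)$ and each archimedean place splits completely in $L$; the auxiliary condition $H^0(E_w, (T_\chi/p)^\vee(1)) = \Bbbk(\chi)^{G_{E_w}} = 0$ for each non-archimedean $w \mid v$ with $v \in S$ is equivalent to $\chi$ being non-trivial on every decomposition group at $v \in S \setminus S_\infty(K)$, which is precisely the extra hypothesis of claim (ii). This promotes the containment of claim (i) to the asserted equality of Fitting ideals. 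The explicit isomorphism $(\ZZ_p\otimes {\rm Cl}(\cO_L))^\chi \simeq \bigoplus_{i\geq 0} I_{i+1}(T_\chi)/I_i(T_\chi)$ then follows from the structure theorem for finitely generated modules over the DVR $\cO$, as the successive quotients of the Fitting-ideal filtration recover the elementary divisors. The main technical obstacle I anticipate is the careful book-keeping required in verifying Hypotheses~\ref{hyp1'} and~\ref{hyp large} in full, especially the handling of the Chebotarev step at $p=3$ via condition (c).
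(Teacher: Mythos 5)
Your proposal follows essentially the same route as the paper's proof: apply Corollary~\ref{main cor} for $(T_\chi,\cK)$ with $F=K$ and $\cR=\cO$, verify the standing hypotheses, and then use hypothesis~(b) to replace $\Cl(\cO_L[1/p])^\chi$ by $\Cl(\cO_L)^\chi$. Two points are worth flagging. First, in your verification of Hypothesis~\ref{hyp large} you invoke Lemma~\ref{chebotarev} and Remark~\ref{free rem}, but never explain where the specific value $r = |S_\infty(K)|$ comes from: for the canonical Selmer structure on $T_\chi$ the `core rank' is an intrinsic invariant (determined by a global Euler characteristic computation), and it is precisely hypothesis~(b) (no $p$-adic place splits completely in $L/K$) that forces this invariant to equal $|S_\infty(K)|$ rather than something larger; your account uses (b) only for the passage $\Cl(\cO_L)^\chi \cong \Cl(\cO_L[1/p])^\chi$, so the core-rank step is left unjustified. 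Second, in part~(iii) you claim $c_{K,\chi}^{\rm RS} = \eta_{L/K,S\cup S_p}^\chi$ and then carry out an Euler-factor correction; in fact, the paper defines the Euler system by $c_{F,\chi}^{\rm RS} := \eta_{LF/K,S(F)}^\chi$ with $S(F)=S\cup S_{\rm ram}(F/K)$, so the bottom class is already $\eta_{L/K,S}^\chi$ and the extra step is unnecessary (though it does not introduce an error, since your Euler-factor computation shows the two images coincide). On the plus side, your derivation of the isomorphism in part~(ii) from the elementary-divisor structure of $\Fitt^i_\cO$ over the DVR $\cO$ supplies a detail the paper leaves implicit.
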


\begin{proof} At the outset, note that assumption (c) is used simply so that we can include the case $p=3$ (see  Remark~\ref{remark p=3}).

We next show that the conditions of Corollary \ref{main cor} are satisfied under the hypotheses given above.

Firstly, under hypothesis (a) above, the $\mathcal{O}$-module $H^1(\cO_{F,S(F)_p},T_\chi) = (\ZZ_p \otimes_\ZZ \cO_{LF,S(F)_p}^\times)^\chi$ is easily seen to be free for every $F$ in $\Omega(\cK/K)$ and so Hypotheses~\ref{hyp free}(i) is satisfied.
We also obviously have $H^0(F,T_\chi)=0$ and so Hypothesis~\ref{hyp free}(ii) is satisfied.

The field $\cK$ is chosen so that Hypothesis~\ref{hyp K} is satisfied and Hypothesis~\ref{hyp local} is also trivial in this case.

Hypotheses~\ref{hyp1'}(i) and (ii) are trivial since $T_\chi$ is of rank one over $\cO$ (and so one can take $\tau=1$).
%Hypothesis~\ref{hyp1'}(iii) and (iv) follow from the assumption (a): in fact (iv) is trivial and
%(iii) is checked in \cite[Lem. 3.1.1]{R} (see also \cite[Lem. 6.1.5]{MRkoly}).
Hypothesis~\ref{hyp1'}(iii) also follows from the stated assumption (a): in fact this is checked in \cite[Lem. 3.1.1]{R} (see also \cite[Lem. 6.1.5]{MRkoly}).

%Hypothesis \ref{hyp large} is satisfied for $(T_\chi/p^{m}T_\chi, \cF, \cP_{m})$ for all positive integers $m$, since in this case the coefficient ring is $\cO$ and the existence of a `core vertex' is verified by Mazur and Rubin in \cite[Lem. 3.7.1(i)]{MRkoly} and \cite[Cor. 4.1.9(iii)]{MRkoly}.
By  \cite[Cor. 4.1.9(iii)]{MRkoly} and \cite[Cor. 3.5(ii)]{MRselmer},  for all positive integers $m$, Hypothesis~\ref{hyp large} is satisfied for $(T_\chi/p^{m}T_\chi, \cF_{\rm can}, \cP_{m})$ since in this case the coefficient ring is $\cO$.
(Note that in this case, the hypothesis~(b) given above implies that the `core rank' is equal to $r=|S_\infty(K)|$.)

%Finally we consider the conditions (a) and (b) in Corollary~\ref{remark surjective}. In this setting the validity of condition~(a) in the latter result follows directly from the assumption that all places in $S_\infty(K)$ split completely in $L/K$.
Finally we consider the conditions (a) and (b) in Corollary~\ref{remark surjective}. In this setting the validity of condition~(a) follows directly from the assumption that all places in $S_\infty(K)$ split completely in $L/K$.

In addition, if no place in $S \setminus S_\infty(K)$ splits completely in $L/K$, then the $\cO$-module $$H^{0}(E_{w}, (T_\chi/pT_\chi)^\vee(1)) = H^0(E_w, \cO/(p) \otimes \chi)$$ is easily seen to vanish for each non-archimedean prime $w$ of $E$ above $S \cup S_{p}(K)$. (Recall, from \S\ref{koly sect}, that $E$ is a fixed auxiliary field in $\Omega(\cK/K)$ that contains $K(1)$ and is such that $E/K$ is unramified outside $S\cup S_p(K)$.  For example, one could take $E:=K(1)$.)

%if no place in $S\setminus S_\infty(K)$ splits completely in $L/K$, then for any natural number $m$, hypothesis (b) in Corollary \ref{remark surjective} is satisfied for $\mathcal{A} := {\rm Ind}_{G_K}^{G_E}(T_\chi/p^{m}T_\chi)$. (Recall, from \S\ref{koly sect}, that $E$ is a fixed auxiliary field in $\Omega(\cK/K)$ that contains $K(1)$ and is such that $E/K$ is unramified outside $S\cup S_p(K)$.  For example, one could take $E:=K(1)$.) This is because the given assumption that no place in $S_p(K)$ splits completely in $L/K$ combines with the fact that $\chi$ is a non-trivial and faithful character of $\Gal(L/K)$ to imply that for each place $v$ in either $S\setminus S_\infty(K)$ or $S_p(K)$ the module $H^0(K_v,\mathcal{A}^*(1)) = \bigoplus_{w \mid v}H^0(E_w, (T_\chi(-1)/(p^m))^*)$  vanishes.

We have now verified that the hypotheses of Corollary~\ref{main cor}(i) and (ii) are satisfied under the given conditions~(a), (b) and (c), and that the larger set of hypotheses of Corollary~\ref{main cor}(iii) is satisfied if, in addition, no place in $S\setminus S_\infty(K)$ splits completely in $L/K$.

Next we note that, since $\chi$ is both non-trivial and primitive, the natural projection map $(\ZZ_p\otimes_\ZZ{\rm Cl}(\mathcal{O}_L))^\chi \to (\ZZ_p\otimes_\ZZ{\rm Cl}(\mathcal{O}_L[1/p]))^\chi$ is bijective %if no place in $S_p(K)$ splits completely in $L/K$.
under the given condition ~(b).
%In this case, therefore, the identification (\ref{selmer ident}) implies that ${\rm Fitt}_\mathcal{O}^0(H^1_{\cF^\ast}(K,T_\chi^\vee(1))^\vee) \subseteq {\rm Fitt}_\mathcal{O}^0((\ZZ_p\otimes_\ZZ{\rm Cl}(\mathcal{O}_L))^\chi)$, with equality if no place in $S_p(K)$ splits completely in $L/K$.
%In this case,
Therefore the identification (\ref{selmer ident}) implies that $
H^1_{(\cF_{\rm can})^\ast}(K,T_\chi^\vee(1))^\vee = \left( \ZZ_p\otimes_\ZZ{\rm Cl}(\mathcal{O}_L) \right)^\chi.
$

%Given these observations, claim (i), the first assertion of claim (ii) and claim (iii) all follow directly from the result of
%Corollary~\ref{main cor} and the fact that $c_{K,\chi}^{\rm RS} =\eta_{L/K, S}^\chi$.

Given these observations, all of the stated claims follow directly from the result of
Corollary~\ref{main cor} and the fact that $c_{K,\chi}^{\rm RS} =\eta_{L/K, S}^\chi$.
%
%It is then enough to note that, given the first assertion of claim (ii), the rest of it follows directly from the structure theory of finitely generated $\mathcal{O}$-modules.
\end{proof}

%Let $L_{K,S}(\psi,s)$ be the $S$-truncated $L$-function for $\psi \in \widehat \Delta:=\Hom(\Delta,\CC^\times)$. We set
%$$\theta_{L/K,S}(s):=\sum_{\psi \in \widehat \Delta}L_{K,S}(\psi^{-1},s) e_\psi,$$
%where $e_\psi:=|\Delta|^{-1}$

\begin{remark} If $K$ is totally real, then the assumption that places in $S_\infty(K)$ split completely in $L$ implies $\chi$ is not the Teichm\"uller character and so the condition in
Theorem~\ref{RS theorem}(a) reduces to requiring $\chi$ is not trivial. In regard to Theorem \ref{RS theorem}(ii), note that if $S = S_\infty(K)\cup S_{\rm ram}(L/K)$ (which is permissable if  $L/K$ is ramified), then no place in $S\setminus S_\infty(K)$ splits completely in $L/K$. In all cases, it is straightforward to choose a set $S$ that contains $S_\infty(K)\cup  S_{\rm ram}(L/K)$ and is such that no place in $S\setminus S_\infty(K)$ splits completely in $L/K$.
\end{remark}

\begin{remark}\label{promise} Theorem \ref{RS theorem} both refines and extends the results of Rubin in \cite{rubincrelle} and \cite{rubinstark} and, more recently, of B\"uy\"ukboduk in \cite{Buyuk}. (For example, the main result of the latter article deals only with the case $i=0$ and assumes, amongst other things, that $K$ is totally real, $L/K$ is unramified at $p$ and, crucially, that Leopoldt's conjecture is valid.) In addition, under certain mild additional hypotheses, a more careful application of the methods used to prove Theorem \ref{RS theorem} allows one to prove that for a wide range of abelian extensions $L$ of $K$ the higher Fitting ideals of $\ZZ_p\otimes_\ZZ {\rm Cl}(\mathcal{O}_L)$ as a $\ZZ_p[\Gal(L/K)]$-module are determined by Euler systems of rank $r$ for induced forms of the representation $\ZZ_p(1)$. For brevity, however, we defer further discussion of this result to a subsequent article.
\end{remark}

\begin{remark} The approach used to prove Theorem \ref{RS theorem} also leads to analogous results for the twisted representations $T_\chi(a) := T_\chi\otimes_{\ZZ_p}\ZZ_p(a)$ for arbitrary integers $a$. Taken in conjunction with the known validity of the Quillen-Lichtenbaum conjecture, this in turn leads to concrete new information about the Galois structure of even dimensional higher algebraic $K$-groups.

To be a little more precise, in this setting the Rubin-Stark Euler system $c^{\rm RS}_\chi$ defined above can be replaced, modulo the generalized Rubin-Stark conjecture formulated by Kurihara and the first and third authors in \cite[Conj. 3.5(i)]{bks2-2}, by an Euler system that is constructed in just the same way after replacing Rubin-Stark elements by the `generalized Stark elements' $\eta_{L/K,S}(-a)$ that are introduced in loc. cit.

The same argument as in the proof of Theorem \ref{RS theorem} then shows that for any integer $a$, and all suitable characters $\chi$ of $\Gal(L/K)$, the validity of \cite[Conj. 3.5(i)]{bks2-2} implies that ideals of the form
$\im(\eta_{L/K,S}(-a)^\chi)$ are contained in ${\rm Fitt}^0_\mathcal{O}(H^2(\mathcal{O}_{L,S},\ZZ_p(a+1))^\chi)$, respectively in ${\rm Fitt}_{\cO}^0((\ZZ_p\otimes_\ZZ K_{2a}(\mathcal{O}_{L}))^\chi)$ if $a > 0$.

However, since no essentially new ideas are involved in this argument, we prefer not to give any more details here.
\end{remark}

\end{document}